\numberwithin{equation}{section}
\theoremstyle{plain}
\newtheorem{thm}{Theorem}[section]
\newtheorem{prop}[thm]{Proposition}
\newtheorem{defi}[thm]{Definition}
\newtheorem{lem}[thm]{Lemma}
\newtheorem{cor}[thm]{Corollary}
\newtheorem{eg}[thm]{{Example}}
\theoremstyle{remark}
\newtheorem{rema}[thm]{Remark}
\newtheorem{remarks}[thm]{Remarks}
\newcommand{\abv}[2]{\genfrac{}{}{0pt}{}{#1}{#2}}
\newcommand{\ad}{{\mbox{\upshape{ad}}}}
\newcommand{\Ad}{{\mbox{\upshape{Ad}}}}
\newcommand{\afrak}{{\mathfrak a}}
\newcommand{\Aut}{\mathrm{Aut}}
\newcommand{\Auth}{\mathrm{Aut}_{\mathrm{Hopf}}}
\newcommand{\bA}{{\mathbf{A}}}
\newcommand{\Btil}{{\tilde{B}}}
\newcommand{\bfrak}{{\mathfrak b}}
\newcommand{\C}{{\mathbb C}}
\newcommand{\N}{{\mathbb N}}
\newcommand{\cBcs}{\check{B}_{\uc,\us}}
\newcommand{\cB}{{\mathcal B}}
\newcommand{\cC}{{\mathcal C}}
\newcommand{\cD}{{\mathcal D}}
\newcommand{\cF}{{\mathcal F}}
\newcommand{\cG}{{\mathcal G}}
\newcommand{\cJ}{{\mathcal J}}
\newcommand{\cL}{{\mathcal L}}
\newcommand{\cM}{{\mathcal M}}
\newcommand{\cS}{{\mathcal S}}
\newcommand{\cU}{{\mathcal U}}
\newcommand{\cW}{{\mathcal W}}
\newcommand{\cZ}{{\mathcal Z}}
\newcommand{\Ctil}{\tilde{C}}
\newcommand{\End}{\mbox{End}}
\newcommand{\field}{{\mathbb K}}
\newcommand{\gfrak}{{\mathfrak g}}
\newcommand{\glfrak}{{\mathfrak{gl}}}
\newcommand{\GL}{\mathrm{GL}}
\newcommand{\hfrak}{{\mathfrak h}}
\newcommand{\hI}{\hat{I}}
\newcommand{\Htil}{\tilde{H}}
\newcommand{\Hom}{{\mathrm{Hom}}}
\newcommand{\id}{{\mbox{id}}}
\newcommand{\kfrak}{{\mathfrak k}}
\newcommand{\kow}{{\varDelta}}
\newcommand{\Mfrak}{{\mathfrak M}}
\newcommand{\nfrak}{{\mathfrak n}}
\newcommand{\ocC}{{\overline{\mathcal{C}}}}
\newcommand{\ocD}{{\overline{\mathcal{D}}}}
\newcommand{\ocS}{{\overline{\mathcal{S}}}}
\newcommand{\ovG}{{\overline{G}}}
\newcommand{\oI}{{\overline{I}}}
\newcommand{\oi}{{\overline{i}}}
\newcommand{\oj}{{\overline{j}}}
\newcommand{\oL}{{\overline{L}}}
\newcommand{\oKq}{{\overline{\field(q)}}}
\newcommand{\ot}{\otimes}
\newcommand{\Out}{\mathrm{Out}}
\newcommand{\qfield}{k}
\newcommand{\pfrak}{{\mathfrak p}}
\newcommand{\rank}{\mathrm{rank}}
\newcommand{\simHt}{\stackrel{\Htil}{\sim}}
\newcommand{\simS}{\stackrel{\cS}{\sim}}
\newcommand{\slfrak}{{\mathfrak{sl}}}
\newcommand{\sofrak}{{\mathfrak{so}}}
\newcommand{\spfrak}{{\mathfrak{sp}}}
\newcommand{\tfrak}{{\mathfrak t}}
\newcommand{\uc}{\mathbf{c}}
\newcommand{\ud}{\mathbf{d}}
\newcommand{\us}{\mathbf{s}}
\newcommand{\Uq}{U}
\newcommand{\uqg}{{U_q(\mathfrak{g})}}
\newcommand{\uqgh}{{\check{U}_q(\mathfrak{g})}}
\newcommand{\uqgp}{{U_q(\mathfrak{g'})}}
\newcommand{\uqLg}{{U_q(L(\mathfrak{g}))}}
\newcommand{\uqRglN}{{U^R_q(\hat{\mathfrak{gl}}_N)}}
\newcommand{\thetah}{\hat{\theta}}
\newcommand{\vep}{\varepsilon}
\newcommand{\wght}{\mathrm{wt}}
\newcommand{\Yt}{Y^{\mathrm{tw}}_q}
\newcommand{\Yto}{Y^{\mathrm{tw}}_q(\mathfrak{o}_N)}
\newcommand{\Ytsp}{Y^{\mathrm{tw}}_q(\mathfrak{sp}_{2m})}
\newcommand{\Ytspfour}{Y^{\mathrm{tw}}_q(\mathfrak{sp}_{4})}
\newcommand{\Z}{{\mathbb Z}}
\title{Quantum symmetric Kac-Moody pairs}
\author{Stefan Kolb}
\address{School of Mathematics and Statistics, Newcastle University,
   Herschel Building, Newcastle upon Tyne NE1 7RU, UK} 
\email{stefan.kolb@newcastle.ac.uk}
\subjclass[2010]{17B37, 17B67}
\keywords{Kac-Moody algebras, involutions, symmetric pairs, quantum groups, coideal subalgebras}
\thanks{The early stages of this work were supported by the Maxwell Institute for Mathematical Sciences, Edinburgh, and by the Netherlands Organization for Scientific Research (NWO) within the VIDI-project ``Symmetry and modularity in exactly solvable models". The final corrections to this paper were supported by EPSRC grant EP/K025384/1.}
\begin{document}

\begin{abstract}
  The present paper develops a general theory of quantum group analogs of symmetric pairs for involutive automorphism of the second kind of symmetrizable Kac-Moody algebras. The resulting quantum symmetric pairs are right coideal subalgebras of quantized enveloping algebras. They give rise to triangular decompositions, including a quantum analog of the Iwasawa decomposition, and they can be written explicitly in terms of generators and relations. Moreover, their centers and their specializations are determined. The constructions follow G.~Letzter's theory of quantum symmetric pairs for semisimple Lie algebras. The main additional ingredient is the classification of involutive automorphisms of the second kind of symmetrizable Kac-Moody algebras due to Kac and Wang. The resulting theory comprises various classes of examples which have previously appeared in the literature, such as $q$-Onsager algebras and the twisted $q$-Yangians introduced by Molev, Ragoucy, and Sorba. 
\end{abstract}

\maketitle

\tableofcontents

\section{Introduction}
Let $\gfrak$ be a symmetrizable Kac-Moody algebra defined over an algebraically closed field $\field$ of characteristic $0$. The quantized enveloping algebra $\uqg$ of $\gfrak$, discovered by Drinfeld \cite{inp-Drinfeld1} and Jimbo \cite{a-Jimbo1} nearly thirty years ago, is an integral part of representation theory with many deep applications. Let $\theta:\gfrak\rightarrow \gfrak$ be an involutive Lie algebra automorphism and let $\gfrak=\kfrak\oplus \pfrak$ be the decomposition of $\gfrak$ into the $(+1)$- and the $(-1)$-eigenspace of $\theta$. The present paper is concerned with the construction and the structure theory of quantum group analogs of $U(\kfrak)$ as right coideal subalgebras $B=B(\theta)$ of $\uqg$. We call such analogs quantum symmetric pair coideal subalgebras and refer to $(\uqg, B)$ as a quantum symmetric pair.

If $\gfrak$ is finite dimensional then there exist two approaches to the construction of quantum symmetric pairs. In the early nineties, Noumi, Sugitani, and Dijkhuizen constructed quantum group analogs of $U(\kfrak)$ as coideal subalgebras of $\uqg$ for all $\gfrak$ of classical type \cite{a-Noumi96}, \cite{a-NS95}, \cite{a-NDS97}. Their approach is based on explicit solutions of the reflection equation \cite{a-Cher84}, \cite{a-KuSkl92} and hence follows in spirit the methods developed by the (then) Leningrad school of mathematical physics \cite{a-FadResTak2}. Independently, G.~Letzter developed a comprehensive theory of quantum symmetric pairs for all semisimple symmetric Lie algebras \cite{a-Letzter97}, \cite{a-Letzter99a}, \cite{a-Letzter00}, \cite{MSRI-Letzter}. \cite{a-Letzter03}, \cite{a-Letzter04}, \cite{a-Letzter-memoirs}. Her theory is based on the Drinfeld-Jimbo presentation of quantized enveloping algebras. It is well understood that the two approaches to quantum symmetric pairs provide essentially the same coideal subalgebras of $\uqg$, see \cite[Section 6]{a-Letzter99a}.

Over the last decade, numerous examples of quantum symmetric pairs for infinite dimensional symmetrizable Kac-Moody algebras have appeared in the literature. Here we group these examples in three classes.

\medskip

\noindent{\bf (1) $q$-Onsager algebras:} The $q$-Onsager algebra is a quantum symmetric pair coideal subalgebra for the Chevalley involution of the affine Lie algebra $\hat{\slfrak}_2(\field)$. It derives its name from the fact that the Lie subalgebra of $\hat{\slfrak}_2(\C)$ fixed under the Chevalley involution appeared in Onsager's investigation of the planar Ising model \cite{a-Onsager44}, see also \cite[Remark 9.1]{a-Terwilliger06} for historical comments. The $q$-Onsager algebra appeared, as an algebra, in Terwilliger's investigation of tridiagonal pairs and polynomial association schemes \cite[Lemma 5.4]{a-Terwilliger93}. The name $q$-Onsager algebra, however, goes back to Baseilhac and Koizumi \cite{a-BasKoz05} who observed its role as a symmetry algebra for a class of quantum integrable models. An embedding of the $q$-Onsager algebra as a coideal subalgebra of $U_q(\hat{\slfrak}_2(\field))$ was established in \cite[Proposition 1.13]{a-ItoTerwilliger10}, see also \cite[Section 2]{a-Baseilhac05}. Recently, it was proposed to study quantum symmetric pair coideal subalgebras corresponding the Chevalley involution for arbitrary affine Kac-Moody algebras \cite{a-BasBe10} under the name generalized $q$-Onsager algebras. These algebras previously appeared in \cite[Section 3]{a-DeliusGeorge02} and \cite[3.4]{a-DeliusMacKay03}.

\medskip

\noindent{\bf (2) Twisted quantum loop algebras of the second kind:} Assume that $\gfrak$ is finite dimensional and let $L(\gfrak)=\gfrak\ot \field[t,t^{-1}]$ denote the corresponding loop algebra. The involutive automorphism $\theta$ of $\gfrak$ lifts to an involutive automorphism $\hat{\theta}_L$ of $L(\gfrak)$ defined by $\hat{\theta}_L(x\ot t^n)=\theta(x)\ot t^{-n}$ for all $x\in \gfrak$, $n\in \Z$. The same formula also extends $\theta$ to an involutive automorphism $\hat{\theta}$ of the untwisted affine Lie algebra $\hat{\gfrak}$ of $\gfrak$. The fixed Lie subalgebras of $L(\gfrak)$ and $\hat{\gfrak}$ are isomorphic and we call them twisted loop algebras of the second kind. Correspondingly, any quantum symmetric pair coideal subalgebra for the involution $\hat{\theta}$ will be called a twisted quantum loop algebra of the second kind. The twisted quantum loop algebras (of the second kind) corresponding to the Chevalley involution coincide with the generalized $q$-Onsager algebras discussed in (1). The most prominent examples of twisted quantum loop algebras, however, were introduced by Molev, Ragoucy, and Sorba \cite{a-MolRagSor03} for the involutions corresponding to the symmetric pairs $(\slfrak_N(\field),\sofrak_N(\field))$ and $(\slfrak_{2m}(\field),\spfrak_{2m}(\field))$. Molev, Ragoucy, and Sorba call their algebras twisted $q$-Yangians. A twisted quantum loop algebra corresponding to the symmetric pair $(\slfrak_N(\field),\slfrak_N(\field)\cap(\glfrak_r(\field)\oplus \glfrak_{N-r}(\field)))$ was constructed in \cite{a-CGM14}. Both \cite{a-MolRagSor03} and \cite{a-CGM14} use an FRT approach for their constructions and work with $\glfrak_N(\field)$ rather than $\slfrak_N(\field)$. Examples of quantum symmetric pair coideal subalgebras of $U_q(\hat{\slfrak}_2(\field))$ which are not twisted quantum loop algebras were recently constructed in \cite{a-Regelskis12p}.

\medskip

\noindent{\bf (3) Quantized GIM Lie algebras:} Generalized intersection matrix (GIM) Lie algebras, originally introduced by Slodowy \cite{habil-Slodowy}, \cite{a-Slodowy86}, form a class of Lie algebras which generalize Kac-Moody algebras by allowing Cartan matrices with positive off-diagonal entries. It was realized by Berman \cite{a-Berman89} that GIM Lie algebras are subalgebras fixed under an involution of certain symmetrizable Kac-Moody algebras. Recently, quantum group analogs of GIM Lie algebras associated to two-fold affinizations of Cartan matrices of type ADE were constructed by Tan \cite{a-Tan05} in an attempt to better understand the quantum toroidal algebras defined in \cite{a-GKV95}. In \cite{a-LvTan12} the resulting quantized GIM Lie algebras were realized as coideal subalgebras of quantized enveloping algebras. Quantized GIM Lie algebras provide examples of quantum symmetric pairs for non-affine symmetrizable Kac-Moody algebras.

\medskip

In the present paper a comprehensive theory of quantum symmetric pairs for symmetrizable Kac-Moody algebras is developed. This theory comprises all of the above example classes and reduces to Letzter's theory if $\gfrak$ is finite dimensional. The paper develops algebraic properties of the resulting quantum symmetric Kac-Moody pairs. This includes triangular decompositions, in particular an analog of the Iwasawa decomposition. It also includes explicit presentations of quantum symmetric pair coideal subalgebras in terms of generators and relations and a description of their centers. Moreover, we investigate the transformation behavior of quantum symmetric pairs under Hopf algebra automorphisms of $\uqg$ and the behavior of quantum symmetric pairs in the limit $q\rightarrow 1$, commonly referred to a specialization. It has to be stressed that for finite dimensional $\gfrak$ most of these results were obtained by Letzter and that the proofs in the Kac-Moody case are for the most part straightforward translations of her arguments. Nevertheless, we feel that a detailed presentation of the construction of quantum symmetric pairs and of the proofs of their properties is appropriate. Firstly, this guarantees that everything does indeed translate to the Kac-Moody setting. This fact was not used by any of the papers referred to in (1), (2), and (3) above (except \cite{a-Regelskis12p}), although it significantly conceptualizes and generalizes those constructions. This will become apparent in the final two sections of the present paper where the theory is applied to define quantum loop algebras and quantized GIM algebras in full generality. Delius and MacKay asked in \cite[p. 189]{a-DeliusMacKay03} for an affine generalization of Noumi's and Letzter's theories. In \cite[Section 4]{a-BasBe10} Baseilhac and Belliard asked for a Drinfeld-Jimbo realization of the twisted $q$-Yangians in \cite{a-MolRagSor03}. The present paper answers both these questions. 
Secondly, a detailed presentation allows for some minor changes to Letzter's original approach. This will hopefully help to convince the reader that quantum symmetric pairs appear very naturally and are technically not more involved than quantized enveloping algebras themselves.  

An important ingredient in Letzter's construction is the classification of involutive automorphisms of simple complex Lie algebras up to conjugation. Being equivalent to the classification of real simple Lie algebras, this classical problem was essentially solved by E.~Cartan \cite{a-Cartan14}. It was revisited by Araki \cite{a-Araki62} using Cartan subalgebras $\hfrak$ of $\gfrak$, invariant under $\theta$, such that $\hfrak\cap\pfrak$ is maximally abelian in $\pfrak$. Dijkhuizen observed in \cite{a-Dijk96} that this condition is crucial for the general construction of quantum symmetric pairs. In \cite{a-Letzter99a} Letzter specifies the choice of $\theta$ even further. Let $\gfrak=\nfrak^+\oplus \hfrak \oplus \nfrak^-$ be the triangular decomposition of $\gfrak$ with respect to a fixed choice $\Pi=\{\alpha_i\,|\,i\in I\}$ of simple roots and let $e_i,f_i,h_i$ for $i\in I$ denote the Chevalley generators of $\gfrak$. Moreover, for any subset $X\subset I$ let $\gfrak_X$ denote the Lie subalgebra of $\gfrak$ generated by all $e_i,f_i,h_i$ for $i\in X$. To construct quantum symmetric pairs for finite dimensional $\gfrak$, Letzter assumes that 
\begin{align}\label{eq:Lcond1}
  \theta(\hfrak)=\hfrak
\end{align} 
and that there exists a subset $X\subset I$ such that
\begin{align}
  \theta|_{\gfrak_X} = \id_{\gfrak_X}
\end{align}\label{eq:Lcond2}
and
\begin{align}\label{eq:Lcond3}
  \theta(e_i) \in \nfrak^- \quad \mbox{and} \quad \theta(f_i) \in \nfrak^+ \qquad \mbox{if $i \in I\setminus X$.}
\end{align}
For finite dimensional $\gfrak$ the above conditions can be achieved for any involutive automorphism $\theta$ via conjugation. It turns out that Letzter's theory of quantum symmetric pairs can be extended to symmetrizable Kac-Moody algebras and involutions $\theta$ which satisfy \eqref{eq:Lcond1}-\eqref{eq:Lcond3} for a subset $X\subset I$ of \textit{finite} type. More precisely, recall that a Lie algebra automorphism $\sigma: \gfrak\rightarrow \gfrak$ is said to be of the second kind if the standard Borel subalgebras $\bfrak^+$ and $\bfrak^-$ of $\gfrak$ satisfy $[\bfrak^+:\sigma(\bfrak^-)\cap \bfrak^+]<\infty$, \cite[III.1]{a-Levstein88}. Any automorphism $\theta$ satisfying \eqref{eq:Lcond1}-\eqref{eq:Lcond3} for a subset $X\subset I$ of finite type is of the second kind. It is one of the main observations of the present paper that Letzter's theory extends to involutive automorphisms of the second kind of symmetrizable Kac-Moody algebras. 

Involutive automorphisms of the second kind were essentially classified by Kac and Wang \cite{a-KW92}. 
It follows from their results that any involution of the second kind is conjugate to an involution satisfying \eqref{eq:Lcond1}-\eqref{eq:Lcond3}. This involution can be written explicitly in terms of certain pairs $(X,\tau)$ where $X\subset I$ as above, and $\tau$ is a diagram automorphism of $I$. The pairs $(X,\tau)$ corresponding to involutive automorphisms of the second kind can be described solely in terms of the root system of $\gfrak$ and will be called admissible pairs. They are natural generalizations of Satake diagrams \cite[pp.~32/33]{a-Araki62} to the Kac-Moody setting. The explicit form of the involution $\theta(X,\tau)$ corresponding to the admissible pair $(X,\tau)$ immediately suggests the definition of a quantum group analog $\theta_q(X,\tau)$. Similarly, a set of generators of the Lie subalgebra $\kfrak$ fixed under $\theta(X,\tau)$ suggests a set of generators and hence the definition of the corresponding quantum symmetric pair coideal subalgebra.

In the following the content of the present paper is described in some more detail. The classification of involutions of the second kind of symmetrizable Kac-Moody algebras in terms of admissible pairs is recalled in Section \ref{sec:inv2nd}.
This section contains all facts about Kac-Moody algebras used subsequently for the construction of quantum symmetric pairs. Only the proof of the main result, Theorem \ref{classThm}, is moved to Appendix \ref{app:classThm}. Section \ref{sec:LusztigBraid} briefly recalls the definition and properties of quantized enveloping algebras defined over the field $\field(q)$ of rational functions in $q$. Here the focus is on the relation between Lusztig's braid group action on $\uqg$ and the adjoint action of $\uqg$ on itself. The aim is to find $q$-analogs of all the constituents of the involution $\theta(X,\tau)$ of $\gfrak$. With the preparations provided by Sections \ref{sec:inv2nd} and \ref{sec:LusztigBraid} at hand, it is straightforward to define the quantum group analog $\theta_q(X,\tau)$ of $\theta(X,\tau)$ as an $\field(q)$-algebra automorphism of $\uqg$ in Section \ref{sec:qInvolution}. This definition is one of the more essential differences between the present paper and Letzter's constructions for finite dimensional $\gfrak$ where the quantum group analog of $\theta$ maps $q$ to $q^{-1}$, see Subsection \ref{sec:ref-Letzter-qtheta}.
In Section \ref{sec:QSP} quantum symmetric pair coideal subalgebras $B_{\uc,\us}$ are defined depending on two families of additional parameters $\uc,\us\in \field(q)^{I\setminus X}$. The next aim is to describe $B_{\uc,\us}$ explicitly in terms of generators and relations. To this end, three crucial triangular decompositions are presented in Section \ref{sec:triang}. The first two decompositions provide bases of $\uqg$ and $B_{\uc,\us}$ in terms of certain monomials in generators of $B_{\uc,\us}$. The third decomposition is a quantum analog of the Iwasawa decomposition. These tools make it possible to give the desired presentation of $B_{\uc,\us}$ in Section \ref{sec:GensRels}. The relations are given explicitly for all generalized Cartan matrices with entries $\ge -2$, but the method works in full generality. Here the $q$-Onsager algebra appears in Example \ref{eg:qOnsager}. In Section \ref{sec:center} the center of $B_{\uc,\us}$ is determined. It is shown that if $B_{\uc,\us}$ corresponds to an indecomposable Cartan matrix of infinite type then the center $Z(B_{\uc,\us})$ is trivial. The center of $B_{\uc,\us}$ in the finite case was previously determined in \cite{a-KL08} and is here briefly recalled for completeness. Section \ref{sec:equivalence} analyzes the behavior of quantum symmetric pairs under the action of the group $\Aut_{\mathrm{Hopf}}(\uqg)$ of Hopf algebra automorphisms of $\uqg$. The aim is in particular to find a class of representatives of the $\Aut_{\mathrm{Hopf}}(\uqg)$-orbits of quantum symmetric pairs. In Section \ref{sec:specialize} we discuss non-restricted specialization, that is, the behavior of $\theta_q(X,\tau)$ and $B_{\uc,\us}$ in the limit $q\rightarrow 1$. Using specialization, it is in particular shown in Theorem \ref{thm:maxcond} that $B_{\uc,\us}$ satisfies a maximality condition. In these specialization arguments it is crucial that the triangular decompositions involving $B_{\uc,\us}$ remain true over the localization $\bA=\field[q,q^{-1}]_{(q-1)}$, see Theorem \ref{thm:A-triang}. The maximality property of $B_{\uc,\us}$ is used to identify the twisted $q$-Yangians from \cite{a-MolRagSor03} with quantum symmetric pair coideal subalgebras in Section \ref{sec:TwistedQLoop}. It should be noted that twisted $q$-Yangians contain an additional infinite dimensional central subalgebra due to the fact that they are constructed from $\glfrak_n(\field)$ and not from $\slfrak_n(\field)$. The final Section \ref{sec:QGIM} provides the construction of quantized GIM Lie algebras in full generality, going beyond the two-fold affinizations considered by Lv and Tan in \cite{a-Tan05}, \cite{a-LvTan12}. 

At the end of each section that emulates Letzter's constructions for finite dimensional $\gfrak$, we provide  detailed references to her work and point out any differences between her constructions and those presented here in the Kac-Moody case. This applies to Sections \ref{sec:qInvolution}, \ref{sec:QSP}, \ref{sec:triang}, \ref{sec:GensRels}, \ref{sec:equivalence}, \ref{sec:specialize}, and \ref{sec:TwistedQLoop}. 

\medskip

\noindent{\bf Acknowledgments.} The author is grateful to N.~Guay for pointing out the paper \cite{a-CGM14}, to Y.~Tan for providing the paper \cite{a-LvTan12} ahead of print, and to J.~Stokman for valuable comments. The initial work on this project was done in 2009 at the Universities of Edinburgh and Amsterdam. The results were announced at a miniworkshop on `Generalizations of Symmetric Spaces' in Oberwolfach in April 2012. The author is grateful to the organizers, Loek Helminck and Ralf Koehl, for this occasion which significantly furthered the completion of the paper. 

\section{Involutive automorphisms of the second kind}\label{sec:inv2nd}
Automorphisms of infinite-dimensional, symmetrizable Kac-Moody algebras belong to either of two disjoint classes, automorphisms of the first and automorphisms of the second kind. In principle, involutive automorphisms of the second kind were classified in \cite{a-KW92}. In the very final remark of their paper, Kac and Wang announce that a combinatorial description of involutions of the second kind in terms of Dynkin diagrams is possible. This section is the author's take on what Kac and Wang might have had in mind. This will, moreover, allow us to fix notations for Kac-Moody algebras and their automorphisms. The main result of this section, Theorem \ref{classThm}, should be attributed to \cite{a-KW92}, however, the author was unable to locate the given explicit formulation in the literature. In Appendix \ref{app:classThm} we will reduce Theorem \ref{classThm} to results stated in \cite{a-KW92}. A similar result is contained in \cite{a-BBBR95} where almost split real forms of symmetrizable Kac-Moody algebras are classified. 

All through this paper $\field$ denotes an algebraically closed field of characteristic $0$.
\subsection{Kac-Moody algebras and groups}\label{sec:KM}
Let $I$ be a finite set and let $A=(a_{ij})_{i,j \in I}$ be a generalized Cartan matrix. Recall that $A$ is a square matrix with integer entries such that $a_{ii}=2$ for all $i\in I$, $a_{ij}\le 0$ if $i\neq j$, and $(a_{ij}=0 \Leftrightarrow a_{ji}=0)$. We always assume $A$ to be symmetrizable, that is, there exists a diagonal matrix $D=\mbox{diag}(\epsilon_i|i\in I)$ with coprime entries $\epsilon_i\in \N$ such that $DA$ is symmetric. Moreover, we assume $A$ to be indecomposable. 
Let $P^\vee$ be the free abelian group of rank $2|I|-\mbox{rank}(A)$ with $\Z$-basis $\{h_i\,|\,i\in I\}\cup \{d_s\,|\, s=1, \dots,|I|-\mbox{rank}(A)\}$, set $\hfrak=\field\ot_\Z P^\vee$, and define 
$P=\{\lambda\in \hfrak^\ast\,|\,\lambda(P^\vee)\subseteq \Z\}$. 
As usual, we call $P$ the weight lattice associated to $A$. With the notation $P^\vee$ we follow \cite[2.1]{b-HongKang02} but note that $P^\vee$ is an extension of the dual root lattice. Moreover, define $\Pi^\vee=\{h_i\,|\,i\in I\}$ and choose a linearly independent subset $\Pi=\{\alpha_i\,|\,i\in I\}$ of $\hfrak^\ast$ such that 
\begin{align}\label{eq:alpha(h)}
  \alpha_j(h_i)=a_{ij} \qquad \mbox{and} \qquad \alpha_j(d_s)\in \{0,1\}
\end{align}
for all $i,j\in I$ and all $s=1, \dots,|I|-\mbox{rank}(A)$. In this case $(\hfrak,\Pi, \Pi^\vee)$ is a minimal realization of $A$. Let $Q=\Z \Pi$ and $Q^\vee=\Z \Pi^\vee$ denote the root lattice and the dual root lattice, respectively, and  set $Q^+=\sum_{i\in I}\N_0 \alpha_i$. For any $\mu,\nu\in \hfrak^\ast$ we write $\mu\ge \nu$ if $\mu-\nu\in Q^+$.

The Kac-Moody algebra $\gfrak=\gfrak(A)$ is the Lie algebra over $\field$ generated by $\hfrak$ and elements $e_i, f_i$ for $i\in I$ and the relations given in \cite[1.3]{b-Kac1}. The derived algebra $\gfrak'=[\gfrak,\gfrak]$ is the Lie subalgebra of $\gfrak$ generated by the elements $e_i, f_i$ for $i\in I$. Let $\nfrak^+$ and $\nfrak^-$ denote the Lie subalgebras of $\gfrak$ generated by the elements of the sets $\{e_i\,|\,i\in I\}$ and $\{f_i\,|\,i\in I\}$, respectively. One has triangular decompositions
\begin{align*}
  \gfrak=\nfrak^+ \oplus \hfrak \oplus \nfrak^- \qquad \gfrak'=\nfrak^+ \oplus \hfrak' \oplus \nfrak^-
\end{align*}
where $\hfrak'=\sum_{i\in I}\field h_i$. As a $\hfrak$-module $\gfrak$ decomposes into root spaces
\begin{align*}
  \gfrak=\bigoplus_{\beta\in \hfrak^\ast} \gfrak_\beta
\end{align*}
where $\gfrak_\beta=\{x\in \gfrak\,|\,[h,x]=\beta(h)x,\, \mbox{for all } h\in \hfrak\}$. Let $\Phi=\{\beta\in \hfrak^\ast\,|\,\gfrak_\beta\neq 0\}$ denote the set of roots and set $\Phi^+=\Phi\cap Q^+$.

For any $i\in I$ the fundamental reflection $r_i \in \GL(\hfrak)$ is defined by 
\begin{align*}
  r_i(h)=h-\alpha_i(h) h_i \qquad \mbox{for all } h\in \hfrak.
\end{align*}
The Weyl group $W$ is the subgroup of $\GL(\hfrak)$ generated by the fundamental reflections $r_i$.
It is a Coxeter group given by defining relations
\begin{align}\label{Coxeter}
  r_i^2=1, \qquad (r_ir_j)^{m_{ij}}=1
\end{align}
for all $i,j\in I$ where $m_{ij}=2,3,4,6,$ and $\infty$ if
$a_{ij}a_{ji}=0,1,2,3$, and $\ge 4$, respectively, \cite[Proposition 3.13]{b-Kac1}. Via duality $W$ also acts on $\hfrak^\ast$ by
\begin{align*}
  r_i(\alpha)=\alpha -\alpha(h_i)\alpha_i \qquad \mbox{for all } \alpha\in \hfrak^\ast
\end{align*}
and the root system $\Phi$ is stable under $W$. Let $\Phi^{re}=W\Phi$ and $\Phi^{im}=\Phi\setminus \Phi^{re}$ denote the sets of real and imaginary roots, respectively.

By \cite[2.1]{b-Kac1} there exists a nondegenerate, symmetric, bilinear form $(\cdot,\cdot)$ on $\hfrak$ such that
\begin{align*}
  (h_i,h)=\alpha_i(h)/\epsilon_i \quad \forall h\in \hfrak, i \in I,\qquad (d_m,d_n)=0 \quad\forall n,m\in \{1,\dots,s\}.
\end{align*}
We denote the induced bilinear form on $\hfrak^\ast$ by the same symbol and observe that
\begin{align*}
  (\alpha_i,\alpha_j)=\epsilon_i a_{ij} \qquad \mbox{for all } i,j\in I.
\end{align*}
The action of $W$ on $\hfrak$ can be interpreted in terms of the Kac-Moody group $G$ associated to $\gfrak'$. We briefly recall the construction of $G$ along the lines of \cite[1.3]{a-KW92}. Let $G^\ast$
denote the free product of the additive groups $\gfrak_\alpha$ for $\alpha\in \Phi^{re}$ and let $i_\alpha:\gfrak_\alpha\rightarrow G^\ast$ be the canonical inclusion. Recall that a $\gfrak'$-module $(V,\pi)$ is called integrable if $\pi(e_i)$ and $\pi(f_i)$ act locally nilpotently on $V$ for all $i\in I$. For any integrable $\gfrak'$-module $(V,\pi)$ define a homomorphism $\pi^\ast:G^\ast\rightarrow \Aut(V)$ by $\pi^\ast(i_\alpha(x))=\exp(\pi(x))$ for $x\in \gfrak_\alpha$,
$\alpha\in \Phi^{re}$. Let $N^\ast$ denote the intersection of all $\ker(\pi^\ast)$. By definition $G:=G^\ast/N^\ast$ is the Kac-Moody group associated to $\gfrak'$. We write $\pi^\ast:G^\ast\rightarrow G$ to denote the canonical projection. To shorten notation we write
$\exp(x):=\pi^\ast(i_\alpha(x))$ for $x\in \gfrak_\alpha$, $\alpha\in \Phi^{re}$. Let $\Aut(\gfrak)$ denote the group of Lie algebra automorphisms of $\gfrak$. There is a group homomorphism
$\Ad:G\rightarrow \Aut(\gfrak)$ corresponding to the adjoint action of $\gfrak'$ on $\gfrak$.
More explicitly, we have $\Ad(\exp(x)))=\exp(\ad(x))$ for all $x\in \gfrak_\alpha$,
$\alpha\in \Phi^{re}$.  Consult \cite[1.3]{a-KW92} for more details. 

We are now ready to lift the action of $r_i$ on $\hfrak$ to an element in $\Aut(\gfrak)$.
For any $i\in I$ define an element $m_i\in G$ by
\begin{align*}
  m_i=\exp(e_i) \exp(-f_i) \exp(e_i).
\end{align*}
By \cite[Lemma 3.8]{b-Kac1} one has $\Ad(m_i)(\gfrak_\alpha)=\gfrak_{r_i(\alpha)}$ for all $\alpha\in
\hfrak^\ast$ and $\Ad(m_i)|_{\hfrak}=r_i$ for all $i\in I$. Moreover, by \cite[Remark 3.8]{b-Kac1} the elements $m_i$ satisfy the relations
\begin{align}\label{eq:coxeter-rel}
  \underbrace{m_i m_j m_i\, .\,.\,.}_{m_{ij} \mathrm{factors}} = \underbrace{m_j m_i m_j\, .\,.\,.}_{m_{ij} \mathrm{ factors}}
\end{align}
with $m_{ij}$ as in \eqref{Coxeter}. 
\subsection{Automorphisms of the first and second kind}\label{sec:Auto12}
Let $\bfrak^+=\nfrak^+\oplus \hfrak$ and $\bfrak^-=\hfrak\oplus \nfrak^-$ denote the standard Borel subalgebras of $\gfrak$. One says that $\sigma\in \Aut(\gfrak)$ is of the first kind if $\sigma(\bfrak^+)=\Ad(g)(\bfrak^+)$ for some $g\in G$. By \cite[4.6]{a-KW92} this is equivalent to 
\begin{align*}
  \dim(\sigma(\bfrak^+)\cap \bfrak^-)<\infty.
\end{align*}
Similarly, $\sigma$ is of the second kind if $\sigma(\bfrak^+)=\Ad(g)(\bfrak^-)$ for some $g$ in $G$, or equivalently,
\begin{align*}
  \dim(\sigma(\bfrak^+)\cap \bfrak^+)<\infty.
\end{align*}
If $\gfrak$ not of finite type then any automorphism of $\gfrak$ is either of the first or of the second kind but never both. In this case the group $\Aut(\gfrak)$ is $\Z_2$-graded with automorphisms of the first kind in degree $0$ and automorphisms of the second kind in degree $1$.

Apart from $\Ad(G)$, four other subgroups of $\Aut(\gfrak)$ are relevant in the following.
The set $\Htil=\Hom(Q,\field^\times)$ of group homomorphism from $Q$ to the multiplicative group $\field^\times$ is a group under multiplication. For any $x\in \Htil$ define $\Ad(x)\in \Aut(\gfrak)$ by $\Ad(x)|_\hfrak=\id_\hfrak$ and $\Ad(x)(v)=x(\alpha)v$ for all $\alpha\in \Phi$, $v\in \gfrak_\alpha$. Then $\Ad(\Htil)$ is a subgroup of $\Aut(\gfrak)$. The automorphisms in $\Ad(\Htil)$ are of the first kind.

Let $\Aut(A)$ denote the group of all permutations $\sigma$ of the set $I$ such that $a_{i,j}=a_{\sigma(i),\sigma(j)}$. View $\Aut(A)$ as a subgroup of $\Aut(\gfrak')$ by requiring
$\sigma(e_i)=e_{\sigma(i)}$,  $\sigma(f_i)=f_{\sigma(i)}$. The action of $\Aut(A)$ on $\gfrak'$ can be extended to an action on $\gfrak$ following \cite[4.19]{a-KW92}. Then the induced map on $\hfrak^\ast$ satisfies $\sigma(\alpha_i)=\alpha_{\sigma(i)}$. Viewed as a subgroup of $\Aut(\gfrak)$, the group $\Aut(A)$ consists of automorphisms of the first kind.

Recall the Chevalley involution $\omega$ defined by 
  \begin{align}\label{eq:Chevalley}
    \omega(e_i)&=-f_i & \omega(f_i)&=-e_i & \omega(h)=-h 
  \end{align}
for all $i\in I$ and  $h\in \hfrak$. Define $\Out(A)=\Aut(A)$ if $A$ is of finite type and $\Out(A)=\Aut(A)\cup\omega\Aut(A)$ else \cite[1.32]{a-KW92}. As $\omega$ commutes with all elements in $\Aut(A)$, one obtains that $\Out(A)$ is a subgroup of $\Aut(\gfrak)$. 

Let $\Aut(\gfrak,\gfrak')$ denote the group of automorphisms of $\gfrak$ which restrict to the identity on $\gfrak'$. The following decomposition of $\Aut(\gfrak)$ is established in \cite[4.23]{a-KW92}.
\begin{prop}
  $\Aut(\gfrak)=\Out(A)\ltimes \big(\Aut(\gfrak,\gfrak')\times (\Ad(\Htil)\ltimes \Ad(G))\big).$
\end{prop}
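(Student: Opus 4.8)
The plan is to decompose an arbitrary $\sigma\in\Aut(\gfrak)$ in stages, peeling off one factor at a time and using the semidirect/direct product structure claimed on the right-hand side, so that the proposition is really a bookkeeping statement organizing three independent inputs. First I would recall that since $A$ is indecomposable and we may assume $\gfrak$ is not of finite type (the finite type case being classical and excluded from the Kac–Moody discussion here, or handled by $\Out(A)=\Aut(A)$), the $\Z_2$-grading of $\Aut(\gfrak)$ means we only need to treat the degree-$0$ part, the automorphisms of the first kind; composing with $\omega$ if necessary moves a second-kind automorphism into degree $0$, which accounts for the $\Out(A)$ factor modulo the $\Aut(A)$ part. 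So the core is to prove $\Aut^{(0)}(\gfrak)=\Aut(A)\ltimes\big(\Aut(\gfrak,\gfrak')\times(\Ad(\Htil)\ltimes\Ad(G))\big)$ and then semidirect-product $\omega$ (or $\Out(A)/\Aut(A)$) on top, checking $\omega$ normalizes the whole bracketed group.

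Next I would handle the first-kind case. Given $\sigma$ of the first kind, by the definition recalled in Subsection~\ref{sec:Auto12} we have $\sigma(\bfrak^+)=\Ad(g)(\bfrak^+)$ for some $g\in G$, so $\Ad(g)^{-1}\sigma$ fixes $\bfrak^+$ (setwise). An automorphism fixing $\bfrak^+$ also fixes its nilradical $\nfrak^+$ and, being an automorphism, fixes the normalizer $\bfrak^-$ of the opposite picture once one pins down $\hfrak$; the standard argument is that such a $\sigma$ permutes the simple root spaces $\gfrak_{\alpha_i}$ (the one-dimensional pieces generating $\nfrak^+$ whose brackets do not exhaust $\nfrak^+$ at the next level in a constrained way — equivalently the $\alpha_i$ are the indecomposable elements of $\Phi^+$), giving a diagram automorphism $\tau\in\Aut(A)$. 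Replacing $\sigma$ by $\tau^{-1}\Ad(g)^{-1}\sigma$ we get an automorphism fixing each $\gfrak_{\alpha_i}$ as a line and fixing $\hfrak$; rescaling the Chevalley generators by an element of $\Htil=\Hom(Q,\field^\times)$ (the rescaling of $e_i$ is then forced on $f_i$ by the $h_i$-bracket relation) reduces to an automorphism fixing every $e_i$ and $f_i$, i.e. fixing $\gfrak'$, hence lying in $\Aut(\gfrak,\gfrak')$. Here one invokes that $\gfrak$ is generated by $\hfrak$ and the $e_i,f_i$, so the only freedom left after fixing $\gfrak'$ is on the complement of $\hfrak'$ in $\hfrak$, which is exactly $\Aut(\gfrak,\gfrak')$.

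Then I would verify the product is semidirect in the asserted nesting: $\Ad(G)$ is normal in $\Ad(\Htil)\ltimes\Ad(G)$ because $\Ad(\Htil)$ normalizes $\Ad(G)$ (conjugating a root-group element $\exp(x)$, $x\in\gfrak_\alpha$, by $\Ad(h)$ rescales $x$, landing back in $\Ad(G)$); $\Aut(\gfrak,\gfrak')$ commutes with both $\Ad(\Htil)$ and $\Ad(G)$ since those act trivially off $\gfrak'$ while $\Aut(\gfrak,\gfrak')$ acts trivially on $\gfrak'$ — this gives the direct product $\Aut(\gfrak,\gfrak')\times(\Ad(\Htil)\ltimes\Ad(G))$; and $\Aut(A)$ normalizes this whole group because a diagram automorphism conjugates $\exp(x)\mapsto\exp(\tau(x))$ and permutes the $\epsilon_i$-normalized data, and it clearly preserves $\Aut(\gfrak,\gfrak')$. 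Finally the intersection of $\Aut(A)$ with the bracketed subgroup is trivial (a nontrivial diagram permutation cannot fix all $\gfrak'$ pointwise up to the inner/$\Htil$ corrections unless it is the identity, since $\Ad(G)\Ad(\Htil)$ preserves the set of highest-weight lines of integrable modules, forcing the permutation of simple roots to be trivial), giving the semidirect decomposition. The whole thing is then cited to \cite[4.23]{a-KW92}, so in the paper this "proof" is really: \emph{this is \cite[4.23]{a-KW92}}; the only genuine content to reprove would be the first-kind reduction sketched above, and the main obstacle there is making precise the claim that any automorphism fixing $\bfrak^+$ permutes the simple root spaces — this needs the characterization of simple roots inside $\Phi^+$ as those admitting no decomposition $\alpha_i=\beta+\gamma$ with $\beta,\gamma\in\Phi^+$, together with care about the grading so that the argument does not secretly use finite-dimensionality.
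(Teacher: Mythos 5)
The paper does not prove this proposition at all: it is stated as a quotation of \cite[4.23]{a-KW92} (the sentence preceding it reads ``the following decomposition of $\Aut(\gfrak)$ is established in \cite[4.23]{a-KW92}''), so your closing observation is exactly right --- there is nothing in the paper to compare against beyond the citation, and your answer agrees with the paper on that point.

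Your supplementary sketch is the standard route (essentially what Kac and Wang do): reduce to automorphisms of the first kind via $\omega$, normalize $\sigma(\bfrak^+)$ by $\Ad(G)$, peel off a diagram automorphism, rescale by $\Ad(\Htil)$, and absorb the remainder into $\Aut(\gfrak,\gfrak')$. If you wanted to turn it into an actual proof, two spots need repair. First, after arranging $\sigma(\bfrak^+)=\bfrak^+$ you still must arrange $\sigma(\hfrak)=\hfrak$; this uses conjugacy of split Cartan subalgebras inside a Borel subalgebra (Peterson--Kac, \cite{a-PK83}), and the phrase ``once one pins down $\hfrak$'' hides precisely this step. Second, your justification that $\Aut(\gfrak,\gfrak')$ commutes with $\Ad(\Htil)$ and $\Ad(G)$ --- ``those act trivially off $\gfrak'$'' --- is false for $\Ad(G)$: for instance $\Ad(m_i)$ acts on $\hfrak$ by the reflection $r_i$. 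The correct reason is the identity $\phi\circ\Ad(\exp(x))\circ\phi^{-1}=\Ad(\exp(\phi(x)))=\Ad(\exp(x))$ for $x\in\gfrak_\alpha$, $\alpha\in\Phi^{re}$, and $\phi\in\Aut(\gfrak,\gfrak')$, together with $\Ad(\Htil)|_{\hfrak}=\id$. Likewise the triviality of the various intersections (in particular that no nontrivial element of $\Out(A)$ lies in $\Aut(\gfrak,\gfrak')\,\Ad(\Htil)\,\Ad(G)$, which is exactly where the finite-type exception in the definition of $\Out(A)$ enters) is asserted rather than argued. None of this changes the verdict on the statement itself, which both you and the paper ultimately rest on \cite{a-KW92}.
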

The above proposition implies that any $\sigma\in \Aut(\gfrak)$ of the second kind can be written in the form
\begin{align}\label{eq:Aut-general-form}
  \sigma= \Ad(s)\circ f\circ \tau\circ\omega \circ \Ad(g) 
\end{align}
for some $s\in \Htil$, $f\in \Aut(\gfrak,\gfrak')$, $\tau\in \Aut(A)$, and $g\in G$.
\subsection{Longest elements in parabolic subgroups of $W$}\label{sec:Weyl}
Special elements in $G$ play a major role in the classification of involutive automorphism of the second kind in Theorem \ref{classThm}. Let $X\subset I$ be a subset of finite type, let $W_X\subset
W$ be the corresponding parabolic subgroup with longest element $w_X$,
and let $\Phi_X$ be the corresponding root system considered as a subset of $\Phi$. Fix a reduced
decomposition $w_X=r_{i_1}\dots r_{i_k}$ and define $m_X=m_{i_1}\dots
m_{i_k}$. By the word property for Coxeter groups \cite{b-BjornerBrenti06} and relation \eqref{eq:coxeter-rel}, the element $m_X\in G$ is independent of the chosen reduced decomposition of $w_X$. 

Let $\gfrak_X\subseteq\gfrak$ denote the semisimple Lie algebra generated by
$\{e_i,f_i\,|\,i\in X\}$ with Chevalley automorphism $\omega_X$
mapping $e_i$ to $-f_i$. Let moreover $\tau_X$ denote the diagram
automorphism of $\gfrak_X$ corresponding to the longest element
$w_X$. The following proposition can be found in \cite[4.9, 4.10]{a-BBBR95}. We use the notation $\Aut(A,X)=\{\sigma\in \Aut(A)\,|\,\sigma(X)=X\}$.
\begin{prop}\label{Rousseau-Prop}
  \begin{enumerate}
     \item The automorphism $\Ad(m_X)$ of $\gfrak$ leaves $\gfrak_X$
       invariant and 
       satisfies the relation
       \begin{align*}
          \Ad(m_X)|_{\gfrak_X}=\tau_X \omega_X.
       \end{align*}
     \item In $\Aut(\gfrak)$ the relation
        \begin{align*}
          \Ad(m_X^2)=\Ad(\exp(i\pi 2 \rho^\vee_X))
        \end{align*}
      holds, where $2\rho^\vee_X$ denotes the sum of the positive
      coroots of $\Phi_X$.
     \item The automorphism $\Ad(m_X)$ of $\gfrak$ commutes with all
     elements in $\Aut(A,X)$ and with the Chevalley involution $\omega$.
  \end{enumerate}
\end{prop}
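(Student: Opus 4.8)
The plan is to establish the three assertions in turn; each reduces to a computation inside the rank-one group attached to a single node, combined with the combinatorics of reduced words for $w_X$ and the fact, recalled above, that $m_X\in G$ does not depend on the chosen reduced decomposition. For (1), since $\Ad(m_{i_j})|_\hfrak=r_{i_j}$ one has $\Ad(m_X)|_\hfrak=w_X$, and because $r_i(h_j)=h_j-a_{ij}h_i$ for $i,j\in X$ the element $w_X$ preserves $\hfrak_X:=\sum_{i\in X}\field h_i$; iterating $\Ad(m_i)(\gfrak_\alpha)=\gfrak_{r_i(\alpha)}$ gives $\Ad(m_X)(\gfrak_\alpha)=\gfrak_{w_X(\alpha)}$, and $w_X$ permutes $\Phi_X\subseteq\Phi^{re}$, so from $\gfrak_X=\hfrak_X\oplus\bigoplus_{\alpha\in\Phi_X}\gfrak_\alpha$ one gets $\Ad(m_X)(\gfrak_X)=\gfrak_X$. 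Writing $\phi:=\Ad(m_X)|_{\gfrak_X}$, it acts on $\hfrak_X$ as $w_X$, which by the definition of $\tau_X$ equals $-\tau_X$; hence $\phi$ maps $e_i$ into $\gfrak_{w_X(\alpha_i)}=\field f_{\tau_X(i)}$, say $\phi(e_i)=c_i f_{\tau_X(i)}$ with $c_i\in\field^\times$, and likewise $\phi(f_i)\in\field e_{\tau_X(i)}$. The automorphism $\tau_X\omega_X$ restricts to $-\tau_X=w_X$ on $\hfrak_X$ and has the same shape on generators, so (1) reduces to the claim that the scalars $c_i$ agree with those of $\tau_X\omega_X$ (knowing it for the $e_i$, the relation $\phi(h_i)=w_X(h_i)$ together with $\phi$ being an automorphism forces the $f_i$-coefficients as well). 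This normalization of scalars is the one genuinely computational point and the step I expect to be the main obstacle: by the reduced-word description of $m_X$ it takes place entirely inside the semisimple Lie algebra $\gfrak_X$, where it amounts to the classical identification of the adjoint action of the standard lift of the longest element of $W_X$ with $\tau_X\omega_X$, which in turn reduces to rank-one and rank-two checks.

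For (2), I work inside $\Aut(\gfrak)$. The rank-one input is $\Ad(m_i^2)=\Ad(\exp(i\pi h_i))$, i.e. $m_i^2$ lies in the torus and acts on $\gfrak_\beta$ by $(-1)^{\beta(h_i)}$ — the identity $m_i^2=-\id$ in $\mathrm{SL}_2$. Fix a reduced decomposition $w_X=r_{i_1}\cdots r_{i_k}$; as $w_X^{-1}=w_X$, the reversed word $r_{i_k}\cdots r_{i_1}$ is again a reduced decomposition of $w_X$, so $\Ad(m_X)^2=\Ad(m_{i_1})\cdots\Ad(m_{i_k})\Ad(m_{i_k})\cdots\Ad(m_{i_1})$. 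Collapsing the central pair with $\Ad(m_{i_k}^2)=\Ad(\exp(i\pi h_{i_k}))$ and conjugating the resulting torus element outward through $\Ad(m_{i_{k-1}})$ — which by $\Ad(m_i)|_\hfrak=r_i$ turns its exponent $h_{i_k}$ into $r_{i_{k-1}}(h_{i_k})$ — and iterating, using that torus elements commute, one obtains $\Ad(m_X)^2=\Ad(\exp(i\pi\sum_{j=1}^k r_{i_1}\cdots r_{i_{j-1}}(h_{i_j})))$. Now $r_{i_1}\cdots r_{i_{j-1}}(h_{i_j})=\beta_j^\vee$ with $\beta_j:=r_{i_1}\cdots r_{i_{j-1}}(\alpha_{i_j})$, and $\{\beta_1,\dots,\beta_k\}$ enumerates without repetition the inversion set of $w_X$, which equals $\Phi_X^+$; hence the exponent is $\sum_{\beta\in\Phi_X^+}\beta^\vee=2\rho^\vee_X$, which is the assertion.

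For (3), let $\sigma\in\Aut(A,X)$. Since $\sigma$ is a Lie algebra automorphism with $\sigma(e_i)=e_{\sigma(i)}$ and $\sigma(f_i)=f_{\sigma(i)}$, conjugating the exponentials gives $\sigma\circ\Ad(m_i)\circ\sigma^{-1}=\Ad(m_{\sigma(i)})$, hence $\sigma\circ\Ad(m_X)\circ\sigma^{-1}=\Ad(m_{\sigma(i_1)})\cdots\Ad(m_{\sigma(i_k)})$. As $\sigma$ permutes the reflections $r_i$ and preserves the Coxeter relations, conjugation by $\sigma$ is a length-preserving automorphism of $W$ carrying $w_X$ to the longest element of $W_{\sigma(X)}=W_X$, i.e. $\sigma w_X\sigma^{-1}=w_X$; therefore $r_{\sigma(i_1)}\cdots r_{\sigma(i_k)}$ is a reduced decomposition of $w_X$, and by independence of $m_X$ from the reduced decomposition the right-hand side equals $\Ad(m_X)$, so $\sigma$ and $\Ad(m_X)$ commute. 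For the Chevalley involution, $\omega(e_i)=-f_i$ and $\omega(f_i)=-e_i$ give $\omega\circ\Ad(m_i)\circ\omega=\Ad(\exp(-f_i)\exp(e_i)\exp(-f_i))$, and the rank-one relation $\exp(-f_i)\exp(e_i)\exp(-f_i)=\exp(e_i)\exp(-f_i)\exp(e_i)=m_i$ (again checked in $\mathrm{SL}_2$) shows $\omega$ commutes with each $\Ad(m_i)$, hence with $\Ad(m_X)$; so (2) and (3) are formal once the two rank-one identities above are established.
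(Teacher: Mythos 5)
Your part (3) is essentially the paper's own proof: the same conjugation formula $\sigma\circ\Ad(m_i)\circ\sigma^{-1}=\Ad(m_{\sigma(i)})$ combined with the independence of $m_X$ from the reduced word of $w_X$, and the same rank-one identity $m_i=\exp(-f_i)\exp(e_i)\exp(-f_i)$ (verified by an $\slfrak_2$-argument) for commutation with $\omega$. For parts (1) and (2) the paper gives no argument at all -- it cites Lemme 4.9 and Corollaire 4.10.3 of Back-Valente, Bardy-Panse, Ben Messaoud, and Rousseau -- so there your proposal genuinely diverges. Your proof of (2) is correct and self-contained: since $w_X^{-1}=w_X$, the reversed word is again reduced, so $\Ad(m_X)^2=\Ad(m_{i_1}\cdots m_{i_k}m_{i_k}\cdots m_{i_1})$; collapsing $m_{i_k}^2$ into the torus automorphism with exponent $i\pi h_{i_k}$, moving torus factors out to the left (each passage through $\Ad(m_{i_j})$ replaces the exponent by its image under $r_{i_j}$, and each newly created square contributes a further torus factor), and identifying $\{r_{i_1}\cdots r_{i_{j-1}}(\alpha_{i_j})\}$ with the inversion set $\Phi_X^+$ of the longest element gives exactly $\exp(i\pi 2\rho^\vee_X)$. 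This is a reasonable substitute for the citation.

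Part (1), however, contains a genuine gap, and it sits exactly where you say you expect the main obstacle. The reduction to ``the scalars $c_i$ agree with those of $\tau_X\omega_X$'' is fine (invariance of $\gfrak_X$, the action $w_X=-\tau_X$ on $\hfrak_X$, and the determination of the $f_i$-coefficients from the $e_i$-coefficients via $[\phi(e_i),\phi(f_i)]=\phi(h_i)$ are all correct), but the scalar identification itself is only asserted: you call it classical and claim it ``reduces to rank-one and rank-two checks'' without carrying out, or even outlining, that reduction. It is not formal. The coefficient of $\Ad(m_X)(e_i)$ on $f_{\tau_X(i)}$ depends on the whole reduced word for $w_X$, so one needs an actual mechanism to control it uniformly in $i\in X$ -- for instance an induction along reduced words, or the standard theory of the lifts $m_w$ in the extended Weyl group and their action on Chevalley generators. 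It is also precisely the place where sign conventions matter: a direct rank-one computation gives $\Ad(m_i)(e_i)=-f_i$, so matching the outcome against an $\omega_X$ normalized by $e_i\mapsto f_i$ (as stated in the proposition) is a nontrivial bookkeeping question that your argument never confronts. The paper sidesteps all of this by citation; as a self-contained proof, your part (1) is incomplete at this one step, while parts (2) and (3) stand.
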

\begin{proof}
   Parts (1) and (2) are \cite[Lemme 4.9]{a-BBBR95} and \cite[Corollaire 4.10.3]{a-BBBR95}, respectively. 
   To prove (3) note that by definition the relation
      \begin{align} \label{Ad-commute}
        \sigma(\Ad(m_i)x)=\Ad(m_{\sigma(i)})\sigma(x) 
      \end{align}
      holds for any $\sigma\in\Aut(A)$ and any $x\in \gfrak$. If
      $\sigma\in \Aut(A,X)$ and $w_X=r_{i_1}\dots r_{i_k}$ is a
      reduced decomposition then $w_X=r_{\sigma(i_1)}\dots
      r_{\sigma(i_k)}$. Hence
      \begin{align*}
        m_X=m_{\sigma(i_1)}\dots m_{\sigma(i_k)}
      \end{align*}
      because $m_X$ is independent of the choice of reduced
      decomposition for $w_X$. Now \eqref{Ad-commute} implies
      \begin{align*}
        \sigma(\Ad(m_X)(x))=\Ad(m_X)(\sigma(x))
      \end{align*}
      and thus $\Ad(m_X)$ and $\sigma$ commute.

      An $\slfrak_2$-argument shows the relation $m_i=\exp(-f_i) \exp(e_i) \exp(-f_i)$ in $G$.
      Hence 
      \begin{align*}
        \omega(\Ad(m_i) x)= \Ad(\exp(-f_i) \exp(e_i) \exp(-f_i))(\omega(x))=\Ad(m_i)(\omega(x))
      \end{align*}  
      holds for any $x\in \gfrak$. Thus $\Ad(m_X)$ commutes with $\omega$.
\end{proof}
\subsection{Admissible pairs}
We introduce the notion of an admissible pair which generalizes Satake diagrams as given in \cite{a-Araki62} in the finite case to symmetrizable Kac-Moody algebras. It is slightly more explicit than a similar notion used in \cite[Def. 4.10 b), Cor. 4.10.4]{a-BBBR95} in so far as we also
include Property (\ref{adm3}), below, in the definition. 
\begin{defi}\label{admissible}
  A pair $(X,\tau)$ consisting of a subset $X\subseteq I$ of finite type and an
  element $\tau\in \Aut(A,X)$ is called admissible if the following
  conditions are satisfied:
  \begin{enumerate}
    \item $\tau^2=\id_I$.
    \item \label{adm2} The action of $\tau$ on $X$ coincides with the action of $-w_X$.
    \item \label{adm3} If $j\in I\setminus X$ and $\tau(j)=j$ then
         $\alpha_j(\rho_X^\vee)\in \Z$. 
   \end{enumerate}
\end{defi}
  If $\gfrak$ is of finite type then the pair $(X=I,\tau=-w_X)$ is always admissible.
  It is an instructive exercise to verify that the Satake diagrams in \cite[pp.~32/33]{a-Araki62} describe all other admissible pairs for finite dimensional simple $\gfrak$.
\begin{eg}
Consider $\gfrak=\slfrak_4(\field)$ with $I=\{1,2,3\}$ and the standard
choice of simple roots and coroots. Then the pair $(\{1,3\},\id_I)$ is
admissible because $-w_X$ acts as the identity on $X$ and
$\rho_X^\vee=(h_1+h_3)/2$ satisfies
\begin{align*}
  \alpha_1(\rho_X^\vee)=\alpha_3(\rho_X^\vee)=1,\qquad \alpha_2(\rho_X^\vee)=-1.
\end{align*}
The pair $(\{1\},\id_I)$, however, is not admissible because in this
case $\rho_X^\vee=h_1/2$ satisfies
\begin{align*}
  \alpha_1(\rho^\vee_X)&=1,& \alpha_2(\rho_X^\vee)&=-1/2,&
  \alpha_3(\rho_X^\vee)=0
\end{align*}
and hence condition (\ref{adm3}) is not satisfied.
The pair $(\{2\},\id_I)$ is not admissible for the same reason while the pair $(\{1,2\},\id_I)$ violates (\ref{adm2}). Finally, the pair  $(\{2\},(13))$ is admissible because now condition
(\ref{adm3}) is empty. 
\end{eg}
We now associate an involutive automorphism of the second kind to any admissible pair.
Let $>$ be a fixed total order on the set $I$. Let $(X,\tau)$ be an
admissible pair. We define an element $s(X,\tau)\in \Htil$ by
\begin{align}\label{sDef}
  s(X,\tau)(\alpha_j)=\begin{cases}
      1& \mbox{if $j\in X$ or $\tau(j)=j$,}\\
     i^{\alpha_j(2\rho^\vee_X)}& \mbox{if $j\notin X$ and $\tau(j)>
      j$,}\\
     (-i)^{\alpha_j(2\rho^\vee_X)}& \mbox{if $j\notin X$ and $\tau(j)<
      j$,}\\
    \end{cases}
\end{align}
where $i\in \field$ denotes a square-root of $-1$.
\begin{thm}
  Let $(X,\tau)$ be an admissible pair. Then
  \begin{align}\label{tauDef}
    \theta(X,\tau)=\Ad(s(X,\tau))\circ \tau\circ \omega\circ \Ad(m_X)
  \end{align}
  defines an involutive automorphism of $\gfrak$ of the second kind which commutes with
  $\Ad(s(X,\tau))$.
\end{thm}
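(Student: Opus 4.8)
The plan is to set $\theta := \theta(X,\tau)$, $s := s(X,\tau)$ and $\Theta_0 := \tau\circ\omega\circ\Ad(m_X)$, so that $\theta = \Ad(s)\circ\Theta_0$, and to reduce every assertion to an identity in the abelian group $\Htil = \Hom(Q,\field^\times)$, on which $\Ad$ is injective (if $\Ad(x) = \id$ then $x(\alpha_i) = 1$ for all $i$, hence $x = 1$). That $\theta$ is an automorphism of $\gfrak$ is clear. For "of the second kind'' I would argue directly: by Proposition \ref{Rousseau-Prop}(3) the automorphism $\Ad(m_X)$ commutes with $\tau$ and with $\omega$; since $\omega(\bfrak^+) = \bfrak^-$, $\tau(\bfrak^-) = \bfrak^-$, and $\Ad(s)$ preserves $\hfrak$ and every root space, this gives $\theta(\bfrak^+) = \Ad(s)\,\Ad(m_X)\,\tau(\bfrak^-) = \Ad(m_X)(\bfrak^-)$, and $m_X\in G$.

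For involutivity and for the commutation with $\Ad(s)$ the two structural inputs I would extract are: (i) $\Theta_0^2 = \tau\omega\tau\omega\circ\Ad(m_X)^2 = \Ad(m_X^2)$, using that $\tau$ commutes with $\omega$ and $\tau^2 = \omega^2 = \id$, and then $\Ad(m_X^2) = \Ad(t)$ where $t\in\Htil$ is given by $t(\alpha_j) = (-1)^{\alpha_j(2\rho_X^\vee)}$, by Proposition \ref{Rousseau-Prop}(2); and (ii) $\Theta_0$ maps $\gfrak_\alpha$ to $\gfrak_{\Theta\alpha}$, where $\Theta := -\tau w_X$ acts on $\hfrak^\ast$, and since $\Ad(m_X)$ commutes with $\tau$ (Proposition \ref{Rousseau-Prop}(3)) so do $w_X$ and $\tau$ on $\hfrak^\ast$, whence $\Theta^2 = \id$ and $\Theta(Q) = Q$; conjugating the diagonal operator $\Ad(s)$ by $\Theta_0$ then gives $\Theta_0\circ\Ad(s)\circ\Theta_0^{-1} = \Ad(s^\Theta)$ with $s^\Theta := s\circ\Theta \in \Htil$. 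Combining these, $\theta\circ\Ad(s) = \Ad(s\cdot s^\Theta)\circ\Theta_0$ and $\theta^2 = \Ad(s\cdot s^\Theta)\circ\Theta_0^2 = \Ad(s\cdot s^\Theta\cdot t)$, so by injectivity of $\Ad$ on $\Htil$ it suffices to prove $s^\Theta = s$ and $s\cdot s^\Theta\cdot t = 1$ in $\Htil$, i.e.\ for every $j\in I$ that $s(\Theta\alpha_j) = s(\alpha_j)$ and $s(\alpha_j)^2(-1)^{\alpha_j(2\rho_X^\vee)} = 1$.

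The last step is a short case check against \eqref{sDef} and Definition \ref{admissible}. For $j\in X$ both $\alpha_j$ and $\Theta\alpha_j$ lie in $Q_X = \sum_{i\in X}\Z\alpha_i$, where $s$ is trivial, and $\alpha_j(2\rho_X^\vee) = 2$, so both identities are immediate. For $j\notin X$ I would use $w_X\alpha_j = \alpha_j + \gamma_j$ with $\gamma_j\in Q_X$, so that $\Theta\alpha_j = -\alpha_{\tau(j)} - \tau(\gamma_j)$ and hence $s(\Theta\alpha_j) = s(\alpha_{\tau(j)})^{-1}$, together with $\alpha_{\tau(j)}(2\rho_X^\vee) = \alpha_j(2\rho_X^\vee) =: n_j$ (as $\tau$ permutes $\Phi_X^+$ it fixes $2\rho_X^\vee$). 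Then: if $\tau(j) = j$, \eqref{sDef} gives $s(\alpha_j) = 1$, so $s(\Theta\alpha_j) = 1 = s(\alpha_j)$, while Definition \ref{admissible}(\ref{adm3}) gives $n_j = 2\alpha_j(\rho_X^\vee)\in 2\Z$, whence $s(\alpha_j)^2(-1)^{n_j} = 1$; if $\tau(j) > j$, then $s(\alpha_j) = i^{n_j}$ and $s(\alpha_{\tau(j)}) = (-i)^{n_j}$ (since $\tau(\tau(j)) = j < \tau(j)$), so $s(\Theta\alpha_j) = (-i)^{-n_j} = i^{n_j} = s(\alpha_j)$ and $s(\alpha_j)^2(-1)^{n_j} = (-1)^{n_j}(-1)^{n_j} = 1$; the case $\tau(j) < j$ is symmetric.

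I expect the only genuinely delicate point to be the subcase $j\in I\setminus X$, $\tau(j) = j$ of the identity $s\cdot s^\Theta\cdot t = 1$: this is exactly where condition (\ref{adm3}) of Definition \ref{admissible} is needed, to force $t(\alpha_j) = (-1)^{\alpha_j(2\rho_X^\vee)} = 1$; without it $\theta$ would square to a nontrivial element of $\Ad(\Htil)$ rather than to the identity. Everything else is bookkeeping of how $\Ad(m_X)$, $\omega$ and $\tau$ permute root spaces — supplied by Proposition \ref{Rousseau-Prop} — together with the elementary congruence $w_X\alpha_j \equiv \alpha_j \pmod{Q_X}$.
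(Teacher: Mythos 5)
Your proof is correct and follows essentially the same route as the paper: reduce $\theta(X,\tau)^2$ to $\Ad(s(X,\tau))^2\circ\Ad(m_X)^2$ via the commutation statements of Proposition \ref{Rousseau-Prop}, then check triviality on each $\gfrak_{\alpha_j}$ using Proposition \ref{Rousseau-Prop}(2), the definition \eqref{sDef}, and condition (\ref{adm3}) of Definition \ref{admissible}. The only cosmetic differences are that you spell out the commutation of $\Ad(s(X,\tau))$ with $\tau\circ\omega\circ\Ad(m_X)$ (i.e.\ $s\circ\Theta=s$), which the paper merely asserts, and you verify the "second kind" property directly by computing $\theta(\bfrak^+)=\Ad(m_X)(\bfrak^-)$ instead of invoking the $\Z_2$-grading of $\Aut(\gfrak)$.
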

\begin{proof}
  Note first that $\Ad(s(X,\tau))$ as defined by \eqref{sDef} commutes 
  with $\Ad(m_X)$ and with $\tau\circ \omega$ and hence it commutes
  also with $\theta(X,\tau)$ as defined by \eqref{tauDef}. Note also
  that $\theta(X,\tau)$ is an automorphism of second kind because
  $\Ad(s(X,\tau))$, $\tau$, and $\Ad(m_X)$ are automorphisms of the
  first kind, and $\omega$ is an automorphism of the second kind. By
  Proposition \ref{Rousseau-Prop}.(3) the map $\Ad(m_X)$ commutes with
  $\tau$ and $\omega$ and hence one obtains 
  \begin{align*}
    \theta(X,\tau)^2=\Ad(s(X,\tau))^2\circ \Ad(m_X)^2.
  \end{align*}
  Both $\Ad(s(X,\tau))^2$ and $\Ad(m_X)^2$ act trivially on $\hfrak$ and
  hence it remains to show that 
  \begin{align}\label{need}
     \Ad(s(X,\tau))^2\circ \Ad(m_X)^2(x)=x
  \end{align}
  for all $x\in \gfrak_{\alpha_j}$, $j\in I$. By Proposition
  \ref{Rousseau-Prop}.(2) on has
  $\Ad(m_X)^2(x)=(-1)^{\alpha_j(2\rho^\vee_X)}x$ for all $x\in
  \gfrak_{\alpha_j}$. Relation \eqref{need} now follows immediately
  from definition \eqref{sDef} and condition (\ref{adm3}) in
  Definition \ref{admissible}.
\end{proof}
\begin{rema}
  The definition of $s(X,\tau)$ in \eqref{sDef} depends on the chosen total order on the set $I$ and hence so does $\theta_q(X,\tau)$. More explicitly, for a different total order on $I$ the map $s(X,\tau)$ may change by a factor $-1$ on both $\alpha_j$ and $\alpha_{\tau(j)}$ if $j\neq \tau(j)$. The corresponding map $\theta(X,\tau)$ for the new total order is conjugate to the original $\theta(X,\tau)$ by an element in $\Ad(\Htil)$.
\end{rema}
The following theorem provides the main result of this section, namely the
classification of involutive automorphisms of the second kind of
$\gfrak$ in terms of admissible pairs. Note that the group $\Aut(A)$ acts on the set of all admissible pairs by
\begin{align*}
  \sigma((X,\tau))=(\sigma(X), \sigma\circ\tau\circ \sigma^{-1})
\end{align*}
for $\sigma\in \Aut(A)$.
\begin{thm}\label{classThm}
   The map $(X,\tau)\mapsto \theta(X,\tau)$ gives a
  bijection between the set of $\Aut(A)$-orbits of admissible pairs for
  $\gfrak$ and the set of $\Aut(\gfrak)$-conjugacy classes of involutive automorphisms
  of the second kind. 
\end{thm}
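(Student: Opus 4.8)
The plan is to prove the two directions separately: surjectivity (every involution of the second kind is conjugate to some $\theta(X,\tau)$) and injectivity (distinct $\Aut(A)$-orbits give non-conjugate involutions). Throughout I would lean on the general form \eqref{eq:Aut-general-form} and on the Kac--Wang decomposition from Proposition after \eqref{eq:Aut-general-form}, so that the whole argument is ultimately a translation of \cite{a-KW92} into the language of admissible pairs; the appendix is where the reduction to their precise statements is carried out.

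For surjectivity, let $\theta$ be an involution of the second kind. Writing it as $\Ad(s)\circ f\circ\tau\circ\omega\circ\Ad(g)$ as in \eqref{eq:Aut-general-form}, the first job is to use conjugation by $\Ad(G)$ to absorb the $\Ad(g)$ factor and then to normalise the Borel: the condition $\theta(\bfrak^+)=\Ad(h)(\bfrak^-)$ for some $h\in G$ lets us conjugate $\theta$ so that it maps $\bfrak^+$ to a standard parabolic, which is where the subset $X\subset I$ of finite type enters. One then checks, using Proposition \ref{Rousseau-Prop}, that after this normalisation $\theta$ agrees with $\tau\circ\omega\circ\Ad(m_X)$ up to an element of $\Ad(\Htil)$ and up to $\Aut(\gfrak,\gfrak')$; since $\theta$ is of the second kind its behaviour on $\gfrak'$ determines it on $\gfrak$ up to the latter, and the former factor is exactly the freedom in choosing $s$. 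The constraints $\tau\in\Aut(A,X)$, $\tau^2=\id$, $\tau|_X=-w_X$ on $X$, and the integrality condition (\ref{adm3}) then fall out of the requirement that the resulting automorphism square to the identity --- this is precisely the computation already carried out in the Theorem preceding \ref{classThm}, read in reverse. Finally one must check $s$ can be chosen as in \eqref{sDef}; the residual ambiguity is a twist by $\Ad(\Htil)$ fixing $\bfrak^+$, and one shows such a twist either is trivial or is realised by an $\Aut(A)$-conjugation, so it does not enlarge the orbit.

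For injectivity, suppose $\theta(X,\tau)$ and $\theta(X',\tau')$ are conjugate by some $\psi\in\Aut(\gfrak)$. Since conjugation preserves the kind and both are of the second kind, $\psi$ may be taken of the first kind, hence of the form $\Ad(s')\circ f'\circ\sigma\circ\Ad(g')$ with $\sigma\in\Aut(A)$. The key point is that the subset $X$ is an invariant: $X$ is recovered from $\theta(X,\tau)$ as (a representative of) the set of $i$ with $\theta(X,\tau)(e_i)$ lying, up to the torus action, in $\nfrak^+$ rather than $\nfrak^-$ --- equivalently, $\gfrak_X$ is characterised intrinsically as the sum of the finite-dimensional root spaces inside the $+1$-eigenspace $\kfrak$ that are generated by Chevalley-type generators. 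Conjugating $\theta(X,\tau)$ into standard position forces $\psi$ to send the standard parabolic attached to $X$ to that attached to $X'$, whence $\sigma(X)=X'$ after adjusting by $\Ad(G)$; then $\tau$ and $\tau'$, being the diagram automorphisms read off from the action of $\theta$ on $\gfrak'/[\text{radical of }\gfrak_X]$, must satisfy $\tau'=\sigma\circ\tau\circ\sigma^{-1}$, so $(X',\tau')=\sigma((X,\tau))$. The residual element $s$ does not interfere because any two admissible choices of $s$ for the same $(X,\tau)$ give conjugate $\theta$'s, as noted in the Remark after \eqref{sDef}.

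I expect the main obstacle to be the bookkeeping in surjectivity: showing that the normalisation of the Borel can be done \emph{simultaneously} with arranging $\tau\in\Aut(A,X)$ and that the leftover torus element is forced into the specific shape \eqref{sDef} rather than merely into $\Ad(\Htil)$. This is exactly the subtlety Kac and Wang alluded to in their final remark, and honestly disentangling it --- separating the genuine parameters $(X,\tau)$ from the spurious conjugation freedom --- is why the detailed verification is deferred to Appendix \ref{app:classThm} and reduced there to the explicit results of \cite{a-KW92}.
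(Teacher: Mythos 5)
Your overall route is the paper's: normalise $\theta$ by conjugation using the Kac--Wang machinery, read off $(X,\tau)$ and a torus element $s$, derive the admissibility conditions from $\theta^2=\id$, and prove injectivity by showing the normalisation is rigid up to $\Aut(A)$. But the sketch glosses over exactly the points where the appendix has to work, and two of your claims are wrong as stated. First, the $\Aut(\gfrak,\gfrak')$ factor is \emph{not} ``the freedom in choosing $s$'': it acts only on the complement of $\gfrak'$ (the scaling elements $d_s$) and is unrelated to $\Ad(\Htil)$. It must be shown to be trivial, and the paper does this with the square-root trick (Lemma \ref{CommuteLem}): replace the residual factor by one commuting with $\theta$, so that $\theta^2=\id$ forces it to be an involution in $\Aut(\gfrak,\gfrak')$, which in characteristic $0$ is the identity by \cite[4.20]{a-KW92}. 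The same trick is needed for $\Ad(s)$: your plan to ``read the squaring computation in reverse'' only yields $s(\alpha_j)s(\alpha_{\tau(j)})=1$, condition (\ref{adm3}), and $s=\pm s(X,\tau)$ after one has arranged that $\Ad(s)$ commutes with $\theta$, which is not automatic. Moreover, what the normalisation of the Borel must produce is a split pair in the sense of Definition \ref{splitpair} --- in particular $\theta|_{\gfrak_\alpha}=\id$ for $\alpha\in\Delta^+\cap\Z X$, not merely that $\theta$ carries $\bfrak^+$ into a standard parabolic position; this is the existence statement Theorem \ref{thm:split-exist}.(1) (Kac--Wang 5.19) and is where the finite-type set $X$ with the properties you need actually comes from.

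Second, in the injectivity argument the phrase ``after adjusting by $\Ad(G)$'' hides the essential lemma: an arbitrary adjustment destroys the relation $\theta(X,\tau)=\psi\circ\theta(X',\tau')\circ\psi^{-1}$, so the adjusting element must lie in $G^{\theta}$. That this is possible is precisely the uniqueness of split pairs up to $G^\theta$-conjugacy, Theorem \ref{thm:split-exist}.(2) (Kac--Wang 5.32); once $\psi$ fixes $(\hfrak,\bfrak^+)$ it induces $\Phi\in\Aut(A)$, and $\Theta=\Phi\circ\Theta'\circ\Phi^{-1}$ gives $\Phi(X')=X$ and $\tau'=\Phi^{-1}\tau\Phi$ directly, making your ad hoc ``intrinsic characterisation'' of $X$ (which in any case only makes sense after this normalisation) unnecessary. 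Finally, ``conjugation preserves the kind, hence $\psi$ may be taken of the first kind'' is not a valid inference: conjugation by an automorphism of either kind preserves the kind of the conjugated involution, so no constraint on $\psi$ arises. The conclusion can be salvaged (replace $\psi$ by $\psi\circ\theta(X',\tau')$, or, as the paper does, by $\psi\circ\omega$ and work with the induced maps on $\hfrak^\ast$), but as written this step, together with the omissions above, is a genuine gap between your outline and a proof.
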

  We will prove Theorem \ref{classThm} in Appendix \ref{app:classThm} by reducing it to results in \cite{a-KW92}. 
  
By construction one has $\theta(X,\tau)(h)=-w_X\tau(h)$ for all $h\in \hfrak$. Hence, by duality, $\theta=\theta(X,\tau)$ induces the map
\begin{align}\label{eq:Theta}
  \Theta:\hfrak^\ast\rightarrow \hfrak^\ast, \qquad \Theta(\alpha)=-w_X \tau(\alpha). 
\end{align}  
As the bilinear form $(\cdot,\cdot)$ on $\hfrak^\ast$ is $W$-invariant one obtains
\begin{align}\label{eq:invTheta}
  (\alpha,\beta)=(\Theta(\alpha),\Theta(\beta))
\end{align}
for all $\alpha, \beta\in Q$.
\subsection{The invariant Lie subalgebras $\kfrak$ and $\kfrak'$}
Let $(X,\tau)$ be an admissible pair and let $\theta=\theta(X,\tau)$
be the corresponding involution of the second kind defined by
\eqref{tauDef}. Let $\kfrak=\{x\in \gfrak\,|\,\theta(x)=x\}$ and $\kfrak'=\{x\in \gfrak'\,|\,\theta(x)=x\}$ denote the invariant Lie subalgebras of $\gfrak$ and $\gfrak'$, respectively. To define quantum group analogs of $\kfrak$ and $\kfrak'$ we determine a set of generators.
\begin{lem}\label{generators}
  The Lie algebra $\kfrak$ is generated by the elements
  \begin{align}
    & e_i,f_i &&\mbox{ for $i\in X$,} \label{m-gens}\\
    & h\in \hfrak &&\mbox{ with $\theta(h)=h$}, \label{Ut0-gens}\\
    & f_i+\theta(f_i) &&\mbox{ for $i\in
      I\setminus X$.} \label{Bis} 
  \end{align}
  Similarly, the Lie algebra $\kfrak'$ is generated by the elements \eqref{m-gens}, \eqref{Bis}, and all $h\in \hfrak'$ with $\theta(h)=h$.
\end{lem}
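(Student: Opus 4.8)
The plan is to prove that the listed elements generate $\kfrak$ (and similarly $\kfrak'$) by exploiting the triangular decomposition of $\gfrak$ together with the explicit form of $\theta=\theta(X,\tau)$. First I would record what $\theta$ does on generators: for $i\in X$ one has $\theta(e_i)=e_i$ and $\theta(f_i)=f_i$ since $\Ad(m_X)$ acts on $\gfrak_X$ as $\tau_X\omega_X$ (Proposition \ref{Rousseau-Prop}.(1)) and $\tau$ restricted to $X$ is $-w_X$, which by condition (\ref{adm2}) cancels the $\omega_X$-twist; also $s(X,\tau)(\alpha_j)=1$ for $j\in X$, so $\Ad(s(X,\tau))$ acts trivially there. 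Hence the elements in \eqref{m-gens} lie in $\kfrak$, and they generate a copy of $\gfrak_X$. For $i\in I\setminus X$, the element $f_i+\theta(f_i)$ is $\theta$-invariant by construction (applying $\theta$ swaps the two summands since $\theta^2=\id$), and one checks that $\theta(f_i)\in \nfrak^+\oplus\hfrak$ has weight $\Theta(-\alpha_i)=w_X\tau(\alpha_i)\in Q^+$, confirming these are genuinely ``mixed'' generators.

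Let $\kfrak_0$ denote the subalgebra of $\kfrak$ generated by the elements \eqref{m-gens}, \eqref{Ut0-gens}, and \eqref{Bis}. The key step is to show $\kfrak_0=\kfrak$ by an induction on the height of weight components. Since $\theta(\hfrak)=\hfrak$, the algebra $\gfrak$ is $\theta$-stable and decomposes as $\gfrak=\kfrak\oplus\pfrak$ with $\pfrak$ the $(-1)$-eigenspace; moreover $\theta$ preserves the root-space filtration in the sense that $\theta(\gfrak_\beta)=\gfrak_{\Theta(\beta)}$. Given $x\in \kfrak$, write $x=\sum_\beta x_\beta$ according to the root-space decomposition. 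Every $\theta$-invariant element is a sum of elements of the form $y+\theta(y)$ with $y\in\gfrak_\beta$ (for $\beta$ with $\Theta(\beta)\ne\beta$) plus elements lying in $\theta$-stable root spaces. The strategy is to show that every such $y+\theta(y)$ lies in $\kfrak_0$ by downward induction on $\hght(\beta^-)$, where $\beta^-$ denotes the negative part of $\beta$ relative to the decomposition into $\Phi_X$-span and complementary simple roots; equivalently, bracket the top-degree generators $f_i+\theta(f_i)$ and $e_i,f_i$ ($i\in X$) together and use the fact that $\gfrak$ is generated by the $e_i,f_i,h_i$, so iterated brackets of our generators, after projecting to $\kfrak$ via $\mathrm{id}+\theta$, exhaust $\kfrak$ modulo lower-height terms. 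Finally, the Serre-type relations and the $W_X$-symmetry let one solve for the lower terms, closing the induction. The variant for $\kfrak'$ is identical except one works inside $\gfrak'=\nfrak^+\oplus\hfrak'\oplus\nfrak^-$ and uses that $\hfrak'$ is $\theta$-stable, so $\hfrak'\cap\kfrak$ is spanned by $\theta$-invariant $h\in\hfrak'$.

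I expect the main obstacle to be the inductive step controlling the ``lower-height correction terms.'' When one brackets a generator $f_i+\theta(f_i)$ ($i\in I\setminus X$) with $e_j$ ($j\in X$) or with another such generator, the result is a $\theta$-invariant element whose leading term is what one wants but which carries a genuine tail in $\hfrak$ and in lower root spaces coming from $[e_j,\theta(f_i)]$ and from commutators of the $\nfrak^+$-parts of the various $\theta(f_i)$. One must argue that these tails are themselves already in $\kfrak_0$ — this is where condition (\ref{adm3}) and the precise formula \eqref{sDef} for $s(X,\tau)$ matter, since they guarantee $\theta^2=\id$ exactly and pin down how $\theta(f_i)$ sits inside $\bfrak^+$. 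A clean way to organize this is to note that $\kfrak_0\supseteq\gfrak_X$ and $\kfrak_0$ contains all of $\hfrak\cap\kfrak$, so $\kfrak_0$ is a module over $U(\gfrak_X)\oplus(\hfrak\cap\kfrak)$; acting by $\gfrak_X$ on the generators \eqref{Bis} produces, for each $i\in I\setminus X$, an entire $\gfrak_X$-submodule of $\kfrak$ whose lowest-weight vector is $f_i+\theta(f_i)$, and one checks these submodules together with $\gfrak_X$ and $\hfrak\cap\kfrak$ span $\kfrak$ by comparing weight multiplicities against the Iwasawa-type decomposition $\gfrak=\kfrak\oplus\hfrak\oplus\nfrak^+$ (with $\hfrak$ here the part of $\hfrak$ not in $\kfrak$). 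That multiplicity count, rather than any single clever bracket, is the real content of the lemma.
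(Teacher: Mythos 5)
There is a genuine gap, and it sits in both the middle and the end of your plan. The inductive step in your second paragraph is never actually specified: "projecting to $\kfrak$ via $\mathrm{id}+\theta$" is not an operation you may perform inside the subalgebra generated by \eqref{m-gens}--\eqref{Bis} (applying $\mathrm{id}+\theta$ to an arbitrary bracket does not land you back in that subalgebra by any argument you have given), and "the Serre-type relations and the $W_X$-symmetry let one solve for the lower terms" is a hope, not a proof. Worse, the "clean way to organize this" in your last paragraph rests on a false claim: the spaces $\ad(U(\gfrak_X))(f_i+\theta(f_i))$ are finite dimensional, so together with $\gfrak_X$ and $\hfrak\cap\kfrak$ they meet only finitely many root spaces, whereas $\kfrak$ has nonzero components in $\gfrak_\beta\oplus\gfrak_{\Theta(\beta)}$ for infinitely many $\beta$ whenever $\gfrak$ is infinite dimensional (take $X=\emptyset$, $\tau=\mathrm{id}$ for affine $\slfrak_2$: your proposed spanning set is essentially two vectors plus part of $\hfrak$, while $\kfrak$ is the infinite-dimensional Onsager algebra). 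The lemma asserts \emph{generation} as a Lie algebra, not spanning, so no weight-multiplicity count of the kind you describe can close the argument. In addition, the decomposition you invoke is misstated: the Iwasawa-type decomposition is $\gfrak'=\nfrak_X^+\oplus\afrak'\oplus\kfrak'$ with $\nfrak_X^+$ the $\gfrak_X$-module generated by the $e_i$, $i\in I\setminus X$ (see Remark \ref{rem:Iwa}), not $\gfrak=\kfrak\oplus\hfrak\oplus\nfrak^+$; and since that decomposition is not established before this lemma, building the proof on it risks circularity.

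The actual argument is much more elementary. Let $\tilde\kfrak$ be the subalgebra generated by the listed elements and take $x\in\kfrak$, written as $x=x^++x^0+x^-$ along the triangular decomposition. Because each $f_i$ with $i\in X$ is itself a generator and each $f_i$ with $i\in I\setminus X$ is the $\nfrak^-$-component of the generator $f_i+\theta(f_i)$ (here one only needs $\theta(f_i)\in\nfrak^+$), one can find $u\in\tilde\kfrak$ with $u-x^-\in\nfrak^++\hfrak$, so one may assume $x^-=0$. Then comparing $\hfrak$-components of $\theta(x)=x$ shows $\theta(x^0)=x^0$, so $x^0$ is one of the generators \eqref{Ut0-gens} and may be subtracted as well. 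Finally, for $x=x^+=\sum_\beta x_\beta\in\nfrak^+$, invariance under $\theta$ together with $\theta(\gfrak_\beta)=\gfrak_{-w_X\tau(\beta)}$ forces every $\beta$ with $x_\beta\neq 0$ to lie in $\sum_{i\in X}\N_0\alpha_i$, hence $x\in\gfrak_X\subseteq\tilde\kfrak$. No induction on height, no module-theoretic multiplicity comparison, and no Iwasawa decomposition is needed.
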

\begin{proof}
  Let $\tilde{\kfrak}$ denote the Lie subalgebra of $\gfrak$ generated by the elements \eqref{m-gens}, \eqref{Ut0-gens}, and \eqref{Bis}. Observe that $\tilde{\kfrak}\subseteq\kfrak$ because all generators of $\tilde{\kfrak}$ are invariant under $\theta$. Conversely, assume that $x\in \kfrak$. Write $x=x^+ + x^0 + x^-$ with $x^+\in \nfrak^+$, $x^0\in \hfrak$, and $x^-\in \nfrak^-$. As $\theta(f_i)\in \nfrak^+$ for $i\in I\setminus X$ there exists $u\in \tilde{\kfrak}$ such that $u-x^-\in\nfrak^+ + \hfrak$. Hence we may assume that $x^-=0$. Similarly, $x^0\in \tilde{\kfrak}$ and hence we may assume that $x=x^+\in \nfrak^+$. Write $x$ as a sum of weight vectors $x=\sum_{\beta\in Q^+} x_\beta$. As $\theta(\gfrak_\beta)=\gfrak_{-w_X\tau(\beta)}$ the relation $x_\beta\neq 0$ implies $\beta\in \sum_{i\in X} \N_0 \alpha_i$. Hence $x\in \tilde{\kfrak}$.    
\end{proof}
The corresponding generators of the universal enveloping algebra can be modified by constant terms. The nonstandard quantum symmetric pair coideal subalgebras which will be introduced in Definition \ref{def:qsp} contain quantum analogs of such modified generators.
\begin{cor}\label{cor:U(g)generators}
  Let $\us=(s_i)_{i\in I\setminus X}\in \field^{I\setminus X}$. The universal enveloping algebra $U(\kfrak)$ is generated by the elements \eqref{m-gens}, \eqref{Ut0-gens}, and 
  \begin{align}
    &f_i+\theta(f_i)+s_i&&\mbox{ for $i\in
      I\setminus X$.} \label{eq:UBis} 
  \end{align}
  as a unital algebra.
\end{cor}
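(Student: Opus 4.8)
The plan is to deduce this corollary from Lemma~\ref{generators} by a change-of-generators argument inside $U(\kfrak)$. Let $U'$ denote the unital subalgebra of $U(\kfrak)$ generated by the elements \eqref{m-gens}, \eqref{Ut0-gens} together with the modified elements $f_i+\theta(f_i)+s_i$ for $i\in I\setminus X$ from \eqref{eq:UBis}. The statement to prove is $U'=U(\kfrak)$.

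First I would show $U'\subseteq U(\kfrak)$: each generator lies in $U(\kfrak)$ since $e_i,f_i$ ($i\in X$), the $\theta$-fixed $h\in\hfrak$, and $f_i+\theta(f_i)$ ($i\in I\setminus X$) all lie in $\kfrak$ by Lemma~\ref{generators}, and $s_i\in\field$ lies in the unit of $U(\kfrak)$; hence so does the sum $f_i+\theta(f_i)+s_i$. Conversely, to show $U(\kfrak)\subseteq U'$ it suffices, since $U(\kfrak)$ is generated as a unital algebra by the Lie algebra generators of $\kfrak$ listed in Lemma~\ref{generators}, to check that each of those generators lies in $U'$. The elements \eqref{m-gens} and \eqref{Ut0-gens} are literally among the generators of $U'$, so the only thing to verify is that $f_i+\theta(f_i)\in U'$ for each $i\in I\setminus X$. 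But this is immediate: $f_i+\theta(f_i)=\bigl(f_i+\theta(f_i)+s_i\bigr)-s_i$, and both terms on the right lie in $U'$ (the first is a generator, the second is a scalar multiple of $1$). Therefore every generator of $U(\kfrak)$ lies in $U'$, whence $U(\kfrak)\subseteq U'$, and combined with the reverse inclusion we get $U'=U(\kfrak)$.

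There is no real obstacle here: the corollary is a routine shift of generators, and the only point requiring care is the conceptual one that passing from the Lie algebra $\kfrak$ to its universal enveloping algebra allows us to add central scalars, which cannot be done inside $\kfrak$ itself — this is exactly the content of the remark preceding the corollary. The role of the corollary is purely motivational, preparing the ground for the nonstandard parameters in Definition~\ref{def:qsp}, so a short argument as above is all that is needed.
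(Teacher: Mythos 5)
Your argument is correct and is exactly the intended (and only reasonable) one: the paper gives no separate proof, treating the corollary as an immediate consequence of Lemma~\ref{generators} since the scalars $s_i$ can be absorbed or subtracted inside the unital algebra $U(\kfrak)$. Nothing is missing.
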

\begin{rema}
  A complete set of defining relations of $U(\kfrak)$ or $U(\kfrak')$ in terms of the generators \eqref{m-gens}--\eqref{Bis} can be written down. For the GIM Lie algebras described in the introduction and in Section \ref{sec:QGIM} this was achieved by Berman in \cite{a-Berman89}. The author is not aware of any publication describing defining relations of $U(\kfrak)$ or $U(\kfrak')$ for general admissible pairs. A rather indirect way to obtain such relations is provided by specialization of the defining relations of their quantum analogs using the results of Sections \ref{sec:GensRels} and \ref{sec:specialize} of the present paper. 
\end{rema}
\subsection{Compatible minimal realizations}\label{sec:compatible}
  For $\tau\in \Aut(A)$ the corresponding Lie algebra automorphism $\tau:\gfrak\rightarrow \gfrak$ defined in \cite[4.19]{a-KW92} does not necessarily leave $P^\vee$ invariant. If, however, there exists a permutation $\sigma$ of the set $\{1,\dots, s\}$ such that
  \begin{align}\label{eq:compatible}
    &\alpha_{\tau(i)}(d_{\sigma(j)})=\alpha_i(d_j) & &\mbox{ for all $i\in I$, $j\in \{1,\dots, s\}$}
  \end{align}
then one has $\tau(P^\vee)=P^\vee$ with $\tau(d_j)=d_{\sigma(j)}$. In this case we say that the  minimal realization is compatible with $\tau$. In the next subsection we will need to choose a compatible minimal realization in order to lift $\tau$ to an automorphism of the quantized enveloping algebra of $\gfrak$. 
\begin{eg}
  Let $A=\left(\begin{matrix} 2 & -2 \\ -2 & 2 \end{matrix}\right)$ be the generalized Cartan matrix of affine $\slfrak_2$ with $I=\{0,1\}$ and $P^\vee=\Z h_0\oplus \Z h_1 \oplus \Z d$. Then the minimal realization defined by $\alpha_0(d)=1$, $\alpha_1(d)=0$ is not compatible with the transposition $(01)\in \Aut(A)$. However, the minimal realization defined by $\alpha_0(d)=1=\alpha_1(d)$ is compatible with $(01)$.
\end{eg}
  \begin{prop}
    Let $A$ be of affine type and $\tau\in \Aut(A)$. Then there exists a minimal realization for $A$ which is compatible with $\tau$.
  \end{prop}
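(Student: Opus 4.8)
The plan is to explicitly construct a compatible minimal realization by modifying only the choice of the linear functionals $\alpha_i$ on the span of the $d_s$, leaving the Cartan part $\hfrak'$ untouched. Recall that for an affine generalized Cartan matrix $A$ one has $s = |I| - \rank(A) = 1$, so there is a single element $d = d_1$, and a minimal realization amounts to choosing values $\alpha_i(d) \in \{0,1\}$ for $i \in I$ subject only to the requirement that $(\hfrak, \Pi, \Pi^\vee)$ be a realization, i.e.\ that the $\alpha_i$ be linearly independent on $\hfrak = \hfrak' \oplus \field d$. Since the $\alpha_i$ already have the prescribed pairings $\alpha_j(h_i) = a_{ij}$ on $\hfrak'$, linear independence on $\hfrak$ fails precisely when some nonzero $\Z$-combination $\sum c_i \alpha_i$ vanishes on $\hfrak'$ \emph{and} on $d$; the kernel of $A$ is one-dimensional, spanned by the vector of marks $(a_i)_{i \in I}$ (the null root coefficients), so the condition for a valid realization is exactly that $\sum_i a_i \alpha_i(d) \neq 0$.

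First I would recall from Kac's book that $\tau \in \Aut(A)$ permutes the marks: $a_{\tau(i)} = a_i$ for all $i \in I$, because the kernel of $A$ is $\tau$-stable and one-dimensional with positive entries. Next, I would choose $\alpha_i(d)$ so that it depends only on the $\tau$-orbit of $i$; concretely, pick any index $i_0 \in I$, set $\alpha_{i_0}(d) = \alpha_{\tau(i_0)}(d) = 1$, and set $\alpha_j(d) = 0$ for all $j$ outside the $\tau$-orbit of $i_0$. Then \eqref{eq:compatible} holds with $\sigma = \id$ (since $s = 1$), because $\alpha_{\tau(i)}(d) = \alpha_i(d)$ for every $i$ by construction. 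It remains to check this is a valid minimal realization, i.e.\ that $\sum_i a_i \alpha_i(d) \neq 0$: if $\tau(i_0) = i_0$ the sum is $a_{i_0} > 0$, and if $\tau(i_0) \neq i_0$ it is $a_{i_0} + a_{\tau(i_0)} = 2a_{i_0} > 0$. Finally, by the remark in Subsection \ref{sec:compatible}, once \eqref{eq:compatible} holds one automatically gets $\tau(P^\vee) = P^\vee$ with $\tau(d) = d$, so the realization is compatible with $\tau$ in the required sense.

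I do not expect any serious obstacle here; the only point needing a little care is verifying that the modified $\alpha_i$ still constitute a realization, which reduces to the nonvanishing of $\sum_i a_i \alpha_i(d)$ as above, and the $\tau$-invariance of the marks $(a_i)$, which is standard for affine $A$ (the null root is the unique, up to scalar, positive element of $\ker A$, hence fixed by $\tau$ up to scalar, and comparing a common coordinate forces the scalar to be $1$). One could also phrase the whole argument more invariantly by noting that the affine $\hfrak$ can be taken to be $\hfrak' \oplus \field d$ where $d$ is the scaling element and $\tau$ extends to $\hfrak$ fixing $d$; but the concrete choice above is the most direct route and makes \eqref{eq:compatible} transparent.
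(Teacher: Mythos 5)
Your construction is essentially the paper's own proof: there the role of $i_0$ is played by the affine node $0$, one sets $\alpha_i(d_1)=\delta_{i,0}$ if $\tau(0)=0$ and $\alpha_i(d_1)=\delta_{i,0}+\delta_{i,\tau(0)}$ otherwise, and linear independence is deduced exactly as you do from the positivity of the integers $b_j$ spanning the kernel of $A$. One shared caveat: both your explicit assignment and the paper's only verify \eqref{eq:compatible} when the chosen node satisfies $\tau^2(i_0)=i_0$ (which holds for the involutions arising from admissible pairs, but not for every $\tau\in\Aut(A)$, e.g.\ a rotation of the affine $A_2$ cycle); for such $\tau$ your own guiding principle already gives the fix --- set $\alpha_j(d)=1$ on the entire $\langle\tau\rangle$-orbit of $i_0$, the nonvanishing of $\sum_j b_j\alpha_j(d)$ again following from positivity.
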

\begin{proof}
  Assume that $A=(a_{ij})_{i,j\in I}$ is given by one of the Dynkin diagrams in \cite[p. 54/55]{b-Kac1} and $I=\{0,1,\dots,n\}$, where $\alpha_0$ denotes the additional affine root. There exist positive integers $b_j$, uniquely determined by $b_0=1$, such that $\sum_{j=0}^n b_j a_{ij}=0$. If $\tau(0)=0$ then define $\alpha_i\in \hfrak^\ast$ by \eqref{eq:alpha(h)} and $\alpha_i(d_1)=\delta_{i,0}$. If $\tau(0)\neq 0$ then define $\alpha_i$ by \eqref{eq:alpha(h)} and $\alpha_i(d_1)=\delta_{i,0}+\delta_{i,\tau(0)}$. In both cases it follows from the positivity of the coefficients $b_j$ that $\{\alpha_i\,|\,i\in I\}$ is a linearly independent subset of $\hfrak^\ast$. Relation \eqref{eq:compatible} holds by construction with $\sigma=\id_{\{1\}}$.
\end{proof}
\begin{rema}
  The above proof does not work in the general symmetrizable Kac-Moody case. It would be good to know if any automorphism of a symmetrizable generalized Cartan matrix has a compatible minimal realization. 
\end{rema}  
\section{Lusztig automorphisms}\label{sec:LusztigBraid}
This section provides notation and background on quantum groups. It is well known that quantized enveloping algebras have only very few Hopf algebra automorphisms, see Theorem \ref{thm:AutUqg}. A quantum analog of the involutive automorphism $\theta(X,\tau)$ corresponding to an admissible pair can therefore be defined only as an algebra automorphism of the quantized enveloping algebra of $\gfrak$. It is straightforward to define quantum group analogs of the Chevalley involution and of elements of $\Aut(A)$. To define a quantum group analog of the automorphism $\Ad(m_X)$ we briefly recall the definition and properties of the Lusztig automorphisms. In particular, in Sections \ref{sec:triang-decomp}, \ref{sec:Lusztig} we use results by K\'eb\'e \cite{a-Kebe99} to describe the action of certain Lusztig automorphisms on Chevalley generators of quantized enveloping algebras.  
\subsection{Quantum groups}\label{sec:quagroup}
  Let $\field(q)$ be the field of rational functions in an indeterminate $q$. Recall that $D=\mathrm{diag}(\epsilon_i|i\in I)$ and define $q_i=q^{\epsilon_i}$ for any $i\in I$. Up to minor notational changes we follow the presentation in \cite{b-Lusztig94} and define for $\gfrak=\gfrak(A)$ the quantized enveloping algebra $\uqg$ to be the associative $\field(q)$-algebra with generators $E_i, F_i,$ and $K_h$ for all $i\in I$, $h\in P^\vee$ and relations  
\begin{enumerate}
  \item $K_0=1$ and $K_h K_{h'}=K_{h+h'}$ for all $h,h'\in P^\vee$. 
  \item $K_h E_i= q^{\alpha_i(h)}E_i K_h$ for all $i\in I$, $h\in P^\vee$.
  \item $K_h F_i= q^{-\alpha_i(h)}F_i K_h$ for all $i\in I$, $h\in P^\vee$.
  \item $\displaystyle E_i F_i - F_i E_i=\delta_{ij} \frac{K_i-K_i^{-1}}{q_i-q_i^{-1}}$ for all $i\in I$ where $K_i=K_{h_i}^{\epsilon_i}$.
  \item \label{q-Serre} The quantum Serre relations given in \cite[3.1.1.(e)]{b-Lusztig94}.
\end{enumerate}
For later use we make the quantum Serre relations more explicit. For any $i,j\in I$ let $F_{ij}(x,y)$ denote the noncommutative polynomial in two variables defined by
\begin{align}\label{eq:Fij-def}
  F_{ij}(x,y)=\sum_{n=0}^{1-a_{ij}}(-1)^n
  \left[\begin{matrix}1-a_{ij}\\n\end{matrix}\right]_{q_i}
  x^{1-a_{ij}-n}yx^n.  
\end{align}
where $\left[ \abv{1-a_{ij}}{n} \right]_{q_i}$ denotes the $q_i$-binomial coefficient defined for instance in \cite[1.3.3]{b-Lusztig94}.  
By \cite[Corollary 33.1.5]{b-Lusztig94}, the quantum Serre relations in (\ref{q-Serre}) can be written in the form
\begin{align}\label{eq:q-Serre}
  F_{ij}(E_i,E_j)=F_{ij}(F_i,F_j)=0\qquad \mbox{for all $i,j\in I$}.
\end{align}
The algebra $\uqg$ is a Hopf algebra with coproduct $\kow$, counit $\vep$, and antipode $S$ given by
\begin{align}
  \kow(E_i)&=E_i \ot 1 + K_i\ot E_i,& \vep(E_i)&=0, & S(E_i)&=-K_i^{-1} E_i,\label{eq:E-copr}\\
  \kow(F_i)&=F_i\ot K_i^{-1} + 1 \ot F_i,&\vep(F_i)&=0, & S(F_i)&=-F_i K_i,\label{eq:F-copr}\\
  \kow(K_h)&=K_h \ot K_h,& \vep(K_h)&=1, & S(K_h)&=K_{-h}\label{eq:K-copr}
\end{align}
for all $i\in I$, $h\in P^\vee$.
We denote by $\uqgp$ the Hopf subalgebra of $\uqg$ generated by the elements $E_i, F_i$, and $K_i^{\pm 1}$ for all $i\in I$. 
\begin{remarks}
  1) The Hopf algebra $\uqgp$ coincides with the Hopf algebra $U_q(\gfrak_C)$ for $C=DA$ defined in \cite[3.2.9]{b-Joseph}.\\
  2) In later sections we will sometimes need to work with $\uqg$ defined over a field extension of $\field(q)$.  In particular, in Sections \ref{sec:omega-commute} and \ref{sec:equivalence} we will work over $\field(q^{1/2})$ and $\oKq$, respectively. In this case $\uqg$ is defined by the same relations (1)--(4) and \eqref{eq:q-Serre}.
\end{remarks}
As usual let $U^+$, $U^-$, and $U^0$ denote the subalgebra of $\uqg$ generated by the elements $\{E_i\,|\,i\in I\}$, $\{F_i\,|\,i\in I\}$, and $\{K_h\,|\,h\in P^\vee\}$ respectively. Moreover define $U^0{}'$ to be the subalgebra of $U^0$ generated by the elements $\{K_i^{\pm 1}\,|\,i\in I\}$. By \cite[3.2]{b-Lusztig94} the multiplication maps give isomorphisms of vector spaces
\begin{align}\label{eq:triang-decomp}
  U^+\ot U^0\ot U^- \cong \uqg, \qquad U^+\ot U^0{}'\ot U^- \cong \uqgp.
\end{align} 
Let $\field(q)[Q]$ be the group algebra of the root lattice. There is an algebra isomorphism $\field(q)[Q]\rightarrow U^0{}'$ such that $\alpha_i\mapsto K_i$. For any $\beta\in Q$ we hence write
\begin{align}\label{eq:Kbeta}
  K_\beta=\prod_{i\in I} K_i^{n_i} \qquad \mbox{if $\beta=\sum_{i\in I} n_i \alpha_i$.}
\end{align}
The commutation relations (2), (3) then take the form
\begin{align*}
  K_\beta E_i = q^{(\beta,\alpha_i)} E_i K_\beta,\qquad K_\beta F_i = q^{-(\beta,\alpha_i)} F_i K_\beta
  \qquad \mbox{for all } \beta\in Q,\, i\in I.
\end{align*}
For any $\uqgp$-module $M$ and any $\mu\in P$ let 
\begin{align*}
  M_\mu=\{m\in M\,|\, K_i m=q_i^{\mu(h_i)}m \mbox{ for all } i\in I\}
\end{align*}
denote the corresponding weight space. In particular, with respect to the left adjoint action of $\uqgp$ on itself or on $\uqg$ one obtains a $Q$-grading of both $\uqgp$ and $\uqg$. More precisely, we use Sweedler notation to define the left adjoint action of $\uqg$ on itself by 
\begin{align*}
  \ad(x)(u)= x_{(1)} u S(x_{(2)}) \qquad \mbox{ for all $x,u\in \uqg$.}
\end{align*}
Then one has $\ad(K_i)(u)=K_i u K_i^{-1}$ and hence
\begin{align*}
  \uqg_\beta= \{u\in \uqg\,|\, K_i u K_i^{-1}= q^{(\alpha_i,\beta)}u\}
\end{align*}
for all $\beta\in Q$, and $\uqgp_\beta$ is defined analogously.
\subsection{Automorphisms of $\uqg$ and $U_q(\gfrak')$}\label{sec:q-Aut}
Define $\omega$ to be the algebra automorphism of $\uqg$ determined by
\begin{align}\label{eq:q-Chevalley}
  \omega(E_i)=-F_i, \qquad \omega(F_i)=-E_i, \qquad \omega(K_h)=K_{-h}.
\end{align}
Observe that our definition of $\omega$ differs by a sign from the definition given in \cite[3.1.3]{b-Lusztig94}. We choose to add the sign in order to make $\omega$ a quantum analog of the Chevalley involution given by \eqref{eq:Chevalley}. The map $\omega$ is a coalgebra antiautomorphism of $\uqg$. 

Using the notation introduced in Subsection \ref{sec:Auto12} also in the quantum case, we set $\tilde{H}=\Hom(Q,\field(q)^\times)$ where $\field(q)^\times$ denotes the multiplicative group of $\field(q)$. For any $x$ in $\tilde{H}$ define a Hopf algebra automorphism $\Ad(x)$ of $\uqg$ by 
\begin{align*}
  \Ad(x)(u)=x(\beta)u \qquad \mbox{for all } u\in \uqg_\beta \mbox{ and } \beta\in Q. 
\end{align*}
Note that $\Ad(x)$ leaves $\uqgp$ invariant. Moreover, one has
\begin{align}\label{eq:Htil-om}
  \Ad(x)\circ \omega = \omega\circ\Ad(x^{-1}) \qquad \mbox{ for all $x\in \Htil$.}
\end{align}
By \cite[4.19]{a-KW92} the group $\Aut(A)$ acts on $\gfrak(A)$ by Lie algebra automorphisms. It seems not straightforward to find an analogue action of $\Aut(A)$ on $\uqg$ because it is unclear how to define the action on the generators $K_{d_s}$. However, for any $\tau\in\Aut(A)$ there exists an algebra automorphism of $\uqgp$, denoted by the same symbol $\tau$, such that
\begin{align}\label{eq:tau-def}
  \tau(E_i)=E_{\tau(i)},\qquad  \tau(F_i)=F_{\tau(i)},\qquad  \tau(K_i^{\pm1})=K_{\tau(i)}^{\pm 1}.
\end{align}  
Moreover, if the minimal realization of $A$ is compatible with $\tau$ in the sense of Subsection \ref{sec:compatible} with corresponding permutation $\sigma$ of $\{1,\dots,s\}$, then one can extend $\tau$ to an algebra automorphism of $\uqg$ given by \eqref{eq:tau-def} and $\tau(d_j)=d_{\sigma(j)}$. 
The action of $\Aut(A)$ on $Q$ allows us to form the semidirect product $\tilde{H} \rtimes \Aut(A)$ which acts faithfully on $\uqgp$ by Hopf algebra automorphisms. Let $\Auth(\uqgp)$ denote the group of Hopf algebra automorphisms of $\uqgp$. The following theorem is given in \cite[Theorem 2.1]{a-Twietmeyer92}.
\begin{thm}\label{thm:AutUqg}
  The map 
  \begin{align*}
    \tilde{H} \rtimes \Aut(A)\rightarrow \Auth(\uqgp), \qquad (x,\tau)\mapsto \Ad(x)\circ \tau
  \end{align*}
  is an isomorphism of groups. 
\end{thm}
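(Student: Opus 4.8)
The plan is to establish the isomorphism in Theorem~\ref{thm:AutUqg} by first proving that the map is a well-defined group homomorphism, then constructing its inverse explicitly. For the homomorphism property, I would note that each $\Ad(x)$ with $x\in\Htil$ is already known to be a Hopf algebra automorphism of $\uqgp$ (established in Subsection~\ref{sec:q-Aut}), and each $\tau\in\Aut(A)$ is a Hopf algebra automorphism of $\uqgp$ by \eqref{eq:tau-def}: one checks directly that $\tau$ respects the coproduct on generators, since $\kow(E_{\tau(i)})=E_{\tau(i)}\ot 1 + K_{\tau(i)}\ot E_{\tau(i)}=(\tau\ot\tau)\kow(E_i)$, and similarly for $F_i$ and $K_i^{\pm 1}$. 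The semidirect product structure on $\Htil\rtimes\Aut(A)$ is defined precisely so that $\tau\circ\Ad(x)\circ\tau^{-1}=\Ad(\tau(x))$, where $\tau$ acts on $\Htil=\Hom(Q,\field(q)^\times)$ via its action on $Q$; verifying this is a one-line computation on a homogeneous element $u\in\uqgp_\beta$, giving $\tau(\Ad(x)(\tau^{-1}(u)))=\tau(x(\tau^{-1}(\beta))\tau^{-1}(u))=x(\tau^{-1}(\beta))u=(\tau\cdot x)(\beta)u$. Hence $(x,\tau)\mapsto\Ad(x)\circ\tau$ is a group homomorphism into $\Auth(\uqgp)$.

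For injectivity, suppose $\Ad(x)\circ\tau=\id$. Applying both sides to $K_i$ gives $K_{\tau(i)}=K_i$, and since the $K_i$ are part of a basis of $U^0{}'\cong\field(q)[Q]$ via \eqref{eq:triang-decomp}, this forces $\tau(i)=i$ for all $i$, so $\tau=\id$. Then $\Ad(x)=\id$, and applying this to $E_i$ gives $x(\alpha_i)E_i=E_i$, so $x(\alpha_i)=1$ for all $i$; since $\{\alpha_i\}$ generates $Q$, we get $x=1$. This gives injectivity.

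The main work is surjectivity: given an arbitrary $\phi\in\Auth(\uqgp)$, I must show it lies in the image. The standard argument uses the classification of grouplike elements and skew-primitive elements. Since $\phi$ is a Hopf algebra map, it permutes the set of grouplike elements $\{K_\beta\,|\,\beta\in Q\}$ of $\uqgp$ (these are exactly the grouplikes because $U^0{}'$ is a group algebra and one checks no new grouplikes arise from the triangular decomposition); this induces a group automorphism of $Q$ which, by compatibility with the pairing coming from the commutation relations and with the finitely many simple roots, must be realized by some $\tau\in\Aut(A)$. Replacing $\phi$ by $\tau^{-1}\circ\phi$, I may assume $\phi$ fixes every $K_\beta$. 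Then $\phi(E_i)$ is a $(K_i,1)$-skew-primitive element of weight $\alpha_i$ (by the grading), hence a scalar multiple $c_i E_i$ of $E_i$ — one shows the weight space $\uqgp_{\alpha_i}$ in degree corresponding to skew-primitivity is one-dimensional, spanned by $E_i$. Similarly $\phi(F_i)=d_i F_i$, and applying $\phi$ to relation~(4) forces $c_i d_i=1$. Setting $x(\alpha_i)=c_i$ defines $x\in\Htil$ with $\phi=\Ad(x)$. The hard part here is the skew-primitive element computation: verifying that the only $(K_i,1)$-primitive elements of weight $\alpha_i$ are the scalar multiples of $E_i$, which uses the triangular decomposition \eqref{eq:triang-decomp} together with the quantum Serre relations \eqref{eq:q-Serre} to rule out any contribution from $U^-$ or from higher-degree terms of $U^+$; since this is precisely the content cited from \cite[Theorem~2.1]{a-Twietmeyer92}, I would reference that result for the delicate primitive-element analysis and present the grouplike and Serre-relation parts as the structural skeleton.
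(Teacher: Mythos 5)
Your outline is correct as far as it goes, but note that the paper does not actually prove this statement: Theorem \ref{thm:AutUqg} is quoted verbatim from Twietmeyer \cite[Theorem 2.1]{a-Twietmeyer92}, so there is no in-paper argument to compare against, and your sketch is essentially the standard proof lying behind that citation. The homomorphism and injectivity parts of your proposal are complete and unproblematic. All the substance sits in surjectivity, and there your reduction has one imprecise step: knowing only that $\phi$ permutes the grouplikes $\{K_\beta\,|\,\beta\in Q\}$ gives an automorphism of the lattice $Q$, and "compatibility with the pairing" does not by itself force this automorphism to permute the simple roots. The usual way to see that $\phi(K_i)=K_{\tau(i)}$ for a permutation $\tau$ of $I$ is again via the skew-primitive classification: the $K_i$ are exactly the grouplikes $g$ admitting $(g,1)$-skew-primitives outside $\field(q)(1-g)$ (namely $E_i$ and $F_iK_i$), so $\phi$ must permute them; only after that do the relations $K_jE_iK_j^{-1}=q^{(\alpha_j,\alpha_i)}E_i$ force $(\alpha_{\tau(i)},\alpha_{\tau(j)})=(\alpha_i,\alpha_j)$ and hence $\tau\in\Aut(A)$, and the one-dimensionality of the weight-$\alpha_i$ skew-primitive space gives $\phi(E_i)\in\field(q)^\times E_i$. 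Since both this distinguishing of the simple $K_i$ and the skew-primitive space computation are precisely the content of the cited Twietmeyer theorem, to which you defer anyway, your proposal is consistent with the paper's treatment; just be aware that the deferred reference is carrying that step too, not only the one-dimensionality claim.
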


\subsection{Parabolic decompositions}\label{sec:triang-decomp}
We recall here some results of \cite{a-Kebe99} which will be used
to make the quantum group analogue of the involution $\theta(X,\tau)$ explicit in Theorem \ref{thm:theta-props}. Let $X\subset
I$ be a subset of finite type. Let $\cM_X$ denote the subalgebra of $\uqg$ generated by
$\{E_i, F_i, K_i^{\pm 1}\,|\,i \in X\}$. Write $\cM_X^-$, $\cM_X^+$, and $\cM^0_X$ for the
subalgebras generated by  $\{F_i,\,|\,i \in X\}$, $\{E_i\,|\,i \in
X\}$, and $\{K_i^{\pm 1}\,|\,i\in X\}$, respectively. For any $i\in I\setminus X$ the subspace
$\ad(\cM_X)(E_i)$ of $\Uq$ is finite-dimensional and contained in
$\Uq^+$. This follows from the triangular decomposition of
$\cM_X$, the fact that $\ad(F_j)(E_i)=0$ for $j\in X$, the quantum Serre relations, and
the fact that $\ad (E_j)u\in \Uq^+$ for all $u \in \Uq^+$. Moreover,
the $\ad(\cM_X)$-module  $\ad(\cM_X)(E_i)$ is irreducible. Now define
$V_X^+$ to be the subalgebra generated by the elements of all the finite
dimensional subspaces $\ad (\cM_X)(E_i)$ for $i \notin X$. It is proved
in \cite{a-Kebe99} that multiplication gives an isomorphisms of vector spaces 
\begin{align} 
  \Uq^+ &\cong V_X^+\ot \cM^+_X\label{Keb1}.
\end{align}
To write explicitly the action of Lusztig's isomorphisms corresponding
to the longest element in $W_X$ we provide the following lemma. Recall from \eqref{eq:E-copr}--\eqref{eq:K-copr} that $S$ denotes the antipode of $\uqg$.
\begin{lem}\label{EiLem}
  Let $i\in I\setminus X$.\\
  (1) If $v\in \Uq^+$ is a highest weight vector
  for the adjoint action of $\cM_X\Uq^0$ of weight $w_X\alpha_i$ then
  $v\in \ad(\cM_X)(E_i)$. \\
  (2) If $v\in S(\Uq^-)$ is a lowest weight vector
  for the adjoint action of $\cM_X\Uq^0$ of weight $-w_X\alpha_i$ then
  $v\in \ad(\cM_X)(F_iK_i)$.
\end{lem}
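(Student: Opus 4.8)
The plan is to prove (1) and deduce (2) by applying the Chevalley involution $\omega$. For (1), the starting point is the parabolic decomposition $\Uq^+\cong V_X^+\ot \cM_X^+$ from \eqref{Keb1}, together with the fact that $V_X^+$ is generated by the finite-dimensional irreducible $\ad(\cM_X)$-modules $\ad(\cM_X)(E_i)$ for $i\notin X$. First I would analyze the $\ad(\cM_X\Uq^0)$-module structure of $\Uq^+$: since $\cM_X$ is the quantized enveloping algebra of a semisimple Lie algebra $\gfrak_X$ of finite type, $\Uq^+$ is a (possibly infinite) direct sum of finite-dimensional irreducible $\ad(\cM_X)$-modules, graded by $Q$ via the $\Uq^0$-action. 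The module $\ad(\cM_X)(E_i)$ has highest weight $\alpha_i$ (with highest weight vector $E_i$), so its extremal weight obtained by applying $w_X$ is $w_X\alpha_i$, and this weight space is one-dimensional inside $\ad(\cM_X)(E_i)$ because $w_X\alpha_i$ is an extremal weight of an irreducible module. The key claim is then that $w_X\alpha_i$ occurs as a highest weight (i.e. is annihilated by all $\ad(E_j)$, $j\in X$) in no other irreducible constituent of $\Uq^+$.

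To establish that claim I would argue with weights. Any irreducible $\ad(\cM_X)$-constituent of $\Uq^+$ sits in some $Q$-graded piece $\Uq^+_\beta$ with $\beta\in Q^+$, and its $\Uq^0$-weight equals $\beta$ when restricted to $\cM_X^0$; a highest weight vector of weight $w_X\alpha_i$ forces $\beta|_{\hfrak_X}=w_X\alpha_i$ up to the $W_X$-action, hence the constituent has highest weight $\lambda$ with $\lambda\in W_X(\beta)$ dominant for $X$ and $w_X\lambda$ among the weights; combined with the decomposition $\beta=\beta'+\gamma$ with $\beta'$ a weight of $V_X^+$ and $\gamma\in \sum_{i\in X}\N_0\alpha_i$ coming from $\cM_X^+$, a weight count shows the only possibility with a highest weight vector of weight exactly $w_X\alpha_i$ is $\beta=\alpha_i$ and the constituent $\ad(\cM_X)(E_i)$ itself. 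The point is that $w_X\alpha_i = \alpha_i - (\text{nonneg. combination of }\alpha_j,\,j\in X)$ has the $\alpha_i$-coefficient equal to $1$, so $\beta$ must have $\alpha_i$-coefficient $1$ and all other non-$X$ coefficients zero; then $\beta'$ (a weight of $V_X^+$, whose generating modules all have exactly one $\alpha_j$ with $j\notin X$ appearing with coefficient $1$) must equal $w_X\alpha_i$ plus $X$-roots, pinning down $\beta' $ inside $\ad(\cM_X)(E_i)$ and forcing $\gamma=0$.

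For part (2), I would apply the Chevalley involution $\omega$ of \eqref{eq:q-Chevalley}. Since $\omega(E_j)=-F_j$, $\omega(F_j)=-E_j$, $\omega(K_h)=K_{-h}$, one checks $\omega\circ\ad(x) = \ad(\omega(x))\circ\omega$ up to the coalgebra-antiautomorphism bookkeeping; in particular $\omega$ intertwines the $\ad(\cM_X\Uq^0)$-action on $\Uq^+$ with a twisted $\ad(\cM_X\Uq^0)$-action on $\omega(\Uq^+)=\Uq^-$, sending highest weight vectors of weight $w_X\alpha_i$ to lowest weight vectors of weight $-w_X\alpha_i$, and sending $E_i$ to $-F_i$. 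Tracking the $K$-twist that appears in the coproduct of $F_i$ (cf. \eqref{eq:F-copr}) explains the shift from $F_i$ to $F_iK_i$: the correct statement is that $\ad(\cM_X)$ applied to $S(\Uq^-)$ is the relevant module, and $S(F_i)=-F_iK_i$, so the image of the generator $E_i$ under the composite $\omega$ then $S$ (or directly the appropriate identification) is a scalar multiple of $F_iK_i$. Thus a lowest weight vector $v\in S(\Uq^-)$ of weight $-w_X\alpha_i$ lies in $\ad(\cM_X)(F_iK_i)$.

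The main obstacle I anticipate is the weight-counting argument in part (1) — specifically, making rigorous that no other irreducible $\ad(\cM_X)$-constituent of $\Uq^+$ contributes a highest weight vector of weight exactly $w_X\alpha_i$. This requires carefully combining the $Q$-grading, the $W_X$-symmetry of weights within each finite-dimensional constituent, and the parabolic factorization \eqref{Keb1}, and it is where the hypothesis that $X$ is of finite type (so $\cM_X$-modules are finite-dimensional and $w_X$ exists) is essential. The $\omega$-argument in part (2) is then routine bookkeeping with the Hopf structure once (1) is in hand.
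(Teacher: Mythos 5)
Your starting point is right, and so is your first reduction: via \eqref{Keb1} and the fact that $w_X\alpha_i$ has $\alpha_i$-coefficient $1$ and no other simple roots outside $X$, the $V_X^+$-factor of $v$ is confined to $\ad(\cM_X)(E_i)$; this is also the first step of the paper's proof. But your argument then rests on two claims that do not hold. First, $\Uq^+$ is not an $\ad(\cM_X)$-submodule of $\Uq$ at all: for $j\in X$ one has
$\ad(F_j)(E_j)=(F_jE_j-E_jF_j)K_j=-\tfrac{K_j^2-1}{q_j-q_j^{-1}}\notin \Uq^+$,
so there is no decomposition of $\Uq^+$ into finite-dimensional irreducible $\ad(\cM_X)$-modules; only $V_X^+$ (and the locally finite part of $\cM_X$) admit such decompositions, while $\Uq^+$ carries merely an action of the nonnegative part $\ad(\cM_X^+\cM_X^0\Uq^0)$. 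Second, the weight count cannot ``force $\gamma=0$'': the weights of $\ad(\cM_X)(E_i)$ are exactly of the form $\alpha_i+(\mbox{nonnegative combination of }\alpha_j,\ j\in X)$ with top weight $w_X\alpha_i$ (also note $w_X\alpha_i=\alpha_i+\mbox{nonneg.\ $X$-part}$, not minus), so any $u\ot m$ with $u\in\ad(\cM_X)(E_i)$ of weight $w_X\alpha_i-\gamma$ and $m\in(\cM_X^+)_\gamma$, $\gamma\neq 0$, has total weight $w_X\alpha_i$. Excluding these terms is precisely the nontrivial content of the lemma, and it requires using the hypothesis that $v$ is killed by $\ad(E_j)$ for all $j\in X$, not just its weight.

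This is where the paper's proof does real work: writing $v=\sum_l u_l\ot m_l$ with linearly independent weight vectors $u_l\in\ad(\cM_X)(E_i)$, the highest-weight condition gives $\sum_l \ad(E_j)(u_l)\ot m_l=-\sum_l \ad(K_j)(u_l)\ot\ad(E_j)(m_l)$ for $j\in X$, so the span of the $m_l$ is $\ad(\cM_X^+\cM_X^0)$-stable and hence contained in the locally finite part of $\cM_X$; choosing $m_l$ of maximal weight shows it is a highest weight vector of the locally finite part lying in $\cM_X^+$, hence a scalar, which kills all terms with $\gamma\neq 0$. Your proposal is missing this (or an equivalent) mechanism; the ``main obstacle'' you flag at the end is exactly this step, and it is not a weight count. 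For part (2), the paper argues analogously with the decomposition $S(\Uq^-)\cong V_X^-\ot S(\cM_X^-)$ rather than via $\omega$; your $\omega$-route might be made to work, but beware that $\omega$ is a coalgebra antiautomorphism, so it intertwines the left adjoint action with a right-adjoint-type action, and the bookkeeping with $S$ needed to land in $\ad(\cM_X)(F_iK_i)$ is not entirely routine.
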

\begin{proof}
  To prove (1) note that the assumption on the weight of $v$ and \eqref{Keb1}
  imply that 
  \begin{align*}
    v\in \ad (\cM_X^+ )(E_i)\ot \cM_X^+.
  \end{align*}
  With respect to this decomposition write $v=\sum_l u_l\ot m_l $ with 
  linearly independent weight vectors $u_l\in \ad(\cM_X^+)(E_i)$. The
  $\ad(\cM^+_X)$-invariance of $v$ implies
  \begin{align}\label{eq:adE}
    \sum_l \ad(E_i)(u_l)\ot m_l= -\sum_l \ad(K_i)(u_l)\ot \ad(E_i)(m_l)
  \end{align}
  for all $i\in X$ and hence the space $\cM_v$ spanned by
  the elements $m_l$ is $\ad(\cM_X^+\cM^0_X)$-invariant. It follows that
  $\cM_v$ is contained in the locally finite part $F_l(\cM_X)=\{m\in\cM_X\,|\,\dim(\ad(\cM_X)(m))<\infty\}$, see for example \cite[Proof of Lemma 3.1.1]{a-Fauqu98}. Choose $m_l$ of maximal weight (w.r.t.~$\ad(\cM_X)$). Then \eqref{eq:adE} implies that $m_l\in F_l(\cM_X)\cap \cM^+_X$ is a highest
  weight vector (w.r.t.~$\ad(\cM_X)$) and hence $m_l\in \field(q)
  1$. Hence $v\in \ad(\cM_X^+)(E_i)\ot \field(q) 1$ which proves (1).
  
  Property (2) is proved analogously using the decomposition
  $S(\Uq^-)\cong V_X^-\ot S(\cM^-_X)$ where $V_X^-$ denotes the subalgebra of
  $S(\Uq^-)$ generated by the elements of all the finite dimensional
  subspaces $\ad (\cM_X)(F_jK_j)$ for $j \notin X$. 
\end{proof}
In the following subsection we will give the highest and
the lowest weight vector from the previous lemma explicitly in terms
of Lusztig's automorphisms.  
\subsection{Definition and properties of Lusztig automorphisms}\label{sec:Lusztig}
  For any $i\in I$ let $T_i$ be the algebra automorphism of $\uqg$ denoted by $T''_{i,1}$ in \cite[37.1]{b-Lusztig94}. In particular, $T_i$ satisfies the relations
  \begin{align}\label{eq:Ti-rels}
       T_i(K_h)=K_{r_i(h)},\qquad
       T_i(E_i)=-F_i K_i,\qquad
       T_i(F_i)=-K_i^{-1}E_i
    \end{align}
  for any $h\in P^\vee$. Moreover, by \cite[37.24]{b-Lusztig94} the inverse of $T_i$ satisfies 
  \begin{align}\label{eq:T-invers}
      T_i^{-1}=\sigma\circ T_i\circ \sigma
    \end{align}
    where $\sigma:\uqg\rightarrow \uqg$ is the unique $\field(q)$-algebra
    antiautomorphism of $\uqg$ determined by $\sigma(E_i)=E_i$, $\sigma(F_i)=F_i$, and $\sigma(K_h)=K_{-h}$ for all $i\in I$, $h\in P^\vee$. In particular one has 
  \begin{align*}
       T^{-1}_i(K_h)=K_{r_i(h)},\qquad
       T^{-1}_i(E_i)=-K_i^{-1}F_i, \qquad
       T^{-1}_i(F_i)=-E_i K_i.
    \end{align*}
  By \cite[39.4.3]{b-Lusztig94} the automorphisms $T_i$ satisfy braid relations. Hence, for any $w\in W$ one may define an algebra automorphism $T_w:\uqg\rightarrow \uqg$ by
  \begin{align*}
      T_w=T_{i_1}\dots T_{i_k}
  \end{align*}
where $w=r_{i_1}\dots r_{i_k}$ is a reduced expression for $w$. One hence has $T_w(K_h)=K_{w(h)}$ for all $h\in P^\vee$ and, with the notation \eqref{eq:Kbeta}, $T_w(K_\beta)=K_{w(\beta)}$ for all $\beta\in Q$.    
   
Let $X$ be a subset of $I$ of finite type. We would like to replace the
  automorphism $\Ad(m_X)$ in \eqref{tauDef} by the Lusztig automorphism
  $T_{w_X}$ where $w_X$ denotes the longest element in the parabolic
  subgroup $W_X$.
  We first determine the action of $T_{w_X}$ on the generators
  corresponding to the subset $X$.
  \begin{lem}\label{Tw-on-X} Let $X$ be a subset of $I$ of finite type. Define a permutation $\tau$ of $X$ by $w_X(\alpha_i)=-\alpha_{\tau(i)}$ for all $i\in X$. For all $i\in X$ one has
  \begin{align*}
    T_{w_X}(E_i)&=-F_{\tau(i)} K_{\tau(i)},&
    T_{w_X}(F_i)&=-K_{\tau(i)}^{-1}E_{\tau(i)},&
    T_{w_X}(K_i)&=K^{-1}_{\tau(i)},\\
    T_{w_X}^{-1}(E_i)&=-K_{\tau(i)}^{-1} F_{\tau(i)},&
    T_{w_X}^{-1}(F_i)&=-E_{\tau(i)} K_{\tau(i)},&
    T_{w_X}^{-1}(K_i)&=K^{-1}_{\tau(i)}.
  \end{align*}
  \end{lem}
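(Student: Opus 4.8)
\textbf{Proof plan for Lemma \ref{Tw-on-X}.} The plan is to reduce the statement to an already well-understood situation: the finite-type subalgebra $\cM_X\subseteq \uqg$, on which $T_{w_X}$ restricts to the Lusztig automorphism associated to the longest element of a \emph{finite} Weyl group. First I would observe that since $X$ is of finite type, $\gfrak_X$ is semisimple and $w_X$ is the longest element of $W_X$; in particular $w_X$ has order $2$ on $\hfrak_X^\ast$ and the map $i\mapsto \tau(i)$ determined by $w_X(\alpha_i)=-\alpha_{\tau(i)}$ is the diagram automorphism of $X$ that also appears in Proposition \ref{Rousseau-Prop}. The relation $T_{w_X}(K_h)=K_{w_X(h)}$ is immediate from the formula $T_w(K_h)=K_{w(h)}$ recalled just before the lemma, and applying it to $h=h_i$ for $i\in X$ together with $w_X(h_i)=-h_{\tau(i)}$ (the coroot version of $w_X(\alpha_i)=-\alpha_{\tau(i)}$, valid because $X$ is of finite type so we may work inside $\gfrak_X$) gives $T_{w_X}(K_i)=K_{\tau(i)}^{-1}$.

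For the action on $E_i$ and $F_i$ with $i\in X$, the cleanest route is to fix, for each $i\in X$, a reduced expression of $w_X$ that \emph{starts} with $r_i$, say $w_X=r_i\, w'$ with $\ell(w_X)=1+\ell(w')$; such an expression exists precisely because $r_i(\alpha_i)=-\alpha_i<0$, i.e.\ $\ell(r_iw_X)<\ell(w_X)$, so $w_X = r_i (r_i w_X)$ is length-additive. Hmm — actually I want $w_X$ to \emph{end} with a reflection, so instead I would use that $\ell(w_X r_i) < \ell(w_X)$ and write $w_X = w'' r_i$ with $\ell(w_X) = \ell(w'')+1$, giving $T_{w_X} = T_{w''} T_i$. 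Then $T_{w_X}(E_i) = T_{w''}(T_i(E_i)) = T_{w''}(-F_iK_i)$. Now the key point is that $w''$ sends $-\alpha_i = r_i(\alpha_i)$, hence sends $w_X(\alpha_i) = -\alpha_{\tau(i)}$... let me recompute: $w''(r_i(\alpha_i)) = w_X(\alpha_i) = -\alpha_{\tau(i)}$, so $w''(-\alpha_i) = -\alpha_{\tau(i)}$, i.e. $w''(\alpha_i)=\alpha_{\tau(i)}$, and moreover $w''$ is a product of reflections $r_j$ with $j \in X$, $j \ne$ (nothing is forced here, but $w''$ stays inside $W_X$). Since $w''(\alpha_i) = \alpha_{\tau(i)} > 0$, the standard fact \cite[Proposition 40.1.2 or 37.2.4]{b-Lusztig94} that $T_{w}$ sends the root vector $F_i$ (resp.\ $E_i$) to the root vector of weight $w(-\alpha_i)$ (resp.\ $w(\alpha_i)$) when $w(\alpha_i)>0$ and the expression is length-additive gives $T_{w''}(F_iK_i) = F_{\tau(i)}K_{\tau(i)}$ up to the obvious $K$-factor bookkeeping, whence $T_{w_X}(E_i) = -F_{\tau(i)}K_{\tau(i)}$. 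The formula for $T_{w_X}(F_i)$ follows the same way from $T_i(F_i) = -K_i^{-1}E_i$, and the statements for $T_{w_X}^{-1}$ follow either by the analogous argument using $T_i^{-1}$ or, more slickly, by applying $T_{w_X}$ to both sides and using $\tau^2 = \id_X$ (since $w_X^2 = 1$) together with the already-proved formulas.

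An alternative, perhaps more transparent, packaging of the same argument: restrict everything to $\cM_X \cong U_q(\gfrak_X)$, where $\gfrak_X$ is semisimple. On $\cM_X$ the automorphism $T_{w_X}$ is exactly the Lusztig automorphism attached to the longest Weyl group element, and the action of this automorphism on Chevalley generators of a quantized enveloping algebra of finite type is classical — it is the composite of the diagram automorphism $\tau_X$ with (a sign-twisted form of) the Chevalley involution, matching Proposition \ref{Rousseau-Prop}(1) at the classical level. Reading off this composite on $E_i, F_i, K_i$ produces exactly the six formulas claimed. I would present the first, root-combinatorial argument as the main proof since it keeps the $K$-factors explicit, and mention the $\cM_X$-restriction as the conceptual reason it works.

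\textbf{Main obstacle.} The one genuinely fiddly point is the precise tracking of the $K_\mu$-factors: Lusztig's $T_i$ do not simply permute root vectors but introduce $K$-twists (e.g.\ $T_i(E_i) = -F_iK_i$, not $-F_i$), and when composing through a reduced word one must verify that the accumulated $K$-factor collapses to exactly $K_{\tau(i)}^{\pm1}$ and no more. The clean way to handle this is to note that $T_{w_X}$ is weight-homogeneous: it sends the $\beta$-weight space of $\uqg$ to the $w_X(\beta)$-weight space, and an element of $\cM_X \cap \uqg_{-\alpha_{\tau(i)}}$ that is, up to scalar, a $K$-multiple of $F_{\tau(i)}$ must be $c\,F_{\tau(i)}K_\mu$ with $K_\mu$ forced; comparing with the known $\mu = h_i$ contribution from the final $T_i$ and the fact that all intermediate $T_{r_j}$, $j\in X$, act on $U^0$ by $K_h \mapsto K_{r_j(h)}$ pins down $\mu = \tau(i)$-coroot exactly. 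Everything else is bookkeeping, and the lemma should follow in a few lines once this is set up cleanly.
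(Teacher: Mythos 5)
Your proposal is correct and follows essentially the same route as the paper: write $w_X=w''r_i$ length-additively, apply $T_i$ first, and use that $w''(\alpha_i)=\alpha_{\tau(i)}$ is a \emph{simple} root, so that $T_{w''}$ sends $E_i,F_i,K_i$ to $E_{\tau(i)},F_{\tau(i)},K_{\tau(i)}$ (the paper cites Jantzen, Proposition 8.20, for exactly this fact; note that no weight-space bookkeeping is needed since $T_{w''}$ is an algebra automorphism, so $T_{w''}(F_iK_i)=F_{\tau(i)}K_{\tau(i)}$ immediately). The only cosmetic difference is in the $T_{w_X}^{-1}$ formulas, which the paper obtains by conjugating with the antiautomorphism $\sigma$ via $T_i^{-1}=\sigma\circ T_i\circ\sigma$ and $w_X^{-1}=w_X$, whereas your trick of applying $T_{w_X}$ to both sides and using $\tau^2=\id_X$ works equally well.
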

  \begin{proof}
    Write $w_X=w' r_i$ for some $w'\in W_X$. Then $w'(\alpha_i)=\alpha_{\tau(i)}$ because
    $w_X(\alpha_i)=-\alpha_{\tau(i)}$. By \cite[Proposition 8.20]{b-Jantzen96} one obtains $T_{w'}(E_i)=E_{\tau(i)}$ and hence
    \begin{align*}
      T_{w_X }(F_i)=T_{w'}(-K_i^{-1}E_i)=-K_{\tau(i)}^{-1}E_{\tau(i)}.
    \end{align*}
    Similarly one obtains the other two expressions. The formulas
    for $T_{w_X}^{-1}$ follow by conjugation with the algebra
    antiautomorphism $\sigma$, see \eqref{eq:T-invers}, using $w_X=w_X^{-1}$.
  \end{proof}
  Now we determine the action of $T_{w_X}$ and $T_{w_X}^{-1}$ on the remaining
  generators. This will allow us to identify the $\ad(\cM_X)$-highest
  weight vector in $\ad(\cM_X)(E_i)$ and the $\ad(\cM_X)$-lowest
  weight vector in $\ad(\cM_X)(F_iK_i)$ for $i\in I\setminus X$. The
  following lemma is a generalization of \cite[Remark 1.6]{a-DCoKa90}.
  \begin{lem}\label{Tw-on-Xc}
    Let $X$ be a subset of $I$ of finite type and $i\in I\setminus X$. 
    \begin{enumerate}
      \item The subspace $\ad(\cM_X)(E_i)$ of $U^+$ is a
       finite dimensional, irreducible $\ad(\cM_X)$-submodule of $\uqg$
       with highest weight vector $T_{w_X}(E_i)$ and lowest weight vector
       $E_i$. 
      \item The subspace $\ad(\cM_X)(F_iK_i)$ of $S(U^-)$ is
       a finite dimensional, irreducible $\ad(\cM_X)$-submodule of $\uqg$
       with highest weight vector $F_iK_i$ and lowest weight vector
       $T^{-1}_{w_X}(F_iK_i)$.
    \end{enumerate}     
  \end{lem}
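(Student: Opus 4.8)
The plan is to prove statement (1) and observe that (2) follows by an analogous argument (alternatively, by applying the antiautomorphism $\sigma$ and using \eqref{eq:T-invers}). First I would recall from Subsection \ref{sec:triang-decomp} that $\ad(\cM_X)(E_i)$ is a finite-dimensional, irreducible $\ad(\cM_X)$-module contained in $U^+$; this leaves only the identification of the highest and lowest weight vectors. For the lowest weight vector, note that $\ad(F_j)(E_i)=0$ for all $j\in X$ since $E_iF_j = F_jE_i$ (the generators $E_i$, $F_j$ commute when $i\neq j$), so $E_i$ itself is a lowest weight vector of weight $\alpha_i$; by irreducibility it is \emph{the} lowest weight vector up to scalar.

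For the highest weight vector the key point is a weight computation. Writing $w_X = r_{i_1}\cdots r_{i_k}$ reduced, the automorphism $T_{w_X} = T_{i_1}\cdots T_{i_k}$ satisfies $T_{w_X}(K_h) = K_{w_X(h)}$, so for $u\in \uqg_\beta$ one has $T_{w_X}(u)\in \uqg_{w_X(\beta)}$. Hence $T_{w_X}(E_i)\in \uqg_{w_X\alpha_i}$. Next I must check that $T_{w_X}(E_i)\in U^+$: this is where I would invoke the standard fact (Lusztig, \cite[Prop.~8.20]{b-Jantzen96}, or the argument already used in Lemma \ref{Tw-on-X}) that if $w\alpha_i \in \Phi^+$ then $T_w(E_i)\in U^+$; here $w_X\alpha_i = \alpha_i + \sum_{j\in X}(\cdots)\alpha_j$ is a positive root because $i\notin X$ and applying $W_X$ to $\alpha_i$ only adds multiples of the $\alpha_j$, $j\in X$. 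Then I would apply Lemma \ref{EiLem}.(1): since $T_{w_X}(E_i)\in U^+$ is a weight vector of weight $w_X\alpha_i$, to conclude $T_{w_X}(E_i)\in \ad(\cM_X)(E_i)$ it suffices to show it is a highest weight vector for the $\ad(\cM_X\Uq^0)$-action, i.e.\ that $\ad(E_j)(T_{w_X}(E_i)) = 0$ for all $j\in X$.

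This last vanishing is the main obstacle, and I would handle it as follows. For $j\in X$ one has $r_j w_X = w_X r_{j'}$ for the appropriate $j'\in X$ (namely $\tau(j)$ in the notation of Lemma \ref{Tw-on-X}), equivalently $w_X$ admits a reduced expression beginning with... no—rather, since $w_X$ is the longest element of $W_X$, $\ell(r_j w_X) < \ell(w_X)$, so $w_X$ has a reduced expression \emph{ending} appropriately; the cleanest route is: $T_{w_X}(E_i) = T_j\big(T_{r_j w_X}(E_i)\big)$ after choosing the reduced expression of $w_X$ to start with $r_j$. Then $\ad(E_j)(T_{w_X}(E_i))$ relates, via the intertwining property of $T_j$ with the adjoint action (e.g.\ $T_j \ad(F_j) = \ad(E_j) T_j$ up to the known $K$-twist, cf.\ \cite[37.2.4]{b-Lusztig94}), to $T_j\big(\ad(F_j) T_{r_j w_X}(E_i)\big)$, and $\ad(F_j)$ kills the lowest-weight-type vector $T_{r_j w_X}(E_i)$ because $r_j w_X \alpha_i$ has a nonpositive $\alpha_j$-coefficient... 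I would need to be careful with signs and the exact intertwining identity, but the structural point is that $T_{w_X}(E_i)$ sits at the top of the string through $\ad(\cM_X)(E_i)$ precisely because $w_X$ sends $\alpha_i$ to the extreme weight in its $W_X$-orbit within $\alpha_i + \Z_{\geq 0}\Phi_X$. Once $\ad(E_j)(T_{w_X}(E_i)) = 0$ for all $j\in X$ is established, Lemma \ref{EiLem}.(1) gives $T_{w_X}(E_i)\in \ad(\cM_X)(E_i)$, and since this is a nonzero vector of the extremal weight $w_X\alpha_i$ in the irreducible module, it is the highest weight vector. Statement (2) follows the same template with $U^+$ replaced by $S(U^-)$, $E_i$ by $F_iK_i$, $T_{w_X}$ by $T_{w_X}^{-1}$, and highest/lowest interchanged, using Lemma \ref{EiLem}.(2) and the decomposition $S(\Uq^-)\cong V_X^-\ot S(\cM_X^-)$.
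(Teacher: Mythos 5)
Your framing (finite-dimensionality and irreducibility quoted from Subsection \ref{sec:triang-decomp}, $E_i$ as lowest weight vector, reduction of everything to the single claim $\ad(E_j)(T_{w_X}(E_i))=0$ for $j\in X$, and part (2) by the $\sigma$-symmetric argument) matches the paper. But at that single claim -- which is the whole content of the lemma -- your argument has a genuine gap. You split off one braid operator, $T_{w_X}=T_j\circ T_{w'}$ with $w'=r_jw_X$, and want to conclude from an intertwining relation that $\ad(E_j)(T_{w_X}(E_i))$ is proportional to $T_j\big(\ad(F_j)(T_{w'}(E_i))\big)$, so that it remains to show $\ad(F_j)(T_{w'}(E_i))=0$. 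The intertwining relation itself is fine (one checks directly that $T_j(\ad(F_j)(u))=-K_j^{-1}\,\ad(E_j)(T_j(u))\,K_j^{-1}$; it does not come from \cite[37.2.4]{b-Lusztig94}, which concerns $\omega$), but the vanishing $\ad(F_j)(T_{w'}(E_i))=0$ is not established. Your stated reason, a sign condition on the $\alpha_j$-coefficient of $r_jw_X\alpha_i$, is not sound: $r_jw_X\alpha_i$ lies in $Q^+$, and knowing only $(r_jw_X\alpha_i)(h_j)\le 0$ does not force a weight vector of that weight to be killed by $\ad(F_j)$ unless you already know it lies inside the finite-dimensional module $\ad(\cM_X)(E_i)$ -- which is precisely what you are trying to prove. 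So your route either begs the question or requires a further induction on $\ell(w')$ that you do not carry out.

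The paper avoids this entirely by applying the \emph{full} automorphism $T_{w_X}$ to the identity $\ad(F_j)(E_i)=F_jE_iK_j-E_iF_jK_j=0$ and using Lemma \ref{Tw-on-X}: since $T_{w_X}(F_j)=-K_{\tau(j)}^{-1}E_{\tau(j)}$ and $T_{w_X}(K_j)=K_{\tau(j)}^{-1}$, the same conjugation computation you attempted for a single $T_j$ gives
\begin{align*}
0=T_{w_X}\big(\ad(F_j)(E_i)\big)=-K_{\tau(j)}^{-1}\big(\ad(E_{\tau(j)})(T_{w_X}(E_i))\big)K_{\tau(j)}^{-1},
\end{align*}
and because $\tau$ permutes $X$ this yields $\ad(E_k)(T_{w_X}(E_i))=0$ for all $k\in X$ in one stroke, with no induction and no auxiliary vanishing to check; Lemma \ref{EiLem}.(1) then finishes part (1), and the analogous computation with $T_{w_X}^{-1}$ applied to $\ad(E_j)(F_iK_i)=0$, together with the observation $T_{w_X}^{-1}(F_iK_i)\in S(U^-)$, gives part (2). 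Your remark that one must check $T_{w_X}(E_i)\in U^+$ before invoking Lemma \ref{EiLem}.(1) is a fair point (the paper leaves this implicit for part (1) while verifying the analogue for part (2)), but as written your proof of the key vanishing does not go through; replacing your $T_j$-splitting step by the direct $T_{w_X}$-computation repairs it.
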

  \begin{proof}
    For any $i\in I\setminus X$, $j\in X$ one has by Lemma \ref{Tw-on-X} the relation
    \begin{align*}
      0&=T_{w_X}(\ad(F_j)(E_i))=T_{w_X}(F_jE_iK_j-E_iF_jK_j)\\
       &=-K_{\tau(j)}^{-1} \big(E_{\tau(j)}T_{w_X}(E_i) - K_{\tau(j)}
       T_{w_X}(E_i)K_{\tau(j)}^{-1}E_{\tau(j)}  \big) K_{\tau(j)}^{-1}\\
       &=-K_{\tau(j)}^{-1} \big(\ad(E_{\tau(j)})(T_{w_X}(E_i))\big) K_{\tau(j)}^{-1}
    \end{align*}
    and hence $T_{w_X}(E_i)$ is a highest weight vector for the adjoint
    action of $\cM_X$ of weight $w_X\alpha_i$. By Lemma \ref{EiLem}
    one obtains that $T_{w_X}(E_i)$ is a highest weight vector of
    the irreducible $\ad(\cM_X)$-module generated by $E_i$.

    Similarly, for any $i\in I\setminus X$, $j\in X$ one obtains
    \begin{align*}
      0&=T^{-1}_{w_X}(\ad (E_j)(F_iK_i))=-K_{\tau(j)}^{-1}(\ad F_{\tau(j)})(T^{-1}_{w_X}(F_iK_i))K_{\tau(j)}^{-1}.
    \end{align*}
    Moreover, $T_{w_X}^{-1}(F_iK_i)=\sigma\circ T_{w_X}(F_i)K_{w_X\alpha_i}\in S(U^-)$. 
    Hence we may apply Lemma \ref{EiLem} and obtain that $T_{w_X}^{-1}
    (F_iK_i)\in \ad(\cM_X)(F_iK_i)$ is a lowest weight vector.
  \end{proof}

\section{Quantum involutions}\label{sec:qInvolution}
  From now on until the end of Section \ref{sec:specialize} fix an admissible pair $(X,\tau)$. In this section the
  automorphism $\theta(X,\tau)$ defined by \eqref{tauDef} is deformed to an
  automorphism of $\uqg$. We first consider the case
  $X=\emptyset$. Then we use the Lusztig automorphism corresponding to
  the longest word $w_X\in W_X$ to also deform the automorphism $\Ad(m_X)$.
\subsection{The case $X=\emptyset$}
  In this case $\theta(\emptyset, \tau)=\tau\circ \omega$ by
  \eqref{tauDef}. Recall that we use the same symbols $\tau$ and $\omega$ in the
  quantum case. The automorphism $\tau\circ \omega:\uqgp\rightarrow \uqgp$ is a natural quantum analog of $\theta(\emptyset, \tau)$, but to make the definition in this case compatible with the definition for general $X\subset I$ we consider a slightly different map.
  Define an algebra automorphism $\psi:\uqg \rightarrow \uqg$ by
  \begin{align}\label{psi-Def}
     \psi(E_i)&= E_i K_i,& \psi(F_i)&=K_i^{-1} F_i ,& \psi(K_h)&=K_h
  \end{align}
  for all $i\in I$, $h\in P^\vee$. Now define $\theta_q(\emptyset, \tau)=\psi \circ\tau\circ   \omega:\uqgp\rightarrow \uqgp$ as the $q$-deformation of
  $\theta(\emptyset, \tau)$. 
  If the minimal realization of $A$ is compatible with $\tau$ then $\theta_q(\emptyset, \tau)$ also defines an automorphism of $\uqg$. 
\subsection{A $q$-analog of $\mathrm{Ad}(m_X)$}  
  Assume now that $X\neq \emptyset$. Define 
  \begin{align*}
    T_X= T_{w_X}\circ \psi:\uqg\rightarrow \uqg
  \end{align*}
  where $\psi$ is given by \eqref{psi-Def}. The following lemma is a $q$-analogue  of statement (1) and the first part of (3) in Proposition \ref{Rousseau-Prop}. The first statement in the lemma is the reason for introducing the additional isomorphism $\psi$. 
  \begin{lem}\label{q-Rousseau1}
    (1) $ T_X \circ \tau \circ \omega|_{\cM_X}= \mathrm{id}|_{\cM_X}$.\\
    (2) $T_X$ commutes with $\tau$ as an automorphism of $\uqgp$.
  \end{lem}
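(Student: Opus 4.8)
The plan is to prove the two statements by direct computation on the algebra generators, using the explicit formulas for $T_{w_X}$, $\psi$, $\tau$, and $\omega$ assembled in Section~\ref{sec:LusztigBraid}. For part~(1), the algebra $\cM_X$ is generated by $E_i,F_i,K_i^{\pm1}$ for $i\in X$, so it suffices to check the identity on these generators. The key ingredients are Lemma~\ref{Tw-on-X}, which gives the action of $T_{w_X}$ on $E_i,F_i,K_i$ for $i\in X$ in terms of the permutation $\tau$ of $X$ determined by $w_X(\alpha_i)=-\alpha_{\tau(i)}$, together with the explicit formulas \eqref{psi-Def}, \eqref{eq:q-Chevalley}, and \eqref{eq:tau-def}. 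Here the permutation of $X$ built into $w_X$ is precisely $\tau|_X$, since $(X,\tau)$ is admissible and condition \eqref{adm2} says $\tau$ acts on $X$ as $-w_X$; this is what makes the composition $T_X\circ\tau\circ\omega$ collapse to the identity rather than to a diagram automorphism.

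The first step is to trace a single generator through the chain $T_X\circ\tau\circ\omega = T_{w_X}\circ\psi\circ\tau\circ\omega$. For $E_i$ with $i\in X$: apply $\omega$ to get $-F_i$, apply $\tau$ to get $-F_{\tau(i)}$, apply $\psi$ to get $-K_{\tau(i)}^{-1}F_{\tau(i)}$, and finally apply $T_{w_X}$ using Lemma~\ref{Tw-on-X}. Since $\tau$ is an involution on $X$, the permutation $\tau$ appearing in that lemma sends $\tau(i)$ back to $i$, so $T_{w_X}(K_{\tau(i)}^{-1})=K_i$ and $T_{w_X}(F_{\tau(i)})=-K_i^{-1}E_i$; multiplying these gives back $E_i$. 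The same bookkeeping handles $F_i$ and $K_i^{\pm1}$, the latter being essentially immediate since $\psi$, $\omega$, $\tau$ are easy on $\Uq^0{}'$ and $T_{w_X}(K_h)=K_{w_X(h)}$ with $w_X(h_i)=-h_{\tau(i)}$. This is the content of the ``first statement is the reason for introducing $\psi$'' remark: without the $\psi$-twist one would land on $\tau_X\omega_X$ rather than the identity, matching Proposition~\ref{Rousseau-Prop}(1).

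For part~(2), one again checks commutation on the generators $E_i,F_i,K_i^{\pm1}$ of $\uqgp$. Write $T_X\circ\tau = T_{w_X}\circ\psi\circ\tau$; since $\psi$ and $\tau$ manifestly commute (both act diagonally/by relabelling and $\tau$ preserves the data defining $\psi$), it reduces to showing $T_{w_X}\circ\tau=\tau\circ T_{w_X}$ on $\uqgp$. This is the $q$-analog of the fact, from Proposition~\ref{Rousseau-Prop}(3), that $\Ad(m_X)$ commutes with elements of $\Aut(A,X)$. The argument parallels the classical one: for $\tau\in\Aut(A,X)$, a reduced decomposition $w_X=r_{i_1}\cdots r_{i_k}$ yields another reduced decomposition $w_X=r_{\tau(i_1)}\cdots r_{\tau(i_k)}$ (using $\tau(X)=X$ and $\tau(w_X)=w_X$), and since the Lusztig automorphisms satisfy braid relations, $T_{w_X}=T_{\tau(i_1)}\cdots T_{\tau(i_k)}$; combined with the evident identity $\tau\circ T_i=T_{\tau(i)}\circ\tau$ on generators (immediate from \eqref{eq:Ti-rels} and \eqref{eq:tau-def}), one obtains $\tau\circ T_{w_X}=T_{w_X}\circ\tau$.

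The main obstacle is not conceptual but purely a matter of keeping the relabelling by $\tau$ straight through the composition in part~(1) — in particular making sure that the permutation ``$\tau$'' produced by Lemma~\ref{Tw-on-X} genuinely coincides with the diagram automorphism $\tau$ restricted to $X$, which is exactly where admissibility condition \eqref{adm2} enters and must be invoked explicitly. Once that identification is in place, both parts are short direct verifications.
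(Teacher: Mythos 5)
Your proof is correct and follows essentially the same route as the paper: part (1) is the direct generator check via Lemma \ref{Tw-on-X} and \eqref{psi-Def} (which the paper leaves implicit, and where your explicit appeal to admissibility condition (\ref{adm2}) to identify the permutation of Lemma \ref{Tw-on-X} with $\tau|_X$ is exactly the right point to make), and part (2) is the same reduced-decomposition/braid-relation argument combined with $\tau\circ T_i=T_{\tau(i)}\circ\tau$ and the commutation of $\psi$ with $\tau$. The only minor imprecision is that $\tau\circ T_i=T_{\tau(i)}\circ\tau$ is not quite immediate from \eqref{eq:Ti-rels} alone, since one also needs the formulas for $T_i(E_j)$, $T_i(F_j)$ with $j\neq i$; these are covered by \cite[37.1.3]{b-Lusztig94}, which is what the paper cites, and they are manifestly symmetric under diagram automorphisms because $a_{\tau(i)\tau(j)}=a_{ij}$ and $q_{\tau(i)}=q_i$.
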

  \begin{proof}
    The first claim follows immediately from Lemma \ref{Tw-on-X} and
    the definition \eqref{psi-Def} of $\psi$.
    To verify (2), observe that by definition of $T_i$ one has
    \begin{align}\label{tau-comp1}
       \tau(T_i(E_j))=T_{\tau(i)}(E_{\tau(j)})
    \end{align}
    for any $i,j\in I$, see \cite[37.1.3]{b-Lusztig94}. Let $w_X=r_{i_1}\dots r_{i_k}$ be a reduced
    expression. Then the relation $w_X=r_{\tau(i_1)}\dots r_{\tau(i_k)}$ and \eqref{tau-comp1} imply
    $\tau(T_{w_X}(E_j))=T_{w_X}(\tau(E_j))$. Similar relations hold
    for $F_j$ and $K_j$ and hence $T_{w_X}$ commutes with $\tau$. As
    $\psi$ commutes with $\tau$ so does $ T_{w_X}\circ \psi$.
  \end{proof}
  \begin{rema}
    If the minimal realization of $A$ is compatible with $\tau$ then $T_X$ and $\tau$ commute as automorphisms of $\uqg$.
  \end{rema}
\subsection{The quantum involution $\theta_q(X,\tau)$}
We are now in a position to define a quantum analog of the involutive automorphism
$\theta(X,\tau)$ defined by \eqref{tauDef}. Recall the definition of
the element $s(X,\tau)\in \tilde{H}$ from \eqref{sDef} and of the subgroup
$\Ad(\tilde{H})$ of $\Aut(\uqg)$ from Subsection \ref{sec:q-Aut}.  
\begin{defi}\label{defi:q-invol}
 We call the automorphism $\theta_q(X,\tau):\uqgp\rightarrow \uqgp$ defined by
 \begin{align*}
   \theta_q(X,\tau)=\Ad(s(X,\tau))\circ T_X \circ \tau\circ\omega
 \end{align*}
 the quantum involution corresponding to $(X,\tau)$.
\end{defi}
By Lemma \ref{Tw-on-Xc} for any $i\in I\setminus X$ there exist  $r\in \N_0$, monomials 
\begin{align}\label{Z-def}
  Z^-_i(X)&=F_{i_1}\dots F_{i_r}\in \cM^-_X, &
  Z^+_i(X)&=E_{j_1}\dots E_{j_r}\in \cM^+_X, 
\end{align}
and coefficients $a_i^+,$ $a_i^-\in \field(q)$ such that
\begin{align}\label{Z-prop}
  T^{-1}_{w_X}(F_iK_i)&=a_i^-\ad(Z^-_i(X))(F_iK_i),&
  T_{w_X}(E_i)&=a_i^+\ad(Z^+_i(X))(E_i).
\end{align}
By construction the quantum involution $\theta_q(X,\tau)$ is a $\field(q)$-algebra automorphism of $\uqgp$. It is not involutive, but by the following theorem it retains crucial properties of $\theta(X,\tau)$. 
\begin{thm}\label{thm:theta-props}
   The quantum involution $\theta_q(X,\tau)$ has the following properties: 
   
(1) $\theta_q(X,\tau)|_{\cM_X}=\mathrm{id}|_{\cM_X}$.  

(2) For all $\beta\in Q$ one has $\theta_q(X,\tau)(K_\beta)=K_{\Theta(\beta)}$.  

(3) For any $i\in I\setminus X$ there exist $u_i, v_i\in
\field(q)^\times$ such that 
  \begin{align*}
    \theta_q(X,\tau)(E_i)&=-u_i\,\sigma \big(\ad( Z^-_{\tau(i)}(X))(F_{\tau(i)}K_{\tau(i)})\big),\\
    \theta_q(X,\tau)(F_i K_i)&=-v_i\,\ad(Z^+_{\tau(i)}(X))(E_{\tau(i)}).
  \end{align*} 
  
\end{thm}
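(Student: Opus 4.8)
The plan is to verify the three properties of $\theta_q(X,\tau)=\Ad(s(X,\tau))\circ T_X\circ \tau\circ\omega$ one at a time, exploiting that each constituent is understood explicitly. For \textbf{(1)}: Lemma \ref{q-Rousseau1}.(1) already gives $T_X\circ\tau\circ\omega|_{\cM_X}=\mathrm{id}|_{\cM_X}$, so it only remains to check that $\Ad(s(X,\tau))$ acts trivially on $\cM_X$. But $\cM_X$ is $Q$-graded with homogeneous components of weights in $\Z\Pi_X$, and by \eqref{sDef} one has $s(X,\tau)(\alpha_j)=1$ for all $j\in X$; hence $\Ad(s(X,\tau))$ is the identity on each weight space of $\cM_X$, giving $\theta_q(X,\tau)|_{\cM_X}=\mathrm{id}|_{\cM_X}$.

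For \textbf{(2)}: trace $K_\beta$ through the three maps. Since $\omega(K_\beta)=K_{-\beta}$, $\tau(K_{-\beta})=K_{-\tau(\beta)}$ (using \eqref{eq:tau-def} and $\tau(\alpha_i)=\alpha_{\tau(i)}$), and $\psi(K_h)=K_h$, we get $T_X(K_{-\tau(\beta)})=T_{w_X}(K_{-\tau(\beta)})=K_{-w_X\tau(\beta)}=K_{\Theta(\beta)}$ by the identity $T_w(K_\gamma)=K_{w(\gamma)}$ recorded in Subsection \ref{sec:Lusztig} and the definition \eqref{eq:Theta} of $\Theta$. Finally $\Ad(s(X,\tau))$ fixes all $K_h$, so $\theta_q(X,\tau)(K_\beta)=K_{\Theta(\beta)}$.

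For \textbf{(3)}, which I expect to be the main obstacle: compute $\theta_q(X,\tau)(E_i)$ for $i\in I\setminus X$ step by step. First $\omega(E_i)=-F_i$, then $\psi(-F_i)=-K_i^{-1}F_i=-S(F_iK_i)$... more carefully, $\psi(F_i)=K_i^{-1}F_i$, so $T_X(E_i)=T_{w_X}(\psi(\omega^{-1}\text{-image}))$; precisely $T_X\circ\tau\circ\omega(E_i)=T_{w_X}\circ\psi\circ\tau(-F_i)=T_{w_X}\circ\psi(-F_{\tau(i)})=-T_{w_X}(K_{\tau(i)}^{-1}F_{\tau(i)})$. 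Now $K_{\tau(i)}^{-1}F_{\tau(i)}$ differs from $\sigma(F_{\tau(i)}K_{\tau(i)})$ only by the sign/grading bookkeeping of $\sigma$ (the algebra antiautomorphism with $\sigma(F_j)=F_j$, $\sigma(K_h)=K_{-h}$), so $\sigma(F_{\tau(i)}K_{\tau(i)})=K_{\tau(i)}^{-1}F_{\tau(i)}$; hence this equals $-T_{w_X}(\sigma(F_{\tau(i)}K_{\tau(i)}))$. Using \eqref{eq:T-invers}, $T_{w_X}\circ\sigma=\sigma\circ T_{w_X}^{-1}$ (valid for $w_X=w_X^{-1}$ after checking the braid-length bookkeeping as in the proof of Lemma \ref{Tw-on-X}), so this becomes $-\sigma(T_{w_X}^{-1}(F_{\tau(i)}K_{\tau(i)}))$. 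By \eqref{Z-prop} this is $-a_{\tau(i)}^-\,\sigma\big(\ad(Z^-_{\tau(i)}(X))(F_{\tau(i)}K_{\tau(i)})\big)$. Finally apply $\Ad(s(X,\tau))$: since this is a Hopf algebra automorphism commuting appropriately with $\sigma$ and $\ad$ up to scalars (it multiplies each weight vector by a scalar depending on its $Q$-degree), it introduces a nonzero scalar; absorbing all scalars into $u_i\in\field(q)^\times$ gives the first formula. The second formula, for $\theta_q(X,\tau)(F_iK_i)$, follows the same template: $\omega(F_iK_i)=-E_iK_i^{-1}$... one tracks $\omega$, $\tau$, $\psi$, then uses the $T_{w_X}$-image of $E_{\tau(i)}$ from \eqref{Z-prop}, together with part (2) for the $K$-part, and collects scalars into $v_i$. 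The genuine difficulty is the careful sign and weight bookkeeping when commuting $\sigma$, $\ad$, and $\Ad(s(X,\tau))$ past one another and confirming that the scalars produced are indeed nonzero elements of $\field(q)$ — here one invokes that $a_i^{\pm}\in\field(q)^\times$ (Lusztig automorphisms are invertible, so the leading coefficients do not vanish) and that $s(X,\tau)$ takes values in $\field(q)^\times$ (indeed in roots of unity in $\field$).
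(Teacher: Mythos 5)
Your proposal is correct and follows essentially the same route as the paper: (1) via Lemma \ref{q-Rousseau1} together with triviality of $\Ad(s(X,\tau))$ on $\cM_X$, (2) by tracing $K_\beta$ through the factors, and (3) by pushing $E_i$ (resp.\ $F_iK_i$) through $\omega$, $\tau$, $\psi$, $T_{w_X}$ and invoking \eqref{Z-prop}, \eqref{eq:T-invers}, with $\Ad(s(X,\tau))$ contributing only a nonzero scalar. The only cosmetic difference is that you work out the $E_i$ formula in detail (using $T_{w_X}\circ\sigma=\sigma\circ T_{w_X}^{-1}$ from $w_X=w_X^{-1}$) and sketch the $F_iK_i$ case, whereas the paper does the reverse.
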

\begin{proof}
  Claim (1) follows from Lemma \ref{q-Rousseau1} and the fact that
  $\Ad(s(X,\tau))$ restricts to the identity on $\cM_X$. One obtains
  Claim (2) by the following calculation 
  \begin{align*}
    \theta_q(X,\tau)(K_\beta)= T_{w_X}(K^{-1}_{\tau(\beta)})=K^{-1}_{w_X( \tau(\beta))} =
    K_{\Theta(\beta)}. 
  \end{align*}
  To verify Claim (3) observe for $i\in I\setminus X$ the relation 
  \begin{align*}
    \theta_q(X,\tau)(F_i K_i)&\stackrel{\phantom{\eqref{Z-prop}}}{=}
    \Ad(s(X,\tau))\circ T_X\circ \tau\circ\omega( F_i K_i)\\ 
      &\stackrel{\phantom{\eqref{Z-prop}}}{=}\Ad(s(X,\tau))\circ
      T_X(- E_{\tau(i)} K_{\tau(i)}^{-1})\\ 
      &\stackrel{\phantom{\eqref{Z-prop}}}{=}\Ad(s(X,\tau))\circ T_{w_X}(-E_{\tau(i)} )\\ 
      &\stackrel{\eqref{Z-prop}}{=}-a_i^+ \Ad(s(X,\tau))
      \circ\ad(Z^+_{\tau(i)}(X))(E_{\tau(i)}). 
  \end{align*} 
The map $\Ad(s(X,\tau))$ multiplies by a nonzero scalar. This
proves the second formula in (3). The first formula follows
analogously from the first formula in \eqref{Z-prop} and relation 
\eqref{eq:T-invers}.  
\end{proof}
\begin{rema}
  If the minimal realization of $A$ is compatible with $\tau$ then $\theta_q(X,\tau)$ extends to an algebra automorphism of $\uqg$.
\end{rema}
\subsection{Commutation with $\omega$}\label{sec:omega-commute}
By Proposition \ref{Rousseau-Prop}.(3) the automorphism $\Ad(m_X)$ of $\gfrak$ commutes with the Chevalley involution $\omega$ of $\gfrak$. As shown below, however, the automorphism $T_X$ of $\uqg$ does not commute with the Chevalley involution $\omega$ of $\uqg$. Commutation with $\omega$ can be achieved by a slight modification of $T_X$ if on works with $\uqg$ defined over the field $\field(q^{1/2})$. In the following we make this explicit although commutation of $\omega$ and $T_X$ is not necessary for the construction of quantum symmetric pairs. The content of this subsection will not be used in the rest of the paper.  

For any $i \in I$ one has
\begin{align*}
  \omega\circ \psi(E_i)&=q_i^{-2} \psi\circ \omega(E_i),&
  \omega\circ \psi(F_i)&=q_i^{2} \psi\circ \omega(F_i).
\end{align*}
Hence one obtains the relation
\begin{align}\label{nu-commute}
  \psi \circ \omega=\omega\circ \psi \circ \Ad(\nu^2)
\end{align}
where $\nu \in \Htil$ is defined by $\nu(\alpha_i)=q_i$.

By \cite[37.2.4]{b-Lusztig94} one has
\begin{align}\label{eq:Ti-om}
  T_i(\omega(u))= q^{-(\alpha_i,\beta)}\omega (T_i(u)) \qquad \mbox{ for any } i\in I, u\in \uqg_\beta
\end{align}   
where the additional sign appearing in \cite[37.2.4]{b-Lusztig94} is conveniently hidden in our definition of $\omega$. Relation \eqref{eq:Ti-om} implies in particular that
 \begin{align}\label{omega-commute}
      T_w(\omega(u))= \big(\prod_{i\in I}q_i^{m_i}\big)\omega(T_w(u))
    \end{align}
for all $u \in \uqg_\beta$ and $w\beta-\beta=\sum_{i\in I}m_i \alpha_i$, see \cite[8.18(5)]{b-Jantzen96}. 
We abbreviate the above expression in the special case $w=w_X$. For any $i\in I$ define
\begin{align}\label{Q-def}
    Q_{X,i}=\prod_{j\in I} q_{j}^{m_j/2}\in \field(q^{1/2}) 
\end{align}
where $w_X\alpha_i-\alpha_i=\sum_{\alpha_j\in\pi} m_j \alpha_j$. For the rest of this subsection assume that $\uqg$ is defined over the field $\field(q^{1/2})$. Formula \eqref{omega-commute} implies
that
\begin{align}\label{omega-commute-special}
  T_{w_X} \circ \omega=\omega\circ T_{w_X}\circ \Ad(\eta_X^2)
\end{align}
where $\eta_X\in \Htil$ is defined by $\eta_X(\alpha_i)=Q_{X,i}$.
Combining \eqref{nu-commute} and \eqref{omega-commute-special} one obtains
\begin{align}
  \omega\circ\big( T_{w_X} \circ \psi \circ \Ad(\eta_X\nu)\big)
  &\stackrel{\phantom{\eqref{eq:Htil-om}}}{=} T_{w_X} \circ \omega \circ \psi \circ \Ad(\eta_X^{-1}\nu) \nonumber\\
  &\stackrel{\phantom{\eqref{eq:Htil-om}}}{=} T_{w_X}\circ \psi \circ \omega\circ \Ad(\eta_X^{-1}\nu^{-1})\nonumber\\
  &\stackrel{\eqref{eq:Htil-om}}{=}\big( T_{w_X} \circ \psi \circ \Ad(\eta_X\nu)\big)\circ\omega.\label{eq:tXpomega}
\end{align}
Define $T_X'=T_{w_X}\circ\psi\circ \Ad(\eta_X\nu)$. One can now formulate a version of Lemma \ref{q-Rousseau1} which includes commutation with $\omega$.
\begin{lem}
  (1) $ T_X' \circ \tau \circ \omega|_{\cM_X}= \mathrm{id}|_{\cM_X}$.\\
  (2) $T_X'$ commutes with $\tau$ and $\omega$ as an automorphism of $\uqgp$.
\end{lem}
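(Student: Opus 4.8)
The plan is to prove the two statements of the lemma separately, using the work already done. For part (1), I would observe that $\Ad(\eta_X\nu)$ lies in $\Ad(\Htil)$ and therefore acts diagonally on each $\uqgp_\beta$; in particular it restricts to the identity on $\cM_X^0$ and multiplies each Chevalley generator of $\cM_X$ by a scalar. The key point is to show that after composing with $\tau\circ\omega$ and then $T_X = T_{w_X}\circ\psi$, all these scalars cancel. Concretely, I would compute $T_X'\circ\tau\circ\omega$ on $E_i$, $F_i$, $K_i$ for $i\in X$ and compare with the already-established identity $T_X\circ\tau\circ\omega|_{\cM_X}=\id|_{\cM_X}$ from Lemma \ref{q-Rousseau1}(1). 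Since $T_X' = T_X\circ\Ad(\eta_X\nu)$ and $\Ad(\eta_X\nu)$ commutes with everything in sight up to a scalar, the difference between the two composites is an element of $\Ad(\Htil)$; one must then check that this element is trivial on $\cM_X$, which amounts to verifying that $\eta_X(\alpha_i)\nu(\alpha_i) = 1$ for $i\in X$. For $i \in X$ one has $w_X\alpha_i = -\alpha_{\tau(i)}$, so the exponent vector defining $Q_{X,i}$ records $w_X\alpha_i - \alpha_i = -\alpha_{\tau(i)} - \alpha_i$; a short computation with \eqref{Q-def} should give $\eta_X(\alpha_i)\nu(\alpha_i)$ equal to the product of $q_j$ over the relevant indices, and the claim is that this equals $1$ on the nose for $i\in X$ — this is the one genuinely computational point, and it is where I expect the bookkeeping with the precise normalization of $\eta_X$ and $\nu$ to require care.

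For part (2), commutation of $T_X'$ with $\tau$ follows exactly as in Lemma \ref{q-Rousseau1}(2): $T_{w_X}$ and $\psi$ commute with $\tau$, and $\Ad(\eta_X\nu)$ commutes with $\tau$ provided $\eta_X$ and $\nu$ are $\tau$-invariant as characters of $Q$, i.e. $\eta_X(\alpha_{\tau(i)}) = \eta_X(\alpha_i)$ and likewise for $\nu$; the latter is immediate since $\epsilon_{\tau(i)} = \epsilon_i$, and the former follows from $\tau$-invariance of the length function data defining $Q_{X,i}$ (because $w_X = w_X^{-1}$ is itself $\tau$-invariant, $w_X\alpha_{\tau(i)} - \alpha_{\tau(i)} = \tau(w_X\alpha_i - \alpha_i)$). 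Commutation of $T_X'$ with $\omega$ is precisely the content of \eqref{eq:tXpomega}, which was established immediately before the lemma: $\omega\circ T_X' = T_X'\circ\omega$ by the chain of equalities combining \eqref{nu-commute}, \eqref{omega-commute-special}, and \eqref{eq:Htil-om}. So part (2) is essentially a matter of citing \eqref{eq:tXpomega} and re-running the argument of Lemma \ref{q-Rousseau1}(2).

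The main obstacle I anticipate is purely the part (1) scalar cancellation: one must track, for $i\in X$, the combined effect of the twist $\Ad(\eta_X\nu)$ on top of the already-balanced $T_X\circ\tau\circ\omega$, and confirm it is trivial. This reduces to showing $Q_{X,i}\cdot q_i = 1$ for $i\in X$ — equivalently, with $w_X\alpha_i-\alpha_i = \sum_j m_j\alpha_j$, that $\prod_j q_j^{m_j/2} = q_i^{-1}$; since $w_X\alpha_i = -\alpha_{\tau(i)}$ gives $m_j = -\delta_{j,i} - \delta_{j,\tau(i)}$, this becomes $q_i^{-1/2}q_{\tau(i)}^{-1/2} = q_i^{-1}$, i.e. $q_{\tau(i)} = q_i$, which holds because $\tau\in\Aut(A,X)$ preserves the symmetrizing data ($\epsilon_{\tau(i)}=\epsilon_i$). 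With that verified, the rest is routine: write out $T_X'\circ\tau\circ\omega$ on the three families of generators of $\cM_X$ and read off that it is the identity.
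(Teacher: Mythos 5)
Your proposal is correct and follows essentially the same route as the paper: part (1) reduces to the fact that $\Ad(\eta_X\nu)$ restricts to the identity on $\cM_X$ together with Lemma \ref{q-Rousseau1}.(1), part (2) combines Lemma \ref{q-Rousseau1}.(2) with the $\tau$-invariance of the character $\eta_X\nu$ (your check is equivalent to the paper's condition $q_iQ_{X,i}=q_{\tau(i)}Q_{X,\tau(i)}$), and the $\omega$-commutation is exactly \eqref{eq:tXpomega}. The only difference is that you spell out the scalar computation $Q_{X,i}\,q_i=1$ for $i\in X$, which the paper asserts without proof; your computation of it is correct.
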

\begin{proof}
  Property (1) follows from the fact that $\Ad(\eta_X \nu)|_{\cM_X}=\id|_{\cM_X}$ and from Lemma \ref{q-Rousseau1}.(1). Commutation of $\tau$ and $T_X'$ follows from $q_i Q_{X,i}=q_{\tau(i)}Q_{X,\tau(i)}$ and from Lemma \ref{q-Rousseau1}.(2). Commutation of $\omega$ and $T_X'$ was verified in \eqref{eq:tXpomega}. 
\end{proof}
\subsection{References to Letzter's constructions}\label{sec:ref-Letzter-qtheta}
For finite dimensional $\gfrak$, an automorphism $\tilde{\theta}_2$ very similar to $\theta_q(X,\tau)$ was constructed in \cite[Theorem 3.1]{a-Letzter99a}. The definition of $\tilde{\theta}_2$ contains an additional Hopf algebra automorphism $\chi$ which produces an additional parameter. In our setting this parameter will be introduced in the next subsection.  More importantly, the automorphism $\tilde{\theta}_2$ constructed in \cite[Theorem 3.1]{a-Letzter99a} is only a $\field$-algebra automorphism which maps $q$ to $q^{-1}$ and $K_\beta$ to $K_{-\Theta(\beta)}$ for all $\beta\in Q$.

Letzter developed the main body of her theory in \cite{a-Letzter99a} in terms of right coideal subalgebras of $\uqg$. In the subsequent papers \cite{a-Letzter00} and \cite{MSRI-Letzter} she changed conventions to left coideal subalgebras. This is convenient for the study of quantum Harish-Chandra modules as defined in \cite[Definition 3.1]{a-Letzter00} but it comes at the price that the definition of the quantum involutions given in \cite[Theorem 7.1]{MSRI-Letzter} involves the right adjoint action. Moreover, the connection with Lusztig's automorphisms, which was evident in \cite{a-Letzter99a}, only appears implicitly in the later papers, for example in the proof of \cite[Theorem 7.1]{MSRI-Letzter}.
\section{Quantum symmetric pairs}\label{sec:QSP}
To shorten notation we just write $\theta_q$ instead
of $\theta_q(X,\tau)$ to denote the quantum involution associated to
the fixed admissible pair $(X,\tau)$ by Definition \ref{defi:q-invol}. 
Recall that Corollary \ref{cor:U(g)generators} provides a set of generators of
the enveloping algebra $U(\kfrak)$ of the invariant Lie algebra $\kfrak$. Replacing these generators
by suitable elements in $U_q(\gfrak')$ we now define a quantum
analog of $U(\kfrak')$ as a right coideal subalgebra of
$U_q(\gfrak')$. For the elements \eqref{m-gens} and
\eqref{Ut0-gens} this is straightforward. Indeed, the generators $e_i,
f_i$ for $i\in X$ are replaced by $\cM_X$ while the generators $h\in \hfrak'$ with $\theta(h)=h$ are replaced by $K_h$ if $h\in Q^\vee$. For the generators $f_i+\theta(f_i)+s_i$ for $i\in I\setminus X$ from  \eqref{eq:UBis}, however, there exist families of possible $q$-analogs which are given by
\begin{align}\label{eq:Bi-def}
  &B_i= F_i + c_i \theta_q(F_i K_i)K_i^{-1} + s_i K_i^{-1}&&\mbox{for all $i\in I\setminus X$}
\end{align}
for suitable $(c_i)_{i\in I\setminus X}\in (\field(q)^\times)^{I\setminus X}$ and $(s_i)_{i\in I\setminus X}\in \field(q)^{I\setminus X}$. Following \cite[Section 7]{MSRI-Letzter} we will
call a $q$-analog of $U(\kfrak')$ \textit{standard} if $s_i=0$ for all $i\in I\setminus X$. If there are also generators of the form \eqref{eq:Bi-def} with $s_i\neq 0$ then the quantum analog of $U(\kfrak)$ will be called \textit{nonstandard}. In the following subsection we give rigorous definitions of quantum symmetric pairs coideal subalgebras. We then collect properties of their generators which will eventually, in Section \ref{sec:GensRels}, lead to a presentation of quantum symmetric pair coideal subalgebras in terms of generators and relations. 
\subsection{Definition of quantum symmetric pair coideal subalgebras $B_{\uc,\us}$}\label{sec:standard}
Define $Q^{\Theta}=\{\beta \in Q \,|\, \Theta(\beta)=\beta\}$
and let $U^0_\Theta{}'$ denote the subalgebra of $U^0{}'$ generated by the elements $K_\beta$ for all $\beta\in Q^{\Theta}$.
\begin{defi}
  For any $\uc=(c_i)_{i\in I\setminus X}\in(\field(q)^\times)^{I\setminus X}$ and $\us=(s_i)_{i\in I\setminus X}\in \field(q)^{I\setminus X}$ define $B_{\uc,\us}=B_{\uc,\us}(X,\tau)$ to be the subalgebra of $\uqgp$ generated by $\cM_X$, ${U^0_\Theta}{}'$, and the elements \eqref{eq:Bi-def}
for all $i\in I\setminus X$.
\end{defi}
  The algebra $B_{\uc,\us}$ is a quantum analog of $U(\kfrak')$ only for suitable parameters $\uc$ and $\us$. In the remainder of this subsection the necessary restrictions on the parameters will be elaborated. Independently of these restrictions, however, $B_{\uc,\us}$ is always a coideal of $\uqgp$.
\begin{prop}\label{prop:coid}
  Let $\uc\in (\field(q)^\times)^{I\setminus X}$ and $\us\in \field(q)^{I\setminus X}$. Then $B_{\uc,\us}$ is a right coideal subalgebra of $\uqgp$.  
\end{prop}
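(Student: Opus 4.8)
The plan is to check that $B_{\uc,\us}$ is a subalgebra (which is automatic, being generated as a subalgebra) and that it is a \emph{right coideal}, i.e. that $\kow(B_{\uc,\us})\subseteq B_{\uc,\us}\ot \uqgp$. Since $\kow$ is an algebra homomorphism and $B_{\uc,\us}$ is generated by $\cM_X$, by $U^0_\Theta{}'$, and by the elements $B_i$ of \eqref{eq:Bi-def}, it suffices to verify the coideal condition on each of these generators. For $\cM_X$ this is immediate: $\cM_X$ is a Hopf subalgebra of $\uqgp$ by \eqref{eq:E-copr}--\eqref{eq:K-copr}, so $\kow(\cM_X)\subseteq \cM_X\ot\cM_X\subseteq B_{\uc,\us}\ot\uqgp$. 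For $U^0_\Theta{}'$, each $K_\beta$ with $\beta\in Q^\Theta$ satisfies $\kow(K_\beta)=K_\beta\ot K_\beta$, which again lies in $B_{\uc,\us}\ot\uqgp$. So the whole content is in the generators $B_i$, $i\in I\setminus X$.

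\medskip
First I would record what $\kow$ does to each of the three summands of $B_i=F_i+c_i\,\theta_q(F_iK_i)K_i^{-1}+s_iK_i^{-1}$. The term $F_i$ contributes $F_i\ot K_i^{-1}+1\ot F_i$ by \eqref{eq:F-copr}, and the term $s_iK_i^{-1}$ contributes $s_i K_i^{-1}\ot K_i^{-1}$. The real work is the middle term. By Theorem \ref{thm:theta-props}.(3), $\theta_q(F_iK_i)=-v_i\,\ad(Z^+_{\tau(i)}(X))(E_{\tau(i)})$, an element of $\cM_X^+\,U^+\,\cM_X^0$ (a product of $E$'s and $K$'s with $K$-weight $\Theta(\alpha_i)$, hence central weight $\alpha_i$ since $\Theta$ fixes the $K$-degree class back to $\alpha_i$); multiplying by $K_i^{-1}$ gives an element of weight $\alpha_i-\alpha_i=0$ under $\ad$, so $\theta_q(F_iK_i)K_i^{-1}\in\cM_X\cdot\uqgp$ has trivial $Q$-grading. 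The point I would establish is that $\kow\big(\theta_q(F_iK_i)K_i^{-1}\big)$ has the form $\theta_q(F_iK_i)K_i^{-1}\ot K_i^{-1}\;+\;1\ot \theta_q(F_iK_i)K_i^{-1}\;+\;(\text{terms in }\cM_X\ot\uqgp)$, using that $\theta_q(F_iK_i)$ is a sum of products of $E_j$ ($j\in X$), $E_{\tau(i)}$, and $K$'s, that $\tau$ fixes $X$ setwise, and that each $E_j$ with $j\in X$ is $\cM_X$-primitive in the sense $\kow(E_j)\in E_j\ot\cM_X^0+\cM_X\ot E_j\subseteq\cM_X\ot\cM_X$. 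Concretely, writing $\theta_q(F_iK_i)$ as a linear combination of monomials $m'E_{\tau(i)}m''$ with $m',m''\in\cM_X$, the coproduct of each such monomial, after multiplying by $K_i^{-1}$ and sorting terms by their second tensor factor, has exactly one summand whose second leg is a ``full'' copy — namely $1\ot m'E_{\tau(i)}m''K_i^{-1}$ — one summand $m'E_{\tau(i)}m''K_i^{-1}\ot K_i^{-1}$, and all remaining summands with a proper $\cM_X$-piece in the first leg; this is where one uses $\kow(\cM_X)\subseteq\cM_X\ot\cM_X$ and $\kow(K_i^{-1})=K_i^{-1}\ot K_i^{-1}$ together with $K_i\beta\in Q^\Theta$ adjustments to keep the left leg inside $B_{\uc,\us}$.

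\medskip
Putting the three pieces together and using $c_iK_i^{-1}\theta_q(F_iK_i)$-type rearrangements, I would collect the ``first leg $=1$'' summands into $1\ot B_i$, the ``second leg $=K_i^{-1}$'' summands into $B_i\ot K_i^{-1}$ (this uses that $F_i$, $c_i\theta_q(F_iK_i)K_i^{-1}$, and $s_iK_i^{-1}$ all have $\kow$ contributing the same $\ot K_i^{-1}$-type term, so their sum reassembles $B_i$), and observe that every remaining summand has its first tensor factor in $\cM_X\subseteq B_{\uc,\us}$. Hence $\kow(B_i)\in B_i\ot K_i^{-1}+1\ot B_i+\cM_X\ot\uqgp\subseteq B_{\uc,\us}\ot\uqgp$, which is the coideal property.

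\medskip
\textbf{Main obstacle.} The bookkeeping in the middle term is the only nontrivial point: one must be sure that when $\kow$ is applied to the (possibly long) monomial expression for $\theta_q(F_iK_i)K_i^{-1}$ and the result is reorganized so that the second tensor legs are $1$, $K_i^{-1}$, or genuinely ``larger'', the first tensor legs of all the leftover terms land in $\cM_X$ and not merely in $\uqgp$. This is exactly controlled by Theorem \ref{thm:theta-props}.(3) — which pins $\theta_q(F_iK_i)$ inside $\cM_X^+E_{\tau(i)}\cM_X^+\cdot\cM_X^0$ — together with the Hopf-subalgebra property of $\cM_X$ and the fact that $\tau$ preserves $X$, so that nothing of higher $\uqgp$-degree than $E_{\tau(i)}$ ever escapes the $\cM_X$-part. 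Once that placement is clear, the argument is a direct computation with coproducts.
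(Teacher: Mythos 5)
Your overall strategy is the same as the paper's: reduce to the generators $B_i$, control $\theta_q(F_iK_i)$ via Theorem \ref{thm:theta-props}.(3), expand the coproduct, and reassemble the ``$\ot K_i^{-1}$'' column into $B_i\ot K_i^{-1}$. However, the key bookkeeping step is carried out incorrectly. The summand of $\kow\big(\theta_q(F_iK_i)K_i^{-1}\big)$ whose second leg is the full element does \emph{not} have first leg $1$: since $\theta_q(F_iK_i)$ is homogeneous of weight $w_X\alpha_{\tau(i)}$ (not of weight $\alpha_i$, and the middle term of $B_i$ is not of weight $0$ as you assert), that summand has first leg $K_{w_X\alpha_{\tau(i)}}K_i^{-1}$. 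Consequently $\kow(B_i)$ contains no term $1\ot B_i$ (compare Lemma \ref{lem:fo} and Remark \ref{rem:s-indep}), and your final containment $\kow(B_i)\in B_i\ot K_i^{-1}+1\ot B_i+\cM_X\ot\uqgp$ is false whenever $\tau(i)\neq i$: the leftover first legs carry Cartan factors such as $K_{\tau(i)}K_i^{-1}$ and $K_{w_X\alpha_{\tau(i)}-\alpha_i}$, which do not lie in $\cM_X$. They do lie in $U^0_\Theta{}'$, but that is exactly the point requiring an argument: one needs $\alpha_i-\alpha_{\tau(i)}\in Q^\Theta$ (which follows from $\alpha_i-w_X\alpha_i\in\Z X$ and $\Theta|_{\Z X}=\mathrm{id}$) together with $\Z X\subseteq Q^\Theta$; your phrase about ``$K_i\beta\in Q^\Theta$ adjustments'' does not supply this, and without it the first legs are not known to lie in $B_{\uc,\us}$.

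There is a second gap in the monomial-by-monomial expansion itself. For a single monomial $m'E_{\tau(i)}m''$ the coproduct produces summands with $E_{\tau(i)}$ in the first leg and a nontrivial element of $\cM_X^+$ in the second leg (for instance $K_jE_{\tau(i)}\ot E_j$ from $E_jE_{\tau(i)}$), and such first legs are not in $B_{\uc,\us}$; these terms cancel only in the specific combination $\ad(Z^+_{\tau(i)}(X))(E_{\tau(i)})$. This is precisely why the paper does not expand monomials but applies the coproduct identity for the adjoint action, obtaining \eqref{eq:kowZi}, namely that $\kow\big(\ad(Z_{\tau(i)}^+(X))(E_{\tau(i)})\big)-\ad(Z_{\tau(i)}^+(X))(E_{\tau(i)})\ot 1$ lies in $\cM^+_X K_{\tau(i)}\ot\uqgp$, and then concludes $\kow(B_i)-B_i\ot K_i^{-1}\in \cM^+_X U^0_\Theta{}'\ot\uqgp$ as in \eqref{eq:Bi-kow}. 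With that identity and with $K_{\tau(i)}K_i^{-1}\in U^0_\Theta{}'$ your assembly goes through; as written, the proposal does not establish the coideal property.
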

\begin{proof}
  Clearly, $\cM_X$ and $U^0_\Theta{}'$ are Hopf subalgebras of $\uqgp$. It remains to show that 
  \begin{align}\label{eq:kowBigoal}
    \kow(B_i)\in B_{\uc,\us}\ot \uqgp 
  \end{align} 
   for all $i\in I\setminus X$. Theorem \ref{thm:theta-props} implies that there exists $v_i\in \field(q)^\times$ such that
  \begin{align}\label{eq:BiZi}
    B_i = F_i - c_i v_i\, \ad(Z_{\tau(i)}^+(X))(E_{\tau(i)}) K_i^{-1} + s_i K_i^{-1}.
  \end{align}
  If one applies the relation 
  \begin{align*}
    \kow(\ad(x)(u)) = x_{(1)} u_{(1)} S(u_{(3)}) \ot \ad(x_{(2)})(u_{(2)})
  \end{align*}
  to $x=Z^+_{\tau(i)}(X)$ and $u=E_{\tau(i)}$ then one obtains in view of Equation \eqref{eq:E-copr} the relation
  \begin{align}\label{eq:kowZi}
    \kow\big(\ad(Z_{\tau(i)}^+(X))(E_{\tau(i)})\big) - \ad(Z_{\tau(i)}^+(X))(E_{\tau(i)})\ot 1 \in \cM^+_X K_{\tau(i)}\ot \uqgp.
  \end{align}
  Relations  \eqref{eq:BiZi} and \eqref{eq:kowZi} together imply that
  \begin{align}\label{eq:Bi-kow}
    \kow(B_i)-B_i\ot K_i^{-1}\in \cM^+_X {U^0_\Theta}' \ot \uqgp
  \end{align}
  and hence relation \eqref{eq:kowBigoal} holds for all $i\in I\setminus X$.
\end{proof}
A desirable condition for $B_{\uc,\us}$ to qualify as a quantum analog of $U(\kfrak')$ is that $B_{\uc,\us}\cap U^0{}'=U^0_\Theta{}'$. Lemmas \ref{lem:towards-standard} and \ref{lem:towards-nonstandard} below impose restrictions on the parameters $\uc$ and $\us$ for which this condition is satisfied.
\begin{lem}\label{lem:cond2}
  Let $i\in I\setminus X$ such that $\tau(i)\neq i$ and
  $(\alpha_i,\Theta(\alpha_i))=0$. Then
  $(\alpha_i,\alpha_{\tau(i)})=0$ and 
  $\Theta(\alpha_i)=-\alpha_{\tau(i)}$.  
\end{lem}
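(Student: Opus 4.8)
The plan is to unwind the definition $\Theta(\alpha_i) = -w_X\tau(\alpha_i)$ and use the geometry of the bilinear form. Since $\tau(i)\in I\setminus X$ and $\tau$ permutes the simple roots, $\tau(\alpha_i) = \alpha_{\tau(i)}$. Now $w_X$ fixes the root lattice generated by $X$ setwise (it permutes $\Phi_X$) and for $j\notin X$ one has $w_X(\alpha_j) = \alpha_j - \gamma_j$ with $\gamma_j\in \sum_{k\in X}\N_0\alpha_k$; more precisely $w_X(\alpha_j)$ is again a root, positive except for its $\alpha_j$-coefficient being $1$. So write $\Theta(\alpha_i) = -\alpha_{\tau(i)} + \gamma$ where $\gamma = \gamma_{\tau(i)} \in \sum_{k\in X}\N_0\alpha_k$, and the goal is to show $\gamma = 0$.

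First I would compute $(\alpha_i,\Theta(\alpha_i))$ using the hypothesis that it vanishes. By \eqref{eq:invTheta} the form is $\Theta$-invariant, so $(\alpha_i,\alpha_i) = (\Theta(\alpha_i),\Theta(\alpha_i))$. Expanding $\Theta(\alpha_i) = -\alpha_{\tau(i)} + \gamma$ gives
\begin{align*}
  (\alpha_i,\alpha_i) = (\alpha_{\tau(i)},\alpha_{\tau(i)}) - 2(\alpha_{\tau(i)},\gamma) + (\gamma,\gamma).
\end{align*}
Since $\tau\in\Aut(A)$ preserves the form on simple roots (indeed $\epsilon_{\tau(i)} = \epsilon_i$ because $\tau$ is a diagram automorphism), $(\alpha_i,\alpha_i) = (\alpha_{\tau(i)},\alpha_{\tau(i)})$, so I get $(\gamma,\gamma) = 2(\alpha_{\tau(i)},\gamma)$. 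Next I would bring in the assumption $(\alpha_i,\Theta(\alpha_i)) = 0$, i.e. $(\alpha_i, -\alpha_{\tau(i)} + \gamma) = 0$, which reads $(\alpha_i,\gamma) = (\alpha_i,\alpha_{\tau(i)})$. I also want to observe that $i\notin X$ means $(\alpha_i,\alpha_k)\le 0$ for all $k\in X$, hence $(\alpha_i,\gamma)\le 0$; similarly $(\alpha_{\tau(i)},\alpha_k)\le 0$ for $k\in X$ gives $(\alpha_{\tau(i)},\gamma)\le 0$. Combined with $(\gamma,\gamma) = 2(\alpha_{\tau(i)},\gamma)\le 0$ and positive-definiteness of the form restricted to the finite-type sub-root-system $\Phi_X$ (here the finite type of $X$ is essential), $(\gamma,\gamma)\ge 0$ forces $(\gamma,\gamma) = 0$ and therefore $\gamma = 0$.

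Once $\gamma = 0$, we have $\Theta(\alpha_i) = -\alpha_{\tau(i)}$, which is the second assertion. The first assertion then follows: $(\alpha_i,\alpha_{\tau(i)}) = -(\alpha_i,\Theta(\alpha_i)) = 0$. I should double-check the edge case where $\tau(i)$ could lie among the indices $j$ with $\tau(j)=j$ or interact with $X$ — but $\tau(i)\in I\setminus X$ since $\tau$ preserves $X$, and $\tau(i)\ne i$ is given, so no degeneracy arises.

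The main obstacle I anticipate is justifying rigorously that $w_X(\alpha_{\tau(i)}) + \alpha_{\tau(i)} \in \sum_{k\in X}\N_0\alpha_k$ with the correct sign, i.e. pinning down exactly the form of $w_X$ applied to a simple root outside $X$; this is standard (it follows from $w_X$ being the longest element of the finite parabolic $W_X$, so it sends $\Phi_X^+$ to $\Phi_X^-$ and preserves the span complement appropriately), but it needs to be invoked carefully. The positive-definiteness of $(\cdot,\cdot)$ on the real span of $\{\alpha_k : k\in X\}$ — valid precisely because $X$ is of finite type — is the other key input that makes the norm argument go through.
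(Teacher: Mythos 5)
Your argument breaks at the very point you flagged as the anticipated obstacle: the sign in the decomposition of $w_X\alpha_{\tau(i)}$. For $j\notin X$ the longest element of $W_X$ sends $\alpha_j$ to a \emph{positive} root obtained by \emph{adding} simple roots from $X$: $w_X(\alpha_j)=\alpha_j+\beta_j$ with $\beta_j\in\sum_{k\in X}\N_0\alpha_k$ (e.g.\ for $X=\{2\}$ in type $A_3$, $r_2(\alpha_1)=\alpha_1-\alpha_1(h_2)\alpha_2=\alpha_1+\alpha_2$). Hence $\Theta(\alpha_i)=-w_X\alpha_{\tau(i)}=-\alpha_{\tau(i)}-\beta$ with $\beta\in\sum_{k\in X}\N_0\alpha_k$, so your $\gamma$ equals $-\beta$ and is a \emph{nonpositive} combination of the $\alpha_k$, $k\in X$. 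This reverses the two inequalities you need: $(\alpha_i,\gamma)\ge 0$ and $(\alpha_{\tau(i)},\gamma)\ge 0$. With the correct sign your key identity $(\gamma,\gamma)=2(\alpha_{\tau(i)},\gamma)$ relates two nonnegative quantities and forces nothing; indeed it could not, since $(\alpha_i,\alpha_i)=(\Theta\alpha_i,\Theta\alpha_i)$ is automatic — $\Theta=-w_X\tau$ is an isometry, which is exactly \eqref{eq:invTheta} — so the norm comparison carries no information at all. The only apparent force in your step ``$(\gamma,\gamma)\le 0$'' comes from the sign slip.

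What your computation does salvage, once the sign is fixed, is the first assertion: the hypothesis reads $0=(\alpha_i,\Theta\alpha_i)=-(\alpha_i,\alpha_{\tau(i)})-(\alpha_i,\beta)$, and since $i\notin X$ and $i\neq\tau(i)$ both summands are $\ge 0$, so $(\alpha_i,\alpha_{\tau(i)})=0$ and $(\alpha_i,\beta)=0$. This is how the paper gets the first claim, but it does not yield $\beta=0$, i.e.\ the second claim, from the hypothesis at $i$ alone. The missing idea is to bring in the same vanishing at $\tau(i)$: by $\tau$- and $W$-invariance of the form one has $(\alpha_{\tau(i)},\Theta\alpha_{\tau(i)})=(\alpha_i,\Theta\alpha_i)=0$. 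Since $\Theta$ fixes $\Z X$ pointwise and $\Theta^2=\id$, the element $\alpha_i+\Theta(\alpha_{\tau(i)})=\alpha_i-w_X\alpha_i$ lies in $\Z X$ and equals $\alpha_{\tau(i)}+\Theta(\alpha_i)$; expanding $\bigl(\alpha_{\tau(i)}+\Theta(\alpha_i),\alpha_i+\Theta(\alpha_{\tau(i)})\bigr)$ and using the four vanishing pairings shows this element of $\Z X$ has norm zero, and positive definiteness of the form on the finite-type span of $X$ — the one input you identified correctly — then gives $\Theta(\alpha_i)=-\alpha_{\tau(i)}$. So your overall strategy (decompose, compare norms, use definiteness on $\Z X$) is close in spirit, but as written it rests on a false decomposition, and repairing it requires the additional symmetric use of the hypothesis at $\tau(i)$ rather than the norm identity.
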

\begin{proof}
  As $\tau(i)\neq i$ and $(\alpha_i,\alpha_j)\le 0$ for all $j\neq i$
  the relation
  $(\alpha_i,-w_X\alpha_{\tau(i)})=(\alpha_i,\Theta(\alpha_i))=0$
  implies that
  \begin{align}\label{eq:aiati}
    (\alpha_i,\alpha_{\tau(i)})=0.
  \end{align}
  Moreover, $ \alpha_i+\Theta(\alpha_{\tau(i)})=\alpha_i-w_X\alpha_i\in \Z X$
  and hence 
  \begin{align}\label{eq:tau-inv}
     \alpha_i+\Theta(\alpha_{\tau(i)})=\Theta(\alpha_i+\Theta(\alpha_{\tau(i)}))
     =\alpha_{\tau(i)}+\Theta(\alpha_i). 
  \end{align}
  Equations \eqref{eq:aiati}, \eqref{eq:tau-inv}, and the assumption
  $(\alpha_{\tau(i)},\Theta(\alpha_{\tau(i)}))=(\alpha_i,\Theta(\alpha_i))=0$ imply
  \begin{align*}
    (\alpha_{\tau(i)}+\Theta(\alpha_i),\alpha_{\tau(i)}+\Theta(\alpha_i))
      =(\alpha_{\tau(i)}+\Theta(\alpha_i),\alpha_i+\Theta(\alpha_{\tau(i)}))=0
  \end{align*}
  and hence $\Theta(\alpha_i)=-\alpha_{\tau(i)}$. 
\end{proof}
\begin{lem}\label{lem:towards-standard}
  Let $\cB$ be a right coideal subalgebra of $\uqg$. Let $i,j\in I$ such
  that $(\alpha_i,\alpha_j)=0$, $(\alpha_i,\alpha_i)=(\alpha_j,\alpha_j)$, and 
  \begin{align*}
    F_i - c_i E_j K_i^{-1}\in \cB, \quad 
    F_j - c_j E_i K_j^{-1}\in \cB
  \end{align*}
  for some $c_i,c_j\in \field(q)^\times$. If $c_i\neq c_j$ then
  $(K_iK_j)^{-1}\in \cB$.
\end{lem}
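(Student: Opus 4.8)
The plan is to combine closure of $\cB$ under multiplication with the right coideal property. Write $B_i := F_i - c_i E_j K_i^{-1}$ and $B_j := F_j - c_j E_i K_j^{-1}$ for the two given elements of $\cB$. First I would record the consequences of the hypotheses: from $(\alpha_i,\alpha_j) = \epsilon_i a_{ij} = 0$ we get $a_{ij} = a_{ji} = 0$, so $E_i$ commutes with $E_j$, $F_i$ with $F_j$, $E_i$ with $F_j$, $E_j$ with $F_i$, and $K_i^{\pm 1}$ with $E_j$ and $F_j$ (and $K_j^{\pm 1}$ with $E_i, F_i$); and from $(\alpha_i,\alpha_i) = 2\epsilon_i = 2\epsilon_j = (\alpha_j,\alpha_j)$ we get $\epsilon_i = \epsilon_j$, hence $q_i = q_j$. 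Note $i\neq j$ since $(\alpha_i,\alpha_i)\neq 0$.

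Next I would compute the commutator $[B_i,B_j] = B_iB_j - B_jB_i$, which lies in $\cB$ because $\cB$ is a subalgebra. Expanding and using the above commutation relations together with $E_kF_k - F_kE_k = (K_k - K_k^{-1})/(q_k - q_k^{-1})$ and $K_i^{-1}E_i = q_i^{-2}E_iK_i^{-1}$, one finds that $[F_i,F_j] = 0$, that the term $[E_jK_i^{-1},E_iK_j^{-1}]$ is a scalar multiple of $q_i^{-2}-q_j^{-2} = 0$ and hence vanishes, and that the two remaining cross terms recombine into
\[
  [B_i,B_j] = \frac{1}{q_i - q_i^{-1}}\Bigl( c_j\,K_iK_j^{-1} - c_i\,K_i^{-1}K_j + (c_i - c_j)(K_iK_j)^{-1} \Bigr).
\]
Multiplying by $q_i - q_i^{-1}\in\field(q)^\times$, the element $C := c_j K_iK_j^{-1} - c_i K_i^{-1}K_j + (c_i - c_j)(K_iK_j)^{-1}$ lies in $\cB$.

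Finally I would extract $(K_iK_j)^{-1}$ from $C$ using the coideal axiom. Under the isomorphism $U^0{}' \cong \field(q)[Q]$ sending $\alpha_k \mapsto K_k$, the three monomials $K_iK_j^{-1}$, $K_i^{-1}K_j$, $(K_iK_j)^{-1}$ correspond to the pairwise distinct elements $\alpha_i - \alpha_j$, $\alpha_j - \alpha_i$, $-\alpha_i - \alpha_j$ of $Q$ (distinct since $\alpha_i,\alpha_j$ are distinct members of the linearly independent set $\Pi$), so they are linearly independent in $\uqg$. Choose a $\field(q)$-linear functional $\phi$ on $\uqg$ with $\phi((K_iK_j)^{-1}) = 1$ and $\phi(K_iK_j^{-1}) = \phi(K_i^{-1}K_j) = 0$. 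Since each of these monomials is group-like, $\kow(C)$ is the sum of the three corresponding terms $x \otimes x$, so $(\id \otimes \phi)(\kow(C)) = (c_i - c_j)(K_iK_j)^{-1}$. By the right coideal property $\kow(C) \in \cB \otimes \uqg$, and $\id \otimes \phi$ maps $\cB \otimes \uqg$ into $\cB$; hence $(c_i - c_j)(K_iK_j)^{-1} \in \cB$, and since $c_i \neq c_j$ we conclude $(K_iK_j)^{-1} \in \cB$.

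I expect the commutator computation to be the one delicate point: it is exactly the equality $q_i = q_j$, guaranteed by $(\alpha_i,\alpha_i)=(\alpha_j,\alpha_j)$, that kills the $E$-bilinear term and makes the coefficient of $(K_iK_j)^{-1}$ equal to $(c_i-c_j)/(q_i-q_i^{-1})$ — so that the hypothesis $c_i\neq c_j$ really is what is needed. The subsequent extraction is then a formal consequence of the coideal axiom and the linear independence of the group-like elements involved.
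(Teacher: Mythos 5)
Your proof is correct and follows essentially the same route as the paper: you compute the commutator $[B_i,B_j]$, which agrees with the paper's relation \eqref{eq:BiBj-com}, and then extract $(K_iK_j)^{-1}$ by applying the coproduct and using the coideal property together with the linear independence of the group-like elements $K_\beta$. The paper merely states this last step as ``applying the coproduct to the right hand side''; your version spells out both the cancellation forced by $q_i=q_j$ and the separation argument, and is a faithful elaboration of the paper's proof.
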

\begin{proof}
  The claim follows from the relation
  \begin{align}
      [F_i - c_i E_j K_i^{-1}&, F_j - c_j E_i K_j^{-1}]\nonumber\\
        &= - c_i\frac{K_j-K_j^{-1}}{q_j-q_j^{-1}}K_i^{-1} + c_j
        \frac{K_i-K_i^{-1}}{q_i-q_i^{-1}} K_j^{-1}  \label{eq:BiBj-com}
  \end{align}
  by applying the coproduct to the right hand side.
\end{proof}
By the above lemma and Lemma \ref{lem:cond2} the condition $B_{\uc,\us}\cap U^0{}'=U^0_\Theta{}'$ can only be satisfied if the parameter $\uc$ is contained in the set
\begin{align}\label{eq:C-def}
  \cC=\{\uc\in (\field(q)^\times)^{I\setminus X}\,|\, \mbox{$c_i=c_{\tau(i)}$ if
    $\tau(i)\neq i$ and $(\alpha_i,\Theta(\alpha_i))=0$} \}.
\end{align}
The parameters $\us$ are also subject to restrictions. Define
\begin{align}\label{eq:Ins}
  I_{ns}=\{i\in I\setminus X\,|\,\tau(i)=i \mbox{ and } \alpha_i(h_j)=0\, \forall j\in X\}.
\end{align}
By the following lemma it is only reasonable to allow $s_i\neq 0$ for $i\in I_{ns}$.
\begin{lem}\label{lem:towards-nonstandard}
  Let $\cB\subseteq \uqgp$ be a subalgebra which contains $U^0_\Theta{}'$ and the element $B_{i}$ defined by \eqref{eq:Bi-def} for some $c_i,s_i\in \field(q)^\times$, $i\in I\setminus X$. If $i\notin I_{ns}$ then $K_i^{-1}\in \cB$.  
\end{lem}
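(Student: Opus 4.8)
The plan is to show that whenever $i \in I \setminus X$ fails the defining conditions of $I_{ns}$ — that is, either $\tau(i) \neq i$, or there is some $j \in X$ with $\alpha_i(h_j) \neq 0$ — the commutator or bracket structure of $\cB$ forces $K_i^{-1}$ into $\cB$. The key observation is that $\cB$ contains $B_i = F_i + c_i \theta_q(F_iK_i)K_i^{-1} + s_i K_i^{-1}$ together with the whole of $U^0_\Theta{}'$ and $\cM_X$, and that the "extra" constant term $s_iK_i^{-1}$ is the only obstruction to $B_i$ being a genuine triangular-type element. I will treat the two failure cases separately.

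First I would handle the case where there exists $j \in X$ with $\alpha_i(h_j) \neq 0$. Here I would compute the commutator $[E_j, B_i]$ or $[F_j, B_i]$ using that $E_j, F_j, K_j^{\pm 1} \in \cM_X \subseteq \cB$. By Theorem \ref{thm:theta-props}(3) the middle term $\theta_q(F_iK_i)K_i^{-1}$ lies in $\cM_X^+\, U^0_\Theta{}'\, K_i^{-1}$ (up to scalar it equals $-c_iv_i\,\ad(Z^+_{\tau(i)}(X))(E_{\tau(i)})K_i^{-1}$), so brackets with $\cM_X$ stay manageable. The term $s_iK_i^{-1}$ commutes with $E_j$ only up to the scalar $q^{-\alpha_i(h_j)\epsilon_j} \neq 1$ when $\alpha_i(h_j)\neq 0$, producing a nonzero multiple of $s_i E_j K_i^{-1}$; after subtracting off the contributions coming from $F_i$ and the middle term — which land in $\cM_X\, U^0{}'$-controlled pieces — one extracts $s_i E_j K_i^{-1}$, hence $E_jK_i^{-1}$, and then a further bracket with $F_j$ against $\cM_X$ recovers $K_i^{-1}$. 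Alternatively, and more cleanly, I would note that $\cB$ being an algebra lets one iterate: the commutator of $B_i$ with a suitable element of $\cM_X$ lowers the degree and strips away the $F_i$-component, isolating $s_i E_j K_i^{-1}$ plus lower terms; since $s_i \neq 0$, one then divides out and applies $\cM_X$ once more.

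Second, the case $\tau(i) \neq i$ (with $i \in I\setminus X$, $s_i \neq 0$): here I would use the coproduct. We have $K_i \neq K_{\tau(i)}$ as weights, so $\Theta(\alpha_i) = -w_X\tau(\alpha_i) \neq \alpha_i$; in particular $B_i$ is not a $U^0_\Theta{}'$-weight vector in the naive sense, but more to the point, applying $\kow$ to $B_i$ and using the coideal property (Proposition \ref{prop:coid}, specifically \eqref{eq:Bi-kow}) together with the fact that $\cM_X$ and $U^0_\Theta{}'$ are Hopf subalgebras, I would isolate the grouplike contribution. Concretely, since the $F_i$-part has $\kow(F_i) = F_i \ot K_i^{-1} + 1 \ot F_i$ and the middle part lies in $\cM_X^+ U^0_\Theta{}' \ot \uqgp$, applying $(\id \ot \vep)$ or projecting onto an appropriate component of the tensor factors extracts $s_iK_i^{-1}$ modulo pieces already known to be in $\cB$; dividing by $s_i \neq 0$ yields $K_i^{-1} \in \cB$. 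The main obstacle is bookkeeping: one must verify that all the "error terms" produced by these commutators or coproduct manipulations genuinely lie in the subalgebra generated by $\cM_X$, $U^0_\Theta{}'$, and $B_i$ — i.e. that nothing escapes $\cB$ — which requires careful use of the explicit description of $\theta_q(F_iK_i)K_i^{-1}$ from Theorem \ref{thm:theta-props}(3) and of the fact that $\ad(Z^+_{\tau(i)}(X))(E_{\tau(i)}) \in \cM_X^+ K_i^{-1} E_{\tau(i)}$-type monomials interact predictably with $\cM_X$.

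Overall I expect the argument to be short once the right commutator is chosen, with the crux being the identification of which element of $\cM_X$ (or which tensor-component projection) cleanly separates the constant term $s_iK_i^{-1}$ from the genuinely "off-diagonal" parts $F_i$ and $\theta_q(F_iK_i)K_i^{-1}$. The hypothesis $i \notin I_{ns}$ is exactly what guarantees such a separating operation exists: if $\tau(i)\neq i$ the weights $\alpha_i$ and $\alpha_{\tau(i)}$ differ, and if some $\alpha_i(h_j)\neq 0$ then conjugation by $K_j \in \cM_X$ acts nontrivially — either way one gets a scalar $\neq 1$ to pivot on.
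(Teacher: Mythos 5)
Your proposal has a genuine gap: it uses hypotheses the lemma does not grant. The lemma only assumes that the subalgebra $\cB$ contains $U^0_\Theta{}'$ and the single element $B_i$; it does \emph{not} assume $\cM_X\subseteq\cB$, nor that $\cB$ is a coideal. Your first case is built on bracketing $B_i$ against $E_j,F_j\in\cM_X$, and your second case on applying $\kow$ and the coideal property \eqref{eq:Bi-kow} (which concerns $B_{\uc,\us}$, not an arbitrary $\cB$) and then projecting onto a tensor component -- there is no reason such a projection of $\kow(B_i)$ lands back in $\cB$. Even if one added $\cM_X\subseteq\cB$ to the hypotheses, the case-1 extraction is not carried through: from $[E_j,B_i]$ you indeed get a nonzero multiple of $E_jK_i^{-1}$ (the $F_i$-term and the $\theta_q$-term drop out since $(\alpha_j,\alpha_i)=(\alpha_j,\Theta(\alpha_i))$ for $j\in X$), but ``dividing out'' $E_j$ is not an algebra operation, and $[F_j,E_jK_i^{-1}]$ only produces elements of the form $(K_j-K_j^{-1})K_i^{-1}$ plus $E_jF_jK_i^{-1}$-type terms, from which $K_i^{-1}$ is not obviously recovered.

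The paper's proof uses only what the hypotheses provide, namely conjugation by grouplike elements of $U^0_\Theta{}'$ -- an idea your closing paragraph gestures at but which your two detailed cases do not implement. If $\tau(i)\neq i$, conjugate $B_i$ by $K_iK_{\tau(i)}^{-1}\in U^0_\Theta{}'$; if $\alpha_i(h_j)\neq 0$ for some $j\in X$, conjugate by $K_j\in U^0_\Theta{}'$ (note $\Theta(\alpha_j)=\alpha_j$). The point is that for $\beta\in Q^\Theta$ one has $(\beta,\alpha_i)=(\beta,\Theta(\alpha_i))$, so $F_i$ and $c_i\theta_q(F_iK_i)K_i^{-1}$ scale under $\Ad(K_\beta)$ by the \emph{same} factor $q^{-(\beta,\alpha_i)}$, while $s_iK_i^{-1}$ has weight zero and is fixed; since $i\notin I_{ns}$ guarantees $(\beta,\alpha_i)\neq 0$ for the chosen $\beta$, the combination $K_\beta B_iK_\beta^{-1}-q^{-(\beta,\alpha_i)}B_i=(1-q^{-(\beta,\alpha_i)})\,s_iK_i^{-1}$ lies in $\cB$ and is a nonzero multiple of $K_i^{-1}$. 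This one-line pivot replaces both of your cases and stays within the stated hypotheses.
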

\begin{proof}
  If $\tau(i)\neq i$ then $K_i K_{\tau(i)}^{-1}\in U^0_\Theta{}'$. Conjugating $B_{i}$ by this element one obtains $K_i^{-1}\in \cB$. Similarly, if $\alpha_i(h_j)\neq 0$ for some $j\in X$, then one conjugates by $K_j\in U^0_\Theta{}'$ to obtain $K_i^{-1}\in \cB$.
\end{proof}
It will turn out that the condition $s_i=0$ if $i\notin I_{ns}$ is not sufficient to ensure $B_{\uc,\us}\cap U^0{}' = U^0_\Theta{}'$. Consider the set
\begin{align}\label{eq:Sdef}
  \cS=\{\us\in \field(q)^{I\setminus X}\,|\,s_i\neq 0 \Rightarrow \, (i\in  I_{ns} \mbox{ and } a_{ij}\in -2\N_0 \forall j\in I_{ns}\setminus \{i\})\}.
\end{align}
The relevance of the additional condition $a_{ij}\in -2\N_0$ in the definition of $\cS$ will become apparent in the proof of relation \eqref{eq:p00inTheta}, see also Remark \ref{rem:S}.
\begin{defi}\label{def:qsp}
  We call $B_{\uc,\us}$ for $\uc\in \cC$ and $\us \in \cS$ a quantum symmetric pair coideal subalgebra of $\uqgp$. If $\us=\mathbf{0}=(0,0,\dots,0)$ then $B_{\uc}=B_{\uc,{\mathbf{0}}}$ is called {\upshape standard}. If $s_i\neq 0$ for some $i  \in I\setminus X$ then $B_{\uc,\us}$ is called {\upshape nonstandard}. 
\end{defi}
\begin{rema}\label{rema:ad-hoc}
  The construction of the quantum symmetric pair coideal subalgebras $B_{\uc,\us}$ may seem rather ad hoc, since they are defined by giving explicit quantum analogs of the generators of $U(\kfrak')$. It will be shown in Section \ref{sec:specialize} that $B_{\uc,\us}$ specializes to $U(\kfrak')$ and that $B_{\uc,\us}$ is maximal with this property. For finite dimensional $\gfrak$, Letzter showed that any coideal subalgebra of $\uqg$ with these two properties has to be of the form $B_{\uc,\us}$, see \cite[Theorem 5.8]{a-Letzter99a}, \cite[Theorem 7.5]{MSRI-Letzter}. An analog of Letzter's classification result in the Kac-Moody case would provide a stronger justification for the definition of quantum symmetric pair coideal subalgebras. This problem is left for future work. 
\end{rema}
\begin{rema}\label{rem:DefComp}
  The algebra $B_{\uc,\us}$ for $\uc\in \cC$, $\us \in \cS$ is a quantum analog of $U(\kfrak')$, see Theorem \ref{thm:Bcspecial}. If the minimal realization of $A$ is compatible with $\tau$ then one may replace $U^0_\Theta{}'$ by the algebra $U^0_\theta=\field(q)\langle K_h\,|\,h\in P^\vee,\, \theta(h)=h \rangle$ to obtain a quantum analog of $U(\kfrak)$.
\end{rema}
\subsection{Decompositions and projections for $\uqg$}\label{sec:DecProj}
The triangular decomposition \eqref{eq:triang-decomp} for $\uqg$ implies that the multiplication map gives an isomorphism of vector spaces
\begin{align}
 U^+\ot U^0 \ot S(U^-) \cong \uqg.
\end{align}
This leads to a direct sum decomposition
\begin{align}\label{eq:K-sum}
          \uqg=\mathop{\oplus}_{h\in P^\vee} U^+ K_h S(U^-)
        \end{align}
For any $h\in P^\vee$ let $P_h:\uqg\rightarrow  U^+ K_h S(U^-)$ denote the projection with respect to this decomposition. It follows from the formulas for the coproduct of $\uqg$ that the map $P_h$ is a homomorphism of left $\uqg$-comodules, that is
\begin{align}\label{eq:P-comod-hom}
  \kow\circ P_h(x)=(\id \ot P_h)\kow(x) \qquad  \mbox{for all $x\in \uqg$.}
\end{align}
This relation implies the following grading of right coideal subalgebras of $\uqg$.
\begin{lem}\label{lem:PlamB}
   Let $B$ be a right coideal subalgebra of $\uqg$. Then
   $B=\oplus_{h\in P^\vee} P_h(B)$.
\end{lem}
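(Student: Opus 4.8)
The plan is to exploit the fact that the projections $P_\lambda$ decompose the identity on $\uqg$ and that each $P_\lambda$ is a left $\uqg$-comodule map by \eqref{eq:P-comod-hom}. First I would note that \eqref{eq:K-sum} gives $\uqg=\oplus_{\lambda\in P^\vee}U^+K_\lambda S(U^-)$, so every $x\in B$ can be written uniquely as $x=\sum_{\lambda}P_\lambda(x)$ with only finitely many nonzero terms. Thus it suffices to show that each homogeneous component $P_\lambda(x)$ already lies in $B$; the reverse inclusion $\oplus_\lambda P_\lambda(B)\subseteq B$ is then automatic once we know $P_\lambda(B)\subseteq B$ for all $\lambda$.

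The key step is to show $P_\lambda(x)\in B$ for $x\in B$. Here I would use the right coideal property $\kow(x)\in B\ot \uqg$ together with \eqref{eq:P-comod-hom}. Write $\kow(x)=\sum_j b_j\ot u_j$ with $b_j\in B$ and $u_j\in \uqg$, and choose the $u_j$ to be linearly independent over $\field(q)$, so that there is a dual basis, i.e.\ linear functionals $\xi_j$ on the span of the $u_j$ with $\xi_j(u_k)=\delta_{jk}$. Applying $(\id\ot P_\lambda)$ to $\kow(x)$ and using \eqref{eq:P-comod-hom} gives $\kow(P_\lambda(x))=\sum_j b_j\ot P_\lambda(u_j)$. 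Now apply $\id\ot\vep$: using the counit axiom $(\id\ot\vep)\kow = \id$ one gets $P_\lambda(x)=\sum_j \vep(P_\lambda(u_j))\, b_j$, which is a $\field(q)$-linear combination of the $b_j\in B$, hence lies in $B$.

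The only point requiring a little care is the interaction of $P_\lambda$ with the counit $\vep$, i.e.\ making sense of $\vep\circ P_\lambda$. This is where I expect the main (though still mild) obstacle to lie: one must check that $\vep(U^+K_\mu S(U^-))=0$ unless the component is a scalar, equivalently that $\vep$ annihilates $U^+_\beta$ and $S(U^-)$ in positive/negative degrees and that $\vep(K_\mu)=1$; this follows directly from the formulas \eqref{eq:E-copr}, \eqref{eq:F-copr}, \eqref{eq:K-copr} for the counit and the fact that $S$ is an algebra anti-automorphism with $\vep\circ S=\vep$. Once this is in hand the computation $(\id\ot\vep)\kow(P_\lambda(x))=P_\lambda(x)$ combined with $(\id\ot\vep)\kow(x)=x$ applied degreewise closes the argument. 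I would present this as: (i) recall $B=\sum_\lambda P_\lambda(B)$ as vector spaces from \eqref{eq:K-sum}; (ii) prove $P_\lambda(B)\subseteq B$ using the comodule property \eqref{eq:P-comod-hom} and the counit; (iii) conclude directness of the sum from directness of \eqref{eq:K-sum}.
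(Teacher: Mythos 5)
Your argument is correct and is essentially the paper's proof: both apply $(\id\ot P_\lambda)$ to $\kow(b)$, invoke the comodule property \eqref{eq:P-comod-hom} to identify this with $\kow(P_\lambda(b))\in B\ot\uqg$, and then apply $\id\ot\vep$ and the counit axiom to conclude $P_\lambda(b)\in B$, with directness of the sum coming from \eqref{eq:K-sum}. The extra worry about making sense of $\vep\circ P_\lambda$ is harmless but unnecessary, since the counit axiom is applied directly to the element $P_\lambda(b)$.
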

\begin{proof}
  Let $b\in B$ and $h\in P^\vee$. Relation \eqref{eq:P-comod-hom} implies that $\kow(P_h(b))= b_{(1)}\ot P_h(b_{(2)})\in B\ot\uqg$. Application of $\id \ot \vep$ implies $P_h(b)\in B$. 
\end{proof}
We also use the symbol $P_\lambda$ for $\lambda\in Q$ to denote the projection $P_\lambda:\uqgp \rightarrow U^+ K_\lambda S(U^-)$ obtained as above.

We may also consider the direct sum decomposition 
\begin{align}\label{eq:Upm-sum}
  \uqg=\mathop{\oplus}_{\alpha,\beta\in Q^+} U_\alpha^+ U^0 U^-_{-\beta}.
\end{align}
Let $\pi_{\alpha,\beta}:\uqg\rightarrow U_\alpha^+ U^0 U^-_{-\beta}$ denote the projection with respect to this decomposition.
\subsection{Quantum Serre relations for $B_{\uc,\us}$}\label{sec:q-Serre}
All through this subsection fix $\uc\in \cC$ and $\us \in \cS$ and consider the corresponding quantum symmetric pair coideal subalgebra $B_{\uc,\us}$. Recall the definition of the noncommutative polynomials $F_{ij}$ given for any $i,j\in I$ by \eqref{eq:Fij-def}.
The next lemmas collect properties of $F_{ij}$ evaluated on the elements relevant for
the construction of $B_{\uc,\us}$.
\begin{lem}
 The following relations hold for all $i,j\in I$:
\begin{align}
  F_{ij}(F_iK_i,F_jK_j)&=0\label{eq:Serre1},\\
  F_{ij}(\theta_q(F_iK_i)K_i^{-1},\theta_q(F_jK_j)K_j^{-1})&=0,\label{eq:Serre2}\\
  F_{ij}(F_i,K_j^{-1})&=0\label{eq:Serre2a}. 
\end{align}
\end{lem}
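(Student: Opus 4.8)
The plan is to reduce all three identities to the quantum Serre relations \eqref{eq:q-Serre} by means of a single bookkeeping lemma. Throughout put $m=1-a_{ij}$; we may assume $i\neq j$, since for $i=j$ the polynomial $F_{ii}$ is an empty sum and all three assertions are trivial, so $m\ge 1$. \emph{Twisting lemma:} I would first prove that if $\beta,\gamma\in Q$ and $x\in\uqgp_\mu$, $y\in\uqgp_\nu$ are weight vectors for the adjoint action with $(\gamma,\mu)=(\beta,\nu)$, then $F_{ij}(xK_\beta,yK_\gamma)=q^{\,c}\,F_{ij}(x,y)\,K_{m\beta+\gamma}$ for some $c\in\Z$. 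This is direct: by induction $(xK_\beta)^k=q^{\binom{k}{2}(\beta,\mu)}x^kK_{k\beta}$, so moving all $K$'s to the right in the summand $(xK_\beta)^{m-n}(yK_\gamma)(xK_\beta)^n$ of $F_{ij}(xK_\beta,yK_\gamma)$ produces the monomial $x^{m-n}yx^nK_{m\beta+\gamma}$ times $q$ to the power $\binom{m}{2}(\beta,\mu)+m(\beta,\nu)+n\bigl((\gamma,\mu)-(\beta,\nu)\bigr)$, where the coefficient of $(\beta,\mu)$ collapses to $\binom{m}{2}$ via the elementary identity $\binom{m-n}{2}+\binom{n}{2}+n(m-n)=\binom{m}{2}$. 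The hypothesis kills the $n$-dependent term, leaving $q^{\,c}F_{ij}(x,y)K_{m\beta+\gamma}$. For the last of the three relations I would keep the general formula, in which the bracketed factor reads $\sum_{n}(-1)^n q^{\,n((\gamma,\mu)-(\beta,\nu))}\bigl[\abv{m}{n}\bigr]_{q_i}x^{m-n}yx^n$.

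Then the three relations follow as applications. Relation \eqref{eq:Serre1} comes from the lemma with $x=F_i\in\uqgp_{-\alpha_i}$, $\beta=\alpha_i$, $y=F_j\in\uqgp_{-\alpha_j}$, $\gamma=\alpha_j$ — the compatibility $(\alpha_j,-\alpha_i)=(\alpha_i,-\alpha_j)$ is obvious — so $F_{ij}(F_iK_i,F_jK_j)$ is a scalar multiple of $F_{ij}(F_i,F_j)K_{m\alpha_i+\alpha_j}=0$. For \eqref{eq:Serre2}, set $\gamma_i:=\alpha_i-\Theta(\alpha_i)\in Q$ and observe, using Theorem \ref{thm:theta-props}(2), the relation $\Theta^2=\mathrm{id}$, and that $\theta_q$ is an algebra homomorphism, that $\theta_q(F_iK_i)K_i^{-1}=\theta_q(F_iK_i)\,\theta_q(K_{-\Theta(\alpha_i)})=\theta_q(F_iK_{\gamma_i})$; hence the left-hand side of \eqref{eq:Serre2} equals $\theta_q\bigl(F_{ij}(F_iK_{\gamma_i},F_jK_{\gamma_j})\bigr)$, and it suffices to show the argument is $0$. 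Apply the lemma with $x=F_i$, $\beta=\gamma_i$, $y=F_j$, $\gamma=\gamma_j$: the required compatibility $(\gamma_j,-\alpha_i)=(\gamma_i,-\alpha_j)$ reduces to $(\alpha_i,\Theta(\alpha_j))=(\Theta(\alpha_i),\alpha_j)$, which holds by \eqref{eq:invTheta} since $\Theta^2=\mathrm{id}$; so $F_{ij}(F_iK_{\gamma_i},F_jK_{\gamma_j})$ is a scalar multiple of $F_{ij}(F_i,F_j)K_{m\gamma_i+\gamma_j}=0$. Finally, for \eqref{eq:Serre2a} I would use the general formula with $x=F_i\in\uqgp_{-\alpha_i}$, $\beta=0$, $y=1\in\uqgp_0$, $\gamma=-\alpha_j$: then $x^{m-n}yx^n=F_i^m$, the exponent $(\gamma,\mu)-(\beta,\nu)=(\alpha_i,\alpha_j)=\epsilon_i a_{ij}$ gives $q^{\,n(\alpha_i,\alpha_j)}=q_i^{\,n(1-m)}$, so $F_{ij}(F_i,K_j^{-1})=\bigl(\sum_{n=0}^m(-1)^n q_i^{\,n(1-m)}\bigl[\abv{m}{n}\bigr]_{q_i}\bigr)F_i^mK_j^{-1}$, and the scalar vanishes by a standard Gaussian-binomial identity (equivalent, via $\bigl[\abv{m}{n}\bigr]_{q_i}=\bigl[\abv{m}{m-n}\bigr]_{q_i}$, to $\sum_{n}(-1)^n q_i^{\,n(m-1)}\bigl[\abv{m}{n}\bigr]_{q_i}=0$, which drops out of the $q$-binomial theorem upon specialising a variable). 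Alternatively \eqref{eq:Serre2a} can be read off from $\kow\bigl(F_{ij}(F_i,F_j)\bigr)=0$ directly: the component of this element in $\uqgp_{-\alpha_j}\ot\uqgp_{-m\alpha_i}$ is precisely $F_j\ot F_{ij}(F_i,K_j^{-1})$, since to obtain left-hand weight $-\alpha_j$ one must select the summand $F_j\ot K_j^{-1}$ of $\kow(F_j)$ and the summand $1\ot F_i$ of each of the $m$ copies of $\kow(F_i)$.

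The computations are routine; the only points needing care are checking that the $q$-power in the twisting lemma is independent of $n$ (which is the whole purpose of the compatibility hypothesis, and is where the root-system identity \eqref{eq:invTheta} together with $\Theta^2=\mathrm{id}$ enters in the case of \eqref{eq:Serre2}), and pinning down the Gaussian-binomial identity used for \eqref{eq:Serre2a} — or else bypassing it altogether by the coproduct argument above, which I would regard as the cleanest route for that relation.
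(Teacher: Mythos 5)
Your proposal is correct and follows essentially the same route as the paper: your ``twisting lemma'' is just a formalized version of the paper's observation \eqref{eq:q-factors} that the $q$-powers produced by moving the group-like factors to the right are independent of $n$, so all three identities reduce to the quantum Serre relations \eqref{eq:q-Serre} (via applying the algebra automorphism $\theta_q$ for \eqref{eq:Serre2}, with \eqref{eq:invTheta} guaranteeing the $n$-independence) and, for \eqref{eq:Serre2a}, to the same standard Gaussian-binomial identity the paper cites from Jantzen. The only cosmetic difference is that you absorb $K_i^{-1}=\theta_q(K_{-\Theta(\alpha_i)})$ inside $\theta_q$ before twisting, whereas the paper applies $\theta_q$ first and then strips the $K$'s; your coproduct alternative for \eqref{eq:Serre2a} is a valid bonus but not needed.
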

\begin{proof}
  Property \eqref{eq:Serre1} follows from \eqref{eq:q-Serre} because
  \begin{align}\label{eq:q-factors}
    (F_iK_i)^{1-a_{ij}-n} F_jK_j (F_i K_i)^n= F_i^{1-a_{ij}-n} F_j F_i^n K_i^{1-a_{ij}}K_j
  \end{align}
  if $0 \le n \le 1-a_{ij}$. As $\theta_q$ is an algebra automorphism one obtains
  \begin{align*}
    F_{ij}(\theta_q(F_iK_i),\theta_q(F_jK_j))&=0 \qquad \mbox{for all $i,j\in I$.}
  \end{align*}
  This implies \eqref{eq:Serre2} by a calculation analog to \eqref{eq:q-factors}. Finally, to verify \eqref{eq:Serre2a} note that the relation $F_i^{1-a_{ij}-n}K_j^{-1} F_i^n = q_i^{a_{ij}n} F_j^{1-a_{ij}}K_i^{-1}$ implies
  \begin{align*}
    F_{ij}(F_i,K_j^{-1})&=\sum_{n=0}^{1-a_{ij}}(-1)^n q_i^{a_{ij}n}
                        \left[\begin{matrix}1-a_{ij}\\n\end{matrix}\right]_{q_i} F_i^{1-a_{ij}}K_j^{-1}=0
  \end{align*}
  by \cite[0.2.(4)]{b-Jantzen96}. 
\end{proof}
We extend the definition of $B_i$ given in \eqref{eq:Bi-def} for $i\in I\setminus X$ to all elements of
$I$ by defining  $B_i:=F_i$ for $i\in X$.
\begin{lem}\label{lem:Serre2}
 The following relations hold in $\uqg$:
\begin{align}
  F_{ij}(B_i,B_j)&=0&& \mbox{for all $i\in X$, $j\in I$,}\label{eq:Serre3}\\
  \pi_{0,0}(F_{ij}(B_i,B_j))&\in U^0_\Theta{}' &&\mbox{for all $i,j\in
    I$.}\label{eq:p00inTheta}
\end{align}
\end{lem}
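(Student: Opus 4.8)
The plan is to treat the two assertions separately, reducing each to the Serre-type identities \eqref{eq:Serre1}--\eqref{eq:Serre2a} already established, together with the explicit expansion \eqref{eq:BiZi} of $B_i$ and the weight bookkeeping provided by the decomposition \eqref{eq:Upm-sum}.

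For \eqref{eq:Serre3}, fix $i\in X$ and $j\in I$, so $B_i=F_i$. If $j\in X$ as well, then $B_j=F_j$ and \eqref{eq:Serre3} is just the quantum Serre relation \eqref{eq:q-Serre}. If $j\in I\setminus X$, write $B_j=F_j - c_j v_j\,\ad(Z^+_{\tau(j)}(X))(E_{\tau(j)})K_j^{-1}+s_jK_j^{-1}$ as in \eqref{eq:BiZi}. The key point is that $\ad(F_i)$ kills each summand of $B_j$ other than $F_j$ in the appropriate twisted sense: indeed $\ad(F_i)(E_{\tau(j)})=0$ and $\ad(F_i)$ acts as a twisted derivation on $\cM_X^+$ so that $\ad(F_i)\big(\ad(Z^+_{\tau(j)}(X))(E_{\tau(j)})\big)=\ad\big(\ad(F_i)(Z^+_{\tau(j)}(X))E_{\tau(j)}\big)$ stays inside $\cM_X^+E_{\tau(j)}K_{\tau(j)}$, while on $K_j^{-1}$ the operator $\ad(F_i)$ produces something in $\cM_X^-K_j^{-1}$. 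The cleanest route is to observe that $F_{ij}(F_i,-)$ is (up to a scalar) the operator $\ad^{1-a_{ij}}$ applied via the quantum Serre relation, so that $F_{ij}(F_i,B_j)$ can be written as $B_j$ acted on by a fixed element of $\uqg$; then evaluate this action termwise. The term $F_{ij}(F_i,F_j)$ vanishes by \eqref{eq:q-Serre}; the term $F_{ij}(F_i,s_jK_j^{-1})=s_j F_{ij}(F_i,K_j^{-1})$ vanishes by \eqref{eq:Serre2a}; and the remaining term is handled by applying $\theta_q$ to \eqref{eq:Serre2a}, using $\theta_q|_{\cM_X}=\id$ from Theorem \ref{thm:theta-props}(1) and the fact that $\theta_q(F_jK_j)K_j^{-1}$ is exactly the middle summand of $B_j$ up to the scalar $c_j$, so that $F_{ij}(F_i,\theta_q(F_jK_j)K_j^{-1})=\theta_q\big(F_{ij}(F_i,F_jK_j)K_j^{-1}\big)$, which is a consequence of \eqref{eq:Serre1} after the same bookkeeping as in \eqref{eq:q-factors}. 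Summing, $F_{ij}(F_i,B_j)=0$.

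For \eqref{eq:p00inTheta}, the strategy is a filtration/grading argument using \eqref{eq:Upm-sum}. Expand $B_i$ via \eqref{eq:BiZi} and multiply out $F_{ij}(B_i,B_j)$ into a sum of monomials; each monomial is a product of factors drawn from $\{F_k, \ad(Z^+_{\tau(k)}(X))(E_{\tau(k)})K_k^{-1}, K_k^{-1}\}$ for $k\in\{i,j\}$. Apply $\pi_{0,0}$, which annihilates any monomial that has a nonzero $U^+_\alpha$- or $U^-_{-\beta}$-component after being pushed into the PBW-type factorization. One checks that the surviving contributions to $\pi_{0,0}$ come only from monomials that are already in $U^0$; these necessarily involve cancellation between the ``$F_i$-type'' and ``$E$-type'' factors, and their total weight is $\sum m_k\alpha_k$ with the $\alpha_k$-exponents forced to lie in $Q^\Theta$ because $\Theta$ permutes the data (the $Z^+_{\tau(k)}(X)$ have weight $-w_X\alpha_k+\alpha_k\in\Z X$ and $\theta_q$ shifts $K_\beta$ to $K_{\Theta(\beta)}$ by Theorem \ref{thm:theta-props}(2)). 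The hypothesis $\us\in\cS$ enters precisely here: when $s_i\ne 0$ one needs $a_{ij}\in-2\N_0$ for $j\in I_{ns}\setminus\{i\}$ to guarantee that the ``mixed'' $s$-terms $s_iK_i^{-1}$ paired against the $E$-type factor of $B_j$ also land in $U^0_\Theta{}'$ rather than producing a stray $K_i^{-1}K_j^{-\ell}$ outside $U^0_\Theta{}'$; this is the content hinted at in the sentence following \eqref{eq:Sdef} and in Remark \ref{rem:S}.

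The main obstacle is the second relation \eqref{eq:p00inTheta}: organizing the expansion of $F_{ij}(B_i,B_j)$ so that the weight constraints are transparent, and in particular verifying that every monomial whose $\pi_{0,0}$-image is nonzero has its $K$-part built from $K_\beta$ with $\beta\in Q^\Theta$. This is where one must be careful with the interaction between the three types of summands in $B_i$ and the combinatorics of the $q_i$-binomial coefficients, and where the precise form of the set $\cS$ is used. I expect that applying $\theta_q$ to the identities \eqref{eq:Serre1}--\eqref{eq:Serre2a} and invoking Theorem \ref{thm:theta-props}(1)--(2) will let one reduce the bulk of the bookkeeping to the already-known vanishing statements, leaving only a small number of ``cross terms'' to check by hand.
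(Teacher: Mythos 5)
Your handling of \eqref{eq:Serre3} is essentially the paper's own argument: expand $B_j$ into its three summands and kill them with \eqref{eq:q-Serre}, \eqref{eq:Serre2a} and \eqref{eq:Serre2}, using that $\theta_q(F_iK_i)K_i^{-1}=F_i$ for $i\in X$. (One small slip: the identity $F_{ij}(F_i,\theta_q(F_jK_j)K_j^{-1})=\theta_q\bigl(F_{ij}(F_i,F_jK_j)K_j^{-1}\bigr)$ is not literally true, since $\theta_q(K_j^{-1})=K_{-\Theta(\alpha_j)}$; the clean route is to rewrite the middle term as $F_{ij}(\theta_q(F_iK_i)K_i^{-1},\theta_q(F_jK_j)K_j^{-1})$ and quote \eqref{eq:Serre2} directly.)

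The genuine gap is in \eqref{eq:p00inTheta}: weight bookkeeping does not force the $K$-parts of the surviving monomials to lie in $Q^\Theta$, and the statement is not purely a vanishing statement. When $a_{ij}=0$ and $\Theta(\alpha_i)=-\alpha_j$ the zero-weight component is genuinely nonzero; by \eqref{eq:BiBj-com} it is a combination of $K_iK_j^{-1}$, $K_jK_i^{-1}$ (which are in $U^0_\Theta{}'$) and of $K_i^{-1}K_j^{-1}$ (which is not), and the offending $K_i^{-1}K_j^{-1}$ terms cancel only because $c_i=c_j$, i.e.\ because $\uc\in\cC$ as in \eqref{eq:C-def} -- a parameter cancellation your argument never invokes. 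Likewise your explanation of where $\us\in\cS$ enters is not the actual mechanism: for $i,j\in I_{ns}$ one has $\Theta=-\mathrm{id}$ on $\Z\alpha_i+\Z\alpha_j$, so no nontrivial monomial in $K_i^{-1},K_j^{-1}$ can land in $U^0_\Theta{}'$ and one must prove that the zero-weight part vanishes outright; this uses that for $-a_{ij}\in2\N_0$ the Serre polynomial can be antisymmetrized as in \eqref{eq:Fij-even}, and then an invariance of $\pi_{0,0}$ under the word-reversal anti-automorphism exchanging $E_iK_i^{-1}$ and $F_i$ (and fixing $K_i^{-1}$) kills each pair $B_i^mK_j^{-1}B_i^n-B_i^nK_j^{-1}B_i^m$ with $m+n$ odd -- a cancellation of coefficients, not a weight constraint (see Remark \ref{rem:S} for why odd $-a_{ij}$ genuinely fails). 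Finally, the mixed case $i\in I\setminus X$, $j\in X$ needs its own argument: a zero-weight term could a priori occur when $w_X(\alpha_i)=\alpha_i+\alpha_j$, and excluding this uses condition~(3) of Definition \ref{admissible}, namely $\alpha_i(\rho_X^\vee)\in\Z$, which your sketch never touches. The paper's proof is precisely this four-case analysis, and without these three ingredients (admissibility in the $j\in X$ case, $c_i=c_j$ in the $\Theta(\alpha_i)=-\alpha_j$ case, and the reversal symmetry in the $I_{ns}$ case) the claim does not follow.
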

\begin{proof}
  For $i\in X, j\in I$ one has $\theta_q(F_iK_i)=F_iK_i$ and hence
  \begin{align*}
    F_{ij}(B_i,B_j) &\stackrel{\phantom{\eqref{eq:Serre2a}}}{=} F_{ij}(F_i, F_j+c_j\theta_q(F_jK_j)K_j^{-1} + s_j K_j^{-1})\\
     &\stackrel{\eqref{eq:Serre2a}}{=} F_{ij}(F_i,F_j+ c_j\theta_q(F_jK_j)K_j^{-1})\\
       &\stackrel{\eqref{eq:q-Serre}}{=} c_j F_{ij}(F_i, \theta_q(F_jK_j)K_j^{-1})\\
      &\stackrel{\phantom{\eqref{eq:Serre2a}}}{=}c_j F_{ij}(\theta_q(F_iK_i)K_i^{-1},\theta_q(F_jK_j)K_j^{-1})\stackrel{\eqref{eq:Serre2}}{=}0.
  \end{align*}
  This proves \eqref{eq:Serre3}. We will now verify relation \eqref{eq:p00inTheta} in several steps. By \eqref{eq:Serre3} we may assume that $i\notin X$.
  
  \noindent{\bf Step 1:} $\pi_{0,0}(F_{ij}(B_i,B_j))=0$ if $i\in I$, $j\in X$.\\
  Indeed, assume that $\pi_{0,0}(F_{ij}(B_i,B_j))\neq 0$ for some $i\in I\setminus X$, $j\in X$. For weight reasons this is only possible if $\tau(i)=i$, $a_{ij}=-1$, and $w_X(\alpha_i)=\alpha_i+\alpha_j$. The latter relation, however, implies that $(\alpha_i+\alpha_j)(h_k)=0$ for all $k\in X\setminus\{j\}$. Hence $\alpha_i(h_k)=\alpha_j(h_k)=0$ for all $k\in X\setminus \{j\}$. This, in turn, implies that $w_X(\alpha_i)=s_j(\alpha_i)=\alpha_i - a_{ji} \alpha_j$ and hence $a_{ji}=-1$. But then $\alpha_i(\rho^\vee_X)=\alpha_i(h_j/2)=a_{ji}/2=-1/2$ which contradicts the fact that $(X,\tau)$ is an admissible pair. 
  
  By Step 1 we may from now on assume that neither $i$ nor $j$ are contained in $X$.
  
  \noindent{\bf Step 2:}  $\pi_{0,0}(F_{ij}(B_i,B_j))\in U^0_\Theta{}'$ if $i,j\notin I_{ns}$.\\
  Again, in this case $\pi_{0,0}(F_{ij}(B_i,B_j))\neq 0$ implies that $F_{ij}(B_i,B_j)$
  has a zero weight summand. By definition of $F_{ij}$ and $B_i,B_j$,
  however, this is only possible if $a_{ij}=0$ and
  $\Theta(\alpha_i)=-\alpha_j$. In this case
  $c_i=c_j$ by definition \eqref{eq:C-def} of $\cC$ and hence
  $B_i=F_i - c_i E_j K_i^{-1}$ and $B_j=F_j -c_j E_i K_j^{-1}$. Now relation
  \eqref{eq:BiBj-com} implies that $F_{ij}(B_i,B_j)\in U^0_\Theta{}'$.
  
  \noindent{\bf Step 3:}  $\pi_{0,0}(F_{ij}(B_i,B_j))=0$ if ($i\notin I_{ns}$ and $j\in I_{ns}$) or ($i\in I_{ns}$ and $j\notin I_{ns}$).\\
  This holds for weight reasons. 
  
  \noindent{\bf Step 4:}  $\pi_{0,0}(F_{ij}(B_i,B_j))=0$ if $i,j\in I_{ns}$ and $-a_{ij}\in 2\N_0$.\\
  Observe first that in this case 
  \begin{align}\label{eq:pi-pi}
    \pi_{0,0}\left( F_{ij}(B_{i},B_{j}) \right)=s_j\pi_{0,0}\left( F_{ij}(B_{i},K_j^{-1}) \right)
  \end{align}
  for weight reasons. Moreover, for $-a_{ij}\in 2\N_0$ the non-commutative polynomial \eqref{eq:Fij-def} can be written as
  \begin{align}\label{eq:Fij-even}
    F_{ij}(x,y)=\sum_{n=0}^{-a_{ij}/2}(-1)^n 
                          \left[\begin{matrix}1-a_{ij}\\n\end{matrix}\right]_{q_i} \Big( x^{1-a_{ij}-n} y x^n - x^n y x^{1-a_{ij}-n}\Big).
  \end{align}
  By \eqref{eq:pi-pi} and \eqref{eq:Fij-even} it suffices to show that
  \begin{align}\label{eq:Goal}
    \pi_{0,0}\left( B_{i}^m K_j^{-1} B_{i}^n - B_{i}^n K_j^{-1} B_{i}^m\right)=0
  \end{align}
  if $n+m$ is odd.
  
  Let $\Mfrak$ denote the free monoid generated by symbols $E,F,K^{-1}$ and let $\iota:\Mfrak\rightarrow\Mfrak$ denote the monoid anti-automorphism defined by $\iota(E)=F$, $\iota(F)=E$, and $\iota(K^{-1})=K^{-1}$. Let, moreover, $\ell:\Mfrak\rightarrow \N_0$ denote the length function and let $\pi:\Mfrak\rightarrow \uqgp$ denote the monoid homomorphisms defined by $\pi(E)=E_i K_i^{-1}$, $\pi(F)=F_i$, and $\pi(K^{-1})=K_i^{-1}$. To verify \eqref{eq:Goal} it suffices to show that
  \begin{align}\label{eq:Goal2}
    \pi_{0,0}\left( \pi(u) K_j^{-1} \pi(v) - \pi(\iota(v))K_j^{-1} \pi(\iota(u))\right)=0
  \end{align} 
for all $u,v\in \Mfrak$ with $\ell(u)=m$ and $\ell(v)=n$. The above relation follows from $\pi_{0,0}(\pi(w))=\pi_{0,0}(\pi(\iota(w)))$ which holds for all $w\in \Mfrak$. This proves Step 4, and hence completes the proof of Equation \eqref{eq:p00inTheta}.
\end{proof}
\begin{rema}\label{rem:S}
  Assume, contrary to the definition of $\cS$ given by \eqref{eq:Sdef}, that $s_j\neq 0$ for some $i,j\in I_{ns }$ with $-a_{ij}$ odd. For $\lambda_{ij}=(1-a_{ij})\alpha_i + \alpha_j$ one then obtains 
  \begin{align*}
    P_{-\lambda_{ij}}\circ \pi_{0,0}\left(F_{i,j}(B_i,K_j^{-1})\right)\neq 0
  \end{align*}
  in general. It is straightforward to verify this for $-a_{ij}=1$ or $-a_{ij}=3$ by direct computation. Hence in this case
   \begin{align*}
    P_{-\lambda_{ij}}\circ \pi_{0,0}\left(F_{i,j}(B_i,B_j)\right)\neq 0.
  \end{align*}
  As will be seen in the proof of Proposition \ref{prop:Z=0}, the above relation would imply that $B_{\uc,\us}\cap U^0{}'\neq U^0_\Theta{}'$. This is the reason why we restrict to parameters $\us$ in the set $\cS$ given by \eqref{eq:Sdef}
\end{rema}
\begin{rema}
  By the above proof, relation \eqref{eq:p00inTheta} can be refined. Indeed, on has $\pi_{0,0}\left(F_{i,j}(B_i,B_j)\right)=0$ unless $\Theta(\alpha_i)=-\alpha_j$ and $a_{ij}=0$ in which case $c_i=c_j$, $s_i=s_j=0$, and
\begin{align*}  
  F_{ij}(B_i,B_j)= c_i \frac{K_i K_j^{-1}-K_j K_i^{-1}}{q_i-q_i^{-1}}
\end{align*}
as calculated in the proof of Lemma \ref{lem:towards-standard}.
\end{rema}
The following technical lemma will be used in the proof of Proposition \ref{prop:Z=0}.
\begin{lem}\label{lem:not-Pt}
  Let $\alpha,\beta\in Q^+$. If
  $\pi_{\alpha,\beta}(F_{ij}(B_i,B_j))\neq 0$ then
  $\lambda_{ij}-\alpha\notin Q^\Theta$ and $\lambda_{ij}-\beta\notin
  Q^\Theta$. 
\end{lem}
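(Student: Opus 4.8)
The plan is to distinguish $Q^\Theta$ from the weights that can occur by means of a single $\Z$-valued degree. Define $d\colon Q\to\Z$ by $d\bigl(\sum_{k\in I}m_k\alpha_k\bigr)=\sum_{k\in I\setminus X}m_k$. The generators $r_k$ ($k\in X$) of $W_X$, and hence $w_X$, only modify coefficients of $\alpha_k$ with $k\in X$, so $d(w_X\mu)=d(\mu)$; and $\tau$ permutes $I\setminus X$, so $d(\tau\mu)=d(\mu)$. Hence $d(\Theta\mu)=d(-w_X\tau\mu)=-d(\mu)$, and in particular $d(\mu)=0$ for all $\mu\in Q^\Theta$. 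It therefore suffices to prove that $\pi_{\alpha,\beta}(F_{ij}(B_i,B_j))\neq 0$ implies both $d(\lambda_{ij}-\alpha)>0$ and $d(\lambda_{ij}-\beta)>0$.

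Since $F_{ij}(B_i,B_j)=0$ for $i\in X$ by \eqref{eq:Serre3}, I would assume $i\in I\setminus X$. By Theorem~\ref{thm:theta-props}.(3) and Lemma~\ref{Tw-on-Xc}, for $k\in I$ the element $\theta_q(F_kK_k)K_k^{-1}$ is a nonzero scalar multiple of $T_{w_X}(E_{\tau(k)})K_k^{-1}\in U^+_{\gamma_k}\,U^0{}'$, where $\gamma_k:=w_X\alpha_{\tau(k)}$ satisfies $d(\gamma_k)=d(\alpha_{\tau(k)})=1$ when $k\notin X$ (for $k\in X$ this element equals $F_k$, by Theorem~\ref{thm:theta-props}.(1)). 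Thus, relative to the triangular decomposition $\uqgp=\bigoplus_{\alpha,\beta\in Q^+}U^+_\alpha\,U^0{}'\,U^-_{-\beta}$, each $B_k$ ($k\in\{i,j\}$) is a sum of weight vectors whose $(U^+,U^-)$-bidegrees are $(0,\alpha_k)$ for the $F_k$-term, $(\gamma_k,0)$ for the $\theta_q(F_kK_k)K_k^{-1}$-term, and $(0,0)$ for the $s_kK_k^{-1}$-term. Expanding $F_{ij}(B_i,B_j)$ multilinearly and using the standard inclusion $U^-_{-\beta_1}U^+_{\alpha_2}\subseteq\bigoplus_{0\le\gamma\le\min(\alpha_2,\beta_1)}U^+_{\alpha_2-\gamma}\,U^0{}'\,U^-_{-(\beta_1-\gamma)}$, a monomial in which $p$ of the $B_i$-factors and $p'\in\{0,1\}$ of the $B_j$-factor contribute an $F$-term, while $q$ of the $B_i$-factors and $q'\in\{0,1\}$ of the $B_j$-factor contribute a $\theta_q$-term, can contribute to $\pi_{\alpha,\beta}$ only if $\beta\le p\alpha_i+p'\alpha_j$ and $\alpha\le q\gamma_i+q'\gamma_j$. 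Since $d$ is monotone on $Q^+$, $p+q\le 1-a_{ij}$, $p'+q'\le 1$, and $d(\lambda_{ij})=(1-a_{ij})+d(\alpha_j)$ (as $i\notin X$, so $d(\alpha_i)=1$), this gives $d(\lambda_{ij}-\alpha)\ge 0$ and $d(\lambda_{ij}-\beta)\ge 0$ for every contributing monomial; moreover equality in the first (resp.\ second) inequality forces $q=1-a_{ij}$ and $q'=1$ (resp.\ $p=1-a_{ij}$ and $p'=1$), i.e.\ the monomial consists entirely of $\theta_q$-terms (resp.\ entirely of $F$-terms).

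To finish, observe that the sum over all monomials consisting entirely of $F$-terms, weighted by the coefficients appearing in $F_{ij}$, equals $F_{ij}(F_i,F_j)=0$ by \eqref{eq:q-Serre}, and the sum over all monomials consisting entirely of $\theta_q$-terms is a nonzero multiple of $F_{ij}(\theta_q(F_iK_i)K_i^{-1},\theta_q(F_jK_j)K_j^{-1})=0$ by \eqref{eq:Serre2}; hence these extremal monomials cannot be responsible for a nonvanishing $\pi_{\alpha,\beta}(F_{ij}(B_i,B_j))$, and non-vanishing forces the strict inequalities, which is the claim. The delicate part is the bookkeeping in the middle step: one must check that the two bidegree bounds are sharp enough that equality of $d$ singles out \emph{exactly} the all-$F$ and all-$\theta_q$ monomials, and one should treat the subcases $j\in X$ and $j\notin X$ separately, since for $j\in X$ the $F$-term and the $\theta_q$-term of $B_j$ coincide (both equalling $F_j=\theta_q(F_jK_j)K_j^{-1}$), so that the ``all-$\theta_q$'' monomials are there understood to use the single summand $F_j$ of $B_j$.
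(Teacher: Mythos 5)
Your proposal is correct and follows essentially the same route as the paper's proof: split each $B_k$ into its $F$-part and its $\theta_q$-part lying in $U^+U^0{}'$, use the identities \eqref{eq:q-Serre} and \eqref{eq:Serre2} (with the case $j\in X$ handled via $F_{ij}(\theta_q(F_iK_i)K_i^{-1},F_j)=0$) to cancel the extremal contributions, and then read off that the surviving bidegrees leave $\lambda_{ij}-\alpha$ and $\lambda_{ij}-\beta$ with a nonzero part outside $\Z X$, hence outside $Q^\Theta$. The only difference is cosmetic: you formalize the last step with the additive degree $d$ vanishing on $Q^\Theta$, where the paper argues directly from $\Theta(\alpha_k)\in -Q^+$ for $k\in I\setminus X$ together with $\alpha'',\,\lambda_{ij}-\tau(\lambda_{ij})\in Q^\Theta$.
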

\begin{proof}
  By \eqref{eq:Serre3} there is nothing to show if $i\in X$. Hence we may assume that $i\notin X$. Consider first the case that $j\in X$. Then \eqref{eq:Serre2} implies
  \begin{align*}
     0=F_{ij}(\theta_q(F_iK_i)K_i^{-1},\theta_q(F_j K_j)K_j^{-1})=F_{ij}(\theta_q(F_iK_i)K_i^{-1},F_j).
  \end{align*}
  Hence, if $\pi_{\alpha,\beta}(F_{ij}(B_i,B_j))\neq 0$ for some $\alpha,\beta\in Q^+$ then $0\le\beta\le\lambda_{ij}-\alpha_i$ and $0\le \alpha\le -\Theta(\lambda_{ij}-\alpha_i)$. This implies that
  \begin{align*}
    \lambda_{ij} - \beta \ge \alpha_i, \qquad -\Theta(\lambda_{ij})-\alpha \ge -\Theta(\alpha_i).
  \end{align*}
  As $\Theta(\alpha_i)\in -Q^+$ one gets $\lambda_{ij}-\beta \notin Q^\Theta$ and $-\Theta(\lambda_{ij})-\alpha\notin Q^\Theta$. As $\lambda_{ij}+\Theta(\lambda_{ij})\in Q^\Theta$ the second relation implies that $\lambda_{ij}-\alpha\notin Q^\Theta$.

  Finally, we turn to the case that $j\notin X$.  
  By \eqref{eq:q-Serre} and \eqref{eq:Serre2} the relation
  $\pi_{\alpha,\beta}(F_{ij}(B_i,B_j))\neq 0$ implies $0\le
  \beta<\lambda_{ij}$ and $\alpha=\alpha'+\alpha''$ with
  $0\le\alpha'<\tau(\lambda_{ij})$ and $\alpha''\in \sum_{i\in
    X}\N_0\alpha_i$. Hence $\lambda_{ij}-\beta$ and
  $\tau(\lambda_{ij})-\alpha'$ are nonzero elements in
  $\N_0\alpha_i+\N_0\alpha_j$ and $\N_0 \alpha_{\tau(i)}+ \N_0 \alpha_{\tau(j)}$, respectively. Now the relations $\Theta(\alpha_k)\in -Q^+$ for $k\in I\setminus X$ and $\alpha'', \lambda_{ij}-\tau(\lambda_{ij})\in Q^\Theta$ imply the claim.
\end{proof}
\begin{lem}\label{lem:rel1}
   The following relations hold in $B_{\uc,\us}$:
  \begin{align}
    K_\lambda B_i&=q^{-(\alpha_i,\lambda)}B_i K_\lambda &&\mbox{for all
    $i\in I$, $\lambda\in Q^\Theta$, } \label{eq:qKB}\\
    E_j B_i- B_i E_j & = \delta_{ij}
    \frac{K_i-K_i^{-1}}{q_i-q_i^{-1}} & & \mbox{for all $i\in I, j\in
      X$.}\label{eq:EBBE}
  \end{align}
\end{lem}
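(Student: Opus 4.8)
The idea is that both relations are immediate consequences of the defining relations of $\uqgp$ together with the definition \eqref{eq:Bi-def} of $B_i$ and the already-established fact (Theorem \ref{thm:theta-props}.(2)) that $\theta_q$ sends $K_\beta$ to $K_{\Theta(\beta)}$. For \eqref{eq:qKB}, first I would treat the case $i\in X$, where $B_i=F_i$ and the relation $K_\lambda F_i = q^{-(\alpha_i,\lambda)}F_i K_\lambda$ is just the commutation relation (3) of $\uqg$ rewritten via \eqref{eq:Kbeta}; note that both sides make sense inside $B_{\uc,\us}$ since $K_\lambda\in U^0_\Theta{}'$ for $\lambda\in Q^\Theta$. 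For $i\in I\setminus X$, write $B_i = F_i + c_i\,\theta_q(F_iK_i)K_i^{-1} + s_i K_i^{-1}$ and check that $K_\lambda$ commutes with each summand up to the scalar $q^{-(\alpha_i,\lambda)}$. The term $F_i$ and the term $K_i^{-1}$ are clear from relation (3) and from the fact that $K_\lambda$ commutes with all $K_\mu$. For the middle term, $\theta_q(F_iK_i)K_i^{-1}$ lies in the weight space $\uqgp_{-\alpha_i}$ by construction (it has the same $Q$-degree as $F_i$, since $\theta_q$ preserves the $Q$-grading only up to $\Theta$, but here $F_iK_i$ has $Q$-degree $-\alpha_i$ and $\theta_q$ moves it to $Q$-degree $\Theta(-\alpha_i)$, then multiplying by $K_i^{-1}$... ) — more carefully, I would simply use $K_\lambda\,\theta_q(F_iK_i)K_i^{-1}K_\lambda^{-1} = \theta_q(K_{\Theta^{-1}(\lambda)}F_iK_iK_{-\Theta^{-1}(\lambda)})K_i^{-1}\cdot K_\lambda K_i^{-1}K_\lambda^{-1}$, but the cleanest route is to observe $\ad(K_\lambda)(B_i)=q^{(\alpha_i',\lambda)}B_i$ for the appropriate weight, using that $\Theta$ fixes $\lambda$ and $(\cdot,\cdot)$ is $\Theta$-invariant by \eqref{eq:invTheta}. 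Concretely: $\theta_q(F_iK_i)$ is, by Theorem \ref{thm:theta-props}.(3), a scalar multiple of $\ad(Z^+_{\tau(i)}(X))(E_{\tau(i)})$, which lies in $\uqgp_{\tau(\lambda_{ij})\text{-type weight}}$; multiplying by $K_i^{-1}$ lands it in weight space $-\alpha_i$ plus a $\Z X$-correction that pairs trivially with $\lambda\in Q^\Theta$. Hence $K_\lambda$ conjugates it by exactly $q^{-(\alpha_i,\lambda)}$, matching the other two summands.

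For \eqref{eq:EBBE} with $j\in X$: if also $i\in X$ then $B_i=F_i$ and this is relation (4) of $\uqg$, namely $E_jF_i - F_iE_j = \delta_{ij}(K_i-K_i^{-1})/(q_i-q_i^{-1})$, and since $i,j\in X$ the right-hand side lies in $\cM_X\subseteq B_{\uc,\us}$. If $i\in I\setminus X$, then $\delta_{ij}=0$ and I must show $E_j$ commutes with $B_i$. Again split $B_i$ into its three summands. For the $F_i$ term: $[E_j,F_i]=\delta_{ij}(\cdots)=0$ since $i\neq j$. For the $K_i^{-1}$ term: $[E_j,K_i^{-1}]=0$ would be false in general, so instead I should be more careful — but in fact $[E_j, s_iK_i^{-1}] = s_i(E_jK_i^{-1}-K_i^{-1}E_j)$, and $K_i^{-1}E_j = q^{-\alpha_j(h_i)\epsilon_i\cdots}$... this is nonzero unless $a_{ij}=0$. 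So the correct statement must be that the sum $E_jB_i - B_iE_j$ telescopes: the contributions from the $K_i^{-1}$-term and from the $\theta_q(F_iK_i)K_i^{-1}$-term must cancel, OR the relevant structure is that $\ad(E_j)(B_i)=0$ as an element, which is what is really being claimed. Here I would use Theorem \ref{thm:theta-props}.(1): $\theta_q|_{\cM_X}=\id$, so $E_j\in\cM_X$ behaves well under $\theta_q$; and the key computation is $\ad(E_j)(F_i)=0$ for $j\in X$, $i\notin X$ (since $F_i$ is a lowest weight vector, cf. Lemma \ref{Tw-on-Xc}), combined with $\ad(E_j)(K_i^{-1})$ and $\ad(E_j)(\theta_q(F_iK_i)K_i^{-1})$ cancelling — precisely because $\theta_q(F_iK_i)K_i^{-1} = -c_i^{-1}(B_i-F_i-s_iK_i^{-1})$ and applying $\ad(E_j)$ to the defining expression. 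The slickest argument: rewrite \eqref{eq:EBBE} as $\ad(E_j)(B_i)=\delta_{ij}\cdot(\text{the fraction})$ after using $\kow(E_j)=E_j\ot 1+K_j\ot E_j$ and $\ad(E_j)(u)=E_ju - K_juK_j^{-1}E_j = E_ju - (\text{weight scalar})uE_j$; but since $E_jB_iK_j^{-1} = q^{?}B_iK_j^{-1}E_j$ only fails to be a clean commutator... Rather than chase this, I would directly verify: $\ad(E_j)(F_i)=0$ and $\ad(E_j)\big(\theta_q(F_iK_i)K_i^{-1}\big)=0$ and $\ad(E_j)(K_i^{-1})=0$ — the last because $\ad(E_j)(K_i^{-1}) = E_jK_i^{-1} - K_jK_i^{-1}K_j^{-1}E_j = E_jK_i^{-1} - q^{-a_{ij}\epsilon_j}K_i^{-1}E_j$... no. OK — the honest approach is that the combination works out; I would present it via $\ad$.

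\textbf{The main obstacle.} The genuinely delicate point is \eqref{eq:EBBE} for $i\in I\setminus X$, $j\in X$: one needs that $E_j$ commutes with the full $B_i$, and this is \emph{not} visible summand-by-summand because $E_j$ does not commute with $K_i^{-1}$. The resolution must exploit that $B_i$, as a whole, is built so that $\ad(\cM_X)$ acts on it compatibly — specifically that $B_i$ lies in a suitable $\ad(\cM_X^+)$-invariant combination, following from Theorem \ref{thm:theta-props}.(1) (so $\theta_q$ commutes with $\ad(E_j)$ for $j\in X$) together with the coideal property (Proposition \ref{prop:coid}). The clean way: apply $\ad(E_j)$ to $B_i = F_i + c_i\theta_q(F_iK_i)K_i^{-1} + s_iK_i^{-1}$, note $\ad(E_j)$ commutes with $\theta_q$ on the relevant elements since $E_j\in\cM_X$ and $\theta_q|_{\cM_X}=\id$ makes $\theta_q$ intertwine the $\ad(\cM_X)$-actions, use $\ad(E_j)(F_iK_i)=\ad(E_j)(F_i)K_i + (\text{weight})F_i\ad(E_j)(K_i)$ with both pieces vanishing for $j\in X$, $i\notin X$, and conclude $\ad(E_j)(B_i)=s_i\ad(E_j)(K_i^{-1})=0$ (the last since $\ad(E_j)(K_i^{-1})=E_jK_i^{-1}-K_jK_i^{-1}K_j^{-1}E_j = (1-q^{?})E_jK_i^{-1}$, which vanishes precisely because for $i\in I_{ns}$ — which is forced when $s_i\neq 0$ by $\us\in\cS$ — one has $\alpha_j(h_i)=0$ for all $j\in X$, so $K_j$ commutes with $K_i^{-1}$... wait, $K_j$ always commutes with $K_i^{-1}$; the issue is $E_j$ vs $K_i^{-1}$: $K_i^{-1}E_j = q^{-a_{ij}\epsilon_i}E_jK_i^{-1}$, wait that's $K_{h_i}^{-\epsilon_i}E_j$, giving $q^{-\epsilon_i\alpha_j(h_i)} = q^{-\epsilon_i a_{ij}}$; but for $s_i\neq 0$ we need $i\in I_{ns}$ so $a_{ij}=\alpha_i(h_j)=0$ — hmm, $I_{ns}$ requires $\alpha_i(h_j)=0$, and $a_{ij}=\alpha_j(h_i)$, so we also need $a_{ij}=0$: indeed $\alpha_i(h_j)=0\Leftrightarrow a_{ji}=0\Leftrightarrow a_{ij}=0$ by the GCM axiom. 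So $\ad(E_j)(s_iK_i^{-1})=0$.) Thus $\ad(E_j)(B_i)=0$, which unwinds to \eqref{eq:EBBE}. I expect writing out this unwinding — converting $\ad(E_j)(B_i)=0$ back into $E_jB_i-B_iE_j=0$ using $\ad(E_j)(u)=E_ju-K_juK_j^{-1}E_j$ and the fact that $B_i$ has a well-defined weight — to be the fiddliest bookkeeping, but it is routine once the vanishing of $\ad(E_j)(B_i)$ is in hand.
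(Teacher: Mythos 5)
Your treatment of \eqref{eq:qKB} contains the right idea — the three summands of $B_i$ have weights $-\alpha_i$, $-\Theta(\alpha_i)$ and $0$, and $(\alpha_i,\lambda)=(\Theta(\alpha_i),\lambda)$ for $\lambda\in Q^\Theta$ by \eqref{eq:invTheta} (which, when $s_i\neq 0$, also gives $(\alpha_i,\lambda)=0$ since then $\Theta(\alpha_i)=-\alpha_i$) — but your ``concrete'' justification is off: the discrepancy between the weights of the first two summands is $\alpha_i-\Theta(\alpha_i)$, which is \emph{not} an element of $\Z X$, and elements of $\Z X$ do not pair trivially with $Q^\Theta$ anyway (note $\alpha_j\in Q^\Theta$ for $j\in X$). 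The correct reason is the $\Theta$-invariance of the form, which you also state, so this part is repairable on the spot.

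The genuine gap is in \eqref{eq:EBBE} for $i\in I\setminus X$, $j\in X$. You propose to prove $\ad(E_j)(B_i)=0$ and then ``unwind'' this to $E_jB_i-B_iE_j=0$, and both steps fail. With the paper's conventions $\ad(E_j)(u)=E_ju-K_juK_j^{-1}E_j$, so $\ad(E_j)(F_i)=(1-q^{-(\alpha_j,\alpha_i)})F_iE_j\neq 0$ whenever $a_{ij}\neq 0$: it is $F_iK_i$, not $F_i$, that is an $\ad(\cM_X)$-highest weight vector (Lemma \ref{Tw-on-Xc}), and likewise $\ad(E_j)(K_i^{\pm 1})=(q^{\mp(\alpha_i,\alpha_j)}-1)K_i^{\pm1}E_j\neq 0$; in fact, once the commutation relation is known one finds $\ad(E_j)(B_i)=(1-q^{-(\alpha_j,\alpha_i)})\bigl(F_i+c_i\theta_q(F_iK_i)K_i^{-1}\bigr)E_j$, which is nonzero e.g.\ for $\gfrak=\slfrak_4$, $X=\{1,3\}$, $i=2$, $j=1$. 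Moreover $B_i$ does \emph{not} have a well-defined weight (its summands have the three distinct weights above), so even if $\ad(E_j)(B_i)$ vanished it would not unwind to the ordinary commutator. The correct and much shorter route, which is the paper's, is to check the ordinary commutator summand by summand: $[E_j,F_i]=0$ since $i\neq j$; $[E_j,s_iK_i^{-1}]=0$ because $s_i\neq 0$ forces $i\in I_{ns}$ and hence $a_{ij}=0$ (this part you do have); and for the middle term write $\theta_q(F_iK_i)K_i^{-1}=\theta_q\bigl(F_iK_{\alpha_i-\Theta(\alpha_i)}\bigr)$, using $\theta_q(K_\beta)=K_{\Theta(\beta)}$ and $\Theta^2=\id$, so that by $\theta_q(E_j)=E_j$ one gets $[E_j,\theta_q(F_iK_i)K_i^{-1}]=\theta_q\bigl([E_j,F_iK_{\alpha_i-\Theta(\alpha_i)}]\bigr)=0$, the bracket vanishing because $[E_j,F_i]=0$ and $E_j$ commutes with $K_{\alpha_i-\Theta(\alpha_i)}$ since $(\alpha_j,\alpha_i)=(\alpha_j,\Theta(\alpha_i))$ by \eqref{eq:invTheta}.
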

\begin{proof}
  Recall from \eqref{eq:invTheta} that $(\alpha,\beta)=(\Theta(\alpha),\Theta(\beta))$ holds for all $\alpha,\beta \in Q$. In particular one has $(\alpha_i,\lambda)=(\Theta(\alpha_i),\lambda)$ if $\lambda\in Q^\Theta$. This implies \eqref{eq:qKB}.
  
  Relation \eqref{eq:EBBE} holds for $i\in X$ by the defining relation \ref{sec:quagroup}.(4) of $\uqg$.
  For $i\notin X$ Equation \eqref{eq:EBBE} follows from
  \begin{align*}
    [E_j, \theta_q(F_iK_i)K_i^{-1}]=\theta_q\big([E_j, F_i K_{\alpha_i-\Theta(\alpha_i)}]\big)=0
  \end{align*}  
  which holds as $(\alpha_j,\alpha_i)=(\alpha_j,\Theta(\alpha_i))$ by \eqref{eq:invTheta}.
\end{proof}
For any $J=(j_1,\dots,j_n)\in I^n$ define $\wght(J)=\sum_{i=1}^n\alpha_{j_i}$ and 
\begin{align}\label{eq:EJFJBJ}
  E_J&=E_{j_1}\dots E_{j_n},& F_J&=F_{j_1}\dots F_{j_n}, &B_J&=B_{j_1}\dots B_{j_n}.
\end{align}
Fix $i,j\in I$ and recall that $\lambda_{ij}=(1-a_{ij})\alpha_i + \alpha_j$. We want to further investigate properties of $Y:=F_{ij}(B_i,B_j)$. By relation \eqref{eq:Bi-kow} and Lemma \ref{lem:rel1} one has
\begin{align}\label{eq:kowY}
  \kow(Y)\in Y\ot K_{-\lambda_{ij}}+
  \sum_{\{J\,|\,\wght(J)<\lambda_{ij}\}} \cM_X^+ U^0_\Theta{}' B_J\ot \uqgp.
\end{align}
For later use we observe the following properties of the second term
\begin{align}
  S(U^-)U^0{}'\cap
  \sum_{\{J\,|\,\wght(J)<\lambda_{ij}\}} \cM_X^+ U^0_\Theta{}' B_J &= S(U^-)U^0{}'\cap\cM_XU^0_\Theta{}' =
  S(\cM^-_X)U^0_\Theta{}',\label{eq:intersect1}\\
  U^+U^0{}'\cap
  \sum_{\{J\,|\,\wght(J)<\lambda_{ij}\}} \cM_X^+U^0_\Theta{}' B_J &=U^+ U^0{}'\cap\cM_XU^0_\Theta{}' =
  \cM^+_XU^0_\Theta{}'\label{eq:intersect2}
\end{align}
which follow from the linear independence of the terms $F_J$
for all $J$ with $\wght(J)<\lambda_{ij}$.
The following proposition provides the main tool to write the algebra
$B_{\uc,\us}$ efficiently in terms of generators and relations.
\begin{prop}\label{prop:Z=0}
In $\uqg$ one has $P_{-\lambda_{ij}}(F_{ij}(B_i,B_j))=0$ for all $i,j\in I$.
\end{prop}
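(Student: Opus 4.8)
The plan is to show that the element $P_{-\lambda_{ij}}(Y)$, where $Y=F_{ij}(B_i,B_j)$, must lie in $B_{\uc,\us}$ and simultaneously be forced to be zero by a weight/support argument. First I would record from \eqref{eq:kowY} that $\kow(Y)\in Y\ot K_{-\lambda_{ij}} + \sum_{\{J\,|\,\wght(J)<\lambda_{ij}\}}\cM_X^+ U^0_\Theta{}' B_J\ot \uqgp$. Applying the comodule homomorphism $P_{-\lambda_{ij}}$ in the second tensor factor (using \eqref{eq:P-comod-hom}) and then $\id\ot\vep$ should give that $P_{-\lambda_{ij}}(Y)\in Y + \sum_{\{J\,|\,\wght(J)<\lambda_{ij}\}}\cM_X^+ U^0_\Theta{}' B_J$, hence $P_{-\lambda_{ij}}(Y)\in B_{\uc,\us}$ since $Y\in B_{\uc,\us}$ by Lemma \ref{lem:Serre2}. (Alternatively: one applies $P_{-\lambda_{ij}}$ directly and uses that the "error" terms have the form of a coideal element plus lower terms.)

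Next I would identify the structural location of $P_{-\lambda_{ij}}(Y)$ inside $\uqg$. By definition $P_{-\lambda_{ij}}(Y)$ lies in $U^+ K_{-\lambda_{ij}} S(U^-)$. On the other hand, the element $Y$, being built from the $B_k$'s of total weight $-\lambda_{ij}$, has its $P_\mu$-components only for $\mu$ of the form $-\lambda_{ij}+$ (something in $Q^+$) coming from the $\theta_q$-parts, so $-\lambda_{ij}$ is the lowest such $\mu$; in particular the $P_{-\lambda_{ij}}$-component is the "purely $F$" part, i.e. $P_{-\lambda_{ij}}(Y)\in K_{-\lambda_{ij}}S(U^-)$ — but actually more is true: $P_{-\lambda_{ij}}(F_{ij}(B_i,B_j)) = F_{ij}(F_i,F_j)K_{-\lambda_{ij}}\cdot(\text{correction})$. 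Here I would invoke \eqref{eq:q-Serre}, which gives $F_{ij}(F_i,F_j)=0$, to conclude that $P_{-\lambda_{ij}}(Y)$ has no term purely in $S(U^-)U^0{}'$; combined with Lemma \ref{lem:not-Pt} (applied with $\alpha=\beta=0$, or rather to the relevant $\pi_{\alpha,\beta}$-components) one pins down that every nonzero $\pi_{\alpha,\beta}$-component of $Y$ that survives into $P_{-\lambda_{ij}}$ has $\lambda_{ij}-\alpha\notin Q^\Theta$ and $\lambda_{ij}-\beta\notin Q^\Theta$.

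Finally I would combine the two facts. Since $P_{-\lambda_{ij}}(Y)\in B_{\uc,\us}$ and $B_{\uc,\us}$ is a right coideal subalgebra, Lemma \ref{lem:PlamB} gives $P_{-\lambda_{ij}}(Y)\in B_{\uc,\us}$ already, but the real point is to use the triangular-decomposition grading together with the intersection identities \eqref{eq:intersect1}, \eqref{eq:intersect2}: any element of $B_{\uc,\us}$ lying in $U^+K_{-\lambda_{ij}}S(U^-)$ and whose $\kow$ lies in $Y\ot K_{-\lambda_{ij}}+\sum_{\wght(J)<\lambda_{ij}}\cM_X^+U^0_\Theta{}'B_J\ot\uqgp$ must, after projecting the "error" part onto $U^+U^0{}'$ resp. $S(U^-)U^0{}'$, land in $\cM_X^+U^0_\Theta{}'$ resp. $S(\cM_X^-)U^0_\Theta{}'$. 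That forces the associated weights to lie in $Q^\Theta$ (since $\cM_X$ has weights in $\Z X\subset Q^\Theta$ and $U^0_\Theta{}'$ has weights in $Q^\Theta$), contradicting Lemma \ref{lem:not-Pt} unless $P_{-\lambda_{ij}}(Y)=0$. The main obstacle I anticipate is the bookkeeping in this last step: carefully tracking, via the comultiplication formula \eqref{eq:kowY}, exactly which summands of $Y$ can contribute to $P_{-\lambda_{ij}}$, and matching them against the constraints of Lemma \ref{lem:not-Pt} so that the only consistent possibility is $0$. This is essentially a linear-independence argument disguised in coideal language, and getting the weight inequalities in Lemma \ref{lem:not-Pt} to interact correctly with the $Q^\Theta$-membership forced by the coideal property is where the care is needed.
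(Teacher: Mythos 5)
There is a genuine gap, and it sits exactly at the point your proposal tries to shortcut. You claim that $P_{-\lambda_{ij}}(F_{ij}(B_i,B_j))$ is essentially the ``purely $F$'' part of $F_{ij}(B_i,B_j)$, so that the quantum Serre relation \eqref{eq:q-Serre} kills it. This is false: when you normal-order a word containing the terms $c_i\theta_q(F_iK_i)K_i^{-1}$ or $s_iK_i^{-1}$, the contractions $E_kF_k-F_kE_k=\frac{K_k-K_k^{-1}}{q_k-q_k^{-1}}$ produce negative powers of the $K$'s, and those contributions land precisely in the component $U^+K_{-\lambda_{ij}}S(U^-)$. Concretely, for $a_{ij}=0$ with $\Theta(\alpha_i)=-\alpha_j$ one reads off from \eqref{eq:BiBj-com} that
\begin{align*}
  P_{-\lambda_{ij}}\bigl(F_{ij}(B_i,B_j)\bigr)=\Bigl(\tfrac{c_i}{q_j-q_j^{-1}}-\tfrac{c_j}{q_i-q_i^{-1}}\Bigr)K_i^{-1}K_j^{-1},
\end{align*}
which vanishes only because $\uc\in\cC$ forces $c_i=c_j$ (and $q_i=q_j$); similarly Remark \ref{rem:S} shows that for parameters $\us$ outside $\cS$ the projection $P_{-\lambda_{ij}}\circ\pi_{0,0}$ of $F_{ij}(B_i,B_j)$ is nonzero. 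So any argument that never invokes the parameter restrictions cannot prove the statement, and your proposal nowhere uses Lemma \ref{lem:Serre2}, in particular relation \eqref{eq:p00inTheta}, which is exactly where those restrictions (and condition (3) of admissibility) enter.

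Your final paragraph is otherwise on the right track and matches the paper's route: setting $Z=P_{-\lambda_{ij}}(F_{ij}(B_i,B_j))$, using \eqref{eq:kowY} with \eqref{eq:P-comod-hom}, picking $\alpha$ (then $\beta$) maximal with $\pi_{\alpha,\beta}(Z)\neq0$, and playing the intersections \eqref{eq:intersect1}, \eqref{eq:intersect2} against Lemma \ref{lem:not-Pt} to force $\alpha=\beta=0$. But this only reduces $Z$ to $\pi_{0,0}(Z)\in\field(q)K_{-\lambda_{ij}}$; to conclude $Z=0$ you then need $\pi_{0,0}(F_{ij}(B_i,B_j))\in U^0_\Theta{}'$ from \eqref{eq:p00inTheta} together with $K_{-\lambda_{ij}}\notin U^0_\Theta{}'$, not the quantum Serre relation. (Your first paragraph's observation that $Z\in B_{\uc,\us}$ is correct but is not what closes the argument.) To repair the proof, drop the ``purely $F$'' identification, run the maximal-$\alpha$/maximal-$\beta$ induction carefully as you sketched, and finish the residual zero-weight case with Lemma \ref{lem:Serre2}.
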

\begin{proof}
  To abbreviate notation set
  $Z:=P_{-\lambda_{ij}}(F_{ij}(B_i,B_j))$. Relations
  \eqref{eq:P-comod-hom} and \eqref{eq:kowY} imply 
\begin{align}\label{eq:kowZ}
  \kow(Z)\in Y\ot K_{-\lambda_{ij}}+
  \sum_{\{J\,|\,\wght(J)<\lambda_{ij}\}} \cM^+_XU^0_\Theta{}' B_J\ot
  U^+ K_{-\lambda_{ij}} S(U^-).
\end{align}
Assume now, that $Z \neq 0$. Choose $\alpha\in Q^+$ maximal with
respect to the partial order such that $\pi_{\alpha,\beta}(Z)\neq 0$
for some $\beta\in Q^+$. In this case
\begin{align*}
  0\neq (\id\ot \pi_{\alpha,0})\kow(Z)\in
  S(U^-)K_{-\lambda_{ij}+\alpha}\ot U^+_\alpha K_{-\lambda_{ij}}
\end{align*} 
by \eqref{eq:E-copr}, \eqref{eq:F-copr}. If $\alpha\neq 0$ then
relations \eqref{eq:kowZ} and \eqref{eq:intersect1} imply
$K_{-\lambda_{ij}+\alpha}\in U^0_\Theta{}'$ in contradiction to Lemma
\ref{lem:not-Pt}. Hence $\alpha=0$ and $Z\in S(U^-)K_{-\lambda_{ij}}$.
Now choose $\beta\in Q^+$ maximal such that $\pi_{0,\beta}(Z)\neq
0$. In this case
\begin{align*}
  0\neq (\id\ot \pi_{0,\beta})\kow(Z)\in
  K_{-\lambda_{ij}+\beta}\ot S(U^-_\beta) K_{-\lambda_{ij}}
\end{align*} 
As before, relations \eqref{eq:kowZ} and \eqref{eq:intersect2} together
with Lemma \ref{lem:not-Pt} imply $\beta=0$. Hence
$Z=\pi_{00}(Z)$. But then $0\neq \pi_{0,0}(Z)\in \field(q) K_{-\lambda_{ij}}\nsubseteq
U^0_\Theta{}'$ in contradiction to Equation \eqref{eq:p00inTheta}. Hence $Z=0$.
\end{proof}
\begin{cor}\label{cor:Serre-lower}
  In $B_{\uc,\us}$ one has the relation 
  \begin{align}
    F_{ij}(B_i,B_j)\in \sum_{\{J\in \cJ\,|\,\wght(J)<\lambda_{ij}\}} \cM_X^+ U^0_\Theta{}' B_J \qquad \mbox{ for all $i,j\in I$}.
  \end{align} 
\end{cor}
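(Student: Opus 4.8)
The plan is to deduce the inclusion directly from the coproduct formula \eqref{eq:kowY}, using Proposition \ref{prop:Z=0} together with the comodule property \eqref{eq:P-comod-hom} of the projections $P_\lambda$. Set $Y=F_{ij}(B_i,B_j)$ and abbreviate by
$$W=\sum_{\{J\in\cJ\,|\,\wght(J)<\lambda_{ij}\}}\cM_X^+U^0_\Theta{}'B_J$$
the subspace appearing on the right-hand side of the claim; note that $W\subseteq B_{\uc,\us}\subseteq\uqgp$. By \eqref{eq:kowY} one has $\kow(Y)\in Y\ot K_{-\lambda_{ij}}+W\ot\uqgp$.

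I would then apply $\id\ot f$, where $f=\vep\circ P_{-\lambda_{ij}}\colon\uqgp\to\field(q)$. On the one hand, since $f(K_{-\lambda_{ij}})=\vep\bigl(P_{-\lambda_{ij}}(K_{-\lambda_{ij}})\bigr)=\vep(K_{-\lambda_{ij}})=1$ and $f(\uqgp)\subseteq\field(q)$, applying $\id\ot f$ to the displayed membership gives $(\id\ot f)\kow(Y)\in Y+W$. On the other hand, using first \eqref{eq:P-comod-hom}, then Proposition \ref{prop:Z=0}, and finally the counit axiom $(\id\ot\vep)\circ\kow=\id$, one obtains
$$(\id\ot f)\kow(Y)=(\id\ot\vep)\bigl((\id\ot P_{-\lambda_{ij}})\kow(Y)\bigr)=(\id\ot\vep)\kow\bigl(P_{-\lambda_{ij}}(Y)\bigr)=P_{-\lambda_{ij}}(Y)=0.$$
Comparing the two expressions for $(\id\ot f)\kow(Y)$ yields $Y\in W$, which is the assertion.

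I do not anticipate any real obstacle here: the argument is a short diagram chase, and the only points deserving attention are keeping straight on which tensor leg $P_{-\lambda_{ij}}$ and $\vep$ act, and noting that the second summand in \eqref{eq:kowY} is literally of the form $W\ot\uqgp$, so that $\id\ot f$ returns an element of $W$. An equivalent formulation avoiding the functional $f$ is to apply $\id\ot P_\mu$ to \eqref{eq:kowY} for each $\mu\in P^\vee$ with $\mu\neq-\lambda_{ij}$: this gives $\kow(P_\mu(Y))\in W\ot\uqgp$ by \eqref{eq:P-comod-hom}, hence $P_\mu(Y)\in W$ upon applying $\id\ot\vep$, and then $Y=\sum_{\mu\in P^\vee}P_\mu(Y)\in W$ because in addition $P_{-\lambda_{ij}}(Y)=0$ by Proposition \ref{prop:Z=0}.
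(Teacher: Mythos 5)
Your argument is correct and is essentially the paper's own proof: the paper derives \eqref{eq:kowZ} by applying $\id\ot P_{-\lambda_{ij}}$ to \eqref{eq:kowY} via \eqref{eq:P-comod-hom}, invokes Proposition \ref{prop:Z=0} to get $Z=P_{-\lambda_{ij}}(F_{ij}(B_i,B_j))=0$, and then applies the counit to the second tensor factor, which is exactly your composite $f=\vep\circ P_{-\lambda_{ij}}$. The only (harmless) imprecision, shared with the paper, is passing from the sum over all multi-indices $J$ in \eqref{eq:kowY} to the sum over $J\in\cJ$ in the statement.
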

\begin{proof}
  This follows from Proposition \ref{prop:Z=0} by applying the counit
  to the second tensor factor in \eqref{eq:kowZ} where $Z=P_{-\lambda_{ij}}(F_{ij}(B_i,B_j))=0$.
\end{proof}
\begin{rema}\label{rem:s-indep}
  For $i\in I_{ns}$ one has
\begin{align}\label{eq:kowBis}
  \kow(B_{i}) = B_{i} \ot K_i^{-1} + 1\ot (F_i - c_i E_i K_i^{-1})
\end{align}
 and thus $\kow(B_i)$ does not explicitly depend on the parameter $s_i$ but only on $c_i$. By the proof of the above theorem, this implies that the expression of
 $ F_{ij}(B_i,B_j)$ as an element of $\sum_{\{J\in \cJ\,|\,\wght(J)<\lambda_{ij}\}} \cM_X^+ U^0_\Theta{}' B_J$ does not explicitly depend on $\us$.
\end{rema}
\subsection{References to Letzter's constructions}
  For finite dimensional $\gfrak$, standard and nonstandard quantum symmetric pairs were introduced in full generality in \cite[Variants 1 and 2, after (7.25)]{MSRI-Letzter}. Previously, the parameters $\uc\in \cC$ were implicitly included via a Hopf algebra automorphism $\chi$ which entered the definition of the automorphism $\tilde{\theta}_2$ in \cite[Theorem 3.1]{a-Letzter99a}. The nonstandard quantum symmetric pair coideal subalgebras seem not to be defined explicitly in that paper, however, their existence is already observed in \cite[(5.14) and Remark 5.10]{a-Letzter99a}. In \cite[Section 2]{a-Letzter00} nonstandard analogs were defined for a parameter set larger than $\cS$ but this was corrected at the end of \cite[Variant 2]{MSRI-Letzter}.
  
  The discussion in the present subsection partly follows \cite[Section 7]{MSRI-Letzter}. Proposition \ref{prop:coid} is \cite[Theorem 7.2]{MSRI-Letzter} or \cite[Corollary 4.2]{a-Letzter99a}. Subsection \ref{sec:q-Serre} follows the procedure outlined in \cite[after Theorem 7.2]{MSRI-Letzter} to describe quantum symmetric pair coideal subalgebras in terms of generators and relations. In particular, Lemma \ref{lem:not-Pt} corresponds to \cite[Lemma 7.3]{MSRI-Letzter} and Lemma \ref{lem:rel1} and Corollary \ref{cor:Serre-lower} are subsumed in \cite[Theorem 7.4]{MSRI-Letzter} in the finite case. The proof of Proposition \ref{prop:Z=0} follows the proof of \cite[Theorem 7.4]{MSRI-Letzter}. A discussion similar to Lemma \ref{lem:Serre2} is contained in \cite[before Lemma 7.3]{MSRI-Letzter}. However, this discussion is significantly simpler than the proof of Lemma \ref{lem:Serre2} because Letzter excludes nonstandard quantum symmetric pairs and uses casework which is possible in the finite case due to the classification of admissible pairs in \cite[p.32/33]{a-Araki62}.
\section{Triangular decompositions}\label{sec:triang}
From now on until the end of Section \ref{sec:center} we fix parameters $\uc\in \cC$, $\us\in \cS$ and hence a corresponding quantum symmetric pair coideal subalgebra $B_{\uc,\us}$ of $\uqgp$.
By Corollary \ref{cor:Serre-lower} there exist relations between the generators $B_i$, $i\in I$, of $B_{\uc,\us}$ which are similar to the quantum Serre relations for $\uqgp$.
To make the lower order terms in these relations explicit, and to show that these relations together with those from Lemma \ref{lem:rel1} form a complete set of relations for $B_{\uc,\us}$, one uses triangular decompositions of $\uqgp$ involving $B_{\uc,\us}$. The triangular decompositions in this section will also be used in Section \ref{sec:specialize} to describe the behavior of $B_{\uc,\us}$  under specialization.
\subsection{A one-sided $U^+ U^0{}'$-module basis of $\uqgp$}\label{sec:BJbasis}
Recall from \eqref{eq:EJFJBJ} that for any multi-index $J=(j_1,\dots,j_n)\in I^n$ we defined $\wght(J)=\sum_{i=1}^n \alpha_{j_i}$ and $F_J=F_{j_1}\dots F_{j_n}$ and $B_J=B_{j_1}\dots B_{j_n}$. In this case we also define $|J|=n$. Let $\cJ$ be a fixed subset of $\bigcup_{n\in \N_0}I^n$ such that $\{F_J\,|\,J\in \cJ\}$ is a basis of $U^-$.  By the triangular decomposition \eqref{eq:triang-decomp} of $\uqgp$ the set $\{F_J\,|\,J\in \cJ\}$ forms a basis $\uqgp$ as a left $U^+U^0{'}$-module. By the following proposition this basis can be replaced by the set $\{B_J\,|\,J\in \cJ\}$.

Define a filtration $\cF^\ast$ of $U^-$ by $\cF^n(U^-)=\mathrm{span}\{F_J\,|\,J\in I^m, m\le n\}$ for all $n\in \N_0$. As the quantum Serre relations are homogeneous, the set $\{F_J\,|\,J\in \cJ, |J|\le n\}$ forms a basis of $\cF^n(U^-)$.
\begin{prop}\label{prop:leftBasisU}
  The set $\{B_J\,|\,J\in \cJ\}$ is a basis of the left (or right) $U^+U^0{}'$-module $\uqgp$.
\end{prop}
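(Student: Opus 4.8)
The plan is to exhibit $\{B_J\mid J\in\cJ\}$ as a triangular perturbation of the known module basis $\{F_J\mid J\in\cJ\}$ coming from \eqref{eq:triang-decomp}, and then to conclude by passing to an associated graded ring.

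First I would introduce a filtration of $\uqgp$ that refines the filtration $\cF^\ast$ of $U^-$: for $n\in\N_0$ set $\cG^n=U^+U^0{}'\,\cF^n(U^-)$ and $\cG^{-1}=0$. By the triangular decomposition \eqref{eq:triang-decomp} the multiplication map $U^+\ot U^0{}'\ot\cF^n(U^-)\to\cG^n$ is a linear isomorphism, so $\bigcup_n\cG^n=\uqgp$, $\cG^0=U^+U^0{}'$ is a subalgebra, and one has natural identifications $\cG^n/\cG^{n-1}\cong U^+U^0{}'\ot\bigl(\cF^n(U^-)/\cF^{n-1}(U^-)\bigr)$ of left (and right) $U^+U^0{}'$-modules. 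The only point genuinely requiring care is that $\cG^\ast$ is multiplicative: using relations \ref{sec:quagroup}.(2)--(4), commuting the $F_i$ in a length-$\le m$ monomial to the right of the $E_j$'s and $K_h$'s never increases the number of $F$-factors, since each collision $F_iE_i-E_iF_i$ contributes a term in $U^0{}'$ with one fewer $F$; an easy induction then gives $\cF^m(U^-)\,U^+U^0{}'\subseteq\cG^m$ and hence $\cG^m\cG^n\subseteq\cG^{m+n}$. Since the quantum Serre relations are homogeneous, $\{F_J\mid J\in\cJ,\ |J|=n\}$ maps to a basis of $\cF^n(U^-)/\cF^{n-1}(U^-)$, so the classes $\{\overline{F_J}\mid J\in\cJ,\ |J|=n\}$ form a free $U^+U^0{}'$-basis of $\gr^n_{\cG}(\uqgp)$.

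Next I would check that $B_i\in\cG^1$ with $\overline{B_i}=\overline{F_i}$ in $\cG^1/\cG^0$ for every $i\in I$. For $i\in X$ this is trivial since $B_i=F_i$. For $i\in I\setminus X$, Theorem \ref{thm:theta-props}(3) shows that $\theta_q(F_iK_i)$ lies in $\cM_X^+K_{\tau(i)}\subseteq U^+U^0{}'$, so by \eqref{eq:Bi-def} one has $B_i-F_i\in U^+U^0{}'=\cG^0$, while $F_i\in\cF^1(U^-)\subseteq\cG^1$. Because $\gr_{\cG}(\uqgp)$ is a graded algebra it follows that $B_J=B_{j_1}\cdots B_{j_{|J|}}\in\cG^{|J|}$ and $\overline{B_J}=\overline{F_{j_1}}\cdots\overline{F_{j_{|J|}}}=\overline{F_J}$ in $\gr^{|J|}_{\cG}(\uqgp)$.

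Finally I would invoke the standard lifting principle for filtered modules. For each $n$ the family $\{\overline{B_J}\mid J\in\cJ,\ |J|=n\}=\{\overline{F_J}\mid J\in\cJ,\ |J|=n\}$ is a free $U^+U^0{}'$-basis of $\gr^n_{\cG}(\uqgp)$, the filtration $\cG^\ast$ is exhaustive with $\cG^{-1}=0$, and each $B_J$ has filtration degree at most $|J|$ with leading term $\overline{F_J}$; hence $\{B_J\mid J\in\cJ\}$ is a free left $U^+U^0{}'$-basis of $\uqgp$. Spanning follows by downward induction on the filtration degree of a given element, subtracting off a $U^+U^0{}'$-combination of the $B_J$ that matches its image in the associated graded; linear independence follows by examining the top filtration degree occurring in a putative nontrivial relation and using freeness of $\gr^n_{\cG}(\uqgp)$. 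The right-module statement is obtained by the symmetric argument, using instead the filtration $\cF^n(U^-)\,U^0{}'U^+$. As indicated above, the main (though ultimately routine) obstacle is verifying the multiplicativity of the filtration $\cG^\ast$; everything else is formal.
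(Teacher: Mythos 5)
Your argument is correct and is essentially the paper's proof recast in associated-graded language: both treat $\{B_J\,|\,J\in\cJ\}$ as a unitriangular perturbation of the basis $\{F_J\,|\,J\in\cJ\}$ with respect to the filtration by the number of $F$-factors, obtaining spanning by induction on $|J|$ and linear independence by extracting the top filtration degree and invoking the triangular decomposition \eqref{eq:triang-decomp}. One small inaccuracy: by Theorem \ref{thm:theta-props}(3), $\theta_q(F_iK_i)$ is a scalar multiple of $\ad(Z^+_{\tau(i)}(X))(E_{\tau(i)})$ and thus lies in $U^+$ rather than in $\cM_X^+K_{\tau(i)}$, but the containment your argument actually uses, namely $B_i-F_i\in U^+U^0{}'$, still holds.
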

\begin{proof}
  We prove the result for the left $U^+ U^0{}'$-module $\uqgp$. Let $J\in \cJ$ and $|J|=n$. We first show by induction on $n$ that $F_J$ is contained in the left $U^+ U^0{'}$-module generated by the set $\{B_J\,|\,J\in \cJ\}$. Indeed, $F_J-B_J\in U^+ U^0{}' \cF^{n-1}(U^-)$. Using the quantum Serre relations for $U^-$ one hence obtains that $F_J-B_J$ is contained in the left $U^+ U^0{}'$-submodule of $\uqgp$ generated by the set $\{F_L\,|\,L\in\cJ, |L|\le n-1\}$. The induction hypothesis implies the desired result.
  
  It remains to show that the set $\{B_J\,|\,J\in \cJ\}$ is linearly independent over $U^+ U^0{}'$.  To this end assume that there exists a non-empty finite subset $\cJ'\subset \cJ$ such that
  \begin{align}\label{eq:assume=0}
     \sum_{J\in \cJ'} a_J B_J =0 
  \end{align}
  for some $a_J\in U^+ U^0{}'$. Let $m\in \N$ be maximal such that there exists $J\in \cJ'$ with $|J|=m$. In view of the specific form \eqref{eq:Bi-def} of the generators $B_i$, Equation \eqref{eq:assume=0} implies that 
  \begin{align}\label{eq:F=0}
     \sum_{J\in \cJ', |J|=m} a_J F_J =0. 
  \end{align}
  By the triangular decomposition \eqref{eq:triang-decomp} of $\uqgp$ and the fact that $\{F_J\,|\,J\in \cJ\}$ is linearly independent, Equation \eqref{eq:F=0} implies that $a_J=0$ for all $J\in \cJ'$ with $|J|=m$. Induction on $m$ gives the desired result. 
\end{proof}
Define a subspace of $B_{\uc,\us}$ by 
\begin{align}\label{eq:BcsJ}
  B_{\uc,\us,\cJ} = \oplus_{J\in \cJ} \field(q) B_J.
\end{align}  
Proposition \ref{prop:leftBasisU} can be reformulated by saying that the multiplication map
\begin{align}\label{eq:leftBasisU}
  U^+ \ot U^0{}' \ot  B_{\uc,\us,\cJ} \rightarrow \uqgp
\end{align}
is an isomorphism of vector spaces.

\subsection{A one-sided $\cM_X^+ U^0_\Theta{'}$-module basis of $B_{\uc,\us}$}
By Proposition \ref{prop:leftBasisU} any element in $B_{\uc,\us}$ can be written as a linear combination of elements in $\{B_J\,|\,J\in \cJ\}$ with coefficients in $U^+U^0{}'$. Actually, it is sufficient to allow coefficients from $\cM^+_X U^0_\Theta{}'$.
\begin{prop}\label{prop:leftBasis}
  The set $\{B_J\,|\, J\in \cJ\}$ is a basis of the left (or right) $\cM_X^+ U^0_\Theta{}'$-module $B_{\uc,\us}$.
\end{prop}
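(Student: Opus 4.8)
The plan is to deduce linear independence from Proposition~\ref{prop:leftBasisU} and to establish the spanning statement by a straightening argument in which Corollary~\ref{cor:Serre-lower} plays the role of the quantum Serre relations. Write $\cB':=\cM_X^+ U^0_\Theta{}'\,B_{\uc,\us,\cJ}$. Linear independence of $\{B_J\mid J\in\cJ\}$ over $\cM_X^+ U^0_\Theta{}'$ is immediate, since by Proposition~\ref{prop:leftBasisU} these elements are already independent over the larger subalgebra $U^+U^0{}'\supseteq\cM_X^+ U^0_\Theta{}'$; so everything reduces to proving $B_{\uc,\us}=\cB'$. Here I would first record that $\cB'\subseteq B_{\uc,\us}$; that $\cM_X^+ U^0_\Theta{}'$ is a subalgebra (from $K_\beta E_j=q^{(\beta,\alpha_j)}E_jK_\beta$), so that $\cM_X^+ U^0_\Theta{}'\,\cB'=\cB'$; and that $\cM_X^0\subseteq U^0_\Theta{}'$, because $\alpha_i\in Q^\Theta$ for every $i\in X$ by condition~(\ref{adm2}) of Definition~\ref{admissible} together with $w_X^2=\mathrm{id}$. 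Consequently $B_{\uc,\us}$ is generated as an algebra by $\cM_X^+$, $U^0_\Theta{}'$ and the elements $B_i$ for $i\in I$ (recall $B_i=F_i$ for $i\in X$), and since $1=B_\emptyset\in\cB'$ it suffices to show $B_i\cB'\subseteq\cB'$ for all $i\in I$.

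The key preliminary is the bubbling relation $B_i\,\cM_X^+ U^0_\Theta{}'\subseteq\cM_X^+ U^0_\Theta{}'\,B_i+\cM_X^+ U^0_\Theta{}'$ for each $i\in I$; this follows from \eqref{eq:qKB} (to move $U^0_\Theta{}'$ past $B_i$) and from \eqref{eq:EBBE}, which gives $B_iE_j=E_jB_i-\delta_{ij}\tfrac{K_i-K_i^{-1}}{q_i-q_i^{-1}}$ with $\tfrac{K_i-K_i^{-1}}{q_i-q_i^{-1}}\in U^0_\Theta{}'$, iterated over a monomial in the $E_j$. Iterating the bubbling relation over a monomial $B_{l_1}\cdots B_{l_p}$ gives
\begin{equation}
  B_{l_1}\cdots B_{l_p}\cdot\cM_X^+ U^0_\Theta{}'\ \subseteq\ \sum_{p'\le p}\cM_X^+ U^0_\Theta{}'\cdot\mathrm{span}\{\,B_M\mid M\in I^{p'}\,\}.\tag{$\ast$}
\end{equation}
Applying $(\ast)$ to $B_i\,\cM_X^+ U^0_\Theta{}'\,B_J$ reduces the claim $B_i\cB'\subseteq\cB'$ to showing that $B_iB_J\in\cB'$ for all $i\in I$, $J\in\cJ$, hence to the statement that $B_L\in\cB'$ for every multi-index $L\in\bigcup_nI^n$, which I would prove by induction on $|L|=n$, the case $n=0$ being $1=B_\emptyset\in\cB'$.

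For the inductive step I would use that $U^-$ is presented by the $F_i$ modulo the ideal generated by the quantum Serre relations \eqref{eq:q-Serre}, together with the fact that $\{F_M\mid M\in\cJ\}$ is a basis of $U^-$: in the free algebra $\field(q)\langle f_i\mid i\in I\rangle$ there is an identity $f_L-\sum_{M\in\cJ}\mu_Mf_M=\sum_\nu p_\nu\,F_{k_\nu l_\nu}(f_{k_\nu},f_{l_\nu})\,s_\nu$ with $\mu_M\in\field(q)$ and monomials $p_\nu,s_\nu$, and projecting it onto its component of weight $\wght(L)$ one may moreover assume $|M|=n$ for all $M$ occurring and $|p_\nu|+(2-a_{k_\nu l_\nu})+|s_\nu|=n$ for all $\nu$. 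Applying the algebra homomorphism $f_i\mapsto B_i$ and rewriting each $F_{k_\nu l_\nu}(B_{k_\nu},B_{l_\nu})$ by Corollary~\ref{cor:Serre-lower} yields
\[
  B_L-\sum_{M\in\cJ}\mu_MB_M\ \in\ \sum_\nu B_{p_\nu}\Big(\sum_{K\in\cJ,\ \wght(K)<\lambda_{k_\nu l_\nu}}\cM_X^+ U^0_\Theta{}'\,B_K\Big)B_{s_\nu},
\]
where $B_{p_\nu},B_{s_\nu}$ denote the images of the monomials $p_\nu,s_\nu$. Since $\wght(K)<\lambda_{k_\nu l_\nu}$ forces $|K|<2-a_{k_\nu l_\nu}$ (as $\Pi$ is linearly independent, so the weight of a monomial determines its length), $(\ast)$ shows that each summand lies in $\cM_X^+ U^0_\Theta{}'\cdot\mathrm{span}\{B_{L'}\mid |L'|<n\}$, hence in $\cB'$ by the induction hypothesis; as $\sum_M\mu_MB_M\in B_{\uc,\us,\cJ}\subseteq\cB'$, we get $B_L\in\cB'$. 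This completes the induction and hence the proof of $B_{\uc,\us}=\cB'$. The right-module statement follows from the symmetric argument, commuting the $\cM_X^+ U^0_\Theta{}'$-factors to the right.

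The step I expect to be the main obstacle is the degree bookkeeping inside the induction: one has to make sure that, once the coefficients from $\cM_X^+ U^0_\Theta{}'$ delivered by Corollary~\ref{cor:Serre-lower} are commuted through the monomials $B_{p_\nu}$ via $(\ast)$, only $B$-monomials of length strictly less than $n$ survive, so that the induction hypothesis can be applied. This is exactly what the sharp bound $\wght(K)<\lambda_{ij}$ in Corollary~\ref{cor:Serre-lower} guarantees; beyond this point the argument is the direct analogue of the proof of Proposition~\ref{prop:leftBasisU}.
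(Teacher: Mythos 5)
Your proof is correct and follows essentially the same route as the paper: linear independence is inherited from Proposition~\ref{prop:leftBasisU}, and spanning is proved by induction on the length of multi-indices using the quantum Serre presentation of $U^-$, Corollary~\ref{cor:Serre-lower}, and the commutation relations of Lemma~\ref{lem:rel1}. The only difference is that you spell out the bookkeeping (the bubbling relation moving $\cM_X^+U^0_\Theta{}'$ past the $B_i$, and the weight-versus-length estimate) that the paper's proof leaves implicit.
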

\begin{proof}
  We prove the result for the left $\cM_X^+ U^0_\Theta{}'$-module $B_{\uc,\us}$.
  Let $L\in I^n$. One can apply the quantum Serre relations \eqref{eq:q-Serre} repeatedly to write
  \begin{align*}
    F_L = \sum_{J\in \cJ, |J|=n } a_J F_J
  \end{align*}
  for some $a_J \in \field(q)$. Hence, using Corollary \ref{cor:Serre-lower} and the relations in Lemma \ref{lem:rel1}, one obtains
  \begin{align*}
    B_L - \sum_{J\in \cJ, |J|=n } a_J B_J \in \sum_{m<n} \sum_{J\in I^m}  \cM^+_X U^0_\Theta{}' B_J.
  \end{align*}
  One obtains $B_L \in \sum_{J\in \cJ} \cM_X^+ U^0_\Theta{}' B_J$ by induction on $n=\wght(L)$.
  This proves that the set $\{B_J\,|\,J\in \cJ\}$ spans $B_{\uc,\us}$ as a left $\cM^+_XU^0_\Theta{}'$-module.

  On the other hand, the set $\{B_J\,|\,J\in \cJ\}$ is linearly independent over $U^+U^0{'}$ by Proposition \ref{prop:leftBasisU}. Hence it is also linearly independent over $\cM^+_X U^0_\Theta{}'$. 
\end{proof}
Again, one can reformulate the above proposition by saying that the multiplication map
\begin{align*}
  \cM_X^+ \ot U^0_\Theta{}' \ot  B_{\uc,\us,\cJ} \rightarrow B_{\uc,\us}
\end{align*}
is an isomorphism of vector spaces.
\subsection{The quantum Iwasawa decomposition}\label{sec:Iwasawa}
Fix a subset $I^\ast$ of $I\setminus X$ containing exactly one element of each $\tau$-orbit of $I\setminus X$. In particular, $I^\ast$ contains all $i\in I\setminus X$ with $\tau(i)=i$ and precisely one of $j,\tau(j)$ if $\tau(j)\neq j$. Let $U'_\Theta$ denote the subalgebra of $U^0{}'$ generated by all elements in the set $\{K_i, K_i^{-1}\,|\,i\in I^\ast\}$. The multiplication map gives an isomorphism of algebras
\begin{align}\label{eq:U0-decomp}
  U'_\Theta \ot U^0_\Theta{}' \cong U^0{}'.
\end{align} 
Recall from Section \ref{sec:triang-decomp} that $V^+_X$ denotes the subalgebra of $U^+$ generated by the elements in $\ad(\cM_X)(E_i)$ for all $i\in I\setminus X$. By \eqref{Keb1} and \eqref{eq:U0-decomp} the multiplication map gives an isomorphism
\begin{align*}
  V_X^+\ot U'_\Theta \ot \cM^+_X \ot U^0_\Theta{}' \cong U^+ U^0{}'
\end{align*}
of vector spaces. By \eqref{eq:U0-decomp} and Proposition \ref{prop:leftBasis} the above decomposition of $U^+U^0{}'$ implies the following result.
\begin{prop}\label{prop:qIwasawa}
  The multiplication map gives an isomorphism of vector spaces
  \begin{align*}
    V_X^+\ot U'_\Theta\ot B_{\uc,\us} \cong \uqgp.
  \end{align*}
\end{prop}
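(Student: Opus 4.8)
The plan is to obtain the claimed factorization of $\uqgp$ by composing isomorphisms that are already in hand, so the whole argument reduces to careful bookkeeping of tensor factors. First I would recall the three vector space isomorphisms available at this point. (i) The decomposition
\[
  V_X^+\ot U'_\Theta\ot \cM_X^+\ot U^0_\Theta{}'\cong U^+U^0{}'
\]
displayed just before the proposition, which comes from \eqref{Keb1}, \eqref{eq:U0-decomp} and the triangular decomposition \eqref{eq:triang-decomp}. (ii) The reformulation \eqref{eq:leftBasisU} of Proposition \ref{prop:leftBasisU}, which together with \eqref{eq:triang-decomp} says that the multiplication map $U^+U^0{}'\ot B_{\uc,\us,\cJ}\to\uqgp$ is an isomorphism. (iii) Proposition \ref{prop:leftBasis}, which says that the multiplication map $\cM_X^+\ot U^0_\Theta{}'\ot B_{\uc,\us,\cJ}\to B_{\uc,\us}$ is an isomorphism. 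It is important that $B_{\uc,\us,\cJ}=\oplus_{J\in\cJ}\field(q)B_J$ is the \emph{same} subspace in (ii) and (iii), for the one fixed choice of $\cJ$.

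Second, I would introduce the total multiplication map
\[
  m\colon V_X^+\ot U'_\Theta\ot \cM_X^+\ot U^0_\Theta{}'\ot B_{\uc,\us,\cJ}\longrightarrow\uqgp
\]
and observe that it factors in two ways. Composing the isomorphism (i), tensored with the identity on $B_{\uc,\us,\cJ}$, with the isomorphism (ii) exhibits $m$ as a composite of isomorphisms, hence an isomorphism. On the other hand, by associativity of multiplication in $\uqgp$, the same map $m$ equals the composite of the isomorphism (iii), tensored with the identity on $V_X^+\ot U'_\Theta$, followed by the multiplication map
\[
  \mu\colon V_X^+\ot U'_\Theta\ot B_{\uc,\us}\longrightarrow\uqgp
\]
from the statement of Proposition \ref{prop:qIwasawa}. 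Since $m$ and the map built from (iii) are isomorphisms, it follows that $\mu$ is an isomorphism, which is exactly the assertion.

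I do not expect a genuine obstacle here; the only point requiring attention is the order of the factors. In \eqref{eq:leftBasisU} the module $B_{\uc,\us,\cJ}$ sits on the right, and in (i) the factor $U^0_\Theta{}'$ sits to the right of $\cM_X^+$, so the associativity rebracketing really does match the grouping $\cM_X^+\ot U^0_\Theta{}'\ot B_{\uc,\us,\cJ}$ appearing in (iii), with no transposition of non-commuting subalgebras required. The one thing worth stating explicitly is that all of (i)--(iii) and $\mu$ are restrictions of the multiplication map of $\uqgp$, so the square of maps commutes by associativity; beyond that, no computation is needed over what was already done for Propositions \ref{prop:leftBasisU} and \ref{prop:leftBasis}.
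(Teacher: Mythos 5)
Your proposal is correct and follows essentially the same route as the paper, which derives the statement by combining the decomposition $V_X^+\ot U'_\Theta\ot \cM^+_X\ot U^0_\Theta{}'\cong U^+U^0{}'$ (from \eqref{Keb1} and \eqref{eq:U0-decomp}) with Proposition \ref{prop:leftBasisU}/\eqref{eq:leftBasisU} and Proposition \ref{prop:leftBasis}; you have merely written out explicitly the associativity bookkeeping that the paper leaves implicit. Your remark that the same subspace $B_{\uc,\us,\cJ}$ and the same ordering of factors appear in both decompositions is exactly the point that makes the composition work.
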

\begin{rema}\label{rem:Iwa}
  The Iwasawa decomposition of $\gfrak'$ is given by $\gfrak'=\nfrak_X^+\oplus \afrak'\oplus \kfrak'$ where $\afrak'=\{h\in \hfrak'\,|\,\theta(h)=-h\}$ and $\nfrak_X^+$ denotes the $\gfrak_X^+$-module generated by $\{e_i\,|\,i\in I\setminus X\}$ under the adjoint action. In the above decomposition, $V_X^+$ is a $q$-analog of $U(\nfrak_X^+)$ and $B_{\uc,\us}$ is a $q$-analog of $U(\kfrak')$. The algebra $U'_\Theta$, however, is a somewhat coarse analog of $U(\afrak')$. A better $q$-analog of $U(\afrak')$ is given by $A_\Theta=\field(q)\langle K_\beta\,|\,\Theta(\beta)=-\beta\rangle$. However, relation \eqref{eq:U0-decomp} does not hold with $U'_\Theta$ replaced by $A_\Theta$. This problem can be circumvented by adjoining certain roots of $K_i^{\pm 1}$ to $U^0{}'$, as done for instance in \cite{a-Letzter99a}, but this is not necessary for our purposes.
\end{rema}
\subsection{References to Letzter's constructions}
For finite dimensional $\gfrak$, Proposition \ref{prop:leftBasis} appears in the proof of \cite[Theorem 7.4]{MSRI-Letzter}. The quantum Iwasawa decomposition is stated along the lines of Remark \ref{rem:Iwa} in \cite[Theorem 4.5]{a-Letzter99a}. Four variants of Proposition \ref{prop:qIwasawa} which interchange the order of the factors and exchange $V_X^+$ with $V_X^-$ appear in \cite[Theorem 2.2]{a-Letzter04}. In an earlier paper \cite[Theorem 2.4]{a-Letzter97} the quantum Iwasawa decomposition was established in the case $X=\emptyset$.
\section{Generators and relations}\label{sec:GensRels}
\subsection{Generators and relations for $B_{\uc,\us}$}
  The following theorem summarizes the results of Subsection \ref{sec:q-Serre} and is identical to \cite[Theorem 7.4]{MSRI-Letzter} if $\gfrak$ is finite dimensional.
\begin{thm}\label{thm:rels}
  Let $\Btil$ be the algebra freely generated over $\cM_X^+ U^0_\Theta{}'$ by elements $\Btil_i$ for all $i\in I$ and let $\Pi$ denote the canonical algebra homomorphism $\Pi:\Btil\rightarrow B_{\uc,\us}$ given by
  \begin{align}\label{eq:can-map}
   \Btil_i\mapsto B_i, \qquad m\mapsto m\qquad \mbox{for all $i\in I$, $m\in \cM^+ U^0_\Theta{}' $}.
  \end{align} 
  Write $\Btil_J=\Btil_{j_1}\cdots \Btil_{j_n}$ for any $J=(j_1,\dots,j_n)\in I^n$.
  Then there exist elements
  \begin{align*}
    \Ctil_{ij}(\uc)\in \sum_{\wght(J)< \lambda_{ij}} \cM^+_X U^0_\Theta{}' \Btil_J \qquad \mbox{for all $i,j\in I$, $i\neq j$}
  \end{align*}  
  such that the kernel $\ker(\Pi)$ is the ideal of $\Btil$ generated by the following elements:
  \begin{align}
     &K_\beta \Btil_i - q^{-(\beta,\alpha_i)} \Btil_i K_\beta && \mbox{for all $\beta\in Q^\Theta$, $i\in I$,} \label{eq:ker1}\\
     & E_i \Btil_j - \Btil_j E_i - \delta_{ij} \frac{K_i - K_i^{-1}}{q_i-q_i^{-1}} && \mbox{for all $i\in X$, $j\in I $,}\label{eq:ker2}\\
     &F_{ij}(\Btil_i,\Btil_j)-\Ctil_{ij}(\uc) &&\mbox{for all $i,j\in I$, $i\neq j$.} \label{eq:ker3}
  \end{align}
  Moreover, the formal expression of the elements $\Ctil_{ij}(\uc)$ in $\sum_{\wght(J)< \lambda_{ij}} \cM^+_X U^0_\Theta{}' \Btil_J$ is independent of the parameter $\us\in \cS$.
\end{thm}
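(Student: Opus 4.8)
The plan is to establish Theorem~\ref{thm:rels} by combining the triangular decomposition from Proposition~\ref{prop:leftBasis} with the relations collected in Subsection~\ref{sec:q-Serre}. First I would observe that the map $\Pi:\Btil\rightarrow B_{\uc,\us}$ is surjective since its image contains all generators $\cM_X$, $U^0_\Theta{}'$, and the $B_i$ for $i\in I\setminus X$ (for $i\in X$ recall $B_i=F_i\in\cM_X$). Next I would check that all three families of elements \eqref{eq:ker1}--\eqref{eq:ker3} lie in $\ker(\Pi)$: relations \eqref{eq:ker1} and \eqref{eq:ker2} are exactly \eqref{eq:qKB} and \eqref{eq:EBBE} from Lemma~\ref{lem:rel1}, while for \eqref{eq:ker3} I would invoke Corollary~\ref{cor:Serre-lower}, which asserts precisely that $F_{ij}(B_i,B_j)\in\sum_{\wght(J)<\lambda_{ij}}\cM_X^+U^0_\Theta{}'B_J$; this is what defines the elements $\Ctil_{ij}(\uc)$ as the corresponding lifts to $\Btil$.

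The heart of the argument is the reverse inclusion: the ideal $\cI$ generated by \eqref{eq:ker1}--\eqref{eq:ker3} is all of $\ker(\Pi)$. The standard strategy, which I would follow, is to show that the quotient $\Btil/\cI$ is spanned as a left $\cM_X^+U^0_\Theta{}'$-module by $\{\Btil_J\,|\,J\in\cJ\}$, and then compare dimensions in each weight space with $B_{\uc,\us}$. For the spanning statement: given an arbitrary monomial $\Btil_L$ with $L\in I^n$, use the quantum Serre relations for $U^-$ to rewrite $F_L$ as a $\field(q)$-combination of $F_J$ with $J\in\cJ$, $|J|=n$; lifting this through \eqref{eq:ker3} (and pushing the $\cM_X^+$-factors to the left using \eqref{eq:ker1}, \eqref{eq:ker2}) expresses $\Btil_L$ modulo $\cI$ as a combination of $\Btil_J$, $J\in\cJ$, $|J|=n$, plus lower-order terms in $\sum_{m<n}\sum_{J\in I^m}\cM_X^+U^0_\Theta{}'\Btil_J$; induction on $n$ completes the spanning. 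Then $\Pi$ descends to a surjection $\Btil/\cI\twoheadrightarrow B_{\uc,\us}$ which sends a spanning set $\{\Btil_J\,|\,J\in\cJ\}$ onto the basis $\{B_J\,|\,J\in\cJ\}$ of $B_{\uc,\us}$ over $\cM_X^+U^0_\Theta{}'$ guaranteed by Proposition~\ref{prop:leftBasis}; hence the spanning set downstairs is linearly independent over $\cM_X^+U^0_\Theta{}'$, forcing $\Pi$ to be an isomorphism and $\cI=\ker(\Pi)$.

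Finally, for the last assertion — that the formal expression of $\Ctil_{ij}(\uc)$ in $\sum_{\wght(J)<\lambda_{ij}}\cM_X^+U^0_\Theta{}'\Btil_J$ does not depend on $\us\in\cS$ — I would appeal to Remark~\ref{rem:s-indep}. There it is shown that for $i\in I_{ns}$ one has $\kow(B_i)=B_i\ot K_i^{-1}+1\ot(F_i-c_iE_iK_i^{-1})$, with no $s_i$ appearing; consequently $\kow(F_{ij}(B_i,B_j))$, and in particular its image under $P_{-\lambda_{ij}}$ and the projections $\pi_{\alpha,\beta}$ used throughout the proof of Proposition~\ref{prop:Z=0}, depend on $\uc$ but not on $\us$. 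Since the coefficients expressing $F_{ij}(B_i,B_j)$ as an element of $\sum_{\wght(J)<\lambda_{ij}}\cM_X^+U^0_\Theta{}'B_J$ are extracted precisely from this coproduct computation (via Corollary~\ref{cor:Serre-lower}, which applies the counit to the second tensor factor of \eqref{eq:kowZ}), they are $\us$-independent, and the same holds for their lifts $\Ctil_{ij}(\uc)$.

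I expect the main obstacle to be the bookkeeping in the spanning step: one must be careful that reordering the $\cM_X^+U^0_\Theta{}'$-coefficients to the left (which uses \eqref{eq:ker2} to move $E_j$, $j\in X$, past the $\Btil_i$, producing the diagonal correction terms) does not increase the weight, so that the induction on $\wght(L)$ genuinely terminates — this is exactly the role played by the fact that $X$ is of finite type, which makes $\cM_X$ well-behaved and ensures the filtration by weight has the right finiteness properties. The linear-independence half is comparatively painless since it is imported wholesale from Proposition~\ref{prop:leftBasisU}.
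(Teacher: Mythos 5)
Your proposal is correct and follows essentially the same route as the paper's own proof: define $\Ctil_{ij}(\uc)$ as lifts via Corollary \ref{cor:Serre-lower}, check the generators of the ideal lie in $\ker(\Pi)$ via Lemma \ref{lem:rel1}, show the quotient is spanned over $\cM_X^+U^0_\Theta{}'$ by the $\Btil_J$, $J\in\cJ$, exactly as in the proof of Proposition \ref{prop:leftBasis}, and conclude injectivity from that proposition, with $\us$-independence coming from Remark \ref{rem:s-indep}. No substantive differences.
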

\begin{proof}
  It follows from Corollary \ref{cor:Serre-lower} that there exist elements 
  \begin{align*}
     \Ctil_{ij}(\uc)\in \sum_{\wght(J)< \lambda_{ij}} \cM^+_X U^0_\Theta{}' \Btil_J
  \end{align*}
 such that $F_{ij}(\Btil_i,\Btil_j)-\Ctil_{ij}(\uc)$ lies in $\ker(\Pi)$ for any $i,j\in I$, $i\neq j$. Let $L$ be the ideal of $\Btil$ generated by the elements in \eqref{eq:ker1}, \eqref{eq:ker2}, and \eqref{eq:ker3} for this choice of $\Ctil_{ij}(\uc)$. It follows from Lemma \ref{lem:rel1} and Corollary \ref{cor:Serre-lower} that $L$ is contained in $\ker(\Pi)$. Hence there is a well defined surjective map $\Btil/L \rightarrow B_{\uc,\us}$ given by \eqref{eq:can-map}. As in the proof of Proposition \ref{prop:leftBasis} one shows that $\Btil/L$ is spanned as a left $\cM_X^+ U^0_\Theta{}'$-module by the elements $\Btil_J$ for $J\in \cJ$. Now Proposition \ref{prop:leftBasis} implies that the surjective map $\Btil/L\rightarrow B_{\uc,\us}$ is also injective.
 The formal independence of $\Ctil_{ij}(\uc)$ of $\us$ was already noted in Remark \ref{rem:s-indep}.
\end{proof} 
In the following we use the notation $C_{ij}(\uc)=\Pi(\Ctil_{ij}(\uc))$.
The results from Subsection \ref{sec:q-Serre} can be efficiently used to find explicit formulas for the elements $C_{ij}(\uc)$. This allows us to obtain a complete presentation of the algebras $B_{\uc,\us}$ in terms of generators and relations without the need to embark on explicit calculations. The method used in Theorem \ref{thm:relsBc} below was developed in \cite{a-Letzter03} for finite dimensional $\gfrak$, but it also works in the symmetrizable Kac-Moody case. To apply it we need the following first order calculation of $\kow(B_i)$. Recall Theorem \ref{thm:theta-props}.(3) and define
  \begin{align*}
    \cZ_i=-v_i \ad(Z^+_{\tau(i)})(K_{\tau(i)}^2)K_{\tau(i)}^{-1} K_i^{-1}
  \end{align*}
for any $i\in I\setminus X$ where we abbreviate $Z^+_{\tau(i)}=Z_{\tau(i)}^+(X)$.  
\begin{lem}\label{lem:fo}
  For any $i\in I\setminus X$ one has
  \begin{align}\label{eq:fo}
    \kow(B_i) =  B_i \ot K_i^{-1} + 1 \ot F_i + c_i \cZ_i \ot E_{\tau(i)}K_i^{-1} +\Upsilon
   \end{align}
   for some  $\Upsilon \in \cM_X U^0_\Theta{}'\ot \sum_{\gamma>\alpha_{\tau(i)}} U_\gamma^+ K_i^{-1}$.
\end{lem}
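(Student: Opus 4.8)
The plan is to compute $\kow(B_i)$ directly from the explicit shape of $B_i$ and then to sort the resulting summands according to the weight of their right tensor leg.

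First I would rewrite $B_i$ using Theorem~\ref{thm:theta-props}.(3), so that
\[
  B_i = F_i - c_i v_i\, \ad(Z^+_{\tau(i)})(E_{\tau(i)})\, K_i^{-1} + s_i K_i^{-1}.
\]
Applying $\kow$ and using \eqref{eq:F-copr}, \eqref{eq:K-copr}, the first and last summands give $F_i\ot K_i^{-1} + 1\ot F_i$ and $s_i K_i^{-1}\ot K_i^{-1}$, and the middle summand gives $-c_i v_i\,\kow(\ad(Z^+_{\tau(i)})(E_{\tau(i)}))\,(K_i^{-1}\ot K_i^{-1})$. Now I would use \eqref{eq:kowZi}: $\kow(\ad(Z^+_{\tau(i)})(E_{\tau(i)})) - \ad(Z^+_{\tau(i)})(E_{\tau(i)})\ot 1$ lies in $\cM^+_X K_{\tau(i)}\ot\uqgp$. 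Since $\ad(Z^+_{\tau(i)})(E_{\tau(i)})$ has weight $w_X\alpha_{\tau(i)} = \alpha_{\tau(i)}+\gamma$ with $\gamma:=\wght(Z^+_{\tau(i)})\in\N_0 X$, while every left leg in $\cM^+_X K_{\tau(i)}$ has weight in $\N_0 X$, the $Q$-grading forces the right leg of every nonleading summand to have weight in $\alpha_{\tau(i)}+\N_0 X$; as $\alpha_{\tau(i)}$ is a simple root not in $X$, such a weight is either $\alpha_{\tau(i)}$ or strictly larger. Combining the leading terms yields $F_i\ot K_i^{-1} + c_i(\theta_q(F_iK_i)K_i^{-1})\ot K_i^{-1} + s_iK_i^{-1}\ot K_i^{-1} = B_i\ot K_i^{-1}$, so it remains to identify the weight-$\alpha_{\tau(i)}$ contribution and to confirm that the higher contributions form $\Upsilon$.

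Write $\kow(\ad(Z^+_{\tau(i)})(E_{\tau(i)})) = \ad(Z^+_{\tau(i)})(E_{\tau(i)})\ot 1 + \zeta_i\ot E_{\tau(i)} + (\text{right leg of }U^+\text{-weight}>\alpha_{\tau(i)})$, where $\zeta_i\in\cM^+_X K_{\tau(i)}$. The crucial point, which I expect to be the main work, is the first-order identity $\zeta_i = \ad(Z^+_{\tau(i)})(K^2_{\tau(i)})\,K_{\tau(i)}^{-1}$, which I would prove by induction on the length $r$ of the monomial $Z^+_{\tau(i)}=E_{j_1}\cdots E_{j_r}$, peeling off the leftmost factor and using the twisted Leibniz rule $\ad(x)(uv)=\ad(x_{(1)})(u)\,\ad(x_{(2)})(v)$ together with $\kow(E_{\tau(i)}) = E_{\tau(i)}\ot 1 + K_{\tau(i)}\ot E_{\tau(i)}$. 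The base computation ($r=1$) is
\[
  \kow(\ad(E_{j_1})(E_{\tau(i)})) = \ad(E_{j_1})(E_{\tau(i)})\ot 1 + \bigl(q^{-(\alpha_{j_1},\alpha_{\tau(i)})}-q^{(\alpha_{j_1},\alpha_{\tau(i)})}\bigr)K_{\tau(i)}E_{j_1}\ot E_{\tau(i)} + K_{j_1}K_{\tau(i)}\ot\ad(E_{j_1})(E_{\tau(i)}),
\]
together with $\ad(E_{j_1})(K^2_{\tau(i)})K_{\tau(i)}^{-1} = \bigl(q^{-(\alpha_{j_1},\alpha_{\tau(i)})}-q^{(\alpha_{j_1},\alpha_{\tau(i)})}\bigr)K_{\tau(i)}E_{j_1}$; for the inductive step one applies $\kow\circ\ad(E_{j_1})$ to $w:=\ad(E_{j_2}\cdots E_{j_r})(E_{\tau(i)})$ and extracts the coefficient of $E_{\tau(i)}$ from $\kow(w)$ via the induction hypothesis, the twisted Leibniz rule giving back $\ad(E_{j_1})(\ad(E_{j_2}\cdots E_{j_r})(K^2_{\tau(i)}))K_{\tau(i)}^{-1}$. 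Multiplying this identity by $-c_i v_i(K_i^{-1}\ot K_i^{-1})$ and comparing with the definition of $\cZ_i$ produces precisely the summand $c_i\cZ_i\ot E_{\tau(i)}K_i^{-1}$.

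It remains to see that the contribution $\Upsilon$ coming from right-leg weights $\gamma>\alpha_{\tau(i)}$ lies in $\cM_X U^0_\Theta{}'\ot\sum_{\gamma>\alpha_{\tau(i)}}U^+_\gamma K_i^{-1}$. The right-leg statement is the weight analysis above; for the left leg, \eqref{eq:kowZi} puts it in $\cM^+_X K_{\tau(i)}K_i^{-1}$, so I only need $\cM^+_X K_{\tau(i)}K_i^{-1}\subseteq\cM_X U^0_\Theta{}'$. Here I would use that $w_X\alpha_{\tau(i)}-\alpha_i\in Q^\Theta$: since $\tau(X)=X$ we have $\tau w_X = w_X\tau$ on $\hfrak^\ast$, hence by \eqref{eq:Theta} $\Theta(w_X\alpha_{\tau(i)}) = -w_X\tau(w_X\alpha_{\tau(i)}) = -w_X^2\tau(\alpha_{\tau(i)}) = -\alpha_i$, and together with $\Theta(\alpha_i)=-w_X\alpha_{\tau(i)}$ this gives $\Theta(w_X\alpha_{\tau(i)}-\alpha_i) = w_X\alpha_{\tau(i)}-\alpha_i$. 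Moreover $\alpha_j\in Q^\Theta$ for $j\in X$ by condition~(\ref{adm2}) of Definition~\ref{admissible} (since then $\Theta(\alpha_j)=-w_X\alpha_{\tau(j)}=w_X^2\alpha_j=\alpha_j$), so $\Z X\subseteq Q^\Theta$, $\cM^0_X\subseteq U^0_\Theta{}'$, and $K_{-\gamma}\in U^0_\Theta{}'$. Using $K_{\tau(i)}K_i^{-1}=K_{-\gamma}K_{w_X\alpha_{\tau(i)}-\alpha_i}$ one gets $\cM^+_X K_{\tau(i)}K_i^{-1}=\cM^+_X\cM^0_X K_{w_X\alpha_{\tau(i)}-\alpha_i}\subseteq\cM^+_X U^0_\Theta{}'$, which finishes the proof. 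Apart from the inductive first-order identity, the argument is routine bookkeeping with the $Q$-grading and the coproduct formulas \eqref{eq:E-copr}--\eqref{eq:K-copr}.
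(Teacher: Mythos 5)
Your proposal is correct and follows essentially the same route as the paper: expand $\kow(B_i)$ via the explicit form of $B_i$ from Theorem \ref{thm:theta-props}.(3), sort the summands of $\kow\big(\ad(Z^+_{\tau(i)})(E_{\tau(i)})\big)$ by the weight of the right tensor leg, and identify the weight-$\alpha_{\tau(i)}$ coefficient as $\ad(Z^+_{\tau(i)})(K_{\tau(i)}^2)K_{\tau(i)}^{-1}$. The only differences are cosmetic: the paper gets this coefficient in one step from the identity \eqref{eq:kowadxu}, $\kow(\ad(x)(u))=x_{(1)}u_{(1)}S(x_{(3)})\ot\ad(x_{(2)})(u_{(2)})$, rather than by your induction on the length of the monomial $Z^+_{\tau(i)}$, and it leaves implicit the containment $\cM^+_XK_{\tau(i)}K_i^{-1}\subseteq\cM_XU^0_\Theta{}'$ that you verify explicitly; both of your substitutes check out.
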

\begin{proof}
  We apply the formula 
  \begin{align}\label{eq:kowadxu}
    \kow(\ad(x)(u))=x_{(1)} u_{(1)} S(x_{(3)}) \ot \ad(x_{(2)})(u_{(2)})
  \end{align}
  to $x=Z^+_{\tau(i)}$ and $u=E_{\tau(i)}$. By \eqref{eq:E-copr} one obtains
  \begin{align*}
    \kow(\ad(Z^+_{\tau(i)}) (E_{\tau(i)})) = \ad(Z_{\tau(i)}^+)(E_{\tau(i)})\ot 1 + \ad(Z_{\tau(i)}^+)(K_{\tau(i)}^2)K_{\tau(i)}^{-1}\ot E_{\tau(i)} + \Psi
  \end{align*}
  for some $\Psi\in \cM_X^+ K_{\tau(i)}\ot \sum_{\gamma>\alpha_{\tau(i)}} U_\gamma^+$. By definition of $B_i$ and $\cZ_i$ one obtains the claim of the lemma with $\Upsilon = c_i v_i\Psi (K_i^{-1}\ot K_i^{-1})$.
\end{proof}
By Theorem \ref{thm:rels} one has
\begin{align}\label{eq:BZ-com}
  \cZ_i B_j =q^{(\alpha_i-\alpha_{\tau(i)},\alpha_j)}B_j \cZ_i \qquad \mbox{for all } i,j\in I\setminus X.
\end{align}

As before fix distinct $i,j\in I$ and write $Y=F_{ij}(B_i,B_j)$. By Proposition \ref{prop:Z=0} one has $Z=P_{-\lambda_{ij}}(Y)=0$. Using \eqref{eq:kowZ} as in the proof of Corollary \ref{cor:Serre-lower} one obtains
\begin{align}
  C_{ij}(\uc)&\stackrel{\phantom{\eqref{eq:P-comod-hom}}}{=}-(\id\ot \vep)(\kow(Z)-Y\ot K_{-\lambda_{ij}} ) \nonumber\\
             &\stackrel{\eqref{eq:P-comod-hom}}{=}-(\id \ot \vep)(\id \ot (P_{-\lambda_{ij}}\circ\pi_{0,0})) (\kow(Y)- Y\ot K_{-\lambda_{ij}}). \label{eq:Cijd}
\end{align}  
The expression $(\id \ot(P_{-\lambda_{ij}}\circ\pi_{0,0})) (\kow(Y)- Y\ot K_{-\lambda_{ij}})$ can be evaluated in general because many terms in $(\id \ot \pi_{0,0})\kow(Y)$ vanish. More explicitly, we use the fact that summands of $\kow(Y)$ need to contain the same number of $F_i$'s and $E_i$'s in the second tensor factor in order to contribute nontrivially. By Lemma \ref{lem:fo}, however, this can only occur if $i\in \{\tau(i),\tau(j)\}$. One obtains the following result.
\begin{thm}\label{thm:Cij=0if}
  For any $i,j\in I$ such that $i\notin \{j,\tau(i),\tau(j)\}$ one has $C_{ij}(\uc)=F_{ij}(B_i,B_j)=0$ in $B_{\uc,\us}$.
\end{thm}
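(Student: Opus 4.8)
The plan is to extract $C_{ij}(\uc)$ from the formula \eqref{eq:Cijd} by a weight count in the second tensor leg of $\kow(Y)$, where $Y=F_{ij}(B_i,B_j)$. First I would record the trivial case $i\in X$: there $F_{ij}(B_i,B_j)=0$ by \eqref{eq:Serre3}, and since $C_{ij}(\uc)=F_{ij}(B_i,B_j)$ in $B_{\uc,\us}$ (Theorem \ref{thm:rels}) there is nothing more to do. So assume $i\in I\setminus X$. By \eqref{eq:Cijd} it then suffices to show that
\[
  \bigl(\id\ot (P_{-\lambda_{ij}}\circ\pi_{0,0})\bigr)\bigl(\kow(Y)-Y\ot K_{-\lambda_{ij}}\bigr)=0.
\]

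Since $\kow$ is an algebra homomorphism, $\kow(Y)=F_{ij}(\kow(B_i),\kow(B_j))$; I would expand this using Lemma \ref{lem:fo} for $\kow(B_i)$, and likewise for $\kow(B_j)$ if $j\in I\setminus X$, while $\kow(B_j)=F_j\ot K_j^{-1}+1\ot F_j$ if $j\in X$. This writes $\kow(Y)-Y\ot K_{-\lambda_{ij}}$ as a sum of pure tensors whose second leg is a product of weight vectors, each taken from $\{K_i^{-1},F_i,E_{\tau(i)}K_i^{-1}\}$ or from the homogeneous summands of the term $\Upsilon$ of Lemma \ref{lem:fo} for the index $i$ (call it $\Upsilon_i$; its weights lie in $\alpha_{\tau(i)}+\sum_{k\in X}\N_0\alpha_k$, since $\Upsilon_i$ is built from the irreducible $\ad(\cM_X)$-module $\ad(\cM_X)(E_{\tau(i)})$ of Lemma \ref{Tw-on-Xc}), together with one vector from the analogous list for the index $j$; moreover in each such term at least one factor is not the corresponding $K^{-1}$. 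Because $\pi_{0,0}$ kills every weight space $\uqgp_\gamma$ with $\gamma\neq 0$, it is enough to show that the total weight of the second leg of every such term is nonzero. Suppose that in a given term $p$ of the $B_i$-legs contribute $F_i$, $q$ contribute $E_{\tau(i)}K_i^{-1}$, and $r$ contribute summands of $\Upsilon_i$. Using $\tau^2=\id_I$ and $\tau(X)=X$, the hypothesis $i\notin\{j,\tau(i),\tau(j)\}$ guarantees that $\tau(i)$ is distinct from $i,j,\tau(j)$ and that $i,\tau(i)\notin X$; hence none of the remaining second-leg vectors (of weights $0$, $-\alpha_j$, $\alpha_{\tau(j)}$, or in $\alpha_{\tau(j)}+\sum_{k\in X}\N_0\alpha_k$) involves $\alpha_i$ or $\alpha_{\tau(i)}$. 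Reading off the coefficient of $\alpha_i$ in the total weight gives $-p$, forcing $p=0$; reading off the coefficient of $\alpha_{\tau(i)}$ gives $q+r$, a sum of nonnegative integers, forcing $q=r=0$. Then every $B_i$-leg equals $K_i^{-1}$, the total weight equals the weight of the single $B_j$-leg, and since the non-$K_j^{-1}$ options all have nonzero weight, that leg is $K_j^{-1}$ too; but then the term is exactly $Y\ot K_{-\lambda_{ij}}$, which has been subtracted off. This proves the displayed identity, whence $C_{ij}(\uc)=0$ and therefore $F_{ij}(B_i,B_j)=0$ in $B_{\uc,\us}$.

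I expect the only real difficulty to be the bookkeeping in the weight count: one has to be certain that the weights occurring in $\Upsilon_i$ (and, for $j\in I\setminus X$, in $\Upsilon_j$) contribute only to the coefficient of $\alpha_{\tau(i)}$ (resp. $\alpha_{\tau(j)}$) and to coefficients of simple roots in $X$, so that no accidental cancellation against the $F_i$- or $F_j$-contributions can occur. This is exactly where one uses that $\Upsilon_i$ comes from $\ad(\cM_X)(E_{\tau(i)})$, whose weights all lie in $\alpha_{\tau(i)}+\sum_{k\in X}\N_0\alpha_k$, and that the hypothesis $i\notin\{j,\tau(i),\tau(j)\}$ together with $\tau(X)=X$ keeps $\alpha_i$ and $\alpha_{\tau(i)}$ disjoint from every other contribution.
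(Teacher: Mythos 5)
Your argument is correct and is essentially the paper's own: the paper disposes of $i\in X$ via \eqref{eq:Serre3} and otherwise deduces the vanishing from \eqref{eq:Cijd} by the same weight count in the second tensor factor of $\kow(Y)$ based on Lemma \ref{lem:fo}, observing that $E$-type contributions can only balance the $F_i$'s when $i\in\{\tau(i),\tau(j)\}$. Your only addition is to make explicit the refinement that the second legs of $\Upsilon_i$ have weights in $\alpha_{\tau(i)}+\sum_{k\in X}\N_0\alpha_k$ (which indeed follows from the $\ad(\cM_X)$-module structure and is implicit in the proof of Lemma \ref{lem:fo}), so the bookkeeping goes through exactly as in the paper.
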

Recall from Equation \eqref{eq:Serre3} that $C_{ij}(\uc)=0$ if $i\in X$. 
Hence we only need to consider the case that $i\in I\setminus X$.
In the following two subsections we distinguish the two cases $j\notin X$ and $j\in X$.
\subsection{Determining $C_{ij}(\uc)$ in the case $i,j\in I\setminus X$}
The next theorem gives explicit expressions for $C_{ij}(\uc)$ for $a_{ij}\in \{0, -1, -2\}$ if $j\in I\setminus X$. In this case, summands of $\kow(Y)$ involving the term $\Upsilon$ in Equation \eqref{eq:fo} will never survive under $\id \ot \pi_{0,0}$. Recall the definition of the $q$-number $[n]_q$ for $n\in \N$.
\begin{thm}\label{thm:relsBc}
  Assume that $i,j\in I\setminus X$. The elements $C_{ij}(\uc)=\Pi(\Ctil_{ij}(\uc))$ from Theorem \ref{thm:rels} are given by the following formulas.

\noindent {\bf Case 1:} $a_{ij}=0$.
  \begin{align}\label{eq:Cij0}
    C_{ij}(\uc)= \delta_{i,\tau(j)}(q_i-q_i^{-1})^{-1}\big(c_i \cZ_i - c_j \cZ_j \big).
  \end{align}
\noindent {\bf Case 2:} $a_{ij}=-1$.
  \begin{align}
    C_{ij}&(\uc)= \delta_{i,\tau(i)} q_i c_i \cZ_i B_j 
     - \delta_{i,\tau(j)}(q_i+q_i^{-1})\big( q_i c_j \cZ_j + q_i^{-2} c_i \cZ_i \big)B_i.\label{eq:Cij1}
  \end{align}  
\noindent {\bf Case 3:} $a_{ij}=-2$.
  \begin{align}\label{eq:Cij2}
    C_{ij}(\uc)=& \delta_{i,\tau(i)} q_i (q_i+q_i^{-1})^2 c_i \cZ_i (B_i B_j - B_j B_i) \\
     &+ \delta_{i,\tau(j)}[3]_{q_i} q_i(q_i^4 -1 )\big(q_i^{-8} c_i \cZ_i B_i^2-c_j \cZ_j B_i^2 \big). \nonumber 
  \end{align}        
\end{thm}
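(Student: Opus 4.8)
The plan is to evaluate the right–hand side of \eqref{eq:Cijd} directly. Since $\kow$ is an algebra homomorphism and $F_{ij}$ is a noncommutative polynomial, $\kow(Y)=F_{ij}(\kow(B_i),\kow(B_j))$, and because $i,j\in I\setminus X$ we may substitute, for $k\in\{i,j\}$, the first–order formula $\kow(B_k)=B_k\ot K_k^{-1}+1\ot F_k+c_k\cZ_k\ot E_{\tau(k)}K_k^{-1}+\Upsilon_k$ of Lemma \ref{lem:fo}, with $\Upsilon_k\in\cM_XU^0_\Theta{}'\ot\sum_{\gamma>\alpha_{\tau(k)}}U^+_\gamma K_k^{-1}$. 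Expanding $F_{ij}$ produces a finite sum of elementary tensors; one then applies $\id\ot(P_{-\lambda_{ij}}\circ\pi_{0,0})$ and subtracts $Y\ot K_{-\lambda_{ij}}$, exactly as prescribed by \eqref{eq:Cijd}.

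The decisive reduction is a weight count in the second tensor leg. The only contributions of negative $Q$-weight available there are $F_i$ and $F_j$, of weights $-\alpha_i$ and $-\alpha_j$; the legs coming from $\Upsilon_k$ have weight $\gamma>\alpha_{\tau(k)}$, of height at least two, and one checks from the explicit form of $\Upsilon_k$ (its second leg comes from $\ad(Z^+_{\tau(k)})$ with $Z^+_{\tau(k)}\in\cM^+_X$) that such a weight cannot be cancelled by the available $F$-legs, so no summand carrying an $\Upsilon_k$-leg survives $\pi_{0,0}$; this is the assertion made just before the theorem, which I would justify in this way. The summand in which every $B_k$-leg contributes its $K_k^{-1}$-term equals $Y\ot K_{-\lambda_{ij}}$ and is cancelled. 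Every remaining surviving summand must pair exactly one $F$-leg with exactly one $E$-leg of the same index: pairing $F_j$ with an $E_{\tau(i)}$-leg forces $\tau(i)=j$, i.e. $i=\tau(j)$, while pairing $F_i$ with an $E_{\tau(i)}$-leg forces $\tau(i)=i$. This simultaneously reproves Theorem \ref{thm:Cij=0if} in the present case and accounts for the prefactors $\delta_{i,\tau(i)}$ and $\delta_{i,\tau(j)}$ in \eqref{eq:Cij0}--\eqref{eq:Cij2}.

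For each surviving summand the first tensor leg is a word in $B_i$, $B_j$ and a single factor $c_k\cZ_k$; I would move that $\cZ_k$ to the far left using the $q$-commutation \eqref{eq:BZ-com} together with the relations of Theorem \ref{thm:rels}, which contributes an explicit power of $q_i$. The second leg collapses, by means of the defining relation \ref{sec:quagroup}.(4) and the $K$-commutations, to a scalar multiple of $K_{-\lambda_{ij}}$; reading off that scalar and applying $-(\id\ot\vep)$ gives the contribution of the summand to $C_{ij}(\uc)$. It then remains to carry out this count for each of $a_{ij}=0,-1,-2$, inserting the explicit shape of $F_{ij}$ from \eqref{eq:Fij-def} with the appropriate $q_i$-binomial coefficients $\left[\abv{1-a_{ij}}{n}\right]_{q_i}$, and for $a_{ij}=-2$ the symmetrized form \eqref{eq:Fij-even}; this yields \eqref{eq:Cij0}, \eqref{eq:Cij1} and \eqref{eq:Cij2} in turn.

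I expect the main obstacle to be precisely this last, case-by-case bookkeeping, and above all the case $a_{ij}=-2$: there one expands words of length up to three in the $B_k$, must keep track of several surviving pairings at once, and has to accumulate correctly the powers of $q_i$, the $q$-numbers such as $[3]_{q_i}$, and the signs arising both from $F_{ij}$ and from sliding $\cZ_i$ and $K_i^{-1}$ past the $B$'s and past one another. The conceptual content — which summands survive — is settled by the weight count above; getting every $q_i$-power in \eqref{eq:Cij1} and \eqref{eq:Cij2} exactly right is the delicate part.
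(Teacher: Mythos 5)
Your proposal follows essentially the same route as the paper's proof: you start from \eqref{eq:Cijd}, substitute the first-order coproduct expansion of Lemma \ref{lem:fo}, discard the $\Upsilon$-terms and produce the factors $\delta_{i,\tau(i)}$, $\delta_{i,\tau(j)}$ by the same weight count in the second tensor leg (your justification via the $X$-support of the excess weight of $\Upsilon$ is exactly the point needed), and then use \eqref{eq:BZ-com} to normalize the surviving first legs before reading off the scalar multiple of $K_{-\lambda_{ij}}$ in the second leg. The only part you leave unexecuted is the explicit case-by-case evaluation for $a_{ij}=0,-1,-2$, which is where the paper's proof actually extracts the coefficients in \eqref{eq:Cij0}--\eqref{eq:Cij2}; the method you describe is precisely the one carried out there.
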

\begin{proof}
  To abbreviate notation we write $Q_{-\lambda_{ij}}=\id \ot (P_{-\lambda_{ij}}\circ \pi_{0,0})$ for any $i,j\in I$. 
  
  \noindent{\bf Case 1:} By \eqref{eq:Cijd} one has
  \begin{align}\label{eq:Cij0expl}
    C_{ij}(\uc)=-(\id \ot \vep)\circ Q_{-\lambda_{ij}} \left(\kow(B_i B_j-B_j B_i)-(B_iB_j - B_j B_i) \ot K_{-\lambda_{ij}}\right).
  \end{align}
  For $j\in I\setminus X$ one obtains
  \begin{align*}
    Q_{-\lambda_{ij}}(\kow(B_i B_j)- B_i B_j \ot K_{-\lambda_{ij}})
     &= Q_{-\lambda_{ij}}\big( c_j\cZ_j \ot F_i E_{\tau(j)}K_j^{-1}\big)\\
        &= Q_{-\lambda_{ij}}\left(-\delta_{i,\tau(j)} c_j\cZ_j \ot \frac{K_i-K_i^{-1}}{q_i-q_i^{-1}}K_j^{-1}\right)\\
        &= \delta_{i,\tau(j)} (q_i-q_i^{-1})^{-1} c_j \cZ_j  \ot K_{-\lambda_{ij}}.
  \end{align*}
  By \eqref{eq:Cij0expl} this implies the desired formula \eqref{eq:Cij0} for $C_{ij}(\uc)$.
  
   \noindent{\bf Case 2:} For $a_{ij}=-1$ one has $Y=B_i^2 B_j - (q_i+q_i^{-1})B_i B_j B_i + B_j B_i^2$. Performing a calculation analogous to Case 1, one obtains
   \begin{align}\label{eq:twoTerms}
     Q_{-\lambda_{ij}}(\kow(Y)- Y\ot K_{-\lambda_{ij}})=\delta_{i,\tau(i)} C_1\ot K_{-\lambda_{ij}} + \delta_{i,\tau(j)} C_2 \ot K_{-\lambda_{ij}}
   \end{align} 
   where
   \begin{align}
     C_1\ot K_{-\lambda_{ij}}=& Q_{-\lambda_{ij}}\bigg(  c_i \cZ_i B_j\ot F_i E_i K_i^{-1}K_j^{-1}\nonumber \\
          &  - (q_i+q_i^{-1})  B_j  c_i \cZ_i \ot  F_i K_j^{-1} E_i K_i^{-1}
     +  B_j  c_i \cZ_i \ot K_j^{-1} F_i E_i K_i^{-1} \bigg) \nonumber\\
     =& - q_i c_i \cZ_i B_j\ot K_{-\lambda_{ij}}\label{eq:C1}
   \end{align}
   and similarly, for the term $C_2$ where $\tau(i)=j$ one obtains
   \begin{align}
     C_2\ot K_{-\lambda_{ij}}&= Q_{-\lambda_{ij}}\bigg( B_i c_j \cZ_j \ot K_i^{-1} F_i E_i K_j^{-1} + B_i c_j \cZ_j \ot F_i K_i^{-1} E_i K_j^{-1} \nonumber \\
      - &(q_i+q_i^{-1}) c_j \cZ_j B_i \ot F_i E_i K_j^{-1} K_i^{-1} - (q_i+q_i^{-1}) B_i c_i \cZ_i \ot K_i^{-1} F_j E_j K_i^{-1} \nonumber\\
      + &c_i \cZ_i B_i \ot F_j E_j K_i ^{-2}  + B_i c_i \cZ_i \ot F_j K_i^{-1} E_j K_i^{-1}
      \bigg)\nonumber\\
      &= (q_i+q_i^{-1}) \big( q_i c_j \cZ_j B_i + q_i^{-2} c_i \cZ_i B_i\big) \ot K_{-\lambda_{ij}}.\label{eq:C2}
   \end{align}
 Equations \eqref{eq:C1} and \eqref{eq:C2} imply formula \eqref{eq:Cij1}.  

\noindent{\bf Case 3:} For $a_{ij}=-2$ one has $Y=B_i^3 B_j - [3]_{q_i} B_i^2 B_j B_i + [3]_{q_i} B_i B_j B_i^2 - B_j B_i^3$ where $[3]_{q_i}=q_i^{-2} + 1 + q_i^2$. Again relation \eqref{eq:twoTerms} holds with $\lambda_{ij}=3\alpha_i+\alpha_j$. If $\tau(i)=i$ then \eqref{eq:BZ-com} gives
\begin{align*}
   \cZ_i B_i = B_i \cZ_i, \qquad \cZ_i B_j = B_j \cZ_i.
\end{align*}
With these relations one determines
\begin{align}\label{eq:-2C1}
   C_1& =q_i(q_i + q_i^{-1})^2 c_i \cZ_i (B_j B_i - B_i B_j)
\end{align}
by a calculation analogous to Case 2. Similarly, if $\tau(i)=j$ one uses the relations
\begin{align}\label{eq:ZBcom}
  \cZ_i B_i = q_i^{4} B_i \cZ_i, \qquad \cZ_i B_j = q_i^{-4} B_j \cZ_i, \qquad \cZ_j B_i = q_i^{-4} B_i \cZ_j.
\end{align}
to obtain
  \begin{align}\label{eq:-2C2}
    C_2& = [3]_{q_i}q_i (1-q_i^4) B_i^2\big( c_i \cZ_i - q_i^{-8} c_j \cZ_j \big).
  \end{align} 
One obtains relation \eqref{eq:Cij2} by inserting \eqref{eq:-2C1} and \eqref{eq:-2C2} into Equation \eqref{eq:twoTerms}.  
\end{proof}
\begin{rema}
  The method of the above proof can in principle by used to determine the lower order terms $C_{ij}(\uc)$ for $a_{ij}=-3, -4, \dots$. This becomes very tedious. In the case where $\gfrak$ is of finite type $G_2$ and $a_{ij}=-3$ the term $C_{ij}(\uc)$ was calculated in \cite[Theorem 7.1]{a-Letzter03}. For $\uc=(q_1^{-2},q_2^{-2})$ it is also given in \cite[Proposition 3.1 ]{a-KolPel11} in the precise conventions of the present paper. 
\end{rema}
\begin{eg}\label{eg:qOnsager}
  Consider the affine Lie algebra $\hat{\slfrak}_2(\field)$ with generalized Cartan matrix $A=\left(\begin{matrix} 2 & -2 \\ -2 & 2 \end{matrix}\right)$ and $I=\{0,1\}$. Choose the admissible pair $(X,\tau)=(\emptyset, \mathrm{id})$. In this case $\cC=(\field(q)^\times)^2$ and $\cS=\field(q)^2$. The quantum symmetric pair coideal subalgebra $B_{\uc,\us}$ corresponding to $\uc=(c_0,c_1)\in \cC$ and $\us=(s_0,s_1)\in \cS$ is generated by two elements
  \begin{align*}
    B_i=F_i- c_i E_i K_i^{-1} + s_i K_i^{-1} \qquad \mbox{for $i=0,1$.}
  \end{align*}   
One has $\cZ_0=\cZ_1=-1$. By Theorem \ref{thm:relsBc} the algebra $B_{\uc,\us}$ is given by the defining relations
\begin{align}
  B_0^3 B_1 {-} [3]_q B_0^2 B_1 B_0 {+} [3]_q B_0 B_1 B_0^2 {-} B_1 B_0^3 &= q(q{+}q^{-1})^2 c_0 (B_1 B_0 {-} B_0 B_1),\label{eq:Ons1}\\
   B_1^3 B_0 {-} [3]_q B_1^2 B_0 B_1 {+} [3]_q B_1 B_0 B_1^2 {-} B_0 B_1^3 &= q(q{+}q^{-1})^2 c_1 (B_0 B_1 {-} B_1 B_0).\label{eq:Ons2}
\end{align}   
This algebra is the $q$-Onsager algebra discussed in the introduction.
\end{eg}
\subsection{Determining $C_{ij}(\uc)$ in the case $i\in I\setminus X$ and $j\in X$}
In this case Lemma \ref{lem:fo} implies that $C_{ij}(\uc)=0$ unless $i=\tau(i)$. Moreover, the expansion \eqref{eq:fo} needs to be extended to include terms of weight $\alpha_i+\alpha_j$ in the second tensor factor.
\begin{lem} \label{lem:foX}
  Assume $i\in I\setminus X$, $j\in X$, and $\tau(i)=i$. Then there exists $\cW_{ij}\in \cM^+_X$ which is independent of $\uc$, such that
  \begin{align*}
     \kow(B_i) =  B_i \ot K_i^{-1} + 1 \ot F_i + c_i \cZ_i \ot E_{i}K_i^{-1} + c_i\cW_{ij}K_j\ot \ad(E_j)(E_i)K_i^{-1}+\Upsilon
   \end{align*}
   for some  $\Upsilon \in \cM_X U^0_\Theta{}'\ot \sum_{\abv{\gamma>\alpha_{i}}{\gamma \neq \alpha_i+\alpha_j}} U_\gamma^+ K_i^{-1}$.
\end{lem}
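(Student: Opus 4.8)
The plan is to reduce to the explicit form of $B_i$ furnished by Theorem~\ref{thm:theta-props}.(3), to read off all but one piece of $\kow(B_i)$ from Lemma~\ref{lem:fo}, and to compute the remaining piece directly from the coproduct formula \eqref{eq:kowadxu} for the adjoint action. Since $\tau(i)=i$, Theorem~\ref{thm:theta-props}.(3) gives $v_i\in\field(q)^\times$ with $\theta_q(F_iK_i)=-v_i\,v$ for $v:=\ad(Z^+_i(X))(E_i)$, so that \eqref{eq:Bi-def} reads $B_i=F_i-c_iv_i\,vK_i^{-1}+s_iK_i^{-1}$; and Lemma~\ref{lem:fo}, specialised to $\tau(i)=i$, gives $\kow(B_i)=B_i\ot K_i^{-1}+1\ot F_i+c_i\cZ_i\ot E_iK_i^{-1}+\Upsilon_0$, where $\Upsilon_0\in\cM_XU^0_\Theta{}'\ot\sum_{\gamma>\alpha_i}U^+_\gamma K_i^{-1}$ denotes the error term of that lemma. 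The first three summands have second tensor leg of weight $0$, $-\alpha_i$, $\alpha_i$ respectively, so the component of $\kow(B_i)$ lying in $\uqgp\ot U^+_{\alpha_i+\alpha_j}K_i^{-1}$ is exactly the weight-$(\alpha_i+\alpha_j)$ part of $\Upsilon_0$, and by the displayed form of $B_i$ it equals $-c_iv_i$ times the image under $(\,\cdot\,)(K_i^{-1}\ot K_i^{-1})$ of the $U^+_{\alpha_i+\alpha_j}$-component of $\kow(v)$. So it suffices to compute that component.

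Applying \eqref{eq:kowadxu} to $x=Z^+_i(X)\in\cM^+_X$ and $u=E_i$, and using \eqref{eq:E-copr}, one gets $\kow(v)=v\ot1+\sum_{(x)}x_{(1)}K_iS(x_{(3)})\ot\ad(x_{(2)})(E_i)$. Because $\kow(U^+_\gamma)\subseteq\bigoplus_{\gamma'+\gamma''=\gamma}U^+_{\gamma'}K_{\gamma''}\ot U^+_{\gamma''}$ and all relevant legs here lie in $\cM^+_X$, iterating the coproduct places $x_{(1)}\in\cM^+_XK_{\beta_2+\beta_3}$, $x_{(2)}\in\cM^+_{X,\beta_2}K_{\beta_3}$, $x_{(3)}\in\cM^+_{X,\beta_3}$ with $\beta_1+\beta_2+\beta_3=\wght(Z^+_i(X))$. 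The summands contributing weight $\alpha_i+\alpha_j$ in the second leg are those with $\beta_2=\alpha_j$; for these $x_{(2)}\in\field(q)E_jK_{\beta_3}$, hence $\ad(x_{(2)})(E_i)=q^{(\beta_3,\alpha_i)}\ad(E_j)(E_i)$, and, using $S(U^+_{\beta_3})\subseteq K_{-\beta_3}U^+_{\beta_3}$ together with $K_{\beta_2+\beta_3}K_iK_{-\beta_3}=K_{\alpha_i+\alpha_j}$, also $x_{(1)}K_iS(x_{(3)})\in\cM^+_XK_{\alpha_i+\alpha_j}$. Summing, the $U^+_{\alpha_i+\alpha_j}$-component of $\kow(v)$ is $\cW'_{ij}K_{\alpha_i+\alpha_j}\ot\ad(E_j)(E_i)$ for some $\cW'_{ij}\in\cM^+_X$ depending only on $(X,\tau)$. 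Feeding this back, the $\uqgp\ot U^+_{\alpha_i+\alpha_j}K_i^{-1}$-component of $\kow(B_i)$ equals $c_i\cW_{ij}K_j\ot\ad(E_j)(E_i)K_i^{-1}$ with $\cW_{ij}:=-v_i\cW'_{ij}\in\cM^+_X$, manifestly independent of $\uc$. Finally $\Theta(\alpha_j)=-w_X\alpha_{\tau(j)}=\alpha_{\tau(\tau(j))}=\alpha_j$ for $j\in X$ by condition~(\ref{adm2}) and $\tau^2=\id$, so $\alpha_j\in Q^\Theta$ and $K_j\in U^0_\Theta{}'$, whence $c_i\cW_{ij}K_j\in\cM_XU^0_\Theta{}'$; setting $\Upsilon:=\Upsilon_0-c_i\cW_{ij}K_j\ot\ad(E_j)(E_i)K_i^{-1}$ removes the weight-$(\alpha_i+\alpha_j)$ part, so $\Upsilon\in\cM_XU^0_\Theta{}'\ot\sum_{\gamma>\alpha_i,\,\gamma\neq\alpha_i+\alpha_j}U^+_\gamma K_i^{-1}$, as required.

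The only delicate point is the weight/normalisation computation identifying $x_{(1)}K_iS(x_{(3)})$ as an element of $\cM^+_XK_{\alpha_i+\alpha_j}$ with precisely that group-like factor: this is what upgrades the first tensor leg from merely ``lying in $\cM_XU^0_\Theta{}'$'' (which Lemma~\ref{lem:fo} already yields) to the exact shape $c_i(\text{element of }\cM^+_X)K_j$ the statement demands. Once the Sweedler legs are put into the form $\cM^+_{X,\bullet}K_\bullet$ as above, this reduces to a routine manipulation with group-like elements and the antipode $S$.
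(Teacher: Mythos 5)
Your proposal is correct and follows essentially the same route as the paper: the paper's proof also applies the coproduct formula \eqref{eq:kowadxu} to $\ad(Z^+_i(X))(E_i)$, observes that the projection onto second-leg weight $\alpha_i+\alpha_j$ lands in $U^+K_jK_i\ot\ad(E_j)(E_i)$, and then argues exactly as in Lemma \ref{lem:fo}. Your Sweedler-leg bookkeeping merely makes explicit the refinement (first leg in $\cM^+_XK_iK_j$, coefficient independent of $\uc$) that the paper leaves implicit, so no further comment is needed.
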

\begin{proof}
  For $\beta\in Q$ let $\pi_\beta:\uqg \rightarrow \uqg_\beta$ denote the projection onto the corresponding weight space. Relation \eqref{eq:kowadxu} implies that
  \begin{align*}
    (\id\ot \pi_{\alpha_i+\alpha_j})\kow(\ad(Z_i)(E_i))\in U^+ K_j K_i \ot \ad(E_j)(E_i)
  \end{align*}
  With this observation the claim follows in the same way as Lemma \ref{lem:fo}.
\end{proof}
\begin{thm}\label{thm:relsBc2}
  Assume that $i\in I\setminus X$ and $j\in X$. The elements $C_{ij}(\uc)=\Pi(\Ctil_{ij}(\uc))$ from Theorem \ref{thm:rels} are given by the following formulas.

\noindent {\bf Case 1:} $a_{ij}=0$.
  \begin{align}\label{eq:Cij0X}
    C_{ij}(\uc)= 0.
  \end{align}
\noindent {\bf Case 2:} $a_{ij}=-1$.
  \begin{align}
    C_{ij}&(\uc)= \delta_{i,\tau(i)}c_i \Big[ \frac{1}{q_i-q_i^{-1}}\big(q_i^2 B_j \cZ_i - \cZ_i B_j\big)
     +\frac{q_i+q_i^{-1}}{q_j-q_j^{-1}}\cW_{ij} K_j\Big]. \label{eq:Cij1X}
  \end{align}  
\noindent {\bf Case 3:} $a_{ij}=-2$.
  \begin{align}\label{eq:Cij2X}
    C_{ij}(\uc)=\delta_{i,\tau(i)} \frac{c_i}{q_i-q_i^{-1}}\bigg[& q_i^2 \Big([3]_{q_i} B_i B_j - (q_i^2 + 2)B_j B_i\Big)\cZ_i \\
     &\qquad-\cZ_i\Big((2+q_i^{-2})B_iB_j - [3]_{q_i} B_j B_i \Big)\bigg]\nonumber\\ 
     -&\delta_{i,\tau(i)}\frac{q_i-q_i^{-1}}{q_j-q_j^{-1}}(q_i+q_i^{-1})^2 [3]_{q_i} B_i c_i \cW_{ij} K_j.\nonumber
  \end{align}        
\end{thm}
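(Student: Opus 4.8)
The plan is to follow the proof of Theorem~\ref{thm:relsBc}, but replacing Lemma~\ref{lem:fo} by the refined coproduct formula of Lemma~\ref{lem:foX}. The starting point is again \eqref{eq:Cijd}, i.e.
\[
  C_{ij}(\uc)=-(\id\ot\vep)\big(\id\ot(P_{-\lambda_{ij}}\circ\pi_{0,0})\big)\big(\kow(Y)-Y\ot K_{-\lambda_{ij}}\big),\qquad Y:=F_{ij}(B_i,B_j),
\]
together with the observation that, since $j\in X$, one has $B_j=F_j$ and hence $\kow(B_j)=F_j\ot K_j^{-1}+1\ot F_j$. First I would note that $C_{ij}(\uc)=0$ unless $\tau(i)=i$: indeed $\tau(X)=X$ forces $\tau(j)\in X$, so $\tau(j)\neq i$, and then Theorem~\ref{thm:Cij=0if} applies whenever $\tau(i)\neq i$. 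So assume $\tau(i)=i$ from now on. Case~1 ($a_{ij}=0$) is then immediate: $\lambda_{ij}=\alpha_i+\alpha_j$ and $Y=B_iB_j-B_jB_i$ contains only one factor $B_i$, while $(\alpha_i,\alpha_j)=\epsilon_ia_{ij}=0$ makes the weight-$(\alpha_i+\alpha_j)$ term $\ad(E_j)(E_i)$ vanish; hence no summand of $\kow(Y)$ has a second tensor leg that straightens into $\field(q)K_{-\lambda_{ij}}$, so $C_{ij}(\uc)=0$.

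For $a_{ij}\in\{-1,-2\}$ I would expand $Y$ in the standard form of $F_{ij}$, apply $\kow$ using Lemma~\ref{lem:foX} at each occurrence of $B_i$ and the elementary coproduct of $F_j$, and then collect the summands surviving $\id\ot(P_{-\lambda_{ij}}\circ\pi_{0,0})$. Two families survive. In the first, two occurrences of $B_i$ contribute $F_i$ and $c_i\cZ_i\ot E_iK_i^{-1}$ respectively, and relation \ref{sec:quagroup}.(4) straightens the second leg to a multiple of $K_{-\lambda_{ij}}$; this produces the $\cZ_i$-terms with denominator $q_i-q_i^{-1}$. In the second, one $B_i$ contributes the weight-$(\alpha_i+\alpha_j)$ term $c_i\cW_{ij}K_j\ot\ad(E_j)(E_i)K_i^{-1}$, the factor $B_j=F_j$ contributes $F_j$, and the remaining $B_i$ contributes $F_i$; straightening $\ad(E_j)(E_i)$ against $F_j$ via the $E_jF_j$-relation and then the leftover $E_i$ against $F_i$ produces the $\cW_{ij}K_j$-terms with denominator $q_j-q_j^{-1}$. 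Finally one commutes the $K$-type factors $\cZ_i$ and $\cW_{ij}K_j$ past $B_i$ and $B_j=F_j$, keeping track of the $q$-powers (a computation parallel to \eqref{eq:BZ-com}, now using $\tau(i)=i$ and the explicit $K$-weights), and reorders into the monomials appearing in \eqref{eq:Cij1X} and \eqref{eq:Cij2X}. The $\uc$-dependence of $C_{ij}(\uc)$ reduces to the overall factor $c_i$ because $\cZ_i$, $\cW_{ij}$, $v_i$ and the coproduct of $F_j$ are all $\uc$-independent, matching the last assertion of Theorem~\ref{thm:rels}.

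The main obstacle will be the bookkeeping for Case~3 ($a_{ij}=-2$): here $Y$ involves three copies of $B_i$, so in the first family one must sum over the choice of which pair of them straightens and which $B_i$ feeds the first tensor leg, and then combine the various $q$-powers picked up in each configuration — these are the source of the coefficients $[3]_{q_i}$, $q_i^2+2$ and $2+q_i^{-2}$ in \eqref{eq:Cij2X}, and getting the signs and orderings exactly right is the only genuinely delicate point. A minor additional subtlety, absent in Theorem~\ref{thm:relsBc}, is that Lemma~\ref{lem:fo} is insufficient here and must be upgraded to Lemma~\ref{lem:foX} precisely to retain the weight-$(\alpha_i+\alpha_j)$ contribution that feeds the second family of surviving terms.
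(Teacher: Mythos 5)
Your overall strategy is the one the paper actually uses: Cases 2 and 3 are obtained by the direct computation from \eqref{eq:Cijd}, with Lemma \ref{lem:foX} replacing Lemma \ref{lem:fo}, and your reduction to $\tau(i)=i$ via Theorem \ref{thm:Cij=0if} is correct. Your Case 1 argument is valid, though the paper gets it more directly: since $a_{ij}=a_{ji}=0$ and $j\in X$, the element $Y=B_iB_j-B_jB_i$ vanishes identically by \eqref{eq:Serre3}. Your identification of the two surviving families (the $E_iF_i$-pairing producing the $\cZ_i$-terms with denominator $q_i-q_i^{-1}$, and the $\ad(E_j)(E_i)$-against-$F_j$ pairing producing the $\cW_{ij}$-terms with denominator $q_j-q_j^{-1}$) is also the right picture, as is the observation that Lemma \ref{lem:fo} must be upgraded to Lemma \ref{lem:foX} for exactly this reason.

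However, your final step contains a genuine error. You propose to finish by commuting $\cZ_i$ and $\cW_{ij}K_j$ past $B_j=F_j$ (and past $B_i$) ``parallel to \eqref{eq:BZ-com}''. Relation \eqref{eq:BZ-com} is stated, and is only valid, for $i,j\in I\setminus X$. Here $j\in X$, and for $\tau(i)=i$ one has $\cZ_i=-v_i\,\ad(Z^+_i)(K_i^2)K_i^{-2}$, which lies in $\cM^+_X U^0_\Theta{}'$: its $\cM^+_X$-part is built from $E_k$ with $k\in X$, and these do not $q$-commute with $F_j$ (already $[E_j,F_j]\neq 0$); the same applies to $\cW_{ij}\in\cM^+_X$. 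This is precisely the subtlety the paper's proof flags: in the present case $\cZ_i$ does \emph{not} $q$-commute with $B_j$, and the only first-leg commutation available is $\cZ_iB_i=B_i\cZ_i$ (valid because $\tau(i)=i$) together with commutation against genuine Cartan factors. That is why \eqref{eq:Cij1X} keeps $B_j\cZ_i$ and $\cZ_iB_j$ as distinct monomials and \eqref{eq:Cij2X} keeps four distinct orderings of $B_i$, $B_j$, $\cZ_i$: the terms must be left in the order in which they arise from the coproduct expansion, not merged by a nonexistent $q$-commutation rule. Carrying out the reordering as you describe would either be impossible or would collapse these monomials incorrectly (compare Example \ref{eg:sp4}, where $\cZ_2=-(1-q^{-2})^2E_1E_3$ visibly fails to commute with $B_1=F_1$), so this step of your plan needs to be replaced by the more careful bookkeeping of the orders in which the first tensor legs appear.
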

\begin{proof}
If $a_{ij}=0$ then $Y=B_iB_j-B_jB_i$ and hence $Y=0$ if $j\in X$ by \eqref{eq:Serre3}.
Cases 2 and 3 follow from Lemma \ref{lem:foX} by direct computation in a similar way as the corresponding cases in Theorem \ref{thm:relsBc} follow from Lemma \ref{lem:fo}. One has to keep in mind, however, that in the present case $\cZ_i$ does in general not $q$-commute with $B_j$. As $i=\tau(i)$ the elements $\cZ_i$ and $B_i$ still commute.
\end{proof}
\begin{eg}\label{eg:sp4}
  Consider $\gfrak=\slfrak_4(\field)$ with $X=\{1,3\}$ and $\tau=\mathrm{id}$. In this case $I_{ns}=\emptyset$ and hence only the standard quantum symmetric pair coideal subalgebra $B_{\uc}$ exists. It is generated by $\cM_X=\field(q)\langle E_1, F_1, K_1^{\pm 1}, E_3, F_3, K_3^{\pm 3}\rangle$ and the element 
  \begin{align*}
    B_2=F_2- c_2 T_1 T_3(E_2)K_2^{-1}= F_2 - c_2 \ad(E_3 E_1)(E_2)K_2^{-1}. 
  \end{align*}
To determine $C_{21}(\uc)$ one calculates
  \begin{align*}
    \kow(B_2)=&B_2\ot K_2^{-1} + 1\ot F_2 - c_2(1-q^{-2})^2 E_1 E_3 \ot E_2 K_2^{-1}\\
         &- c_2(1{-}q^{-2}) E_3 K_1 \ot \ad(E_1) (E_2)K_2^{-1} - c_2(1{-}q^{-2}) E_1 K_3 \ot \ad(E_3)(E_2)K_2^{-1}\\
         &- c_2 K_1 K_3 \ot \ad(E_1 E_3)(E_2)K_2^{-1}
  \end{align*}  
which implies 
\begin{align*}
  \cZ_2=-(1-q^{-2})^2 E_1 E_3, \qquad \cW_{21}=-(1-q^{-2})E_3.
\end{align*} 
By \eqref{eq:Cij1X} one obtains
\begin{align*}
  C_{21}(\uc)= -q^{-1}(q-q^{-1})^2 c_2 F_1 E_1 E_3 -q^{-2} c_2 K_1^{-1} E_3 - c_2 K_1 E_3.
\end{align*} 
\end{eg}
\subsection{References to Letzter's constructions}
  For finite dimensional $\gfrak$, the above method to determine the coefficients $C_{ij}(\uc)$ was devised in \cite[Section 7]{a-Letzter03} and the result corresponding to Theorem \ref{thm:relsBc} is stated in \cite[Theorem 7.1]{a-Letzter03}. By reference to the finite list of cases in \cite[pp. 32/33]{a-Araki62}, Letzter's formulas become simpler than those given here, even for $a_{ij}=-1,-2$.  The necessity to include terms of weight $\alpha_i+\alpha_j$ in the second tensor factor of $\kow(B_i)$ in the calculation of $C_{ij}(\uc)$ for $j\in X$ seems to have been overlooked in \cite{a-Letzter03}. As Example \ref{eg:sp4} above shows, the formulas in \cite[Theorem 7.1(iv)]{a-Letzter03} are only valid for $j\notin X$.   
\section{The center of quantum symmetric pair coideal subalgebras}\label{sec:center}
The center $Z(B_{\uc,\us})$ of the algebra  $B_{\uc,\us}$ can be described in full generality. There are two distinct cases. Recall that the Cartan matrix $A$ is always assumed to be indecomposable. If $\gfrak=\gfrak(A)$ is infinite dimensional then the center $Z(B_{\uc,\us})$ is trivial, as will be shown in Subsection \ref{sec:Z(B)notfinite}. If $\gfrak$ is finite dimensional then the center of a slight extension of $B_{\uc,\us}$ was determined in \cite{a-KL08}. For completeness, and as the  conventions of \cite{a-KL08} differ from those of the present paper, we summarize the structure of the center for the finite case in the brief Subsection \ref{sec:Z(B)finite}.
In both cases, the description of the integrable part of $\uqgp$ due to \cite{a-JoLet1}, \cite{a-JoLet2} provides an essential ingredient in the proof.
\subsection{Integrability of $B_{\uc,\us}$-invariant elements}
  For any left $\uqgp$-module $M$ define its integrable part $I(M)$ by
  \begin{align*}
    I(M)=\{m\in M\,|\, \forall i\in I \, \exists k\in \N\, \mbox{ such that } E_i^km=0=F_i^km\}.
  \end{align*}
  Observe that $I(M)$ is a $\uqgp$-submodule of $M$. We will be in particular interested in the integrable part of $\uqgp$ considered as a module over itself via the left adjoint action. In this case one has by
  \cite[6.4]{a-JoLet1}, \cite[7.1.6]{b-Joseph} the relation
  \begin{align}\label{eq:I(U)}
    I(\uqgp)=\bigoplus_{\lambda\in -R^+} \ad(\uqgp)(K_{-\lambda})
  \end{align}
  where $R^+=Q\cap 2 P^+$ with $P^+=\{\lambda\in P\,|\, \lambda(h_i)\ge 0 \,\forall i\in I\}$. Recall that a $\uqgp$-module $M$ is called a weight module if there exists a direct sum decomposition 
  \begin{align*}
    M=\bigoplus_{\mu\in P} M_\mu
  \end{align*}
  such that for all $m\in M_\mu$ one has 
  \begin{align*}
    K_i m&= q^{(\alpha_i,\mu)}m, & E_i m &\in M_{\mu+\alpha_i}, & F_i m &\in M_{\mu-\alpha_i}.
  \end{align*}
  Finally, for any $\uqgp$-module $M$ define the subspace of $B_{\uc,\us}$-invariant elements by
  \begin{align*}
    M^{B_{\uc,\us}}=\{m\in M\,|\, b m = \vep(b)m \quad \forall b\in B_{\uc,\us}\}. 
  \end{align*}
  To obtain the following result we adapt the proof of \cite[Lemma 4.4]{a-Letzter97} to the present setting.
  \begin{prop}\label{prop:inv-int}
    Let $M$ be a $\uqgp$-weight module. 
    \begin{enumerate}
      \item If $M^{B_{\uc,\us}}\neq \{0\}$ then there exists $v\in M$ such that $F_iv=0$ for all $i\in I$.
      \item $M^{B_{\uc,\us}}\subseteq I(M)$.
    \end{enumerate}   
  \end{prop}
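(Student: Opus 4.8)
\textbf{Proof plan for Proposition \ref{prop:inv-int}.}

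The plan is to exploit the quantum Iwasawa decomposition of Proposition \ref{prop:qIwasawa}, which writes $\uqgp\cong V_X^+\ot U'_\Theta\ot B_{\uc,\us}$, together with the explicit triangular structure of the generators $B_i$. First I would prove (1). Suppose $m\in M^{B_{\uc,\us}}$ is nonzero. Since $M$ is a weight module, we may assume $m$ lies in a single weight space $M_\mu$; among all nonzero $B_{\uc,\us}$-invariant weight vectors, choose one, call it $v$, whose weight $\mu$ is maximal in the dominance order on the (finitely many, by the weight-module hypothesis restricted to the relevant submodule — or more carefully, maximal among weights actually occurring) — here I should instead argue locally: pick $v\in M^{B_{\uc,\us}}$ and look at the finite-dimensional space spanned by all $F_J v$; actually the cleanest route is to observe that $B_{\uc,\us}$ contains $\cM_X$, so $v$ is $\cM_X$-invariant, hence in particular $E_i v = F_i v = 0$ for $i\in X$ after possibly replacing $v$ by a highest weight vector of the $\cM_X$-module $\cM_X v$ (this module is a weight module and $v$ generates a trivial-looking submodule only if $E_i v = F_i v =0$; more precisely, since $b v=\vep(b)v$ for all $b\in\cM_X$ and $\cM_X$ is generated by the $E_i,F_i,K_i$ with $i\in X$, invariance forces $E_i v= F_i v = 0$ directly). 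So $v$ is already killed by $F_i$ for $i\in X$. For $i\in I\setminus X$, use $B_i v = \vep(B_i)v$. Since $B_i = F_i + c_i\theta_q(F_iK_i)K_i^{-1} + s_iK_i^{-1}$ and $\vep(B_i)=0$ (as $\vep(F_i)=0$, $\vep(\theta_q(F_iK_i)K_i^{-1})=0$ because $\theta_q$ sends $F_iK_i$ into $S(U^-)U^0{}'$ with zero counit on the relevant piece — check via Theorem \ref{thm:theta-props}(3) — and $\vep(s_iK_i^{-1})=s_i$, so actually $\vep(B_i)=s_i$), we get
\begin{align*}
  F_i v = -c_i\,\theta_q(F_iK_i)K_i^{-1} v - s_iK_i^{-1}v + s_i v.
\end{align*}
The term $\theta_q(F_iK_i)K_i^{-1}$ raises weight (it lies in $\sum_{\gamma\ge 0}U^+_\gamma U^0{}'$ with the weight-zero part absent for $i\notin X$, by Theorem \ref{thm:theta-props}(3)), so $F_i v$ lies in a strictly higher weight space plus a multiple of $v$. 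Iterating: the subspace $\sum_{J}\field(q)F_J v$ is finite-dimensional because each application of $F_i$ to the current configuration eventually must terminate — here is where the maximality argument goes in. I would take $v$ of maximal weight among all nonzero elements of $M^{B_{\uc,\us}}$ for which this is possible, or rather argue by choosing within the $\uqgp$-submodule generated by $v$; the cleanest is: let $N = \uqgp v$, a weight module, and consider the weights of $N$; since $\theta_q(F_iK_i)K_i^{-1}v$ has weights strictly above $\mu$, if all weights above $\mu$ in $N$ vanish then $F_i v$ is forced to be a scalar multiple of $v$, and a weight-grading argument (the scalar must be $0$ unless $\alpha_i=0$) gives $F_i v = 0$, which is what we want. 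So I would first replace $v$ by a vector whose weight is maximal among weights of $N$ — but that requires $N$ to have a maximal weight, which need not hold for a general weight module. The honest fix, following Letzter \cite[Lemma 4.4]{a-Letzter97}, is:~apply the formula above repeatedly to build, from $v$, a sequence of vectors of strictly increasing weight, and note that each such vector is again a nonzero element of a suitable invariant-type space; pick the construction so that it must stabilize, yielding a vector annihilated by all $F_i$.

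For part (2), I would argue that $M^{B_{\uc,\us}}\subseteq I(M)$ by showing each $m\in M^{B_{\uc,\us}}$ generates a $\uqgp$-submodule on which all $E_i$ and $F_i$ act locally nilpotently. Since $B_{\uc,\us}\supseteq \cM_X$ and $\cM_X$ contains $\slfrak_2$-triples for $i\in X$, invariance already gives $E_i m = F_i m = 0$ for $i\in X$. For $i\in I\setminus X$, the relation above expresses $F_i m$ in terms of $m$ and $\theta_q(F_iK_i)K_i^{-1}m$; one then analyzes the $\uqgp$-module $\uqgp m$ using the Iwasawa decomposition $\uqgp = V_X^+ U'_\Theta B_{\uc,\us}$: since $B_{\uc,\us}m = \field(q)m$ (up to the counit twist) and $U'_\Theta$ acts by scalars on weight vectors, we get $\uqgp m = V_X^+ m$, and $V_X^+$ is a finitely generated algebra built from the finite-dimensional $\ad(\cM_X)$-modules $\ad(\cM_X)(E_i)$. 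Local nilpotence of $E_i$, $F_i$ on $V_X^+ m$ then follows from the structure of $V_X^+$ as a sum of finite-dimensional $\cM_X$-modules together with part (1) (which provides a highest-weight-type vector), by essentially the argument of \cite[6.4]{a-JoLet1}; concretely, $V_X^+ m$ is a highest weight module for a suitable Borel, and integrability of highest weight modules with the relevant highest weights is classical. I expect the main obstacle to be the termination/maximality argument in part (1): in the infinite-dimensional Kac-Moody setting one cannot simply invoke finite-dimensionality of weight spaces or existence of maximal weights, so I would need to follow Letzter's inductive construction carefully and verify that the weight-raising term $\theta_q(F_iK_i)K_i^{-1}$ — whose explicit form is governed by Theorem \ref{thm:theta-props}(3) and the monomials $Z^\pm_i(X)$ — does not create an infinite ascending chain, i.e.~that the process of "pushing weights up" must stop. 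The other delicate point is correctly bookkeeping the counit values $\vep(B_i)$ for nonstandard parameters $\us$, which shifts the invariance condition by a constant and must be tracked through the computation.
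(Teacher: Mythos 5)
Your proposal has genuine gaps in both parts. For (1), you correctly note that invariance under $\cM_X\subseteq B_{\uc,\us}$ kills $E_i,F_i$ for $i\in X$, but for $i\in I\setminus X$ you set up an iterative weight-raising construction and then leave precisely the crucial step unproved: you yourself flag the ``termination/stabilization'' of the process as the main obstacle, and no argument is given that the ascending chain stops (indeed, in a general weight module it need not, so this route as stated does not close). The paper's proof avoids the issue entirely by exploiting that the conclusion of (1) does \emph{not} require the vector $v$ to be invariant: take a nonzero $m\in M^{B_{\uc,\us}}$, write it as a finite sum of weight vectors $m=\sum_\mu m_\mu$, and pick $\mu$ minimal in the support. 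Since $\theta_q(F_iK_i)K_i^{-1}$ raises weight and $K_i^{-1}$ preserves it, the weight-$(\mu-\alpha_i)$ component of $B_im=\vep(B_i)m$ is exactly $F_im_\mu$ on the left and $0$ on the right (minimality of $\mu$), so $F_im_\mu=0$ for all $i\in I$ at once. No maximal weights, no iteration, and the exact value $\vep(B_i)=s_i$ is irrelevant.

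For (2) the decisive claim in your plan --- that $\uqgp m=V_X^+m$ is ``a highest weight module for a suitable Borel'' whose integrability is ``classical'' --- is unjustified and in fact wrong as stated: $m$ is not annihilated by the $F_i$, so the cyclic module is not a highest weight module in any standard sense, and even genuine highest (or lowest) weight modules are integrable only for dominant integral weights, a condition you have no access to here. The paper argues directly on the weight components of $m$: if $F_i^km_\lambda\neq 0$ for all $k$, choose $\lambda$ minimal with this property; then for large $n$ the weight-$(\lambda-n\alpha_i)$ component of $B_i^nm$ equals $F_i^nm_\lambda\neq 0$, contradicting $B_i^nm=\vep(B_i)^nm$, whose support is bounded. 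The analogous statement for the $E_i$ requires elements of $B_{\uc,\us}$ with leading term $E_i$, namely $C_i=E_i+\overline{C}_i$ with $\overline{C}_i\in U^-U^0{}'$, obtained from the adjoint action of $\cM_X^-$ on $B_iK_i$ for $i\in I\setminus X$; your Iwasawa-based route has no substitute for this ingredient, and Proposition \ref{prop:qIwasawa} by itself does not yield local nilpotence.
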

  \begin{proof}
    Assume $m\in M^{B_{\uc,\us}}$ and write $m=\sum_{\mu\in P} m_\mu$ with $m_\mu\in M_\mu$. Choose $\mu\in P$ minimal such that $m_\mu\neq 0$. Then $B_i m=\vep(B_i) m$ implies that $F_i m_\mu=0$ for all $i\in I$. This proves (1).
    
     Next we show that for all $\mu\in P$ there exists $k\in \N$ such that $F_i^k m_\mu=0$. Indeed, otherwise choose $\lambda\in P$ minimal such that $F_i^k m_\lambda\neq 0$ for all $k\in \N$. By minimality, for all $\mu<\lambda$ there exists $k_\mu\in \N$ such that $F_i^{k_\mu} m_\mu=0$. Hence $B_i^n m_\mu \in \bigoplus_{\nu > \mu -k_\mu \alpha_i} M_\nu$ if $\mu<\lambda$. For large $n$ this implies that $B_i^n m $ has $M_{\lambda-n\alpha_i}$-component $F_i^n m_\lambda\neq 0$. This is a contradiction to $B_i^n m = \vep(B_i)^n m$.
    
In the same way one shows that for all $\mu\in P$ there exists $k\in \N$ such that $E_i^k m_\mu=0$. To this end one has to replace the generators $B_i$ in the above argument by elements $C_i= E_i + \overline{C}_i$ with $\overline{C}_i\in U^-U^0{}'$. Such elements exist in $B_{\uc,\us}$ as can be seen via the adjoint action of $\cM_X^-$ on $B_iK_i$ for $i\in I\setminus X$. This completes the proof of (2).  
  \end{proof}
\subsection{The center of $B_{\uc,\us}$ for $A$ of infinite type}\label{sec:Z(B)notfinite}
  We now want to determine the center $Z(B_{\uc,\us})$ of the algebra $B_{\uc,\us}$. Observe first that
  \begin{align*}
    Z(B_{\uc,\us}) = \{z\in B_{\uc,\us}\,|\,\ad(b)(z) = \vep(b)z \,\forall b\in B_{\uc,\us}\},
  \end{align*}
  see also \cite[1.3.3]{b-Joseph}, \cite[Theorem 1.2]{a-Letzter-memoirs}, or \cite[Lemma 4.11]{a-KolbStok09}. By Proposition \ref{prop:inv-int}.(2) one obtains
   \begin{align}\label{eq:ZinI}
    Z(B_{\uc,\us})\subset I(\uqgp)^{B_{\uc,\us}}.
  \end{align}
In view of Equation \eqref{eq:I(U)}, to determine $Z(B_{\uc,\us})$ one hence has to investigate the subspaces  $\left(\ad(\uqgp)(K_{-\lambda})\right)^{B_{\uc,\us}}$ of $\uqgp$ for $\lambda\in R^+$.
\begin{lem}\label{lem:adUK=0}
  Assume that $\gfrak=\gfrak(A)$ is infinite dimensional. If $\lambda\in R^+$ satisfies $\lambda(h_i)\neq 0$ for some $i\in I$ then 
  \begin{align*}
    \left(\ad(\uqgp)(K_{-\lambda})\right)^{B_{\uc,\us}}=\{0\}.
  \end{align*} 
\end{lem}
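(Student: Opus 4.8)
The plan is to argue by contradiction. Write $M:=\ad(\uqgp)(K_{-\lambda})$ and suppose there is a nonzero element $z\in M^{B_{\uc,\us}}$. The left adjoint action makes $M$ a $\uqgp$-weight module (with weights in $Q$), and $M$ is the cyclic submodule of $\uqgp$ generated by the weight-zero vector $K_{-\lambda}$; by the results of \cite{a-JoLet1} (see also \cite{b-Joseph}) that underlie \eqref{eq:I(U)}, $M$ is isomorphic as a $\uqgp$-module to the simple integrable highest weight module $L(\lambda)$ of highest weight $\lambda$. Applying Proposition \ref{prop:inv-int}.(1) to $M$ then produces a nonzero weight vector $v\in M$ with $\ad(F_i)(v)=0$ for all $i\in I$.

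The heart of the argument is that such a $v$ forces $M$ to be finite-dimensional, which is incompatible with $\lambda(h_i)\neq 0$. Indeed, since the subalgebra $U^-$ of $\uqgp$ is generated by the $F_i$ and each of them annihilates $v$, we get $U^-v=\field(q)v$; as $M\cong L(\lambda)$ is simple it is generated by $v$, so the triangular decomposition \eqref{eq:triang-decomp} gives $M=U^+U^0{}'U^-v=U^+v$. Hence every weight of $M$ lies in the interval between $\wght(v)$ and $\lambda$ in the partial order on $Q$, which is a finite set, and since the weight spaces of $L(\lambda)$ are finite-dimensional we conclude $\dim_{\field(q)}M<\infty$. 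Then the weight set of $M$ is a finite, $W$-stable subset of $\hfrak^\ast$ containing $\lambda$, so the Weyl group orbit $W\lambda$ is finite. On the other hand $\lambda(h_i)\neq 0$ for some $i\in I$ forces $r_i\lambda\neq\lambda$, so the stabilizer of $\lambda$ in $W$ is the \emph{proper} standard parabolic subgroup generated by $\{r_j\mid\lambda(h_j)=0\}$; since $\gfrak$ is infinite-dimensional and $A$ is indecomposable, $W$ is an infinite irreducible Coxeter group, and a proper standard parabolic subgroup of such a group has infinite index. Thus $W\lambda$ is infinite, a contradiction, and so $M^{B_{\uc,\us}}=\{0\}$.

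The step I expect to be the main obstacle is this final chain of implications: one has to be careful that the vector $v$ produced by Proposition \ref{prop:inv-int} need not have weight $\lambda$; that it is the \emph{simple} module $L(\lambda)$ one is working with, so that $v$ generates all of $M$; and one uses the Coxeter-theoretic fact that a proper standard parabolic subgroup of an infinite irreducible Coxeter group has infinite index, which should be given an explicit reference. Everything else — the weight-module structure of $M$, Proposition \ref{prop:inv-int}, the triangular decomposition, and the description \eqref{eq:I(U)} of $I(\uqgp)$ — is already available. Note also that the hypothesis $\lambda(h_i)\neq 0$ is genuinely needed: when $\lambda(h_j)=0$ for all $j\in I$, the vector $K_{-\lambda}$ spans a one-dimensional, hence $B_{\uc,\us}$-invariant, submodule of $\uqgp$.
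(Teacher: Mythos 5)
There is a genuine gap, and it sits at the very first substantive step: the identification of $M=\ad(\uqgp)(K_{-\lambda})$ with the simple integrable highest weight module of highest weight $\lambda$ is false. The element $K_{-\lambda}$ is not an $\ad$-highest weight vector: one computes $\ad(E_i)(K_{-\lambda})=(1-q^{-(\lambda,\alpha_i)})E_iK_{-\lambda}$ and $\ad(F_i)(K_{-\lambda})=(1-q^{(\lambda,\alpha_i)})F_iK_{-\lambda}K_i$, both nonzero whenever $(\lambda,\alpha_i)\neq 0$, so $M$ is cyclic on a weight-zero vector but is neither a highest weight module nor simple in general. The result from \cite{a-JoLet1}, \cite{b-Joseph} that actually underlies \eqref{eq:I(U)} is the isomorphism \eqref{eq:VV}, namely $\ad(\uqgp)(K_{-\lambda})\cong V(\lambda/2)\ot V(\lambda/2)^\ast$; already for finite dimensional $\gfrak$ this has dimension $(\dim V(\lambda/2))^2$, not $\dim V(\lambda)$. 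Everything after your identification collapses with it: without simplicity the vector $v$ with $\ad(F_i)(v)=0$ only generates the submodule $U^+v$, not $M$; and the weights of $M\cong V(\lambda/2)\ot V(\lambda/2)^\ast$ are \emph{not} bounded above by $\lambda$ (in the infinite dimensional case they are unbounded in both directions), so the finite-dimensionality of $M$, the finiteness of the weight set, and hence the Coxeter-theoretic contradiction cannot be extracted this way.

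The repair is exactly the route the paper takes, and your opening and closing moves (Proposition \ref{prop:inv-int}.(1), and the remark that the hypothesis $\lambda(h_i)\neq 0$ is needed) are sound: using \eqref{eq:VV}, one notes that $V(\lambda/2)$ is an infinite dimensional simple integrable highest weight module (here is where $\lambda(h_i)\neq 0$ for some $i$ and the infinite dimensionality of $\gfrak$ enter, via \cite[7.1.15 (ii)]{b-Joseph}), hence contains no nonzero vector annihilated by all $F_i$; a short weight argument with $\kow(F_i)=F_i\ot K_i^{-1}+1\ot F_i$ then shows the same for $V(\lambda/2)\ot V(\lambda/2)^\ast$, and Proposition \ref{prop:inv-int}.(1) immediately yields $\left(\ad(\uqgp)(K_{-\lambda})\right)^{B_{\uc,\us}}=\{0\}$. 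In particular, the detour through finiteness of $W\lambda$ and indices of parabolic subgroups of infinite Coxeter groups is unnecessary once the correct module identification is in place.
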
  
\begin{proof}
  Let $V(\lambda/2)$ denote the integrable left $\uqgp$-module of highest weight $\lambda/2$. Its dual space $V(\lambda/2)^\ast$ is a lowest weight $\uqgp$-module of lowest weight $-\lambda/2$ with left action given by
  \begin{align*}
    (u f)(v) = f(S(u)v) \qquad \mbox{ for all } f\in V(\lambda)^\ast, v\in V(\lambda), u\in \uqgp.
  \end{align*}  
  By \cite[Proof of Lemma 7.1.15 (iii)]{b-Joseph} one has an isomorphism of left $\uqgp$-modules
  \begin{align}\label{eq:VV}
    \ad(\uqgp)(K_{-\lambda}) \cong V(\lambda/2) \ot V(\lambda/2)^\ast.
  \end{align}
  Moreover, by \cite[7.1.15 (ii)]{b-Joseph} the simple highest weight $\uqgp$-module $V(\lambda/2)$ is infinite dimensional and therefore does not contain a nonzero element annihilated by all $F_i$ for $i\in I$.  Hence $V(\lambda)\ot V(\lambda/2)^\ast$ does not contain a nonzero element annihilated by all $F_i$ for $i\in I$. By Proposition \ref{prop:inv-int}.(1) one obtains  
  \begin{align*}
    \left(V(\lambda/2) \ot V(\lambda/2)^\ast\right)^{B_{\uc,\us}}=\{0\}
  \end{align*}
which by \eqref{eq:VV} completes the proof of the lemma. 
\end{proof}
With the above lemma one can show that the center of $B_{\uc,\us}$ is trivial unless $A$ is of finite type. 
\begin{thm}\label{thm:Z(B)infinite}
   Assume that $\gfrak$ is infinite dimensional. Then $Z(B_{\uc,\us})=\field(q)1$.
\end{thm}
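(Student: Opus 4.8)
The plan is to combine the inclusion \eqref{eq:ZinI}, the decomposition \eqref{eq:I(U)} of the integrable part, and Lemma \ref{lem:adUK=0}. By \eqref{eq:ZinI} we have $Z(B_{\uc,\us})\subseteq I(\uqgp)^{B_{\uc,\us}}$, and by \eqref{eq:I(U)} the integrable part decomposes as $I(\uqgp)=\bigoplus_{\lambda\in R^+}\ad(\uqgp)(K_{-\lambda})$. Since taking $B_{\uc,\us}$-invariants commutes with the direct sum, it suffices to understand each summand $\big(\ad(\uqgp)(K_{-\lambda})\big)^{B_{\uc,\us}}$ for $\lambda\in R^+$. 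Lemma \ref{lem:adUK=0} already disposes of all $\lambda$ with $\lambda(h_i)\neq 0$ for some $i\in I$; the remaining case is $\lambda(h_i)=0$ for all $i\in I$, i.e. $\lambda\in Q$ lies in the radical of the restriction of the form to $Q$.

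First I would dispatch the trivial summand $\lambda=0$: here $\ad(\uqgp)(K_0)=\ad(\uqgp)(1)=\field(q)1$, which contributes exactly the scalars, as desired. Then I would argue that when $A$ is indecomposable and $\gfrak$ is infinite dimensional, any $\lambda\in R^+$ with $\lambda(h_i)=0$ for all $i\in I$ is forced to be $0$. This is a statement purely about the minimal realization: the map $P\to \field^{I}$, $\lambda\mapsto(\lambda(h_i))_i$, has kernel consisting of weights annihilating $Q^\vee=\Z\Pi^\vee$, and $R^+=Q\cap 2P^+\subseteq Q$. An element of $Q$ annihilating all $h_i$ must be $0$ because the $\alpha_i$ are linearly independent in $\hfrak^\ast$ and, on $Q$, the pairing with $\Pi^\vee$ is governed by the (nondegenerate, since $A$ is indecomposable symmetrizable of infinite type — but actually $\det$ may still vanish) matrix $A$; one should instead simply note that for $\lambda=\sum_i n_i\alpha_i\in Q$ one has $\lambda(h_j)=\sum_i n_i a_{ij}$, so $\lambda$ lies in the radical of $Q$ under the form $(\cdot,\cdot)$; but $(\cdot,\cdot)$ is nondegenerate on $Q$ precisely when $\det A\neq 0$, which fails in the affine case. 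So the cleanest route is: $\lambda\in R^+$ with all $\lambda(h_i)=0$ means $\lambda\in 2P^+$ and $\lambda(h_i)=0$, hence $\lambda(h_i)=0\ge 0$ with equality, and then by indecomposability of $A$ combined with $\lambda\ge 0$ (as $\lambda\in Q\cap 2P^+\subseteq Q^+$) one concludes $\lambda=0$ unless $\gfrak$ is of finite type — I would invoke the standard fact (e.g. \cite[Ch.~4,5]{b-Kac1}) that a dominant integral weight in $Q^+$ that pairs to zero with every coroot and is nonzero can only exist for $A$ of finite type. For an indecomposable $A$ of infinite type there is no nonzero such $\lambda$.

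Putting this together: for infinite-dimensional $\gfrak$ every $\lambda\in R^+$ is either $0$ or satisfies $\lambda(h_i)\neq 0$ for some $i$, so by Lemma \ref{lem:adUK=0} the only summand of $I(\uqgp)$ containing a nonzero $B_{\uc,\us}$-invariant element is $\ad(\uqgp)(K_0)=\field(q)1$. Combined with \eqref{eq:ZinI} this gives $Z(B_{\uc,\us})\subseteq \field(q)1$. The reverse inclusion $\field(q)1\subseteq Z(B_{\uc,\us})$ is obvious, so $Z(B_{\uc,\us})=\field(q)1$.

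The main obstacle I anticipate is the elementary but slightly delicate classification step: showing that for indecomposable $A$ of infinite type there is no nonzero $\lambda\in Q\cap 2P^+$ with $\lambda(h_i)=0$ for all $i$. One must be careful because in the affine case the form on $Q$ is degenerate (its radical is spanned by the imaginary root $\delta$), so one cannot argue by nondegeneracy alone; the point is rather that $\delta\notin 2P^+$ — indeed $\delta$ is not even in $2P$ in general, and more to the point $\delta(h_i)=0$ yet $\delta\notin P^+$ would need checking, so the correct statement is that $\delta$, while annihilating all $h_i$, is not a positive multiple of $2$ times a dominant weight in a way that lands in $R^+$; here one uses that $R^+=Q\cap 2P^+$ and that for affine $A$ one has $Q\cap 2P^+=\{0\}$ because a nonzero element of $Q$ lies in $P^+$ only if it is a nonnegative combination of fundamental weights with the right integrality, which the root lattice does not contain except $0$. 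I would make this precise by a short direct argument with the fundamental weights, or by citing the relevant fact from \cite{b-Kac1} on dominant weights in the root lattice.
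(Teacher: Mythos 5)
There is a genuine gap, and it sits exactly at the step you flagged as ``delicate''. Your reduction via \eqref{eq:ZinI}, \eqref{eq:I(U)} and Lemma \ref{lem:adUK=0} is fine, but your claimed classification — that for indecomposable $A$ of infinite type there is no nonzero $\lambda\in R^+=Q\cap 2P^+$ with $\lambda(h_i)=0$ for all $i$ — is false in the affine case. Take $\gfrak$ affine and $\delta=\sum_j b_j\alpha_j$ the standard imaginary root. Then $\delta(h_i)=0$ for all $i\in I$, and $\delta(d_s)\in\Z$ since the $b_j$ are integers and $\alpha_j(d_s)\in\{0,1\}$; hence $\delta\in P$ and, because $\delta(h_i)=0\ge 0$, even $\delta\in P^+$. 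Consequently $2\delta$ (and every even multiple $2n\delta$) lies in $Q\cap 2P^+=R^+$, is nonzero, and pairs to zero with every coroot. So your ``standard fact'' that a nonzero dominant integral weight in the root lattice annihilating all coroots exists only in finite type is backwards: in finite type the form is nondegenerate on $Q$ and no such $\lambda$ exists, whereas in the affine case they are exactly the multiples of $\delta$ that land in $R^+$. Your fallback argument (``the root lattice contains no nonzero element of $P^+$'') fails for the same reason. Thus your proof does not dispose of the summands $\ad(\uqgp)(K_{-2n\delta})$, $n\ge 1$, which is precisely the nontrivial affine contribution.

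The paper closes this case by a different mechanism, not by showing such $\lambda$ do not exist. If $\lambda\in R^+$ satisfies $\lambda(h_i)=0$ for all $i$, then $(\lambda,\alpha_i)=0$, so $K_{-\lambda}$ is central in $\uqgp$ and $\ad(\uqgp)(K_{-\lambda})=\field(q)K_{-\lambda}$. Hence any $z\in Z(B_{\uc,\us})$ is a linear combination of such group-like elements $K_{-\lambda}$; applying the coproduct one reduces to $z=K_{-\lambda}\in B_{\uc,\us}\cap U^0{}'$, which equals $U^0_\Theta{}'$ by the triangular decomposition of $B_{\uc,\us}$ (Proposition \ref{prop:leftBasis}). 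This forces $\Theta(\lambda)=\lambda$; on the other hand $w_X\lambda=\lambda$ (since $\lambda$ kills all $h_i$), so $\Theta(\lambda)=-\tau(\lambda)$, and $\lambda=-\tau(\lambda)$ together with the positivity of the coefficients of $\lambda$ gives $\lambda=0$. To repair your argument you would need to replace your nonexistence claim by this (or an equivalent) analysis of the central group-likes $K_{-2n\delta}$ inside $B_{\uc,\us}$; as written, the proposal proves the theorem only for $\gfrak$ of indefinite type with nonsingular Cartan matrix.
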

\begin{proof}
  Assume that $z\in Z(B_{\uc,\us})$. By relations \eqref{eq:I(U)} and  \eqref{eq:ZinI} one can write
  $z=\sum_{\lambda\in R^+} z_\lambda$ with 
  \begin{align*}
    z_\lambda\in \left(\ad(\uqgp)(K_{-\lambda})\right)^{B_{\uc,\us}}.
  \end{align*}
  By Lemma \ref{lem:adUK=0} the relation $z_\lambda\neq 0$ implies that $\lambda(h_i)=0$ for all $i\in I$. But in this case $\ad(\uqgp)(K_{-\lambda})=\field(q)K_{-\lambda}$ and hence $z$ is a linear combination of the elements $K_{-\lambda}$ for $\lambda\in R^+$ with $\lambda(h_i)=0$ for all $i\in I$. Applying the coproduct to $z$ one may assume that $z=K_{-\lambda}$ for some $\lambda\in R^+$ with $\lambda(h_i)=0$ for all $i\in I$. For each such $\lambda$, however, one has $\Theta(\lambda)=-\tau(\lambda)$ and hence $K_\lambda\in U^0_\Theta{}'$ implies that $\lambda=0$.  
\end{proof}
\subsection{The center of $B_{\uc,\us}$ for $A$ of finite type}\label{sec:Z(B)finite}
If $\gfrak$ is of finite dimensional then it is convenient to augment $U^0{}'$ to the group algebra of the weight lattice and to work over the field $\field(q^{1/2})$. The resulting extension $\uqgh$ of $\uqgp$ hence has generators $E_i$, $F_i$ and $K_\lambda$ for all $i\in I$ and $\lambda\in P$. It is sometimes called the simply connected quantized enveloping algebra of $\gfrak$. Relation \eqref{eq:I(U)} simplifies to
\begin{align*}
  I(\uqgh)=\bigoplus_{\lambda\in P^+} \ad(\uqg)(K_{-2\lambda})
\end{align*} 
Similarly, one extends $B_{\uc,\us}$ to the algebra $\cBcs$ generated by 
\begin{align*}
  \check{U}^0_\Theta=\field(q^{1/2})\langle K_\lambda\,|\,\lambda \in P, \Theta(\lambda)=\lambda\rangle
\end{align*}
and by $\cM_X$ and the elements $B_i$ defined by \eqref{eq:Bi-def} for all $i\in I\setminus X$ as before.
Define a subset $P^+_\Theta$ of the set of dominant integral weights by
\begin{align*}
  P^+_\Theta = \{\lambda\in P^+ \,|\, \Theta(\lambda) = \lambda + w^0\lambda -w_X\lambda\}
\end{align*}
where $w^0$ denotes the longest element of the Weyl group $W$. The following theorem summarizes the main results of \cite{a-KL08} in the conventions of the present paper.
\begin{thm}
  Assume that $\gfrak$ is finite dimensional.
  \begin{enumerate}
    \item $\displaystyle Z(\cBcs) = \bigoplus_{\lambda\in P^+_\Theta} Z(\cBcs) \cap \ad(\uqg)(K_{-2\lambda})$.
    \item $\displaystyle \dim \big(Z(\cBcs) \cap \ad(\uqg)(K_{-2\lambda}) \big) = 
            \begin{cases}
              1 & \mbox{if $\lambda \in P^+_\Theta$},\\
              0 & \mbox{else}.
            \end{cases}$
   \item The center $Z(\cBcs)$ is a polynomial ring in $\rank(\kfrak)$ variables.         
  \end{enumerate}
\end{thm}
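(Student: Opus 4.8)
The plan is to reconstruct the arguments of \cite{a-KL08}, working throughout with the simply connected algebras $\uqgh$ and $\cBcs$ over the field $\field(q^{1/2})$. The first step is a reduction to a question about isotypic components. Since $z\in\cBcs$ is central if and only if $\ad(b)(z)=\vep(b)z$ for all $b\in\cBcs$, the simply connected analogue of Proposition~\ref{prop:inv-int}(2) shows $Z(\cBcs)\subseteq I(\uqgh)$. By \cite{a-JoLet1} one has $I(\uqgh)=\bigoplus_{\lambda\in P^+}\ad(\uqg)(K_{-2\lambda})$, and each summand is an $\ad(\uqgh)$-submodule, hence stable under $\ad(\cBcs)$; since the corresponding projections are $\ad$-equivariant and the spanning elements produced below lie in $\cBcs$, one obtains $Z(\cBcs)=\bigoplus_{\lambda\in P^+}\big(Z(\cBcs)\cap\ad(\uqg)(K_{-2\lambda})\big)$. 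Parts (1) and (2) thus both follow once $\dim\big(Z(\cBcs)\cap\ad(\uqg)(K_{-2\lambda})\big)$ is computed.

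For this computation, recall from the proof of Lemma~\ref{lem:adUK=0} that $\ad(\uqg)(K_{-2\lambda})\cong V(\lambda)\ot V(\lambda)^\ast$ as $\uqgh$-modules; in particular its $Q$-weights lie in the interval between $-(\lambda-w^0\lambda)$ and $\lambda-w^0\lambda$, with one-dimensional extreme-weight spaces. Given $z\in Z(\cBcs)\cap\ad(\uqg)(K_{-2\lambda})$, I would expand $z$ using the quantum Iwasawa decomposition $\uqgp\cong V_X^+\ot U'_\Theta\ot B_{\uc,\us}$ of Proposition~\ref{prop:qIwasawa} together with the $\cM^+_X U^0_\Theta{}'$-module basis $\{B_J\}$ of $B_{\uc,\us}$ from Proposition~\ref{prop:leftBasis}; the centrality relations $[z,B_i]=0$ and the $\ad(\cM_X)$-triviality of $z$, combined with the above weight bounds, force $z$ to be determined by its component of highest $\cBcs$-filtration, which is a single scalar, so that $\dim\le 1$. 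The existence of a nonzero such element --- the quantum zonal spherical function attached to $\lambda$ --- is established by an explicit construction, and the precise set of $\lambda$ for which it exists is pinned down by specializing at $q=1$ (using the results of Section~\ref{sec:specialize}) and comparing with the classical harmonic analysis of the symmetric pair, which produces exactly the relation $\Theta(\lambda)=\lambda+w^0\lambda-w_X\lambda$, i.e.\ $\lambda\in P^+_\Theta$. This second step is the main obstacle: both the one-dimensionality and, above all, the characterization of the vanishing locus by $P^+_\Theta$ require the theory of quantum zonal spherical functions and a careful specialization argument, with the additional chore of matching the conventions of \cite{a-KL08} --- where the quantum involution differs from the $\theta_q(X,\tau)$ used here --- to those of the present paper.

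For part (3), write $z_\lambda$ for a generator of the one-dimensional space $Z(\cBcs)\cap\ad(\uqg)(K_{-2\lambda})$ for $\lambda\in P^+_\Theta$, and note that $P^+_\Theta$ is a submonoid of $P^+$ since $w^0$ and $w_X$ act linearly. A leading-term computation with the quantum Iwasawa decomposition shows that the top $\cBcs$-filtration parts of $z_\lambda$ and $z_\mu$ multiply without cancellation, so $z_\lambda z_\mu$ is a nonzero scalar multiple of $z_{\lambda+\mu}$ modulo lower filtration; hence, filtering $Z(\cBcs)$ by the poset $P^+$, one gets $\gr Z(\cBcs)\cong\field(q^{1/2})[P^+_\Theta]$ as algebras. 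It remains to identify this monoid algebra with a polynomial ring in $\rank(\kfrak)$ variables. Using the specialization of $\cBcs$ to $U(\kfrak)$ from Section~\ref{sec:specialize}, $\gr Z(\cBcs)$ has the same graded dimensions as $\gr Z(U(\kfrak))$, which by the Chevalley restriction theorem for the reductive Lie algebra $\kfrak$ is a polynomial ring in $\rank(\kfrak)$ variables; therefore $\field(q^{1/2})[P^+_\Theta]$, and hence $Z(\cBcs)$ --- a filtered algebra whose associated graded is polynomial --- is a polynomial ring in $\rank(\kfrak)$ variables. Alternatively, the freeness of $P^+_\Theta$ and the count of its generators can be read off directly from the explicit description of the admissible pair $(X,\tau)$.
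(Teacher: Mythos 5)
Your first paragraph is essentially the paper's own proof of part (1): the paper deduces it from $Z(\cBcs)\subseteq I(\uqgh)$ (the simply connected analogue of Proposition \ref{prop:inv-int}), the decomposition $I(\uqgh)=\bigoplus_{\lambda\in P^+}\ad(\uqg)(K_{-2\lambda})$, and the fact that the projections $P_{-2\lambda}$ preserve the right coideal subalgebra $\cBcs$ (Lemma \ref{lem:PlamB}); your justification ``the spanning elements produced below lie in $\cBcs$'' should simply be replaced by an appeal to that lemma. Beyond this, however, the paper does not reprove the theorem at all: it states it explicitly as a summary of the main results of \cite{a-KL08}, remarks that the proof of (2) ``takes up most of'' that paper, and quotes (3) from \cite[Section 9]{a-KL08}. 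So your proposal attempts more than the paper does, but it does not close the argument.

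The genuine gaps are in (2) and (3). For (2), both the bound $\dim\le 1$ and, above all, the identification of the set of $\lambda$ with nonzero invariants as $P^+_\Theta$ are the entire content of the statement, and you acknowledge them as ``the main obstacle'' without supplying an argument; the sketched weight-bound/filtration reasoning is not a proof, and the existence half cannot be obtained by specializing at $q=1$ and comparing with classical harmonic analysis: specialization shows at most that a quantum invariant, if it exists, degenerates to a classical one, but it can neither manufacture a quantum central element from a classical one nor determine for exactly which $\lambda$ one exists. What is required is the quantum analysis of $\cBcs$-invariants in $\ad(\uqg)(K_{-2\lambda})\cong V(\lambda)\ot V(\lambda)^\ast$ (Letzter's zonal spherical function theory), which is precisely what occupies most of \cite{a-KL08}. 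For (3), the step ``$\gr Z(\cBcs)$ has the same graded dimensions as $\gr Z(U(\kfrak))$'' is unjustified, since taking centers does not commute with specialization in general (one only obtains a containment in one direction), and the claimed absence of cancellation in the product $z_\lambda z_\mu$ is asserted rather than proved. The route that actually works, and the one the paper points to, is your parenthetical alternative: deduce (3) from (1) and (2) together with a direct analysis of the monoid $P^+_\Theta$ as in \cite[Section 9]{a-KL08}, showing it is freely generated by $\rank(\kfrak)$ elements and that the corresponding central elements generate a polynomial ring.
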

The projection $P_{-2\lambda}$ from Section \ref{sec:DecProj} satisfies $P_{-2\lambda}(I(\uqgh))=\ad(\uqg)(K_{-2\lambda})$. By Lemma \ref{lem:PlamB} this proves (1), see also \cite[footnote p. 318]{a-KL08}. The proof of Property (2) is more involved and takes up most of \cite{a-KL08}. Property (3) follows from (1) and (2) and an analysis of the set $P^+_\Theta$, see \cite[Section 9]{a-KL08}.
\section{Equivalence of coideal subalgebras}\label{sec:equivalence}
In this section we investigate equivalence of quantum symmetric pair coideal subalgebras in the sense of the following definition.
\begin{defi}\label{def:equivalence}
  Let $C$ and $D$ be right coideal subalgebras of a Hopf algebra $H$. We say that $C$ is equivalent to $D$ if there exists a Hopf algebra automorphism $\varphi$ of $H$ such that $\varphi(C)=D$. In this case we write  $C\sim D$.
\end{defi}
To obtain more automorphisms, during this section, we work with $\uqgp$ defined over the algebraic closure $\oKq$. Accordingly, we extend the set of parameters and define
\begin{align*}
  \ocC&=\{\uc\in (\overline{\field(q)}^\times)^{I\setminus X}\,|\, \mbox{$c_i=c_{\tau(i)}$ if
    $\tau(i)\neq i$ and $(\alpha_i,\Theta(\alpha_i))=0$} \} \\
  \ocS&=\{\us\in \overline{\field(q)}^{I\setminus X}\,|\,s_i\neq 0 \Rightarrow \, (i\in  I_{ns} \mbox{ and } a_{ij}\in -2\N_0 \forall j\in I_{ns}\setminus \{i\})\}. 
\end{align*}
Recall from Subsection \ref{sec:Iwasawa} that we have fixed a subset $I^*\subset I\setminus X$ containing exactly one element of each $\tau$-orbit in $I\setminus X$.  As we will see, any $B_{\uc,\us}$ is equivalent to a $B_{\ud,\us'}$ for some $\us'\in \ocS$ and ${\ud}$ in the subset 
\begin{align}\label{eq:D-def}
  \ocD=\{\ud\in (\oKq^\times)^{I\setminus X}\,|\, \mbox{$d_i=1$ if
    $\big(\tau(i)=i$ or $(\alpha_i,\Theta(\alpha_i))=0$ or
    $i\notin I^\ast\big)$} \}
\end{align}
of $(\oKq^\times)^{I\setminus X}$.
\subsection{Equivalence under the action of $\Htil$}
Let $B$ and $B'$ be coideal subalgebras of $\uqgp$. Refining the notation introduced in Definition \ref{def:equivalence} we write $B\simHt B'$ if there exists $x\in \Htil$ such that $\Ad(x)(B)=B'$.
Moreover, define an equivalence relation on $\ocS$ by
\begin{align*}
  \us \simS \us' \, \Longleftrightarrow \, s_i= \pm s_i' \quad\mbox{for all } i\in I_{ns}.
\end{align*}
if $\us=(s_i)_{i\in I\setminus X}$ and $\us'=(s_i')_{i\in I\setminus X}$.
For the following result the fact that $\uqgp$ is defined over $\oKq$ is necessary because one needs to take square roots. 
\begin{prop}\label{prop:Htil}
  (1) Let $\uc\in\ocC$ and $\us \in \ocS$. Then there exist $\ud\in \ocD$ and $\us'\in \ocS$ such that
  $B_{\uc,\us}\simHt B_{\ud,\us'}$.\\
  (2) Let $\ud, \ud'\in \ocD$ and $\us,\us'\in \ocS$. Then $B_{\ud,\us}\simHt B_{\ud',\us'}$ if and only if
  $\ud=\ud'$ and $\us\simS \us'$. 
\end{prop}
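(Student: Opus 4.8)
First I would set up the action of $\Ad(x)$ for $x \in \Htil$ on the generators of $B_{\uc,\us}$. Since $\Ad(x)$ acts as the identity on $U^0{}'$ (in particular on $U^0_\Theta{}'$) and as the identity on all $K_i$, it fixes $\cM_X^0$; it scales $E_i \mapsto x(\alpha_i) E_i$ and $F_i \mapsto x(\alpha_i)^{-1} F_i$. So $\Ad(x)$ fixes $\cM_X$ if and only if $x(\alpha_i)=1$ for all $i\in X$ (which we may assume, extending $x$ freely off $\Z X$), and it sends the generator $B_i = F_i + c_i\,\theta_q(F_iK_i)K_i^{-1} + s_i K_i^{-1}$ to something of the form $x(\alpha_i)^{-1}\big(F_i + \lambda_i c_i\,\theta_q(F_iK_i)K_i^{-1} + \mu_i s_i K_i^{-1}\big)$. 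Using Theorem \ref{thm:theta-props}.(3), $\theta_q(F_iK_i)K_i^{-1}$ lives in weight $\Theta(\alpha_i)-\alpha_i \in -Q^+$ (a $\cM_X^+$-adjoint image of $E_{\tau(i)}K_i^{-1}$), so $\Ad(x)$ scales it by $x(\Theta(\alpha_i)-\alpha_i) = x(\Theta(\alpha_i))x(\alpha_i)^{-1}$; likewise $K_i^{-1}$ is fixed. Multiplying the whole generator by the scalar $x(\alpha_i)$ (which is allowed since we only care about the subalgebra generated), I find $\Ad(x)(B_i)$ is a scalar multiple of $F_i + x(\Theta(\alpha_i))x(\alpha_i)^{-1}x(\alpha_i)\, c_i\,\theta_q(F_iK_i)K_i^{-1} + x(\alpha_i) s_i K_i^{-1}$, i.e. the new parameters are $c_i' = x(\Theta(\alpha_i))\,c_i$ and $s_i' = x(\alpha_i)\,s_i$. (I would double-check the exact exponent of $x(\alpha_i)$ in $c_i'$ against the $K_i^{-1}$ twist in \eqref{eq:Bi-def}; the key point is only that $c_i$ is rescaled by $x$ evaluated on a $\Theta$-related weight and $s_i$ by $x(\alpha_i)$.) Hence the effect on parameters is: $c_i \mapsto x(\Theta(\alpha_i))c_i$ and $s_i \mapsto x(\alpha_i)s_i$.

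For part (1), the plan is to choose $x \in \Htil = \Hom(Q, \oKq^\times)$ so as to kill the ``nontrivial'' components of $\uc$. Let $I^{\ast\ast} \subseteq I^\ast$ be the set of $i \in I^\ast$ with $\tau(i)\neq i$ and $(\alpha_i,\Theta(\alpha_i))\neq 0$, i.e. exactly the indices where $\ocD$ allows $d_i \neq 1$; for $i$ in the complementary set I want $c_i' = 1$. For $i \in I\setminus X$ with $\tau(i)=i$ or $(\alpha_i,\Theta(\alpha_i))=0$ or $i\notin I^\ast$, I would solve $x(\Theta(\alpha_i))c_i = 1$; the constraint $\uc \in \ocC$ (so $c_i = c_{\tau(i)}$ when $(\alpha_i,\Theta(\alpha_i))=0$, $\tau(i)\neq i$) together with $\Theta(\alpha_i)=-\alpha_{\tau(i)}$ from Lemma \ref{lem:cond2} guarantees these equations are consistent. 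The main subtlety here is a linear-algebra bookkeeping argument: I need the set of weights $\{\Theta(\alpha_i)\}$ over the indices to be normalized to be ``independent enough'' in $Q$ to allow solving, and I must make sure the normalization I impose on $x$ over $X$ (namely $x(\alpha_i)=1$ for $i\in X$) is compatible. Because $\{\alpha_i\,|\,i\in I\}$ is linearly independent in $\hfrak^\ast$ and $\Htil = \Hom(Q,\oKq^\times)$ is a full character group, one can always solve a finite system $x(\beta_k) = t_k$ for linearly independent $\beta_k \in Q$; the admissibility relations collapse the possibly-dependent ones consistently. Taking square roots is where $\oKq$ is needed, as the remark before the proposition flags. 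This produces $\ud \in \ocD$; the resulting $\us' = (x(\alpha_i)s_i)$ lies in $\ocS$ since the set $I_{ns}$ where $s_i$ may be nonzero is unchanged and $x(\alpha_i)s_i$ vanishes exactly when $s_i$ does.

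For part (2), the ``if'' direction is easy: if $\ud = \ud'$ and $\us \simS \us'$, I would exhibit $x\in\Htil$ with $x(\Theta(\alpha_i))=1$ everywhere (so $\uc$ unchanged) and $x(\alpha_i) = \pm 1$ chosen to match the signs relating $s_i$ and $s_i'$ over $i \in I_{ns}$ — again using linear independence of the $\alpha_i$ to find such $x$, and noting $x(\alpha_i)=\pm1$ forces $x(\Theta(\alpha_i))$ to also be a product of such signs, which I must check stays $=1$; for $i\in I_{ns}$ one has $\tau(i)=i$, so $\Theta(\alpha_i) = -w_X\alpha_i$ and the relevant compatibility is with $\cM_X$, handled by the $X$-normalization. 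The ``only if'' direction is the real obstacle, and I expect it to be the hardest part: I must show that if $\Ad(x)(B_{\ud,\us}) = B_{\ud',\us'}$ then the parameters are related as claimed. The strategy is to use the structural uniqueness of the generators inside the algebra — e.g. via the triangular decomposition / PBW-type basis from Proposition \ref{prop:leftBasis} (the $\cM_X^+ U^0_\Theta{}'$-basis $\{B_J\}$), together with Lemma \ref{lem:PlamB} and the $P_\lambda$-grading — to argue that $B_{\ud',\us'}$ has a distinguished set of ``degree-one'' generators of the precise shape \eqref{eq:Bi-def}, so that $\Ad(x)$ must send the generator $B_i$ of $B_{\ud,\us}$ to a scalar multiple of the generator $B_i$ of $B_{\ud',\us'}$ (same index $i$, since $\Ad(x)$ preserves the $Q$-grading and hence the weight $-\alpha_i$ of the leading term $F_i$). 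Comparing the three components along the decomposition $F_i$, $\theta_q(F_iK_i)K_i^{-1}$, $K_i^{-1}$ then forces $d_i' = x(\Theta(\alpha_i))d_i$ and $s_i' = x(\alpha_i)s_i$; imposing $\ud,\ud'\in\ocD$ (so $d_i = d_i' = 1$ off $I^{\ast\ast}$) pins down $x$ on the relevant weights and yields $\ud=\ud'$ and $\us\simS\us'$. Care is needed that on $I^{\ast\ast}$ the constraint $d_i=d_i'=1$ is not imposed, so there $x(\Theta(\alpha_i))=1$ is forced directly from $d_i=d_i'=1$... wait, no: on $I^{\ast\ast}$, $d_i$ and $d_i'$ may differ, so there $x(\Theta(\alpha_i))$ is the honest ratio $d_i'/d_i$; but these indices are precisely where $\ocD$ permits variation and where the proposition's rigidity statement concerns $\us$ only via $I_{ns} \cap I^{\ast\ast} = \emptyset$. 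I would organize the argument index-by-index through $I^\ast$, separating $i\in I_{ns}$, $i$ with $(\alpha_i,\Theta(\alpha_i))=0$, and the remaining $i$, and check consistency of the resulting system on $x$ — the obstacle being purely the careful casework rather than any deep new idea.
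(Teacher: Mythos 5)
Your overall strategy is the paper's: compute how $\Ad(x)$ rescales the three constituents of $B_i$, solve for $x$ (taking square roots over $\oKq$) to normalize $\uc$ into $\ocD$ for part (1), and for part (2) use the rigidity of the coefficients coming from the $U^+U^0{}'$-basis $\{B_J\}$ (the paper's fact $(\ast)$, drawn from Proposition \ref{prop:leftBasisU}) to read off the parameters. However, two points need fixing. First, the transformation rule you wrote down is wrong: since $\theta_q(F_iK_i)K_i^{-1}$ has weight $-\Theta(\alpha_i)$, one gets $\Ad(x)(B_i)=x(\alpha_i)^{-1}\bigl(F_i + c_i\,x(\alpha_i-\Theta(\alpha_i))\,\theta_q(F_iK_i)K_i^{-1} + x(\alpha_i)s_iK_i^{-1}\bigr)$, so $c_i\mapsto x(\alpha_i-\Theta(\alpha_i))c_i$, not $x(\Theta(\alpha_i))c_i$. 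You hedged on this, but the exact exponent carries real content: with $x|_{\Z X}=1$ and $\tau(i)=i$ one has $x(\alpha_i-\Theta(\alpha_i))=x(\alpha_i)^2$, which is exactly why square roots (hence $\oKq$) are needed in (1), and why in the ``if'' direction of (2) the sign choices $x(\alpha_i)=\pm1$ on $I_{ns}$ leave $\uc$ untouched (a square of $\pm1$). With your version of the rule, flipping $s_i$ would also flip $c_i$ and the ``if'' direction would not go through; your attempted repair via ``the $X$-normalization'' does not address this.

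Second, and more seriously, your ``only if'' argument for (2) has a genuine gap exactly at the indices $i\in I^{\ast}$ with $\tau(i)\neq i$ and $(\alpha_i,\Theta(\alpha_i))\neq 0$, where $\ocD$ allows $d_i\neq 1$. Comparing generators there only yields $d_i'/d_i = x(\alpha_i-\Theta(\alpha_i))$, and you then assert that no rigidity is required on these indices — but the proposition claims precisely $\ud=\ud'$, i.e.\ $d_i=d_i'$ there. The missing idea (the paper's key step) is to exploit the companion index $\tau(i)\notin I^{\ast}$: since $d_{\tau(i)}=1=d'_{\tau(i)}$ by the definition of $\ocD$, the same comparison applied to $B_{\tau(i)}$ forces $x(\alpha_{\tau(i)}+w_X\alpha_i)=1$; then $\alpha_i-\alpha_{\tau(i)}\in Q^\Theta$ gives $\alpha_i+w_X\alpha_{\tau(i)}=\alpha_{\tau(i)}+w_X\alpha_i$, so the scaling factor on $c_i$ is also $1$ and $d_i=d_i'$. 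Without this step your proof establishes only $\us\simS\us'$ and equality of the $d_i$ that are forced to be $1$ anyway, which is strictly weaker than the statement.
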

\begin{proof}
  (1) For all $i\in I\setminus X$ such that $\tau(i)=i$ choose $d_i\in
  \oKq$ such that $d_i^2=c_i$. Define $x\in \Htil$ by $x(\alpha_i)=1$ if $\tau(i)\neq i$ or $i\in X$ and
  $x(\alpha_i)=d_i^{-1}$ else. Then 
  \begin{align*}
    \Ad(x)(B_i)&=\Ad(x)(F_i + c_i\,\theta_q(F_i K_i)K_i^{-1}+ s_i K_i^{-1})\\
               &=d_i(F_i + \theta_q(F_i K_i)K_i^{-1}+ \frac{s_i}{d_i}K_i^{-1})
  \end{align*}
  for all $i$ with $\tau(i)=i$, and $\Ad(x)$ leaves $\cM_X$,
  $U^0_\Theta{}'$, and $B_j$ with $\tau(j)\neq j$ 
  invariant. Hence $B_{\uc,\us}\simHt B_{\uc',\us'}$ where
  $\uc'\in\ocC $ and $\us'\in \ocS$ are given by
  \begin{align*}
    c_i'=\begin{cases}
           1 & \mbox{if $\tau(i)=i$,}\\
           c_i& \mbox{else,}
         \end{cases} \qquad \mbox{ and } \qquad s_i'=\frac{s_i}{d_i} \quad \forall \, i\in I_{ns}.
  \end{align*}
  Assume now that $\tau(j)\neq j$ and $(\alpha_j,\Theta(\alpha_j))=0$. By Lemma \ref{lem:cond2} we have
  $(\alpha_j,\alpha_{\tau(j)})=0$ and $\Theta(\alpha_j)=-\alpha_{\tau(j)}$ and therefore $\alpha_j(h_l)=0=\alpha_{j}(\rho^\vee_X)$ for all $l\in X$. Hence $B_j=F_j - c_j\,E_{\tau(j)}K_j^{-1}$ and $B_{\tau(j)}=F_{\tau(j)} - c_{\tau(j)}\,E_{j} K_{\tau(j)}^{-1}$. 
  As $\uc\in \ocC$ one obtains $c_j=c_{\tau(j)}$. Define $x\in \Htil$ by $x(\alpha_j)=c_j^{-1}$ and $x(\alpha_i)=1$ if $i\neq j$. Then $\Ad(x)(B_j)=c_j(F_j - E_{\tau(j)} K_j^{-1})$, $\Ad(x)(B_{\tau(j)})=F_{\tau(j)}- E_j K_{\tau(j)}^{-1}$ and $\Ad(x)(B_i)=B_i$ if $i\notin\{j,\tau(j)\}$.

  (2) Assume that $\us, \us'\in \cS$ satisfy $\us\sim \us'$. Define $x\in \Htil$ by
  \begin{align*}
    x(\alpha_i)=\begin{cases}
                   1 & \mbox{if $i\notin I_{ns}$ or ($i\in I_{ns}$ and $s_i=s_i'$),}\\
                  -1 & \mbox{else.}
                \end{cases}
  \end{align*}
  One sees immediately that $\Ad(x)(B_{\ud,\us})=B_{\ud,\us'}$ holds for all $\ud \in \ocD$.
  
  For the converse implication we make use of the following fact which is a consequence of Proposition \ref{prop:leftBasisU}:
  \begin{itemize}
    \item[($\ast$)]Let $\ud=(d_i)_{i\in I\setminus X} \in \ocD$, $\us=(s_i)_{i\in I_{ns}}\in \cS$, and $c_i\in \oKq^\times$, $t_i\in \oKq$ for some $i\in I^\ast$. If $F_i + c_i \theta_q(F_i K_i)K_i^{-1} + t_i K_i^{-1}\in B_{\ud,\us}$ then  $c_i=d_i$ and $t_i=s_i$.
  \end{itemize}
  Now write $\ud=(d_i)_{i\in I\setminus X}$, $\us=(s_i)_{i\in I\setminus X}$ and $\ud'=(d_i')_{i\in I\setminus X}$, $\us'=(s_i')_{i\in I\setminus X}$ and assume that $B_{\ud,\us}=\Ad(x)(B_{\ud',\us'})$ for some $x\in \Htil$. We denote the generators of $B_{\ud,\us}$ and $B_{\ud',\us'}$ by $B_i$ and $B_i'$, respectively.
Assume first that $i\in I_{ns}$. Then $d_i=d_i'=1$ and $\theta_q(F_iK_i)=-E_i$  and hence
\begin{align*}
  \Ad(x)(B_i')= x(-\alpha_i) \left( F_i - x(2\alpha_i) E_i K_i^{-1} + x(\alpha_i) s_i'\right).
\end{align*}
Property ($\ast$) implies that $x(\alpha_i)^2=1$ and $x(\alpha_i) s_i' = s_i$. Hence $s_i=\pm s_i'$.

 Next assume that $i\in I^\ast$ with $\tau(i)\neq i$. By definition of $\ocD$ we have in particular $d_{\tau(i)}=1=d'_{\tau(i)}$. It suffices to show that $B_i=B_i'$. To this end one calculates
  \begin{align*}
    \Ad(x)(B_i')&=x(-\alpha_i) F_i + x(-\Theta(\alpha_i)) d_i' \theta_q(F_i K_i) K_i^{-1}\\
                &= x(-\alpha_i) [ F_i + x(\alpha_i + w_X \alpha_{\tau(i)})  d_i' \theta_q(F_i K_i) K_i^{-1} ].
  \end{align*}
By ($\ast$) the above relation implies that 
\begin{align}\label{eq:dd'1}
  d_i = x(\alpha_i + w_X \alpha_{\tau(i)}) d_i'.
\end{align}  
Analogously one obtains with $d_{\tau(i)}=1=d'_{\tau(i)}$ the relation
  \begin{align*}
    \Ad(x)(B_{\tau(i)}')&= x(-\alpha_{\tau(i)}) [ F_{\tau(i)} + x(\alpha_{\tau(i)} + w_X \alpha_{i}) \theta_q(F_i K_i) K_i^{-1} ]
  \end{align*}
and hence $x(\alpha_{\tau(i)} + w_X \alpha_{i})=1$. The relation $\alpha_i-\alpha_{\tau(i)}\in Q^\Theta$ implies that  $\alpha_i + w_X \alpha_{\tau(i)}=\alpha_{\tau(i)} + w_X \alpha_{i}$. The desired relation $d_i=d_i'$ now follows from \eqref{eq:dd'1}.  
\end{proof}
\begin{rema}\label{rema:D=1}
  The definition of $\ocD$ suggests the investigation of the set
  \begin{align*}
    I_{\cD}= \{i\in I^\ast\,|\, \tau(i)\neq i, (\alpha_i,\Theta(\alpha_i))\neq 0\}.
  \end{align*}
  For simple $\gfrak$ of finite type it was observed in \cite[Section 7, Variation 1]{MSRI-Letzter} that $I_\cD$ is empty unless the restricted root system corresponding to the involution $\theta$ is of nonreduced type $BC$. In the latter case $I_\cD$ contains at most one element.
For $\gfrak=\gfrak(A)$ of affine type with indecomposable $A$ the set $I_\cD$ contains at most two elements. This can be seen by direct inspection of the affine Dynkin diagrams in \cite[pp. 54, 55]{b-Kac1}.
  As an example with $|I_\cD|=2$ consider $\gfrak=\widehat{\slfrak}_4(\C)$ with $X=\emptyset$ and $\tau=(01)(23)$. 
\end{rema}
\subsection{Equivalence under the action of $\Aut_{\mathrm{Hopf}}(\uqgp)$}
It is possible that $B_{\ud}\sim B_{\ud'}$ for $\ud,\ud'\in \ocD$ with $\ud\neq \ud'$, even for finite dimensional, simple $\gfrak$. Let $\tau'\in \Aut(A)$ be a diagram automorphism which commutes with $\tau$ and which leaves $X$ invariant. By Proposition \ref{prop:Htil}.(1) one has $\tau'(B_{\ud}) \simHt B_{\ud'}$ for some $\ud'\in \ocD$. As the following example shows it is possible that $\ud'\neq \ud$.
\begin{eg}\label{eg:sl3iso}
  For $\gfrak=\slfrak_3(\C)$, $X=\emptyset$, and $\tau=(12)$ with $I^*=\{1\}$ the standard quantum symmetric pair coideal subalgebra $B_{\ud}$ is generated by the elements
  \begin{align*}
    F_1 - d E_2 K_1^{-1},\quad  F_2 - E_1 K_2^{-1}
  \end{align*}
  if $\ud=(d,1)$. Hence $\tau(B_{\ud})$ is generated by the elements $F_1 - E_2 K_1^{-1}$ and $F_2 - d E_1 K_2^{-1}$. Now apply $\Ad(x)$ where $x(\alpha_1)=d^{-1}$ and $x(\alpha_2)=1$. The subalgebra $\Ad(x)\circ \tau(B_{\ud})$ is generated by the elements
   \begin{align*}
    F_1 - d^{-1} E_2 K_1^{-1},\quad  F_2 - E_1 K_2^{-1}
  \end{align*}
and hence coincides with $B_{\ud'}$ where $\ud'=(d^{-1},1)$. 
\end{eg}
The above example can immediately be generalized to the following statement.
\begin{prop}
  Assume that $\gfrak$ is finite dimensional and simple. Let $\ud, \ud'\in \ocD$ and denote their only nontrivial entry (if any) by $d_i$ and $d_i'$, respectively. Then $B_{\ud} \sim B_{\ud'}$ if and only if $d_i=d_i'$ or $d_i^{-1}=d_i'$.
\end{prop}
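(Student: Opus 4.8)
The plan is to prove the two implications separately, with the ``only if'' direction being the substantive one.

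For the ``if'' direction, suppose first $\ud=\ud'$; then $B_\ud=B_{\ud'}$ and there is nothing to show. Suppose instead $d_i^{-1}=d_i'$, where $i\in I^\ast$ is the unique index with $\tau(i)\neq i$ and $(\alpha_i,\Theta(\alpha_i))\neq 0$ (recall from Remark \ref{rema:D=1} that for finite dimensional simple $\gfrak$ the set $I_\cD$ has at most one element, so $\ud$ has at most one nontrivial entry, and similarly $\ud'$). If $\ud=\ud'=\mathbf{1}$ the claim is trivial, so assume $I_\cD=\{i\}$. The point is that the diagram automorphism $\tau$ itself (or more precisely the composite $\Ad(x)\circ\tau$ for a suitable $x\in\Htil$) does the job, exactly as in Example \ref{eg:sl3iso}. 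Indeed $\tau\in\Aut(A)$ commutes with $\tau$ and leaves $X=\emptyset$ invariant (or, in general, leaves $X$ invariant whenever $I_\cD\neq\emptyset$; this is part of the structure of admissible pairs of type $BC$). By Theorem \ref{thm:AutUqg} the map $\tau$ is a Hopf algebra automorphism of $\uqgp$. Applying $\tau$ to the generators $B_i=F_i-d_i\,\theta_q(F_iK_i)K_i^{-1}$ and $B_{\tau(i)}=F_{\tau(i)}-\theta_q(F_iK_i)K_i^{-1}$ interchanges the roles of $i$ and $\tau(i)$, and then rescaling by $\Ad(x)$ with $x(\alpha_i)=d_i^{-1}$, $x(\alpha_j)=1$ for $j\neq i$, one computes directly that $\Ad(x)\circ\tau(B_\ud)=B_{\ud'}$ with $\ud'$ having $i$-entry $d_i^{-1}$. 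Hence $B_\ud\sim B_{\ud'}$.

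For the ``only if'' direction, suppose $\varphi\in\Auth(\uqgp)$ satisfies $\varphi(B_\ud)=B_{\ud'}$. By Theorem \ref{thm:AutUqg} we may write $\varphi=\Ad(x)\circ\sigma$ for some $x\in\Htil$ and $\sigma\in\Aut(A)$. Since $\gfrak$ is finite dimensional and simple, $\Aut(A)$ is a finite group of diagram automorphisms, so the plan is to analyze how $\sigma$ acts on the generators $B_j$ and $B_{\tau(j)}$. The key structural input is Proposition \ref{prop:leftBasisU} (via property $(\ast)$ in the proof of Proposition \ref{prop:Htil}): any element of $B_{\ud'}$ of the form $F_j+c\,\theta_q(F_jK_j)K_j^{-1}+tK_j^{-1}$ with $j\in I^\ast$ forces $c=d_j'$ and $t=s_j'=0$ (standard case, $\us=\mathbf 0$). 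Now $\Ad(x)\circ\sigma$ sends $\cM_X$ to itself only if $\sigma(X)=X$, and since $\cM_X=\cM_\emptyset$ is just $U^0{}'$-related here (for the $BC$ case $X$ is a single node fixed by the relevant involution structure), one deduces $\sigma$ must either fix $i$ or map $i$ to $\tau(i)$: any other image would produce a generator not lying in $B_{\ud'}$, because the ``$V_X^+$-part'' of $\theta_q(F_jK_j)$ for $j\notin\{i,\tau(i)\}$ has $d_j=1$ by definition of $\ocD$, and matching weights via Proposition \ref{prop:leftBasisU} pins down the possibilities. If $\sigma$ fixes $i$, then $\Ad(x)\circ\sigma(B_\ud)=\Ad(x')(B_\ud)$ for some $x'\in\Htil$, and Proposition \ref{prop:Htil}.(2) gives $\ud=\ud'$. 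If $\sigma$ swaps $i$ and $\tau(i)$, the computation in the ``if'' direction (run in reverse) shows $d_i'=d_i^{-1}$. This exhausts the cases and completes the proof.

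The main obstacle I anticipate is the case analysis on $\sigma\in\Aut(A)$: one must rule out that $\sigma$ sends $i$ to some third node $k\notin\{i,\tau(i)\}$. This requires combining the weight bookkeeping from Proposition \ref{prop:leftBasisU}/property $(\ast)$ with the precise definition \eqref{eq:D-def} of $\ocD$ (namely $d_k=1$ for $k\notin I^\ast$ or with $(\alpha_k,\Theta(\alpha_k))=0$), and using that for simple finite dimensional $\gfrak$ the set $I_\cD$ contains at most one element so there is essentially no room for $\sigma$ to permute nontrivial entries nontrivially. Carrying out this elimination cleanly — rather than by brute-force inspection of each Satake diagram of type $BC$ — is the delicate point; the cleanest route is probably to observe that $\varphi$ must preserve the subspace $\{b\in B_\ud : \kow(b)-b\otimes K^{-1}\in 1\otimes U^-U^0{}'\}$, which by Remark \ref{rem:s-indep} and Lemma \ref{lem:fo} isolates precisely the generators indexed by $i\in I_\cD$, forcing $\sigma(I_\cD)=I_\cD$ and hence $\sigma(i)\in\{i,\tau(i)\}$.
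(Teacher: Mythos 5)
Your proof follows essentially the route the paper intends: the paper gives no separate argument for this proposition, presenting it as the immediate generalization of Example \ref{eg:sl3iso} (apply the diagram automorphism $\tau$ itself and rescale by some $\Ad(x)$ to invert the nontrivial entry) combined with Theorem \ref{thm:AutUqg} and Proposition \ref{prop:Htil}.(2), which is exactly your two directions. One small correction to the only-if direction: the parenthetical claim that in the $BC$ case $X$ is empty or a single node is false (in types AIII/AIV the set $X$ can be a long string of black nodes), but this is immaterial, since the cleaner elimination of $\sigma(i)\notin\{i,\tau(i)\}$ is to observe that $\sigma$ must lie in $\Aut(A,X)^\tau$ (it preserves $\cM_X$, and comparing weights of the generators via Proposition \ref{prop:leftBasisU} forces it to commute with $\tau$) and hence preserves the set $\{j\,|\,\tau(j)\neq j,\ (\alpha_j,\Theta(\alpha_j))\neq 0\}=\{i,\tau(i)\}$, rather than the speculative coideal-subspace argument sketched in your last paragraph.
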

  In the symmetrizable Kac-Moody case there may be more diagram automorphisms which commute with $\tau$ and leave $X$ invariant. Hence one obtains additional Hopf algebra automorphisms which identify $B_{\ud}$ and $B_{\ud'}$ for different $\ud,\ud' \in \ocD$.
\begin{eg}
   As in Remark \ref{rema:D=1} consider $\gfrak=\widehat{\slfrak}_4(\C)$ with $X=\emptyset$ and $\tau=(01)(23)$. By definition, the corresponding quantum symmetric pair coideal subalgebra $B_{\ud}$ for $\ud=(d_0,1,d_2,1)$ is generated by elements
\begin{align*}
  B_0=F_0 {-} d_0 E_1 K_0^{-1}, \,  B_1=F_1 {-} E_0 K_1^{-1}, \,
  B_2=F_2 {-} d_2 E_3 K_2^{-1}, \,  B_3=F_3 {-} E_2 K_3^{-1}. 
\end{align*}
  Consider the diagram automorphism $\tau'=(02)(13)$.
  One has $\tau'(B_{\ud})=B_{\ud'}$ where $\ud'=(d_2,1,d_0,1)$.
\end{eg}
To describe equivalence classes of quantum symmetric pair coideal subalgebras in general, define
\begin{align*}
    \Aut(A,X)^\tau=\{\sigma\in \Aut(A,X)\,|\,\sigma\circ\tau = \tau\circ \sigma\}.
\end{align*}    
Via Proposition \ref{prop:Htil} the action of $\Aut(A,X)^\tau$ on the set of quantum symmetric pair coideal subalgebras induces an action of $\Aut(A,X)^\tau$ on the set $\ocD \times \ocS/\simS$. By Theorem \ref{thm:AutUqg} any automorphism of $\uqgp$ which maps a quantum symmetric pair coideal subalgebra corresponding to $(X,\tau)$ to another such coideal subalgebra, is of the form $\Ad(x)\circ \sigma$ form some $x\in \Htil$ and $\sigma \in \Aut(A,X)^\tau$. This implies the following parametrization of equivalence classes of quantum symmetric pair coideal subalgebras corresponding to the admissible pair $(X,\tau)$.
\begin{thm}\label{thm:equiv}
  There is a one-to-one correspondence between the equivalence classes of quantum symmetric pair coideal subalgebras of $\uqgp$ corresponding to the admissible pair $(X,\tau)$ and the $\Aut(A,X)^\tau$-orbits in $\ocD \times \ocS/\simS$. 
\end{thm}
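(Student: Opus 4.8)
The plan is to combine the parametrization of quantum symmetric pair coideal subalgebras up to $\simHt$-equivalence provided by Proposition~\ref{prop:Htil} with the description of Hopf algebra automorphisms of $\uqgp$ given by Theorem~\ref{thm:AutUqg}. First I would reduce the general equivalence $\sim$ to equivalences of the form $\Ad(x)\circ\sigma$. Indeed, suppose $\varphi$ is a Hopf algebra automorphism of $\uqgp$ with $\varphi(B_{\uc,\us})=B_{\uc',\us'}$ for admissible data corresponding to the \emph{same} pair $(X,\tau)$. By Theorem~\ref{thm:AutUqg} we may write $\varphi=\Ad(x)\circ\sigma$ for a unique $x\in\Htil$ and $\sigma\in\Aut(A)$. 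The first task is to show that $\sigma$ necessarily lies in $\Aut(A,X)^\tau$, i.e.\ $\sigma(X)=X$ and $\sigma\tau=\tau\sigma$. This should follow by applying $\varphi$ to the distinguished subalgebra $\cM_X$ (which is intrinsically characterized inside $B_{\uc,\us}$, e.g.\ as the subalgebra generated by those $K_i^{\pm1}$ lying in $B_{\uc,\us}$ together with adjacent $E$-, $F$-type generators, or by weight considerations using Proposition~\ref{prop:leftBasis}) and by examining which generators $B_i$ involve a $\theta_q(F_iK_i)$-term of which weight; the weight $\Theta(\alpha_i)=-w_X\tau(\alpha_i)$ appearing in $B_i$ must be permuted compatibly, forcing $\sigma$ to normalize both $X$ and $\tau$.

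Next I would set up the action. Using Proposition~\ref{prop:Htil}.(1), every $B_{\uc,\us}$ with $\uc\in\ocC$, $\us\in\ocS$ satisfies $B_{\uc,\us}\simHt B_{\ud,\us'}$ for some $(\ud,\us')\in\ocD\times\ocS$, so each equivalence class has a representative indexed by $\ocD\times\ocS$. For $\sigma\in\Aut(A,X)^\tau$, the image $\sigma(B_{\ud,\us})$ is again a quantum symmetric pair coideal subalgebra for $(X,\tau)$ — here one checks that $\sigma$ intertwines $\theta_q(X,\tau)$ with itself up to an element of $\Ad(\Htil)$, using that $\sigma$ commutes with $\tau$, with $\omega$, and (since $\sigma(X)=X$) with $T_{w_X}$ by the argument in Proposition~\ref{Rousseau-Prop}.(3) transported to the quantum setting via Lemma~\ref{q-Rousseau1}-type reasoning — so by Proposition~\ref{prop:Htil}.(1) again $\sigma(B_{\ud,\us})\simHt B_{\ud'',\us''}$ for some $(\ud'',\us'')\in\ocD\times\ocS$. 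This defines the claimed $\Aut(A,X)^\tau$-action on $\ocD\times\ocS/\simS$; that it is well defined on $\simS$-classes and is a genuine group action follows from functoriality of the construction and uniqueness in Proposition~\ref{prop:Htil}.(2).

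It then remains to prove that $B_{\ud,\us}\sim B_{\ud',\us'}$ (for $\ud,\ud'\in\ocD$, $\us,\us'\in\ocS$) if and only if $(\ud',\us')$ lies in the $\Aut(A,X)^\tau$-orbit of $(\ud,\us)$ modulo $\simS$. The ``if'' direction is immediate from the definition of the action. For ``only if'', given an equivalence realized by $\varphi=\Ad(x)\circ\sigma$ with $\sigma\in\Aut(A,X)^\tau$, we have $\sigma(B_{\ud,\us})\simHt B_{\ud',\us'}$ via $\Ad(x)$; comparing this with the representative $B_{\sigma\cdot(\ud,\us)}$ obtained from the action, Proposition~\ref{prop:Htil}.(2) forces $\sigma\cdot\ud=\ud'$ and $\sigma\cdot\us\simS\us'$, i.e.\ the orbits agree. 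The main obstacle I anticipate is the first step — proving rigorously that the $\Aut(A)$-component $\sigma$ of an arbitrary equivalence must normalize $X$ and commute with $\tau$, and that $\sigma$ acting on $B_{\ud,\us}$ again yields a quantum symmetric pair coideal subalgebra for the \emph{same} $(X,\tau)$ rather than a conjugate pair. This requires carefully extracting the combinatorial data $(X,\tau)$ from the algebra $B_{\uc,\us}$ in an automorphism-invariant way, using the triangular decompositions of Section~\ref{sec:triang} together with the explicit leading terms of the generators $B_i$ from Lemma~\ref{lem:fo}; once that intrinsic characterization is in place, the rest is bookkeeping with Proposition~\ref{prop:Htil} and Theorem~\ref{thm:AutUqg}.
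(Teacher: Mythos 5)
Your proposal takes essentially the same route as the paper: Proposition \ref{prop:Htil} induces the $\Aut(A,X)^\tau$-action on $\ocD\times\ocS/\simS$, Theorem \ref{thm:AutUqg} reduces any equivalence to $\Ad(x)\circ\sigma$ with $\sigma\in\Aut(A,X)^\tau$, and the orbit comparison via Proposition \ref{prop:Htil}.(2) is exactly how the paper concludes. The step you flag as the main obstacle --- that the $\Aut(A)$-component of any equivalence must lie in $\Aut(A,X)^\tau$ and again produces a coideal subalgebra for the same $(X,\tau)$ --- is simply asserted in the paper, so your proposed weight/leading-term argument only supplies detail the paper leaves implicit.
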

\subsection{References to Letzter's constructions}\label{sec:LetzterEquiv}
For finite dimensional $\gfrak$, the parameter sets $\ocD$ and $\ocS$ appear implicitly in \cite[Variants 1 and 2]{MSRI-Letzter} and explicitly in \cite[Section 2]{a-Letzter03}. It was asked at the end of \cite[Variant 1]{MSRI-Letzter} if $B_{\ud}$ and $B_{\ud'}$ for $\ud, \ud'\in \ocD$ can be isomorphic as algebras. Example \ref{eg:sl3iso} shows that they can even be isomorphic via a Hopf algebra automorphism of $\uqg$. It remains an interesting question, however, if they can be isomorphic as algebras if $\ud$ and $\ud'$ belong to different $\Aut(A,X)^\tau$-orbits. The main interest in equivalence of quantum symmetric pair coideal subalgebras stems for the classification result \cite[Theorem 5.8]{a-Letzter99a}, \cite[Theorem 7.5]{MSRI-Letzter} which states that all right coideal subalgebras of $\uqg$ which specialize to $U(\kfrak)$ and satisfy a maximality condition (explained in \ref{sec:MaxCond}) are equivalent to a quantum symmetric pair coideal subalgebra. As indicated in Remark \ref{rema:ad-hoc}, it would be interesting to extend this result to the setting of symmetrizable Kac-Moody algebras. Then Theorem \ref{thm:equiv} would provide a parametrization up to Hopf algebra automorphism of all coideal subalgebras of $\uqgp$ which can reasonably be considered as quantum analogs of $U(\kfrak')$.
\section{Specialization}\label{sec:specialize}
In the present section we consider the limit of quantum symmetric pair coideal subalgebras as $q$ tends to $1$. This is done using non-restricted specialization as outlined in \cite[1.5]{a-DCoKa90}. We follow the presentation in \cite{b-HongKang02}. All through this section we abbreviate $\qfield=\field(q)$.
\subsection{Specialization of $\theta_q(X,\tau)$}\label{sec:spec}
We recall specialization of the $\qfield$-algebra $\uqgp$ following \cite[3.3, 3.4]{b-HongKang02}. Let $\bA=\field[q]_{(q-1)}$ denote the localization of the polynomial ring $\field[q]$ with respect to the prime ideal generated by $q-1$. For any $i\in I$ define 
\begin{align*}
   (K_i; 0)_q=\frac{K_i-1}{q-1}.
\end{align*}
The $\bA$-form $\cU'_\bA$ of $\uqgp$ is defined to be the $\bA$-subalgebra of $\uqgp$ generated by the elements $E_i, F_i, K_i^{\pm 1}$, and $(K_i;0)_q$ for all $i\in I$. Consider $\field$ as an $\bA$-module via evaluation at $1$. The algebra 
\begin{align*}
  \cU_1' = \field\ot_\bA \cU'_\bA
\end{align*} 
is called the specialization of $\uqgp$ at $q=1$. For any $x\in \cU'_\bA$ we denote its image in $\cU'_1$ by $\overline{x}$. The following fact is well known and a minor variation on \cite[Theorem 3.4.9]{b-HongKang02}.
\begin{thm}\label{thm:specialization}
  There exists an isomorphism of algebras $\cU'_1\rightarrow U(\gfrak')$ such that $\overline{E}_i\mapsto e_i$, $\overline{F_i}\mapsto f_i$, and $\overline{(K_i;0)_q}\mapsto \epsilon_ih_i$.
\end{thm}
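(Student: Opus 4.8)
The plan is to deduce Theorem~\ref{thm:specialization} from the standard presentation of $\cU_1'$ via generators and relations, which follows the same template as \cite[Theorem~3.4.9]{b-HongKang02}, adapted to the Kac-Moody case and to the derived algebra $\uqgp$ rather than the full $\uqg$. First I would write down the defining relations of $\uqgp$ from Section~\ref{sec:quagroup}, rewrite relation~(4) in terms of $(K_i;0)_q$ by noting that $\frac{K_i-K_i^{-1}}{q_i-q_i^{-1}} = K_i^{-1}\cdot\frac{K_i^2-1}{q_i-q_i^{-1}}$ and expanding $\frac{K_i^2-1}{q_i-q_i^{-1}}$ as an $\bA$-linear combination of $(K_i;0)_q$ and $K_i$. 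The key point is that each defining relation of $\uqgp$, together with the relations $(q-1)(K_i;0)_q = K_i - 1$ and the relations expressing $K_i^{\pm 1}$ in terms of the generators, lies in $\cU_\bA'$ and specializes at $q=1$ to a relation in $U(\gfrak')$: the commutation relations~(2),(3) become $[\oh_i^\ast, \oE_j]=\alpha_j(h_i)\oE_j$ after setting $\oh_i^\ast = \overline{(K_i;0)_q}$ (so $\oh_i^\ast$ corresponds to $\epsilon_i h_i$ once one checks $\alpha_j(h_i)\epsilon_i = (\alpha_i,\alpha_j)/\epsilon_i\cdot\epsilon_i$... more precisely $K_i E_j = q^{(\alpha_i,\alpha_j)}E_jK_i$ gives $[\oh_i^\ast,\oE_j] = (\alpha_i,\alpha_j)/\epsilon_i\cdot\epsilon_i$, hmm let me just say it gives the correct bracket for $\epsilon_i h_i$ after the elementary computation with $q_i = q^{\epsilon_i}$), relation~(4) becomes $[\oE_i,\oF_j]=\delta_{ij}\oh_i^\ast$, and the quantum Serre relations~\eqref{eq:q-Serre} specialize to the classical Serre relations because the $q_i$-binomial coefficients specialize to ordinary binomial coefficients. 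This produces a surjective algebra homomorphism $U(\gfrak')\to \cU_1'$ sending $e_i\mapsto \oE_i$, $f_i\mapsto \oF_i$, $\epsilon_i h_i\mapsto \oh_i^\ast$.

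Next I would construct the inverse, i.e.\ a surjection $\cU_1'\to U(\gfrak')$. For this the natural route is to use the triangular decomposition. By \eqref{eq:triang-decomp} one has $\uqgp \cong U^+\ot U^0{}'\ot U^-$ as vector spaces, and Lusztig's integral form arguments (see \cite[\S 3.3]{b-HongKang02}) give a corresponding decomposition $\cU_\bA' \cong \cU_\bA^+ \ot \cU_\bA^{0} \ot \cU_\bA^-$ where $\cU_\bA^{0}$ is the $\bA$-subalgebra of $U^0{}'$ generated by the $K_i^{\pm1}$ and $(K_i;0)_q$. Tensoring with $\field$ over $\bA$ is exact on each factor, and one identifies $\field\ot_\bA \cU_\bA^\pm$ with $U(\nfrak^\pm)$ (this is the rank-one-type-$A_1$ and quantum-Serre-relation computation, already in the literature for $\uqnp$ and valid verbatim in the symmetrizable Kac-Moody case since the quantum Serre relations are the only relations in $U^\pm$) and $\field\ot_\bA\cU_\bA^0$ with $U(\hfrak')$ via $\overline{(K_i;0)_q}\mapsto \epsilon_i h_i$ and $\overline{K_i}\mapsto 1$. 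Hence $\dim_\field \cU_1' \le \dim_\field U(\gfrak')$ in each weight space, which forces the surjection $U(\gfrak')\to\cU_1'$ to be an isomorphism.

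The main obstacle I expect is the faithful-flatness / freeness of the $\bA$-form: one must verify that $\cU_\bA'$ is a free $\bA$-module with the expected PBW-type basis, so that base change to $\field$ does not collapse dimensions unexpectedly, and in particular that $\field\ot_\bA\cU_\bA^0 \cong U(\hfrak')$ with $\overline{K_i}=1$ rather than something larger. In the Kac-Moody setting $U^0{}'$ is the group algebra $\field(q)[Q]$, and $\cU_\bA^0$ is spanned over $\bA$ by products $\prod K_i^{n_i}(K_i;0)_q^{m_i}$ with $m_i\in\{0,1\}$; one checks this is $\bA$-free and specializes correctly by the same elementary argument as in \cite[\S 3.3]{b-HongKang02} for $\slfrak_2$, done one index at a time. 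The rest of the argument is then purely formal. I would close by remarking that no new phenomena arise in the symmetrizable Kac-Moody case because the only relations in $U^\pm$ beyond those of the free algebra are the quantum Serre relations, whose specialization behaviour is identical to the finite-dimensional case.
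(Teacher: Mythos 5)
Your argument is correct and is essentially the standard proof that the paper itself does not spell out but delegates to \cite[Theorem 3.4.9]{b-HongKang02}: a surjection $U(\gfrak')\rightarrow \cU_1'$ obtained by specializing the defining relations, and injectivity via the $\bA$-form triangular decomposition together with the identification of the specialized factors $\field\ot_\bA \cU^{\pm}_\bA\cong U(\nfrak^{\pm})$ and $\field\ot_\bA\cU^{0}_\bA\cong U(\hfrak')$, exactly as you outline (the input that the quantum Serre relations present $U^{\pm}$ in the symmetrizable Kac--Moody case is \cite[Corollary 33.1.5]{b-Lusztig94}, which the paper already uses). The one slip is your description of the $\bA$-form of $U^0{}'$: it is not spanned by monomials $\prod_i K_i^{n_i}(K_i;0)_q^{m_i}$ with $m_i\in\{0,1\}$ --- for instance $(K_i;0)_q^2$ is not an $\bA$-linear combination of such terms, and the specialization must be the full polynomial ring $U(\hfrak')$ rather than something of dimension $2^{|I|}$ --- so one needs arbitrary $m_i\in\N_0$, with the Laurent powers $K_i^{n_i}$ constrained instead, as in the rank-one computation of \cite[\S 3.3]{b-HongKang02}.
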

Let now $\phi:\uqgp\rightarrow \uqgp$ be a $\qfield$-linear map. We say that $\phi$ is specializable if $\phi(\cU'_\bA)\subseteq \cU'_\bA$. In this case $\phi$ induces a $\field$-linear map $\id\ot \phi|_{\cU'_\bA}:\cU'_1\rightarrow \cU'_1$. Via Theorem \ref{thm:specialization} one thus obtains a $\field$-linear map $\overline{\phi}: U(\gfrak') \rightarrow U(\gfrak')$. We say that $\phi$ specializes to $\overline{\phi}$. Observe that if $\qfield$-linear maps $f, g: \uqgp\rightarrow \uqgp$ are both specializable then so is $f\circ g$ and $\overline{f\circ g}=\overline{f} \circ \overline{g}$.
We now apply specialization to the quantum involution $\theta_q(X,\tau)$.
\begin{prop}\label{thm:special-prop}
   For any admissible pair $(X,\tau)$ the quantum involution
   $\theta_q(X,\tau)$ specializes to $\theta(X,\tau)$. 
\end{prop}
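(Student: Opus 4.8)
The plan is to verify specializability of each of the constituents of $\theta_q(X,\tau)=\Ad(s(X,\tau))\circ T_X\circ\tau\circ\omega$ and to compute the specialization of each one separately, using the fact (recorded in the discussion after Theorem \ref{thm:specialization}) that specializable maps compose and that $\overline{f\circ g}=\overline f\circ\overline g$. Since $\theta(X,\tau)=\Ad(s(X,\tau))\circ\tau\circ\omega\circ\Ad(m_X)$ by \eqref{tauDef}, it suffices to show
\begin{align*}
  \overline{\omega}=\omega,\qquad \overline{\tau}=\tau,\qquad \overline{\Ad(s(X,\tau))}=\Ad(s(X,\tau)),\qquad \overline{T_X}=\Ad(m_X),
\end{align*}
where on the right $\omega,\tau,\Ad(s(X,\tau)),\Ad(m_X)$ denote the corresponding classical automorphisms of $\gfrak'$, and then to assemble these. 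Throughout I would of course assume the minimal realization is chosen compatible with $\tau$ only where needed; since $\theta_q(X,\tau)$ is defined on $\uqgp$ (Definition \ref{defi:q-invol}), specialization takes place at the level of $\cU'_\bA$ and $U(\gfrak')$.

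The three easy factors I would handle first. For $\omega$: from \eqref{eq:q-Chevalley} one has $\omega(E_i)=-F_i$, $\omega(F_i)=-E_i$, $\omega(K_i^{\pm1})=K_i^{\mp1}$, and $\omega((K_i;0)_q)=(q-1)^{-1}(K_i^{-1}-1)=-K_i^{-1}(K_i;0)_q\in\cU'_\bA$; hence $\omega$ is specializable, and comparing with \eqref{eq:Chevalley} and Theorem \ref{thm:specialization} (using $\overline{K_i^{-1}}=1$) gives $\overline\omega=\omega$. The argument for $\tau$ is immediate from \eqref{eq:tau-def}: $\tau$ permutes the algebra generators of $\cU'_\bA$ and sends $(K_i;0)_q$ to $(K_{\tau(i)};0)_q$, so $\overline\tau$ is the permutation automorphism $e_i\mapsto e_{\tau(i)}$, i.e.\ $\tau$ as an automorphism of $\gfrak'$. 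For $\Ad(s(X,\tau))$ with $s=s(X,\tau)\in\Htil$, note $s(\alpha_i)$ is a fourth root of unity in $\field\subset\bA$, $\Ad(s)$ rescales $E_i$ by $s(\alpha_i)$ and $F_i$ by $s(\alpha_i)^{-1}$ and fixes $K_i^{\pm1}$ and $(K_i;0)_q$, hence is specializable with $\overline{\Ad(s)}$ the automorphism of $\gfrak'$ rescaling $e_i\mapsto s(\alpha_i)e_i$, which is exactly the classical $\Ad(s(X,\tau))$.

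The real work is $\overline{T_X}=\Ad(m_X)$, and this is where I expect the main obstacle. Here $T_X=T_{w_X}\circ\psi$. The factor $\psi$ from \eqref{psi-Def} sends $E_i\mapsto E_iK_i$, $F_i\mapsto K_i^{-1}F_i$, fixes $K_i^{\pm1}$, and sends $(K_i;0)_q\mapsto(K_i;0)_q$, so it is specializable with $\overline\psi=\id_{U(\gfrak')}$ (since $\overline{K_i}=1$). Thus it remains to show $T_{w_X}$ is specializable and $\overline{T_{w_X}}=\Ad(m_X)$. For a single $T_i$ the relations \eqref{eq:Ti-rels}, together with the known formulas (e.g.\ \cite[37.1.3, 37.2]{b-Lusztig94}) for $T_i$ on the higher root vectors generated inside $\cM_X$-type subalgebras, show $T_i(\cU'_\bA)\subseteq\cU'_\bA$ and, upon specialization, recover $\exp(\ad e_i)\exp(-\ad f_i)\exp(\ad e_i)=\Ad(m_i)$ on $\gfrak'$; this is the standard statement that Lusztig's symmetries specialize to the $\Ad(m_i)$ — I would cite it or sketch it on the Chevalley generators $E_i,F_i,E_j$ ($j$ adjacent to $i$) using that $T_i(E_j)$ is an iterated $q$-bracket $\ad(E_i^{(-a_{ij})})(E_j)$ whose coefficients lie in $\bA$ and specialize to the corresponding iterated classical bracket. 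Composing over a reduced word $w_X=r_{i_1}\cdots r_{i_k}$ (of finite type, so $w_X$ and $m_X$ are genuine finite products) and using $\overline{f\circ g}=\overline f\circ\overline g$ yields $\overline{T_{w_X}}=\Ad(m_{i_1})\cdots\Ad(m_{i_k})=\Ad(m_X)$, the last equality by definition of $m_X$ in Subsection \ref{sec:Weyl}. Finally, assembling,
\begin{align*}
  \overline{\theta_q(X,\tau)}
  &=\overline{\Ad(s(X,\tau))}\circ\overline{T_{w_X}}\circ\overline\psi\circ\overline\tau\circ\overline\omega\\
  &=\Ad(s(X,\tau))\circ\Ad(m_X)\circ\id\circ\tau\circ\omega.
\end{align*}
To match \eqref{tauDef} one uses Proposition \ref{Rousseau-Prop}.(3): $\Ad(m_X)$ commutes with $\tau$ and $\omega$, so the right-hand side equals $\Ad(s(X,\tau))\circ\tau\circ\omega\circ\Ad(m_X)=\theta(X,\tau)$. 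The delicate points to get right are the explicit $\bA$-integrality of $T_i$ on the relevant root vectors and the bookkeeping showing the specialized $T_i$ is $\Ad(m_i)$ rather than some other lift of $r_i$; everything else is routine.
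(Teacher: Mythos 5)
Your proposal is correct and follows essentially the same route as the paper's proof: specialize each factor separately (with $\tau$, $\omega$, $\psi$, $\Ad(s(X,\tau))$ being routine), identify $\overline{T_i}$ with $\Ad(m_i)$ via Lusztig's explicit formulas and an $\slfrak_2$-argument so that $\overline{T_X}=\Ad(m_X)$, and conclude by compatibility of specialization with composition. You are in fact slightly more explicit than the paper on two points it leaves implicit, namely the triviality of $\overline{\psi}$ and the use of Proposition \ref{Rousseau-Prop}.(3) to commute $\Ad(m_X)$ past $\tau\circ\omega$ so as to match the ordering in \eqref{tauDef}.
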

\begin{proof}
  Clearly both $\tau$ and $\omega$ are specializable and specialize to the corresponding automorphisms of $U(\gfrak')$. By the explicit formulas given in \cite[37.1.3]{b-Lusztig94} the Lusztig automorphism $T_i$ is also specializable for any $i\in I$. It specializes to the automorphism $\overline{T_i}: U(\gfrak')\rightarrow U(\gfrak')$ given by 
  \begin{align*}
    e_i\mapsto -f_i,& \quad f_i \mapsto -e_i, \quad h\mapsto r_i(h),\\
    e_j\mapsto \frac{\ad(e_i)^{-a_{ij}}}{(-a_{ij})!}(e_j),&\quad  f_j\mapsto\frac{\ad( f_i)^{-a_{ij}}}{(-a_{ij})!}(f_j), \quad \mbox{ for $j\neq i$.}
  \end{align*}
One checks by an $\slfrak_2$-argument that $\overline{T_i}=\Ad(m_i)|_{U(\gfrak')}$. Hence $T_X$ specializes to $\Ad(m_X)|_{U(\gfrak')}$. Now the theorem follows from the fact observed above that specialization is compatible with composition of maps.  
\end{proof}
We give an immediate application of the above proposition. We call a pair of multi-indices $(\uc,\us)\in (\qfield^\times)^{I\setminus X} \times \qfield^{I\setminus X}$ specializable if $c_i,s_i\in \bA$ and $c_i(1)=1$ for all $i\in I\setminus X$.
\begin{cor}\label{cor:specialize-Bi}
  Let $(\uc,\us)\in (\qfield^\times)^{I\setminus X}\times \qfield^{I\setminus X}$ be specializable. The generators $B_i$ of $B_{\uc,\us}$ belong to $\cU'_\bA$ and satisfy $\overline{B_i}=f_i+\theta(f_i)+\overline{s}_i$ for all $i\in I\setminus X$.
\end{cor}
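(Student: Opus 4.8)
The plan is to combine the explicit description of the generators $B_i$ from \eqref{eq:Bi-def} with the specialization result for $\theta_q(X,\tau)$ obtained in Proposition \ref{thm:special-prop}. First I would recall that, by \eqref{eq:Bi-def},
\begin{align*}
  B_i = F_i + c_i\,\theta_q(F_iK_i)K_i^{-1} + s_i K_i^{-1}
\end{align*}
for $i\in I\setminus X$, so it suffices to check that each of the three summands lies in $\cU'_\bA$ and to compute its image in $\cU'_1\cong U(\gfrak')$ under the isomorphism of Theorem \ref{thm:specialization}. The term $F_i$ specializes to $f_i$, and since $(\uc,\us)$ is specializable we have $s_i\in\bA$, so $s_iK_i^{-1}\in\cU'_\bA$ with image $\overline{s}_i\cdot 1 = \overline{s}_i$ (using $\overline{K_i^{-1}}=1$, which follows from $K_i = 1 + (q-1)(K_i;0)_q$).

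The main point is the middle term $c_i\,\theta_q(F_iK_i)K_i^{-1}$. Here I would argue that $F_iK_i\in\cU'_\bA$ and that $\theta_q(X,\tau)$ is specializable: by Proposition \ref{thm:special-prop} it specializes to $\theta(X,\tau)$, hence $\theta_q(F_iK_i)\in\cU'_\bA$ and, since $\overline{K_i}=1$ forces $\overline{F_iK_i}=\overline{F_i}=f_i$, we get $\overline{\theta_q(F_iK_i)} = \theta(X,\tau)(f_i) = \theta(f_i)$. Multiplying by $K_i^{-1}\in\cU'_\bA$ (whose image is $1$) and by $c_i\in\bA$ with $c_i(1)=1$, the middle term lies in $\cU'_\bA$ and specializes to $\theta(f_i)$. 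Adding the three contributions and using that specialization is $\field$-linear (as noted after Theorem \ref{thm:specialization}) gives $\overline{B_i} = f_i + \theta(f_i) + \overline{s}_i$, as claimed.

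The one point requiring a little care — and the main potential obstacle — is making rigorous the claim that $\theta_q(X,\tau)$ being specializable in the sense of Proposition \ref{thm:special-prop} actually gives $\theta_q(F_iK_i)\in\cU'_\bA$ with the stated image: Proposition \ref{thm:special-prop} asserts $\theta_q(\cU'_\bA)\subseteq\cU'_\bA$ and $\overline{\theta_q} = \theta(X,\tau)$ under the identification $\cU'_1\cong U(\gfrak')$, so one just needs that $F_iK_i\in\cU'_\bA$ (clear, as $F_i$ and $K_i$ are among the generators) and then applies the definition of $\overline{\theta_q}$ to the specific element $F_iK_i$ whose class in $\cU'_1$ is $f_i$. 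There is nothing deep here, but one should state explicitly that $\overline{K_i}=1$ in $\cU'_1$ so that $\overline{F_iK_i}=f_i$ rather than something involving $h_i$; this follows at once from $K_i-1 = (q-1)(K_i;0)_q$ specializing to $0$. With that observation in hand the proof is a two-line computation.
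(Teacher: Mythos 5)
Your argument is correct and is essentially the paper's own proof: both rely on $F_iK_i\in\cU'_\bA$, the specializability of $\theta_q(X,\tau)$ from Proposition \ref{thm:special-prop} giving $\overline{\theta_q(F_iK_i)K_i^{-1}}=\theta(f_i)$, and the assumptions $c_i,s_i\in\bA$, $c_i(1)=1$. Your extra remarks (e.g.\ that $\overline{K_i}=1$ so $\overline{F_iK_i}=f_i$) only spell out steps the paper leaves implicit.
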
 
\begin{proof}
  Let $i\in I\setminus X$. As $F_iK_i\in \cU'_\bA$ the above proposition gives $\theta_q(F_iK_i)K_i^{-1}\in \cU'_\bA$ and
  $\overline{\theta_q(F_iK_i)K_i^{-1}}=\theta(f_i)$. Together with the assumptions on $c_i$ and $s_i$ this implies that $B_i\in (B_{\uc,\us})_\bA$ and $\overline{B_i}=f_i+\theta(f_i)+\overline{s}_i$.
\end{proof}
\subsection{Properties of $\bA$-modules}
The ring $\bA$ is a principal ideal domain. We begin by recalling two facts about principal ideal domains which will be repeatedly used in the sequel. The first statement of the following proposition is a consequence of the fundamental theorem of finitely generated modules over principle ideal domains. The second statement can be found in \cite[Corollary 6.3]{b-Eisenbud95}.
\begin{prop}\label{prop:PID}
  Let $R$ be  principal ideal domain.
  \begin{enumerate}
    \item Any finitely generated, torsion-free module over $R$ is free.
    \item An $R$-module is torsion-free if and only if it is flat.
  \end{enumerate}
\end{prop}
As a first application one obtains the following result.
\begin{lem}\label{lem:0spec}
  Let $x\in \cU'_\bA$. Then $\overline{x}=0$ if and only if $x\in (q-1)\cU'_\bA$. 
\end{lem}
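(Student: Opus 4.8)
The statement to prove is Lemma~\ref{lem:0spec}: for $x\in \cU'_\bA$, one has $\overline{x}=0$ if and only if $x\in (q-1)\cU'_\bA$.

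The plan is to recognize that $\overline{x}$ is by definition the image of $1\ot x$ under the canonical map $\cU'_\bA \to \field\ot_\bA \cU'_\bA = \cU'_1$, and that the kernel of the $\bA$-module map $\cU'_\bA \to \field\ot_\bA \cU'_\bA$, $x\mapsto 1\ot x$, is exactly $(q-1)\cU'_\bA$ provided $\cU'_\bA$ is torsion-free (equivalently flat) as an $\bA$-module. So the real content is the flatness of $\cU'_\bA$ over $\bA$.

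First I would establish that $\cU'_\bA$ is torsion-free as an $\bA$-module. The ring $\bA=\field[q]_{(q-1)}$ is a subring of the field $\qfield=\field(q)$, and $\cU'_\bA$ is by construction an $\bA$-subalgebra of the $\qfield$-vector space $\uqgp$. Any $\bA$-submodule of a $\qfield$-vector space is torsion-free over $\bA$: if $a x = 0$ with $a\in\bA\setminus\{0\}$ and $x\in\cU'_\bA\subseteq\uqgp$, then $x=0$ since $a$ is invertible in $\qfield$ and $\uqgp$ is a $\qfield$-vector space. By Proposition~\ref{prop:PID}.(2), torsion-freeness over the principal ideal domain $\bA$ is equivalent to flatness.

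Next I would use flatness to compute the kernel. Tensoring the short exact sequence of $\bA$-modules
\begin{align*}
  0 \longrightarrow \bA \stackrel{q-1}{\longrightarrow} \bA \longrightarrow \field \longrightarrow 0
\end{align*}
(where $\field=\bA/(q-1)\bA$) with the flat $\bA$-module $\cU'_\bA$ yields the exact sequence
\begin{align*}
  0 \longrightarrow \cU'_\bA \stackrel{q-1}{\longrightarrow} \cU'_\bA \longrightarrow \field\ot_\bA \cU'_\bA \longrightarrow 0,
\end{align*}
so the kernel of the specialization map $x\mapsto \overline{x}$ is precisely $(q-1)\cU'_\bA$. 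This gives both implications at once: $\overline{x}=0$ iff $x$ lies in this kernel iff $x\in(q-1)\cU'_\bA$. Alternatively, without invoking $\mathrm{Tor}$ one can argue directly: the ``if'' direction is immediate since $q-1$ maps to $0$ in $\field$; for the ``only if'' direction, $\overline{x}=0$ means $1\ot x = 0$ in $\field\ot_\bA\cU'_\bA$, and since $\field\ot_\bA\cU'_\bA = \cU'_\bA/(q-1)\cU'_\bA$ for the $\bA$-module $\cU'_\bA$, this is exactly $x\in(q-1)\cU'_\bA$.

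There is essentially no obstacle here; the lemma is a soft homological fact. The only point requiring a moment's care is making sure the flatness input (Proposition~\ref{prop:PID}.(2)) applies, which it does since $\bA$ is a PID and $\cU'_\bA$ embeds in the $\qfield$-vector space $\uqgp$, hence is torsion-free.
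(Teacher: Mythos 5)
Your proof is correct and follows essentially the same route as the paper: establish that $\cU'_\bA$ is torsion-free, hence flat over the PID $\bA$ by Proposition \ref{prop:PID}.(2), and tensor the short exact sequence $0\to\bA\xrightarrow{q-1}\bA\to\field\to 0$ with $\cU'_\bA$ to identify the kernel of specialization with $(q-1)\cU'_\bA$. The paper's proof is the same argument (phrased with the sequence $0\to(q-1)\bA\to\bA\to\field\to 0$), so nothing more is needed.
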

\begin{proof}
 Consider the short exact sequence
  \begin{align*}
    0\rightarrow (q-1)\bA\rightarrow \bA \rightarrow \underbrace{\bA/(q-1)\bA}_{\cong \field} \rightarrow 0
  \end{align*}
 of $\bA$-modules.   By the above proposition $\cU'_\bA$ is a flat $\bA$-module. Hence, tensoring by $\cU'_\bA$, one obtains an exact sequence
  \begin{align*}
    0\rightarrow \underbrace{(q-1)\bA \ot_\bA \cU'_\bA}_{\cong (q-1)\cU'_\bA} \rightarrow \cU'_\bA \rightarrow \field\ot_\bA\cU'_\bA \rightarrow 0.
  \end{align*}
  This proves the lemma.
\end{proof}
Let $W$ be a $\qfield$-vector space. All $\bA$-submodules of $W$ are torsion-free. Hence, for any $\bA$-submodule $M$ of $W$ the map $M\rightarrow M\ot_\bA \qfield$ is injective.
As a consequence of the above proposition one obtains the following lemma which will be used in the Subsection \ref{sec:A-triang} to verify triangular decompositions over $\bA$ analog to those in Section \ref{sec:triang}.
\begin{lem}\label{lem:obvious-isos}
  Let $W$ be a $\qfield$-vector space and let $M$ and $M'$ be $\bA$-submodules of $W$. Let $M_\qfield$ and $M'_\qfield$ denote the $\qfield$-vector subspace of $W$ generated by $M$ and $M'$, respectively. Then the following hold:
  \begin{enumerate}
    \item The map $\iota_M: M\ot_\bA \qfield\rightarrow M_\qfield$ is an isomorphism.
    \item The map $\iota_{M,M'}: M \ot_\bA M'\rightarrow M_\qfield \ot_\qfield M'_\qfield$ is injective.
  \end{enumerate}
\end{lem}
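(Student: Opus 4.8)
The plan is to prove both parts by reducing to the two facts about principal ideal domains collected in Proposition \ref{prop:PID}, exactly as announced just before the statement. Throughout, $\bA$ is a PID and every $\bA$-submodule of a $\qfield$-vector space is torsion-free, hence flat by Proposition \ref{prop:PID}.(2).

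For part (1), first note that since $M\subseteq W$ is torsion-free, the natural localization map $M\to M\ot_\bA\qfield$ is injective (one may also see this directly: $q-1$ acts injectively on $M$). I would construct the inverse of $\iota_M$ concretely. The map $M\ot_\bA\qfield\to M_\qfield$ is $m\ot f\mapsto fm$, and it is surjective because $M_\qfield$ is by definition the $\qfield$-span of $M$, so every element of $M_\qfield$ is of the form $\sum_k f_k m_k$ with $f_k\in\qfield$, $m_k\in M$, which is the image of $\sum_k m_k\ot f_k$. For injectivity, suppose $\sum_k m_k\ot f_k\mapsto 0$; clearing denominators we may assume all $f_k=a_k\in\bA$, and then $\sum_k a_k m_k=0$ in $M$, so $\sum_k m_k\ot a_k=\sum_k a_k m_k\ot 1=0$ already in $M\ot_\bA\bA=M$, hence in $M\ot_\bA\qfield$. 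Thus $\iota_M$ is a bijection, i.e.\ an isomorphism of $\qfield$-vector spaces.

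For part (2), the key input is flatness of $M'$ as an $\bA$-module (Proposition \ref{prop:PID}.(2), since $M'$ is torsion-free). The inclusion $M\hookrightarrow M_\qfield$ is an injection of $\bA$-modules, so tensoring with the flat module $M'$ over $\bA$ gives an injection $M\ot_\bA M'\hookrightarrow M_\qfield\ot_\bA M'$. Now $M_\qfield$ is a $\qfield$-vector space, so tensoring over $\bA$ with it factors through $\qfield$: for any $\bA$-module $N$ one has $M_\qfield\ot_\bA N\cong M_\qfield\ot_\qfield(\qfield\ot_\bA N)$. Applying this with $N=M'$ and using part (1) in the form $\qfield\ot_\bA M'\cong M'_\qfield$, I obtain $M_\qfield\ot_\bA M'\cong M_\qfield\ot_\qfield M'_\qfield$. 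Composing, the map $M\ot_\bA M'\to M_\qfield\ot_\qfield M'_\qfield$ is the injection $\iota_{M,M'}$, which is what we wanted. One should check that this composite is indeed the natural map $m\ot m'\mapsto m\ot m'$, but this is immediate from tracing through the identifications.

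The argument is essentially formal once flatness is in hand, so there is no real obstacle; the only point requiring a little care is making sure that the isomorphism $M_\qfield\ot_\bA M'\cong M_\qfield\ot_\qfield M'_\qfield$ is compatible with the maps in play, so that the composite one ends up with is literally $\iota_{M,M'}$ and not merely some injection. This is a routine diagram chase. An alternative, perhaps cleaner, route for (2) is to factor $\iota_{M,M'}$ as $M\ot_\bA M'\to M\ot_\bA M'_\qfield\to M_\qfield\ot_\bA M'_\qfield\to M_\qfield\ot_\qfield M'_\qfield$, where the first map is injective because $M$ is flat over $\bA$ and $M'\hookrightarrow M'_\qfield$, the second because $M'_\qfield$ is flat (being a $\qfield$-vector space, hence a flat $\bA$-module) and $M\hookrightarrow M_\qfield$, and the third is an isomorphism since it is obtained from $M\ot_\bA M'_\qfield\cong M_\qfield\ot_\bA M'_\qfield$ followed by the standard identity $M_\qfield\ot_\bA M'_\qfield\cong M_\qfield\ot_\qfield M'_\qfield$ for $\qfield$-vector spaces. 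I would present whichever of these two bookkeeping schemes reads most transparently.
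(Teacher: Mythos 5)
Your proof is correct, and it is worth comparing the two parts separately. For part (2) you do essentially what the paper does: obtain injectivity by tensoring an inclusion with a flat module, flatness coming from torsion-freeness via Proposition \ref{prop:PID}.(2), and then identify the target with $M_\qfield\ot_\qfield M'_\qfield$ using part (1); the only difference is that the paper tensors $M'\hookrightarrow M'\ot_\bA\qfield$ with the flat module $M$, while your first route tensors $M\hookrightarrow M_\qfield$ with the flat module $M'$ — a cosmetic swap. For part (1), however, your argument is genuinely more elementary than the paper's: the paper passes to the finitely generated submodule $\tilde M$ spanned by the $m_i$, invokes Proposition \ref{prop:PID}.(1) to get a free basis, and clears denominators by a power of $(q-1)$ so that linear independence of the basis forces the coefficients to vanish; you instead clear a single common denominator $s\in\bA\setminus\{0\}$ and push the resulting $\bA$-coefficients across the tensor, so that the relation $\sum_k a_k m_k=0$, read inside $W$ and hence inside $M$, already kills $\sum_k m_k\ot a_k$ in $M\ot_\bA\bA$. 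This avoids the structure theorem for finitely generated torsion-free modules altogether and is the cleaner route; the paper's version buys nothing beyond stylistic uniformity with the surrounding finite-generation arguments. Two small points of polish: justify the phrase ``we may assume $f_k\in\bA$'' by noting that multiplication by $s$ is invertible on the $\qfield$-vector space $M\ot_\bA\qfield$ and does not affect vanishing of the image in $M_\qfield$; and in your alternative factorization for (2) the justification of the last arrow is slightly garbled — what you need (and what is true) is simply that the canonical map $M_\qfield\ot_\bA M'_\qfield\rightarrow M_\qfield\ot_\qfield M'_\qfield$ is an isomorphism because $\qfield\ot_\bA V\cong V$ for any $\qfield$-vector space $V$.
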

\begin{proof}
  (1) Assume that $m=\sum_{i=1}^N m_i\ot a_i \in \ker(\iota_M)$. Let $\tilde{M}$ denote the $\bA$-submodule of $M$ generated by the set $\{m_i\,|\,i=1,\dots, N\}$. Being finitely generated, $\tilde{M}$ is a free $\bA$-module by Proposition \ref{prop:PID}.(1). We can choose a basis $\{b_i\,|\,i=1,\dots,N'\}$ and write
$m=\sum_{i=1}^{N'} b_i\ot a_i'$ for some $a_i'\in \qfield$. There exists $n\in \N$ such that $(q-1)^na_i'\in \bA$ for all $i=1,\dots,N'$. Hence $m\in \ker(\iota_M)$ implies that $0=\iota_M(m(q-1)^n)=\sum_{i=1}^{N'} b_i a_i'(q-1)^n$. By linear independence of the set $\{b_i\,|\,i=1,\dots,N'\}$ one obtains $a_i'(q-1)^n=0$ and hence $m=0$. This proves that $\iota_M$ is injective, and surjectivity holds by construction.

 (2) Part (1) implies that
\begin{align*}
  M_\qfield \ot_\qfield M'_\qfield \cong M \ot_\bA \ot \qfield \ot_\qfield \qfield \ot_\bA M'
  \cong M \ot_\bA \qfield \ot_\bA M' \cong (M\ot_\bA M')\ot_\bA \qfield.
\end{align*}
The $\bA$-module $M$ is torsion free and hence flat by Proposition \ref{prop:PID}. Therefore the map $M\ot_\bA M'\rightarrow M\ot_\bA(M'\ot_\bA \qfield)\cong (M\ot_\bA M')\ot_\bA \qfield$ is injective. This proves that $\iota_{M,M'}$ is injective.
\end{proof}
\subsection{Specialization and triangular decompositions}\label{sec:A-triang}
For any subspace $W\subset \uqgp$ we define $W_\bA= W\cap \cU'_\bA$ and $\overline{W}=\field\ot_\bA W_\bA\subset \cU'_1 $. By \cite[Proposition 3.3.3]{b-HongKang02} the multiplication map yields an isomorphism
\begin{align}\label{eq:A-triang}
  U^+_\bA \ot_\bA U^0_\bA{}'\ot _\bA U^-_\bA \cong \cU'_\bA
\end{align}
analogous to the triangular decomposition \eqref{eq:triang-decomp}. By the following theorem all triangular decompositions from Section \ref{sec:triang} also hold true over $\bA$. Set $\bA^\times=\bA\cap \qfield^\times$ and recall the definition of the subspace $B_{\uc,\us,\cJ}$ given in \eqref{eq:BcsJ}.
\begin{thm}\label{thm:A-triang}
  Let $(\uc,\us)\in \cC\times \cS$ be specializable. The multiplication maps give the following isomorphisms of $\bA$-modules.
  \begin{enumerate}
    \item $(U^0_\Theta{}')_\bA \ot_\bA (U'_\Theta)_\bA \cong U^0_\bA{'},$ 
    \item $(V^+_X)_\bA \ot_\bA (\cM^+_X)_\bA \cong U^+_\bA,$
    \item $U^+_\bA \ot _\bA U^0_\bA{}' \ot _\bA (B_{\uc,\us,\cJ})_\bA \cong \cU'_\bA,$
    \item $(\cM^+_X)_\bA \ot _\bA (U^0_\Theta{}')_\bA \ot _\bA  (B_{\uc,\us,\cJ})_\bA \cong (B_{\uc,\us})_\bA,$
    \item $(V_X^+)_\bA\ot _\bA (U'_\Theta)_\bA \ot_\bA (B_{\uc,\us})_\bA \cong \cU'_\bA$.
  \end{enumerate}
\end{thm}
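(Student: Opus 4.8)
The plan is to deduce all five isomorphisms by the same two-step scheme: injectivity, which is formal, followed by surjectivity, which carries the content.

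For injectivity, note that every $\bA$-module occurring as a tensor factor — such as $(U^0_\Theta{}')_\bA$, $(V^+_X)_\bA$, $(\cM^+_X)_\bA$, $U^+_\bA$, $U^0_\bA{}'$, $(B_{\uc,\us,\cJ})_\bA$ — is a torsion-free, hence by Proposition \ref{prop:PID}(2) flat, $\bA$-submodule of the $\qfield$-vector space $\uqgp$. Therefore each multiplication map $M_1\ot_\bA\cdots\ot_\bA M_k\to\uqgp$ factors through the injection $M_1\ot_\bA\cdots\ot_\bA M_k\hookrightarrow (M_1)_\qfield\ot_\qfield\cdots\ot_\qfield(M_k)_\qfield$ (obtained by iterating Lemma \ref{lem:obvious-isos}, or directly from flatness) followed by the corresponding multiplication map over $\qfield$, which is an isomorphism onto its image: for (1) this is \eqref{eq:U0-decomp}, for (2) it is \eqref{Keb1}, for (3) it is \eqref{eq:leftBasisU} (Proposition \ref{prop:leftBasisU}), for (4) it is Proposition \ref{prop:leftBasis}, and for (5) it is Proposition \ref{prop:qIwasawa}. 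Hence each map over $\bA$ is injective, and its image already has the correct $\qfield$-span.

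For surjectivity I would treat the statements in the order (1), (2), (3), (4), (5), the recurring device being an $\bA$-integral triangular change of basis. Statement (1) reduces to the lattice splitting $Q=Q^\Theta\oplus\Z\{\alpha_i\mid i\in I^\ast\}$ underlying \eqref{eq:U0-decomp} together with the observation that $(K_\beta;0)_q\in U^0_\bA{}'$ for all $\beta\in Q$ (proved by the identity $(K_\beta K_i;0)_q=K_\beta(K_i;0)_q+(K_\beta;0)_q$ and its negative counterpart), which lets one split an $\bA$-basis of $U^0_\bA{}'$ along the direct sum. Statement (2) is the $\bA$-form of Kébé's decomposition \eqref{Keb1}: one uses that the adjoint action of $\cM_X$ preserves $\cU'_\bA$, a PBW basis of the finite-type algebra $(\cM^+_X)_\bA$, and a triangular change of basis between the PBW monomials of $U^+_\bA$ and products of generators of $(V^+_X)_\bA$ and $(\cM^+_X)_\bA$. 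For (3), the starting point is the known $\bA$-triangular decomposition \eqref{eq:A-triang} $\cU'_\bA\cong U^+_\bA\ot_\bA U^0_\bA{}'\ot_\bA U^-_\bA$; one first checks that $\{F_J\mid J\in\cJ\}$ is an $\bA$-basis of $U^-_\bA$ (it spans because $U^-_\bA$ is generated by the $F_i$ and the quantum Serre relations have coefficients in $\Z[q^{\pm1}]\subseteq\bA$, and it is $\bA$-linearly independent since it is $\qfield$-linearly independent). Since $(\uc,\us)$ is specializable, Corollary \ref{cor:specialize-Bi} gives $B_i\in\cU'_\bA$ and, more precisely, $B_i-F_i\in U^+U^0{}'\cap\cU'_\bA=U^+_\bA U^0_\bA{}'$ (the last equality being a restriction of \eqref{eq:A-triang}); expanding the $B_J$ and inducting on $|J|$ (using the filtration of $\uqgp$ by number of $F$-factors) yields $B_J\in F_J+\sum_{J'\in\cJ,\,|J'|<|J|}U^+_\bA U^0_\bA{}'F_{J'}$. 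The matrix expressing the $B_J$ in terms of the $F_J$ is then unitriangular with respect to $|J|$ and has entries in $U^+_\bA U^0_\bA{}'$, hence is invertible over that ring, so $\cU'_\bA$ is free as a left $U^+_\bA U^0_\bA{}'$-module on $\{B_J\mid J\in\cJ\}$; this is exactly the surjectivity in (3) together with the identity $(B_{\uc,\us,\cJ})_\bA=\bigoplus_{J\in\cJ}\bA B_J$. Statement (4) repeats the argument of Proposition \ref{prop:leftBasis} over $\bA$: using (3) one has $\cU'_\bA=\bigoplus_{J\in\cJ}U^+_\bA U^0_\bA{}'B_J$, so $F_{ij}(B_i,B_j)\in\cU'_\bA$ together with Corollary \ref{cor:Serre-lower} and the fact — from (1) and (2) — that $(\cM^+_X)_\bA$ and $(U^0_\Theta{}')_\bA$ are $\bA$-direct summands of $U^+_\bA$ and $U^0_\bA{}'$ forces $F_{ij}(B_i,B_j)\in\sum_{\wght(J)<\lambda_{ij}}(\cM^+_X)_\bA(U^0_\Theta{}')_\bA B_J$; combining this with the $\bA$-integral relations of Lemma \ref{lem:rel1} (note that $\frac{K_i-K_i^{-1}}{q_i-q_i^{-1}}\in\cU'_\bA$ and that the scalars $q^{-(\beta,\alpha_i)}$ are units in $\bA$) lets one rewrite any element of $(B_{\uc,\us})_\bA$ in the required form. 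Finally (5) follows from (1), (2) and (4) exactly as Proposition \ref{prop:qIwasawa} follows from \eqref{Keb1}, \eqref{eq:U0-decomp} and Proposition \ref{prop:leftBasis}, after commuting the elements $K_i^{\pm1}$ for $i\in I^\ast$ past $(\cM^+_X)_\bA$ up to units.

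The step I expect to be the main obstacle is statement (2): unlike (3), (4), (5), it is not merely a restriction of the established $\bA$-triangular decomposition \eqref{eq:A-triang}, so one must genuinely show that $(V^+_X)_\bA$ and $(\cM^+_X)_\bA$ generate $U^+_\bA$ over $\bA$, which requires identifying $(V^+_X)_\bA$ with an explicit $\bA$-span and controlling it via Kébé's analysis and the PBW basis of $(\cM^+_X)_\bA$. A secondary technical point that must be handled with care is the $\bA$-integrality of the lower-order terms $\Ctil_{ij}(\uc)$ (equivalently $C_{ij}(\uc)$) for specializable $\uc$, which is what legitimizes the $\bA$-version of Corollary \ref{cor:Serre-lower} used in (4); this can be obtained either by re-running the proof of Theorem \ref{thm:rels} with $\bA$-coefficients or, more economically, by appealing to (3) once it has been established.
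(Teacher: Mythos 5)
Your overall architecture coincides with the paper's: injectivity in all five cases from Lemma \ref{lem:obvious-isos} together with the $\field(q)$-decompositions of Section \ref{sec:triang}, then surjectivity item by item; your treatments of (1), (3) and (5) are essentially the arguments in the paper. The genuine gap is in (4). By definition $(B_{\uc,\us})_\bA=B_{\uc,\us}\cap\cU'_\bA$, which is \emph{not} a priori the $\bA$-subalgebra generated by $(\cM^+_X)_\bA$, $(U^0_\Theta{}')_\bA$ and the $B_i$. Your plan --- first make the straightening relations integral (Corollary \ref{cor:Serre-lower} with coefficients in $(\cM^+_X)_\bA(U^0_\Theta{}')_\bA$, plus the integral form of Lemma \ref{lem:rel1}) and then ``rewrite any element of $(B_{\uc,\us})_\bA$'' --- only rewrites elements that are already presented as $\bA$-linear combinations of monomials in the generators. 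An arbitrary element of $B_{\uc,\us}\cap\cU'_\bA$ comes with no such integral presentation; producing one is exactly what (4) asserts, so the rewriting step is circular. The repair is the paper's actual argument, which bypasses integrality of the $C_{ij}(\uc)$ altogether: for $b\in(B_{\uc,\us})_\bA$ write $b=\sum_{J\in\cJ}a_JB_J$ with $a_J\in\cM^+_XU^0_\Theta{}'$ over $\field(q)$ (Proposition \ref{prop:leftBasis}), and separately expand $b$ over $\bA$ in the $F_J$ via \eqref{eq:A-triang}; since $B_i-F_i\in(U^+U^0{}')_\bA$ by specializability (Corollary \ref{cor:specialize-Bi}), comparison of the coefficients at the maximal length $|J|=m$ gives $a_J\in(\cM^+_XU^0_\Theta{}')_\bA$ there, and downward induction on $m$ finishes. (Alternatively one can compare the $\field(q)$-expansion with the $\bA$-expansion in the $B_J$ provided by your (3) and then invoke the direct-summand facts; either way the key move is a comparison of two expansions of a general element of the intersection, not a rewriting of words in the generators.)

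A secondary point: your assessment that (2) is the main obstacle is misplaced, and the machinery you propose there (explicit $\bA$-span of $(V^+_X)_\bA$, PBW basis of $(\cM^+_X)_\bA$, triangular change of basis) is not needed. In the paper (2) is almost immediate: $U^+_\bA$ is by definition the $\bA$-subalgebra generated by the $E_i$, each $E_i$ lies in $(\cM^+_X)_\bA$ if $i\in X$ and in $(V^+_X)_\bA$ if $i\notin X$, and the image of the multiplication map is stable under left multiplication by the $E_i$ because moving $\cM^+_X$ past $V^+_X$ uses only the $\ad(\cM_X)$-stability of $V^+_X$ (which preserves $\cU'_\bA$) and scalars that are powers of $q$, hence units in $\bA$. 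Likewise, the ``secondary technical point'' you single out --- integrality of the lower order terms $\Ctil_{ij}(\uc)$ --- plays no role in the paper's proof of the theorem.
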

\begin{proof}
  Injectivity follows in all five cases from Lemma \ref{lem:obvious-isos} and from the triangular decompositions in Section \ref{sec:triang}. It remains to prove surjectivity.
  
 (1) ${U^0_\bA}{}'$ is the $\bA$-subalgebra of $U^0{}'$ generated by the elements
   \begin{align}\label{eq:Ki}
     K_i, \quad K_i^{-1}, \quad (K_i;0)_q
   \end{align}
   for all $i\in I$. If $i\in X$ or $i\in I^*$ then the elements \eqref{eq:Ki} belong to $(U^0_\Theta{}')_\bA$ and $(U'_\Theta)_\bA$, respectively. Hence in this case, the elements \eqref{eq:Ki} are in the image of the multiplication map. If $i\in I\setminus (I^\ast\cup X)$ then $K_iK_{\tau(i)}^{-1}, K_i^{-1} K_{\tau(i)}\in ({U^0_\Theta}')_\bA$ and $K_{\tau(i)}^{\pm 1}\in (U'_\Theta)_\bA$. Hence $K_i$ and $K_i^{-1}$ lie in the image of the multiplication map. Finally,
the relation 
   \begin{align*}
     (K_i;0)_q=(K_iK_{\tau(i)}^{-1};0)_q - K_i\,(K_{\tau(i)}^{-1};0)_q
   \end{align*}
shows that in this case $(K_i;0)_q$ also lies in the image of the multiplication map.   

 (2) $U_\bA^+$ is the $\bA$-subalgebra of $\uqgp$ generated by all $E_i$ for $i\in I$. If $i\in X$ then $E_i\in (\cM^+_X)_\bA$. If $i\notin X$ then $E_i\in (V^+_X)_\bA$. Hence, all $E_i$ lie in the image of the multiplication map.
 
 (3) Recall the filtration $\cF^\ast$ of $U^-$ defined at the beginning of Subsection \ref{sec:BJbasis}. Via the triangular decomposition \eqref{eq:triang-decomp} the filtration $\cF^\ast$ extends to a filtration of $\uqgp$ by
 \begin{align*}
   \cF^n(\uqgp)=U^+\ot U^0{'}\ot \cF^n(U^-). 
 \end{align*}
We show by induction on $n$ that $\cF^n(\uqgp)_\bA$ lies in the image of the multiplication map. For $n=0$ this holds true because \eqref{eq:A-triang} implies that
  \begin{align*}
    \cF^0(\uqgp)_\bA = U^+_\bA \ot_\bA U^0_\bA{'}\ot_\bA \bA.
  \end{align*} 
For $y\in \cF^n(\uqgp)_\bA$, again by \eqref{eq:A-triang}, there exist $u_J\in U^+_\bA\ot_\bA U^0_\bA{'}$ such that
  \begin{align*}
    y-\sum_{J\in I^n\cap\cJ} u_J F_J \in \cF^{n-1}(\uqgp)_\bA.
  \end{align*}  
Hence  
  \begin{align*}
    y-\sum_{J\in I^n\cap\cJ} u_J B_J \in \cF^{n-1}(\uqgp)_\bA.
  \end{align*}  
By induction hypothesis $y$ hence lies in the image of the multiplication map (3).  

 (4) Let $b\in (B_{\uc,\us})_\bA$. By Proposition \ref{prop:leftBasis} one has $b=\sum_{J\in \cJ}a_J B_J$ for some $a_J\in \cM_X^+ U^0_\Theta{}'$. Let $m\in \N_0$ be maximal such that $a_J\neq 0$ for some $J\in \cJ$ with $|J|=m$. By the triangular decomposition \eqref{eq:A-triang} one can write
  \begin{align*}
    b=\sum_{J\in \cJ, |J|\le m} b_J F_J 
  \end{align*}
for some  $b_J\in (U^+ U^0{'})_\bA$. As $B_i-F_i\in (U^+ U^0{'})_\bA$ one obtains $a_J=b_J$ if $|J|=m$.
Hence  $a_J\in (\cM^+_XU^0_\Theta{}')_\bA$ for all $J\in \cJ$ with $|J|=m$. By induction on $m$ one obtains $a_J\in (\cM^+_XU^0_\Theta{}')_\bA$ for all $J\in \cJ$. 
 
 (5) Surjectivity follows by inserting (4) into the left hand side of (5) and using (3) together with (1) and (2).
\end{proof}
\subsection{Specialization of $B_{\uc,\us}$}
We now want to show that for specializable $(\uc,\us)\in \cC\times \cS$ the quantum symmetric pair coideal subalgebra $B_{\uc,\us}$ specializes to $U(\kfrak')$. To this end, let $\gfrak_X^+$ and $\hfrak'_\theta$ denote the Lie subalgebras of $\gfrak'$ generated by the sets $\{e_i\,|\, i\in X\}$ and $\{h_i+\theta(h_i)\,|\,i\in I\}$, respectively.  One verifies that
\begin{align}\label{eq:mh-special}
  \overline{\cM_X^+}&=U(\gfrak^+_X) & \overline{U^0_\Theta{}'}&=U(\hfrak'_\theta{}).
\end{align}  
With this observation, Theorem \ref{thm:A-triang}.(4) gives the following result.
\begin{thm}\label{thm:Bcspecial}
  Let $(\uc,\us)\in \cC\times \cS$ be specializable. Then $\overline{B_{\uc,\us}}=U(\kfrak')$.
\end{thm}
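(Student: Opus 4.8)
The plan is to prove the two inclusions $\overline{B_{\uc,\us}}\subseteq U(\kfrak')$ and $U(\kfrak')\subseteq\overline{B_{\uc,\us}}$ separately, using the $\bA$-form triangular decomposition of Theorem~\ref{thm:A-triang} together with the specialization of the generators from Corollary~\ref{cor:specialize-Bi}. Throughout I identify $\cU'_1$ with $U(\gfrak')$ via Theorem~\ref{thm:specialization}, so that $\overline{E_i}=e_i$, $\overline{F_i}=f_i$, $\overline{B_i}=f_i$ for $i\in X$, and $\overline{B_i}=f_i+\theta(f_i)+\overline{s}_i$ for $i\in I\setminus X$. For $\overline{B_{\uc,\us}}\subseteq U(\kfrak')$ I would start from Theorem~\ref{thm:A-triang}.(4), apply $\field\ot_\bA(-)$, and use the base-change identity $\field\ot_\bA(M\ot_\bA N)\cong(\field\ot_\bA M)\ot_\field(\field\ot_\bA N)$ to see that the multiplication map descends to an identification of $\overline{B_{\uc,\us}}$ with the $\field$-span in $U(\gfrak')$ of the products $u\,h\,\overline{B_J}$ with $u\in\overline{\cM^+_X}$, $h\in\overline{U^0_\Theta{}'}$ and $J\in\cJ$. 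By \eqref{eq:mh-special} one has $\overline{\cM^+_X}=U(\gfrak^+_X)\subseteq U(\kfrak')$ (the $e_i$, $i\in X$, being among the generators of $\kfrak'$ listed in Lemma~\ref{generators}) and $\overline{U^0_\Theta{}'}=U(\hfrak'_\theta)\subseteq U(\kfrak')$, while $\overline{B_J}=\overline{B_{j_1}}\cdots\overline{B_{j_n}}$ is a product of elements of $U(\kfrak')$ by Corollary~\ref{cor:specialize-Bi}. Hence every such product lies in $U(\kfrak')$, and the inclusion follows.

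For the reverse inclusion I would invoke Lemma~\ref{generators}: $U(\kfrak')$ is generated as a unital algebra by $e_i,f_i$ ($i\in X$), by the $\theta$-fixed elements of $\hfrak'$, and by $f_i+\theta(f_i)$ ($i\in I\setminus X$). Each of these lies in $\overline{B_{\uc,\us}}$: for $i\in X$ one has $e_i=\overline{E_i}$ and $f_i=\overline{F_i}=\overline{B_i}$; for $i\in I\setminus X$ one has $f_i+\theta(f_i)=\overline{B_i}-\overline{s}_i\in\overline{B_{\uc,\us}}$ by Corollary~\ref{cor:specialize-Bi}; and any $\theta$-fixed $h=\sum_i c_ih_i\in\hfrak'$ equals $\frac{1}{2}\sum_i c_i(h_i+\theta(h_i))\in\hfrak'_\theta=\overline{U^0_\Theta{}'}\subseteq\overline{B_{\uc,\us}}$ by \eqref{eq:mh-special}. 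Since $\overline{B_{\uc,\us}}=\field\ot_\bA(B_{\uc,\us}\cap\cU'_\bA)$ is the image in $\cU'_1$ of an $\bA$-subalgebra, it is a $\field$-subalgebra, and therefore contains all of $U(\kfrak')$. Together with the first step this gives $\overline{B_{\uc,\us}}=U(\kfrak')$.

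I expect the forward inclusion to be the main obstacle, specifically the point that $\overline{B_{\uc,\us,\cJ}}$ is exactly the $\field$-span of the $\overline{B_J}$, equivalently that $(B_{\uc,\us,\cJ})_\bA=\bigoplus_{J\in\cJ}\bA B_J$ and, more broadly, that passing to the $\bA$-form and specializing does not enlarge $B_{\uc,\us}$. This is precisely what Theorem~\ref{thm:A-triang}.(4) (resting on the flatness properties of the $\bA$-forms established earlier in this section) is designed to supply: the statement about $(B_{\uc,\us,\cJ})_\bA$ follows from the same leading-term comparison as in the proof of Theorem~\ref{thm:A-triang}.(3), by writing each $B_J$ as $F_J$ plus terms of strictly lower degree in the $F$-variables and matching against the $\bA$-triangular decomposition~\eqref{eq:A-triang}. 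Once this is in hand the argument is essentially formal; the reverse inclusion, by contrast, is a routine verification on the explicit generators of $\kfrak'$ from Lemma~\ref{generators}.
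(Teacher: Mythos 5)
Your argument is correct and follows essentially the same route as the paper: the inclusion $U(\kfrak')\subseteq\overline{B_{\uc,\us}}$ via Corollary \ref{cor:specialize-Bi}, $\overline{F_i}$ for $i\in X$, and \eqref{eq:mh-special} together with the generators of $\kfrak'$ (the paper cites Corollary \ref{cor:U(g)generators}, which makes even the subtraction of $\overline{s}_i$ unnecessary), and the reverse inclusion from Theorem \ref{thm:A-triang}.(4) combined with \eqref{eq:mh-special}. The extra detail you supply on base change and on $(B_{\uc,\us,\cJ})_\bA$ being spanned over $\bA$ by the $B_J$ is exactly the leading-term argument already used in the proof of Theorem \ref{thm:A-triang}, so nothing genuinely new is needed.
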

\begin{proof}
  As $(\uc,\us)$ is specializable, Corollary \ref{cor:specialize-Bi} implies that $f_i+\theta(f_i) + \overline{s}_i\in \overline{B_{\uc,\us}}$ for all $i\in I\setminus X$. As $F_i\in B_{\uc,\us}$ one has $f_i\in \overline{B_{\uc\,\us}}$ for all $i\in X$. Moreover, $\gfrak^+_X \subset \overline{B_{\uc,\us}}$ and $\hfrak'_\theta{}\subset \overline{B_{\uc,\us}}$ by \eqref{eq:mh-special}. By Corollary \ref{cor:U(g)generators} this proves that $U(\kfrak')\subseteq \overline{B_{\uc,\us}}$. Conversely, Theorem \ref{thm:A-triang}.(4) and \eqref{eq:mh-special} imply that $\overline{B_{\uc,\us}}\subseteq U(\kfrak')$.
\end{proof}
\subsection{A maximality condition}\label{sec:MaxCond}
Observe that $U^+_\bA$ is generated by all ordered monomials in the generators $ E_i$, $i\in I$, as an $\bA$-module. There are only finitely many such ordered monomials of any given weight. Hence, for any finite dimensional subspace $V\subset U^+$ one obtains that $V_\bA$ is a submodule of a finitely generated $\bA$-module. As $\bA$ is Noetherian this implies that $V_\bA$ is finitely  generated. Similarly one shows that if $V$ is a finite dimensional subspace of $U^0{}'$ then $V_\bA$ is finitely generated. Combining these observations with the triangular decomposition \ref{eq:A-triang} one obtains the following result.
\begin{lem}\label{lem:finfree}
  Let $V\subset \uqgp$ be a finite dimensional subspace. Then $V_\bA$ is a finitely generated, free $\bA$-submodule of $\cU'_\bA$.
\end{lem}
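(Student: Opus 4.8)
Let $V\subset \uqgp$ be a finite dimensional subspace. Then $V_\bA$ is a finitely generated, free $\bA$-submodule of $\cU'_\bA$.

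The plan is to reduce the general case to the two special cases where $V$ sits inside $U^+$ or inside $U^0{}'$, and then patch these together using the $\bA$-form of the triangular decomposition \eqref{eq:A-triang}. Freeness will be automatic once finite generation is established, since $\bA$ is a principal ideal domain and $V_\bA$, being a submodule of the torsion-free module $\cU'_\bA$, is itself torsion-free, hence free by Proposition \ref{prop:PID}.(1). So the real content is finite generation.

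First I would record the two building blocks, which are essentially sketched in the statement's preamble. For $V\subset U^+$: the algebra $U^+_\bA$ is spanned over $\bA$ by ordered monomials $E_{i_1}\cdots E_{i_r}$, and for each weight $\beta\in Q^+$ there are only finitely many such monomials of weight $\beta$; hence the weight space $(U^+_\bA)_\beta$ is a finitely generated $\bA$-module. A finite dimensional $V\subset U^+$ meets only finitely many weight spaces, and $V_\bA=V\cap U^+_\bA$ decomposes as $\bigoplus_\beta (V\cap (U^+)_\beta)_\bA$, with each summand a submodule of the finitely generated module $(U^+_\bA)_\beta$; since $\bA$ is Noetherian, each summand is finitely generated, and so is the finite direct sum. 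The same argument verbatim handles $V\subset U^0{}'$, using that $U^0{}'_\bA$ is $\bA$-spanned by elements $K_\mu\cdot\prod_{i}(K_i;0)_q^{n_i}$ and that within any bounded exponent range there are finitely many of these (one can also use that $U^0{}'_\bA$ is a finitely generated $\bA$-algebra whose generators have no weight, so each "degree" piece is finitely generated). The analogous statement for $V\subset U^-$ is immediate by symmetry.

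Next I would treat a general finite dimensional $V\subset\uqgp$. Using the triangular decomposition \eqref{eq:triang-decomp}, choose a finite dimensional subspace $W=W^+\cdot W^0\cdot W^-$ with $W^+\subset U^+$, $W^0\subset U^0{}'$, $W^-\subset U^-$ each finite dimensional, such that $V\subseteq W^+\cdot W^0\cdot W^-$; this is possible since $V$ is finite dimensional and the product map is surjective onto $\uqgp$. By \eqref{eq:A-triang} the multiplication map $U^+_\bA\ot_\bA U^0_\bA{}'\ot_\bA U^-_\bA\to\cU'_\bA$ is an isomorphism, and it restricts to an isomorphism onto the image of $W^+_\bA\ot_\bA W^0_\bA{}'\ot_\bA W^-_\bA$, which is therefore a submodule of $\cU'_\bA$ isomorphic to a tensor product of three finitely generated $\bA$-modules, hence finitely generated. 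Now $V_\bA=V\cap\cU'_\bA$ is an $\bA$-submodule of this finitely generated module (one checks $V\subset W^+W^0W^-$ implies $V\cap\cU'_\bA$ lands inside the $\bA$-span of $W^+_\bA W^0_\bA{}' W^-_\bA$, using that the triangular decomposition of $\cU'_\bA$ is compatible with that of $\uqgp$), and by Noetherianity of $\bA$ it is finitely generated. Freeness then follows from Proposition \ref{prop:PID}.(1) as noted above.

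The main obstacle I anticipate is the compatibility bookkeeping in the last step: namely verifying that $V\cap\cU'_\bA$ is actually contained in the $\bA$-span of $W^+_\bA W^0_\bA{}' W^-_\bA$ rather than merely in $W^+W^0W^-$. Cleanly this uses that under the $\field(q)$-linear isomorphism $U^+\ot U^0{}'\ot U^-\cong\uqgp$, the $\bA$-form $\cU'_\bA$ corresponds exactly to $U^+_\bA\ot_\bA U^0_\bA{}'\ot_\bA U^-_\bA$ by \eqref{eq:A-triang}, so an element of $\uqgp$ lying in $W^+W^0W^-$ and in $\cU'_\bA$ has all its triangular components with $\bA$-coefficients, relative to fixed $\field(q)$-bases of $W^+,W^0,W^-$; invoking Lemma \ref{lem:obvious-isos} to compare the $\bA$-lattice generated by these bases with $W^{\pm}_\bA$ closes the gap. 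Once this is spelled out the rest is routine.
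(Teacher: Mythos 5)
Your route is the paper's route: the paper proves this lemma in the paragraph preceding its statement by exactly the combination you describe --- finitely many ordered monomials of each weight give finite generation of $V_\bA$ for finite dimensional $V\subset U^+$ (and $U^-$), the analogous claim for $U^0{}'$, the $\bA$-form \eqref{eq:A-triang} of the triangular decomposition to reduce the general case to these, Noetherianity of $\bA$, and Proposition \ref{prop:PID}(1) for freeness. Two of your intermediate formulations are imprecise but harmless: $V_\bA$ need not decompose as $\bigoplus_\beta (V\cap (U^+)_\beta)_\bA$ when $V$ is not weight homogeneous (what you need, and what is true because $U^+_\bA$ is spanned by weight vectors, is the containment $V_\bA\subseteq \bigoplus_{\beta\in F}(U^+_\bA)_\beta$ for the finite set $F$ of weights met by $V$); and in the patching step an element of $\cU'_\bA\cap W^+W^0W^-$ does not have $\bA$-coefficients with respect to arbitrary $\field(q)$-bases of $W^\pm, W^0$ --- the statement that suffices is that, after expanding in the finitely many ordered monomials spanning the weights met by $W^+$ and $W^-$ (which are part of a $\field(q)$-basis of $U^\pm$ and an $\bA$-basis of $U^\pm_\bA$), the middle components lie in $W^0\cap U^0{}'_\bA$, so that the general case reduces to the $U^0{}'$ case.

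The one genuine gap is precisely that $U^0{}'$ case: your claim that the $U^+$ argument applies ``verbatim'' does not hold. All of $U^0{}'$ sits in weight zero, so there is no grading to bound which spanning monomials $K_\mu\prod_i(K_i;0)_q^{n_i}$ can occur, and these monomials are not $\field(q)$-linearly independent (already $(K_i;0)_q$ is a combination of $1$ and $K_i$); hence an element of $V\cap U^0{}'_\bA$ could a priori be an $\bA$-combination of monomials of unbounded degree whose high-degree parts cancel, and neither ``bounded exponent range'' nor the finite generation of degree pieces rules this out. To close the gap one needs an extra input, for instance an $\bA$-basis of $U^0{}'_\bA$ which is also $\field(q)$-linearly independent in $U^0{}'$ (this is what the $\bA$-form results of \cite{b-HongKang02} behind \eqref{eq:A-triang} supply), after which the same ``only finitely many basis elements meet $V$'' argument applies; alternatively, a specialization argument in the spirit of Lemma \ref{lem:0spec} and Theorem \ref{thm:specialization} shows that the coefficients of an element of $V\cap U^0{}'_\bA$ with respect to a suitable basis of $V$ lying in $U^0{}'_\bA$ with $\field$-linearly independent images in $\cU'_1$ cannot have a pole at $q=1$, i.e.\ lie in $\bA$. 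To be fair, the paper disposes of this case with a single ``similarly''; but the specific reason you give is the one that does not work, so this step needs to be reargued.
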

The above lemma and Theorem \ref{thm:A-triang}.(5) imply the following result.
\begin{lem}\label{lem:free-quot}
  Let $(\uc,\us)\in \cC\times \cS$ be specializable and $u\in \uqgp$. Then 
  \begin{align*}
     \big( \qfield u + B_{\uc,\us}\big)_\bA \big/ (B_{\uc,\us})_\bA
  \end{align*}
  is a finitely generated, free $\bA$-module.
\end{lem}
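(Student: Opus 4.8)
The plan is to reduce the statement to Lemma \ref{lem:finfree} by intersecting with the quantum Iwasawa decomposition. Recall that Theorem \ref{thm:A-triang}.(5) gives the isomorphism of $\bA$-modules $(V_X^+)_\bA\ot_\bA (U'_\Theta)_\bA \ot_\bA (B_{\uc,\us})_\bA \cong \cU'_\bA$, and over $\qfield$ Proposition \ref{prop:qIwasawa} gives $V_X^+\ot U'_\Theta\ot B_{\uc,\us}\cong \uqgp$. First I would use this $\qfield$-decomposition to write the given element $u\in\uqgp$ uniquely as a finite sum
\begin{align*}
  u=\sum_{k} p_k\,a_k\,b_k
\end{align*}
with $p_k\in V_X^+$, $a_k\in U'_\Theta$, $b_k\in B_{\uc,\us}$; more usefully, letting $N$ denote the (finite-dimensional) span of the $p_k a_k$ inside $V_X^+ U'_\Theta$, one has $u\in N\cdot B_{\uc,\us}$, and by Proposition \ref{prop:qIwasawa} the multiplication map $N\ot B_{\uc,\us}\to \uqgp$ is injective, so that $\qfield u + B_{\uc,\us}\subseteq N'\cdot B_{\uc,\us}$ where $N'=N+\qfield 1$ is still finite-dimensional inside $V_X^+U'_\Theta$.

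The key step is then to identify $(\qfield u + B_{\uc,\us})_\bA$ with $N'_\bA\ot_\bA (B_{\uc,\us})_\bA$ sitting inside $\cU'_\bA\cong (V_X^+)_\bA\ot_\bA(U'_\Theta)_\bA\ot_\bA(B_{\uc,\us})_\bA$. Concretely, choose a basis $v_1,\dots,v_r$ of the finite-dimensional space $V_X^+U'_\Theta$ adapted so that $v_1,\dots,v_t$ span $N'$ and $v_1=1$; by Lemma \ref{lem:finfree} each $(\qfield v_i)_\bA$ and $N'_\bA$ is finitely generated and free over $\bA$, and by replacing the $v_i$ with suitable $\bA^\times$-multiples and $\bA$-combinations I may assume the $v_i$ form part of an $\bA$-basis of $N'_\bA$ for $i\le t$. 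Using Theorem \ref{thm:A-triang}.(5), an element $x=\sum_i v_i\, b_i$ with $b_i\in B_{\uc,\us}$ lies in $\cU'_\bA$ if and only if all $b_i\in(B_{\uc,\us})_\bA$ (this is exactly the content of the $\bA$-form of the Iwasawa decomposition, after checking that the $v_i$ are chosen compatibly with the $\bA$-lattice $N'_\bA$). Hence $(\qfield u+B_{\uc,\us})_\bA = \bigoplus_{i=1}^{t} v_i\,(B_{\uc,\us})_\bA$, and quotienting by $(B_{\uc,\us})_\bA=v_1\,(B_{\uc,\us})_\bA$ leaves $\bigoplus_{i=2}^{t} v_i\,(B_{\uc,\us})_\bA$, which as a quotient of free $\bA$-modules by a free direct summand is again free; it is finitely generated because $t-1$ is finite and each summand $v_i\,(B_{\uc,\us})_\bA$ maps onto a cyclic image. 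Actually a cleaner way to conclude freeness of the quotient is to note it is a submodule of $\bigoplus_{i=2}^r v_i\,\cU'_\bA / (\text{nothing})$ which is torsion-free, hence by Proposition \ref{prop:PID} together with finite generation it is free.

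The main obstacle will be the bookkeeping in choosing the $v_i$ so that $N'_\bA$ is genuinely a free $\bA$-direct summand of $(V_X^+U'_\Theta)_\bA$ compatible with the $\bA$-form decomposition of Theorem \ref{thm:A-triang}.(5): one needs that $N'_\bA = (V_X^+U'_\Theta)_\bA \cap N'$, which follows since $N'$ is a $\qfield$-subspace and $\bA$-submodules of $\qfield$-spaces are recovered by intersection (using Lemma \ref{lem:obvious-isos}.(1)), and that the multiplication $(V_X^+U'_\Theta)_\bA\ot_\bA(B_{\uc,\us})_\bA\to\cU'_\bA$ is injective with image exactly $\cU'_\bA$ — but the latter is just the combination of Theorem \ref{thm:A-triang}.(5) with Theorem \ref{thm:A-triang}.(2) and (1). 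Once these identifications are in place, finite generation comes from Lemma \ref{lem:finfree} applied to the finite-dimensional space $N'\subset V_X^+U'_\Theta$, and freeness is immediate from Proposition \ref{prop:PID}.(1) since the quotient is a finitely generated torsion-free $\bA$-module.
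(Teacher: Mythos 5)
There is a genuine gap, and it sits exactly at your ``key step''. The identification $(\qfield u + B_{\uc,\us})_\bA = \bigoplus_{i=1}^{t} v_i\,(B_{\uc,\us})_\bA$ is false: $\qfield u + B_{\uc,\us}$ is only a one-dimensional enlargement of $B_{\uc,\us}$, whereas $N'\cdot B_{\uc,\us}$ is far larger. If $u=\sum_{i\ge 2} v_i b_i + b_1$ in the Iwasawa decomposition, then an element $\lambda u + b$ of $\qfield u + B_{\uc,\us}$ has $v_i$-coefficient equal to the \emph{fixed} element $\lambda b_i$ for $i\ge 2$, not an arbitrary element of $B_{\uc,\us}$; so intersecting with $\cU'_\bA$ can never produce the whole direct sum $\bigoplus_i v_i\,(B_{\uc,\us})_\bA$. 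Worse, the module you write down contradicts the statement being proved: $(B_{\uc,\us})_\bA$ has infinite rank over $\bA$ (for specializable parameters it contains the $\qfield$-linearly independent elements $B_J$, $J\in\cJ$, by Corollary \ref{cor:specialize-Bi} and Proposition \ref{prop:leftBasisU}), so $\bigoplus_{i\ge 2} v_i\,(B_{\uc,\us})_\bA$ is \emph{not} finitely generated; the sentence ``it is finitely generated because $t-1$ is finite and each summand maps onto a cyclic image'' has no force, and your fallback ``torsion-free hence free by Proposition \ref{prop:PID}'' still presupposes finite generation, which is precisely what is missing. (The intermediate claim that $x=\sum_i v_i b_i\in\cU'_\bA$ forces $b_i\in(B_{\uc,\us})_\bA$ also needs the $v_i$ to be part of an $\bA$-basis of the full lattice $(V_X^+)_\bA(U'_\Theta)_\bA$, and extending a basis of $N'_\bA$ to such a basis is not automatic since the ambient module is of infinite rank; but this is secondary.)

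The missing idea is to bound the $B_{\uc,\us}$-coordinate of $u$ as well, and to split off the counit components of the other two factors, so that the quotient injects into the $\bA$-form of a \emph{finite-dimensional} space. Concretely (this is what the paper does): choose finite-dimensional subspaces $V\subset V_X^+$, $T\subset U'_\Theta$, $D\subset B_{\uc,\us}$ with $1\in V\cap T$ and $u\in V\ot T\ot D$, and set $V_+=V\cap\ker\vep$, $T_+=T\cap\ker\vep$. Then $\qfield u + B_{\uc,\us}\subseteq (V_+\ot T\ot D)\oplus(\qfield\ot T_+\ot D)\oplus B_{\uc,\us}$, and since $V_\bA=(V_+)_\bA\oplus\bA$ and $T_\bA=(T_+)_\bA\oplus\bA$, Theorem \ref{thm:A-triang}.(5) gives an injective $\bA$-module map from $(\qfield u + B_{\uc,\us})_\bA/(B_{\uc,\us})_\bA$ into $\bigl((V_+)_\bA\ot_\bA T_\bA\ot_\bA D_\bA\bigr)\oplus\bigl(\bA\ot_\bA (T_+)_\bA\ot_\bA D_\bA\bigr)$, which is finitely generated and free by Lemma \ref{lem:finfree}; one then concludes with Proposition \ref{prop:PID} and the Noetherianity of $\bA$. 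Your overall strategy (Iwasawa decomposition, Lemma \ref{lem:finfree}, PID properties) is the right one; the step that fails is replacing the one-dimensional extension $\qfield u$ by the full module $N'_\bA\ot_\bA(B_{\uc,\us})_\bA$, which loses both the correct $\bA$-form and finite generation.
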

\begin{proof}
  If $u\in B_{\uc,\us}$ then there is nothing to show. Otherwise, choose finite dimensional subspaces
  $V\subset V_X^+$, $T\subset U'_\Theta$, and $D \subset B_{\uc,\us}$ such that $u\in V\ot T \ot D$ with respect to the quantum Iwasawa decomposition. We may assume that the unit $1$ of $\uqgp$ is contained in both $V$ and $T$. Set $V_+=V\cap \ker{\vep}$ and $T_+=T\cap \ker{\vep}$. 
Then the subspace $\qfield u + B_{\uc,\us}$ is contained in the direct sum 
\begin{align*}
  (V_+\ot T \ot D) \bigoplus (\qfield \ot T_+ \ot D) \bigoplus B_{\uc,\us}.
\end{align*}
Observe that $V_{\bA}=(V_+)_\bA \oplus \bA$ and $T_{\bA}=(T_+)_\bA \oplus \bA$. 
By Theorem \ref{thm:A-triang}.(5) this implies that $(\qfield u + B_{\uc,\us})_\bA$ is contained in 
  \begin{align*}
  ((V_+)_\bA\ot_\bA T_\bA \ot_\bA D_\bA) \bigoplus (\bA \ot_\bA (T_+)_\bA \ot_\bA D_\bA) \bigoplus (B_{\uc,\us})_\bA.
\end{align*}
Hence one obtains an injective $\bA$-module homomorphism
\begin{align*}
  \big( \qfield u + B_{\uc,\us}\big)_\bA \big/ (B_{\uc,\us})_\bA \rightarrow ((V_+)_\bA\ot_\bA T_\bA \ot_\bA D_\bA) \bigoplus (\bA \ot_\bA (T_+)_\bA \ot_\bA D_\bA).
\end{align*}
By Lemma \ref{lem:finfree} the $\bA$-module on the right hand side is finitely generated and free. Hence, by Proposition \ref{prop:PID}, so is $\displaystyle \big(\qfield u + B_{\uc,\us}\big)_\bA \big/ (B_{\uc,\us})_\bA$ as $\bA$ is Noetherian.  
\end{proof}
\begin{thm}\label{thm:maxcond}  
   Let $W$ be a vector subspace of $\uqgp$ which contains $B_{\uc,\us}$ for some specializable $(\uc,\us)\in \cC\times \cS$. If $\overline{W}=U(\kfrak')$ then $W=B_{\uc,\us}$.
\end{thm}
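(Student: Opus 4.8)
The plan is to argue by contradiction, using that $B_{\uc,\us}\subseteq W$ always holds by construction. Suppose $W\supsetneq B_{\uc,\us}$ and fix $u\in W\setminus B_{\uc,\us}$. Multiplying $u$ by a suitable power of $q-1$ changes neither the subspace $\qfield u+B_{\uc,\us}$ nor the fact that $u\notin B_{\uc,\us}$ (since $B_{\uc,\us}$ is a $\qfield$-subspace), so I may assume $u\in\cU'_\bA$; then $u\in(\qfield u+B_{\uc,\us})_\bA$ while $u\notin (B_{\uc,\us})_\bA=B_{\uc,\us}\cap\cU'_\bA$.

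Write $N=\qfield u+B_{\uc,\us}$. By Lemma \ref{lem:free-quot} the $\bA$-module $M:=N_\bA/(B_{\uc,\us})_\bA$ is finitely generated and free, and it is nonzero because the class of $u$ is nonzero in it; hence $M\cong\bA^r$ with $r\ge 1$ and $\field\ot_\bA M\cong\field^r\ne 0$. Since a free module is flat, applying $\field\ot_\bA(-)$ to the short exact sequence $0\to(B_{\uc,\us})_\bA\to N_\bA\to M\to 0$ yields a short exact sequence $0\to\overline{B_{\uc,\us}}\to\overline{N}\to\field^r\to 0$, in which the left-hand map is, by functoriality, the inclusion $\overline{B_{\uc,\us}}\subseteq\overline{N}$ of subspaces of $\cU'_1$. (As recorded after the definition of $\overline{(\,\cdot\,)}$, each $\overline{V}$ sits inside $\cU'_1$: if $(q-1)^k x\in V_\bA$ then $x\in V$ because $V$ is a $\qfield$-subspace, so $x\in V_\bA$, i.e. $\cU'_\bA/V_\bA$ is torsion-free, hence flat by Proposition \ref{prop:PID}, so $\field\ot_\bA V_\bA\hookrightarrow\cU'_1$.) Consequently $\overline{B_{\uc,\us}}$ is a \emph{proper} subspace of $\overline{N}$.

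On the other hand $N\subseteq W$, so $N_\bA\subseteq W_\bA$ and hence $\overline{N}\subseteq\overline{W}$ inside $\cU'_1$. Combining with Theorem \ref{thm:Bcspecial}, which gives $\overline{B_{\uc,\us}}=U(\kfrak')$, and the hypothesis $\overline{W}=U(\kfrak')$, we obtain $U(\kfrak')=\overline{B_{\uc,\us}}\subsetneq\overline{N}\subseteq\overline{W}=U(\kfrak')$, a contradiction. Therefore $W=B_{\uc,\us}$.

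The only nontrivial ingredient is Lemma \ref{lem:free-quot}, whose proof rests on the $\bA$-form of the quantum Iwasawa decomposition (Theorem \ref{thm:A-triang}.(5)); once the quotient $N_\bA/(B_{\uc,\us})_\bA$ is known to be free, the rest is just the exactness of specialization on the resulting sequence. Accordingly I do not anticipate a serious obstacle here beyond bookkeeping — in particular checking that all the occurrences of $\overline{(\,\cdot\,)}$ refer to subspaces of one and the same $\cU'_1$, which is the role of the torsion-freeness remark above.
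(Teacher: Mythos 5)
Your proof is correct and follows essentially the same route as the paper: both arguments hinge on Lemma \ref{lem:free-quot} (hence on the $\bA$-form of the quantum Iwasawa decomposition, Theorem \ref{thm:A-triang}.(5)) and derive a contradiction with $\overline{B_{\uc,\us}}=\overline{W}=U(\kfrak')$. The only difference is the final bookkeeping: the paper chooses a representative of a basis element of the free quotient and applies Lemma \ref{lem:0spec}, whereas you tensor the short exact sequence $0\to (B_{\uc,\us})_\bA\to (\qfield u+B_{\uc,\us})_\bA\to M\to 0$ with $\field$ and use flatness of the free quotient $M$ --- a clean repackaging that also makes explicit the implicitly used fact that $\overline{V}$ embeds in $\cU'_1$ for any $\qfield$-subspace $V$.
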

\begin{proof}
   Assume that there exists $u\in W \setminus B_{\uc,\us}$. By Lemma \ref{lem:free-quot} the nonzero $\bA$-module $\displaystyle   N_\bA=\big( \qfield u + B_{\uc,\us}\big)_\bA \big/ (B_{\uc,\us})_\bA$ is free. Let $v\in \big( \qfield u + B_{\uc,\us}\big)_\bA$ be a representative of a basis element of $N_\bA$. The assumption $\overline{W}=U(\kfrak')$ together with Theorem \ref{thm:Bcspecial} imply that $\overline{v}=\overline{b}$ for some $b\in (B_{\uc,\us})_\bA$.  By Lemma \ref{lem:0spec} one obtains
 $v-b\in(q-1) \cU'_\bA\cap\big( \qfield u + B_{\uc,\us}\big)=(q-1)\big( \qfield u + B_{\uc,\us}\big)_\bA$. This contradicts the assumption that $v$ represents a basis element of $N_\bA$.     
\end{proof}
\subsection{References to Letzter's constructions}
  Specialization is a major theme in both \cite{a-Letzter99a} and \cite{MSRI-Letzter}. For finite dimensional $\gfrak$, Theorems \ref{thm:Bcspecial} and \ref{thm:maxcond} are stated as \cite[Theorem 4.8]{a-Letzter99a} and \cite[(7.24), (7.25)]{MSRI-Letzter}. The additional assumption that $C$ is an algebra made in \cite[Theorem 4.8]{a-Letzter99a} and \cite[(7.25)]{MSRI-Letzter} seems not to be necessary for the proof. The fact that the triangular decompositions from Section \ref{sec:triang} also hold for the $\bA$-forms, Theorem \ref{thm:A-triang}, is not made explicit in Letzter's papers. Instead she uses a process of `rescaling' and  `subtracting', see \cite[proof of Theorem 4.8]{a-Letzter99a} and \cite[before (7.23)]{MSRI-Letzter}. The fact that this process terminates seems to boil down to the properties of the principal ideal domain $\bA$ stated in Proposition \ref{prop:PID}. A proof of Lemma \ref{lem:0spec} in a similar spirit can be found in \cite[Theorem 2.5]{a-Letzter99a}. As already indicated in Subsection \ref{sec:LetzterEquiv}, Letzter's analysis proceeds one step beyond the present paper with the classification theorem \cite[Theorem 5.8]{a-Letzter99a}, \cite[Theorem 7.5]{MSRI-Letzter}. It is to be expected that a rigorous proof of this result in the Kac-Moody case will involve the triangular decompositions of $\bA$-forms given in Theorem \ref{thm:A-triang}. 
\section{Twisted quantum loop algebras of the second kind}\label{sec:TwistedQLoop}
  In this section and the next we will apply the theory developed so far to construct two classes of quantum symmetric pair coideal subalgebras which in special cases have been considered previously in the literature. The present section is devoted to quantum versions of twisted loop algebras of the second kind which are defined in general in Subsection \ref{sec:symLoop}. It is then shown, in Subsection \ref{sec:symLoopFRT}, how the twisted $q$-Yangians introduced by Molev, Ragoucy, and Sorba in \cite{a-MolRagSor03} appear as examples of such twisted quantum loop algebras. 
\subsection{Symmetric loop algebras and their quantization}\label{sec:symLoop}
Let $\gfrak$ be a finite-dimensional simple Lie algebra over $\field$ and let $\theta:  \gfrak\rightarrow \gfrak$ be an involutive automorphism. Assume that $\rank(\gfrak)=n$ and that $I=\{1,2,\dots,n\}$.
As before write $A=(a_{ij})_{i,j\in I}$ to denote the Cartan matrix of $\gfrak$. By Theorem \ref{classThm} the involution $\theta$ is determined up to conjugation by an admissible pair $(X, \tau)$.

Let $\hat{\gfrak}=\gfrak\ot \field[t,t^{-1}]\oplus \field c\oplus \field d$ be the corresponding untwisted affine Kac-Moody algebra as defined in \cite[7.2]{b-Kac1}. Set $\hat{I}=\{0,1,\dots,n\}$ and recall that the Cartan matrix of $\hat{\gfrak}$ is given by $\hat{A}=(a_{ij})_{i,j\in \hat{I}}$. Here
$a_{00}=2$ and $a_{i0}=a_{0i}=-\alpha_{max}(h_i)$ for all $i\in I$ where $\alpha_{max}$ denotes the highest root in the finite root system corresponding to $\gfrak$. Let $\hat{\hfrak}$ and $\hat{\bfrak}$ denote the standard Cartan and Borel subalgebra of $\hat{\gfrak}$, respectively. Consider the involutive automorphism $\hat{\theta}:\hat{\gfrak}\rightarrow \hat{\gfrak}$ given by 
\begin{align}\label{eq:thetah}
  \hat{\theta}(x\ot t^n)=\theta(x)\ot t^{-n},\qquad\hat{\theta}(c)=-c, \qquad \hat{\theta}(d)=-d.
\end{align}
Let $\hat{\tau}:\hat{I}\rightarrow \hat{I}$ be the map given by $\hat{\tau}|_{I}=\tau$ and $\hat{\tau}(0)=0$. It follows from the explicit description of $\hat{\theta}$ that the induced involution $\hat{\Theta}:\hat{\hfrak}^\ast \rightarrow \hat{\hfrak}^\ast$ is given by $\hat{\Theta}=-w_X \circ \hat{\tau}$. Hence $(\hat{\hfrak},\hat{\bfrak})$ is a split pair for $\hat{\theta}$ in the sense of Definition \ref{splitpair} and $\hat{\theta}$ is an involutive automorphism of the second kind. The preceding discussion implies the following lemma.
\begin{lem}
  Let $(X,\tau)$ be an admissible pair for $\gfrak$ and $\theta=\theta(X,\tau)$.
  \begin{enumerate}
    \item There exists  $\hat{\tau}\in \Aut(\hat{I},X)$ such that $\hat{\tau}(0)=0$ and $\hat{\tau}(i)=\tau(i)$ for all $i\in I$.
    \item The pair $(X,\hat{\tau})$ is admissible for $\hat{\gfrak}$. 
    \item The automorphism $\hat{\theta}:\hat{\gfrak} \rightarrow \hat{\gfrak}$ given by \eqref{eq:thetah} is the involutive automorphism of the second kind corresponding to the admissible pair $(X,\hat{\tau})$.
  \end{enumerate}  
\end{lem}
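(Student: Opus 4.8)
The plan is to prove part (1) by a direct check on the affine Cartan matrix, part (2) by verifying the three conditions of Definition \ref{admissible} for $(X,\hat{\tau})$, and part (3) by invoking the classification machinery behind Theorem \ref{classThm} (via the notion of split pair, Definition \ref{splitpair}) together with the explicit form of $\hat{\theta}$ in \eqref{eq:thetah}. The only genuinely new computation lies in condition~(\ref{adm3}) at the affine node.

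For part (1): by construction $\hat{\tau}$ is a permutation of $\hat{I}$ with $\hat{\tau}^2=\id_{\hat{I}}$ and $\hat{\tau}(X)=\tau(X)=X$, so it suffices to check $a_{\hat{\tau}(i)\hat{\tau}(j)}=a_{ij}$ for all $i,j\in\hat{I}$. For $i,j\in I$ this holds because $\tau\in\Aut(A)$, and $a_{\hat{\tau}(0)\hat{\tau}(0)}=a_{00}$ is trivial. For the coupling to the affine node one computes, for $i\in I$, that $a_{0,\hat{\tau}(i)}=a_{0,\tau(i)}=-\alpha_{max}(h_{\tau(i)})=-(\tau^{-1}\alpha_{max})(h_i)=-\alpha_{max}(h_i)=a_{0i}$, where the highest root $\alpha_{max}$ of the finite root system of $\gfrak$ is fixed because it is invariant under every diagram automorphism; the relation $a_{\hat{\tau}(i),0}=a_{i0}$ follows in the same way. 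Hence $\hat{\tau}\in\Aut(\hat{A},X)$.

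For part (2): since $X\subseteq I$ lies in the finite part of $\hat{I}$, the parabolic subgroup $W_X$, its longest element $w_X$, the subsystem $\Phi_X$ and the element $\rho_X^\vee$ are literally the same whether formed inside $\gfrak$ or inside $\hat{\gfrak}$. Condition~(1) of Definition \ref{admissible} is part (1); condition~(\ref{adm2}) (that $\hat{\tau}$ acts on $X$ as $-w_X$) coincides word for word with condition~(\ref{adm2}) for $(X,\tau)$, which holds by hypothesis. For condition~(\ref{adm3}), if $j\in I\setminus X$ with $\hat{\tau}(j)=j$ this is again condition~(\ref{adm3}) for $(X,\tau)$; the only new case is $j=0$, where $\hat{\tau}(0)=0$ and $0\notin X$, so one must show $\alpha_0(\rho_X^\vee)\in\Z$. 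As $\delta=\alpha_0+\alpha_{max}$ is the null root and $\delta(h_i)=0$ for every $i\in I$, while $\rho_X^\vee$ lies in the $\Q$-span of $\{h_i\mid i\in X\}$, one gets $\delta(\rho_X^\vee)=0$ and thus $\alpha_0(\rho_X^\vee)=-\alpha_{max}(\rho_X^\vee)$. Writing $\alpha_{max}=\sum_{i\in I}c_i\alpha_i$ with marks $c_i$, the contributions to $\alpha_{max}(2\rho_X^\vee)$ from $i\in X$ equal $2c_i$; those from $i\in I\setminus X$ with $\tau(i)=i$ are even by condition~(\ref{adm3}) for $(X,\tau)$; and those from a pair $i\neq\tau(i)$ combine to $2c_i\,\alpha_i(2\rho_X^\vee)$, using $c_i=c_{\tau(i)}$ (as $\tau$ fixes $\alpha_{max}$) and $\alpha_{\tau(i)}(2\rho_X^\vee)=\alpha_i(2\rho_X^\vee)$ (as $\tau$ permutes $\Phi_X^+$, hence fixes $\rho_X^\vee$). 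So $\alpha_{max}(2\rho_X^\vee)\in2\Z$, giving $\alpha_0(\rho_X^\vee)\in\Z$, and $(X,\hat{\tau})$ is admissible.

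For part (3): the discussion preceding the lemma shows that $(\hat{\hfrak},\hat{\bfrak})$ is a split pair for $\hat{\theta}$ and that $\hat{\theta}$ induces $\hat{\Theta}=-w_X\circ\hat{\tau}$ on $\hat{\hfrak}^\ast$, so $\hat{\theta}$ is an involution of the second kind. One then checks that $\hat{\theta}$ satisfies conditions \eqref{eq:Lcond1}-\eqref{eq:Lcond3} with respect to $X\subseteq\hat{I}$: $\hat{\theta}(\hat{\hfrak})=\hat{\hfrak}$; $\hat{\theta}|_{\gfrak_X}=\theta|_{\gfrak_X}=\id_{\gfrak_X}$ (the latter being built into \eqref{tauDef}, cf.\ Lemma \ref{generators}); and for $i\in\hat{I}\setminus X$ one has $\hat{\theta}(e_i)\in\hat{\nfrak}^-$, $\hat{\theta}(f_i)\in\hat{\nfrak}^+$ — for $i\in I\setminus X$ this is inherited from $\theta$, while $e_0$ has $\hat{\gfrak}$-weight $\alpha_0=\delta-\alpha_{max}$, so $\hat{\theta}(e_0)$ has weight $-\delta+w_X\alpha_{max}$, a negative root of $\hat{\gfrak}$ since $w_X$ permutes $\Phi^+\setminus\Phi_X^+$ (which contains $\alpha_{max}$ when $X\subsetneq I$; the case $X=I$ is checked directly and gives weight $-\delta-\alpha_{max}$), and symmetrically for $f_0$. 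By Theorem \ref{classThm} the admissible pair attached to $\hat{\theta}$ is determined by its induced map on $\hat{\hfrak}^\ast$, which is $-w_X\hat{\tau}$; combined with part (2) this identifies that $\Aut(\hat{A})$-orbit with the orbit of $(X,\hat{\tau})$, so $\hat{\theta}$ is the involution of the second kind corresponding to $(X,\hat{\tau})$.

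The main obstacle is this final identification: passing from the equality of induced maps $\hat{\Theta}=-w_X\hat{\tau}$ to the equality of $\Aut(\hat{A})$-orbits of admissible pairs. This is exactly the content of the split-pair correspondence underpinning Theorem \ref{classThm}, and the cleanest route is to quote that correspondence (Definition \ref{splitpair} and the appendix) rather than re-derive it; the only self-contained new step along the way is the parity argument for $\alpha_{max}(2\rho_X^\vee)$ in part (2).
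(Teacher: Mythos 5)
Your proof is correct and follows the same route as the paper, which derives the lemma from the observation that $\hat{\theta}$ induces $\hat{\Theta}=-w_X\circ\hat{\tau}$ on $\hat{\hfrak}^\ast$ and that $(\hat{\hfrak},\hat{\bfrak})$ is a split pair, then appeals to the classification behind Theorem \ref{classThm}. The paper gives no further details, so your explicit verification of condition (3) of Definition \ref{admissible} at the affine node --- the parity argument showing $\alpha_{max}(2\rho_X^\vee)\in 2\Z$ via $\tau$-invariance of $\alpha_{max}$ and of $\rho_X^\vee$ --- is a genuine and correct filling-in of a step the paper leaves implicit.
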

By the above lemma, all the results of this paper can be applied to the involutive automorphism $\hat{\theta}$ corresponding to the admissible pair $(X,\hat{\tau})$. In particular, for any $\uc\in \cC$ and $\us \in \cS$ one obtains the corresponding quantum symmetric pair coideal subalgebra $B_{\uc,\us}$ of $\uqgp$.

The Lie algebra $\hat{\gfrak}'=\gfrak\ot \field[t,t^{-1}]\oplus \field c$ has a one-dimensional center spanned by $c$. The Cartan matrix of $\hat{\gfrak}$ is of rank $n$. There exist uniquely determined integers $b_j\in \N$, $j=0,1,\dots,n$, with $b_0=1$, given in \cite[p. 54]{b-Kac1}, such that \begin{align*}
  \sum_{j=0}^n b_j a_{ij}=0 \qquad \mbox{for all $i\in \hat{I}$}.
\end{align*}  
The above relation implies that $\alpha_i(\sum_{j=0}^n b_j\epsilon_j h_j)=0$ for all $i\in \hat{I}$ and therefore $\sum_{i=0}^n b_j \epsilon_j h_j$ spans the center of $\gfrak'$. The quotient
\begin{align*}
  L(\gfrak)=\hat{\gfrak}'/\field c\cong \gfrak\ot \field[t,t^{-1}]
\end{align*}
is the loop algebra corresponding to $\gfrak$. The involution $\hat{\theta}$ of $\hat{\gfrak}'$ induces an involutive automorphism $\hat{\theta}_L : L(\gfrak)\rightarrow L(\gfrak)$ and the invariant Lie subalgebra $\kfrak'_L=\{x \in L(\gfrak)\,|\,\hat{\theta}_L(x)=x\}$ is isomorphic to $\kfrak'$ because $\hat{\theta}(c)=-c$.

Consider the quantum analog of the central element $c$ given by $K_c=\prod_{j=0}^n K_j^{b_j}$. The Hopf algebra $\uqLg$ defined by
\begin{align*}
  \uqLg=\uqgp/(K_c-1)\uqgp
\end{align*}
is a $q$-analog of the universal enveloping algebra of the loop algebra $L(\gfrak)$. Let $\pi:\uqgp \rightarrow \uqLg$ denote the canonical projection. We call the image $\pi(B_{\uc,\us})$ a \textit{twisted quantum loop algebra (of the second kind)}. It is a right coideal subalgebra of $\uqLg$.  By the following lemma the twisted quantum loop algebra $\pi(B_{\uc,\us})$ is isomorphic to $B_{\uc,\us}$ as an algebra.
\begin{lem}
  For any $\uc \in \cC$ and $\us \in \cS$ one has $(K_c-1)\uqgp \cap B_{\uc,\us}=\emptyset$.
\end{lem}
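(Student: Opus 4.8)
The plan is to prove that $(K_c-1)\uqgp \cap B_{\uc,\us} = \{0\}$ (the statement as written says $\emptyset$, but it must mean the intersection is trivial as a vector space). First I would set $K_c = \prod_{j=0}^n K_j^{b_j} = K_\delta$ where $\delta = \sum_{j=0}^n b_j\alpha_j$ is the null root of $\hat{\gfrak}$. The key observation is that $\delta\in Q^{\hat\Theta}$: since $\hat\Theta = -w_X\circ\hat\tau$ and the null root is $W$-invariant and fixed by every diagram automorphism, we have $\hat\Theta(\delta) = -w_X\hat\tau(\delta) = -w_X\delta = -\delta$? No — I need to be careful. The null root $\delta$ satisfies $w\delta = \delta$ for all $w\in W$ (it spans the radical of the bilinear form restricted appropriately), and $\hat\tau(\delta)=\delta$ since $\hat\tau$ permutes the $b_j$-weighted simple roots preserving $\delta$. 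Hence $\hat\Theta(\delta) = -w_X\hat\tau(\delta) = -\delta$, so in fact $K_\delta\notin U^0_\Theta{}'$ in general. I would instead argue directly: $K_c - 1$ is a specific nonzero element of $U^0{}'$, and the claim is really that no nonzero scalar multiple of $(K_c-1)$ times an element of $\uqgp$ lands in $B_{\uc,\us}$ unless it is zero.

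The cleanest route I would take exploits the one-sided module basis from Proposition \ref{prop:leftBasisU}: the multiplication map $U^+ \ot U^0{}' \ot B_{\uc,\us,\cJ} \to \uqgp$ is an isomorphism, and more refined, Proposition \ref{prop:leftBasis} gives that $B_{\uc,\us} = \bigoplus_{J\in\cJ}\cM_X^+ U^0_\Theta{}' B_J$. Suppose $(K_c-1)x \in B_{\uc,\us}$ for some $x\in\uqgp$, $(K_c-1)x\neq 0$. Write $x = \sum_{J\in\cJ} a_J B_J$ with $a_J \in U^+U^0{}'$ using Proposition \ref{prop:leftBasisU}; then $(K_c-1)x = \sum_J (K_c-1)a_J B_J$, and since $(K_c-1)a_J \in U^+U^0{}'$, the uniqueness in Proposition \ref{prop:leftBasisU} applied to the element $(K_c-1)x\in B_{\uc,\us} \subseteq \uqgp$ forces $(K_c-1)a_J \in \cM_X^+ U^0_\Theta{}'$ for every $J$. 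So the problem reduces to the single-factor claim: if $u\in U^+U^0{}'$ and $(K_c-1)u \in \cM_X^+ U^0_\Theta{}'$, then $(K_c-1)u = 0$, i.e. $u=0$. Writing $u = \sum_\beta u_\beta K_{\mu_\beta}$ in terms of the PBW-type decomposition $U^+\ot U^0{}'$, and using that $K_c$ is central modulo commutation with $E_i$'s which only rescales, one reduces further to: for a fixed weight component, $(K_c-1)\cdot(\text{element of }U^+_\alpha U^0{}')$ lies in $\cM_X^+ U^0_\Theta{}'$ only if it vanishes.

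The heart of the matter is a weight/support argument on $U^0{}'$. Decompose $U^0{}' \cong \field(q)[Q]$ via $K_\beta\leftrightarrow \beta$, and use the Iwasawa-type decomposition $U^0{}' \cong U'_\Theta \ot U^0_\Theta{}'$ from \eqref{eq:U0-decomp}. An element of $\cM_X^+ U^0_\Theta{}'$ has its $U^0{}'$-support contained in $\{K_\gamma : \gamma\in Q^\Theta,\ \gamma = \nu \text{ for }\nu\in\Z X \text{ shifted appropriately}\}$ — more precisely, inside $U^0_\Theta{}'$ in the group-algebra sense combined with the $\cM_X^+$ part carrying only weights in $\N_0 X$. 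Multiplication by $(K_c - 1) = K_\delta - K_0$ shifts a monomial $K_\gamma$ to $K_{\gamma+\delta} - K_\gamma$. If the original element has nonzero support on some $K_\gamma$ with $\gamma$ maximal in the relevant ordering among supported weights, then $K_{\gamma+\delta}$ appears in the product with the same nonzero coefficient; but $\gamma+\delta$ cannot lie in $Q^\Theta$ (using $\hat\Theta(\delta) = -\delta\neq\delta$, so $\gamma\in Q^{\hat\Theta}$ would give $\hat\Theta(\gamma+\delta) = \gamma - \delta\neq\gamma+\delta$), contradicting membership in $\cM_X^+ U^0_\Theta{}'$. The main obstacle I anticipate is making the bookkeeping of the $U^0{}'$-support precise after one strips off the $\cM_X^+$ and $U^+$ factors — one must argue that the top-degree $U^0{}'$-part of $(K_c-1)u$ is genuinely $(K_c-1)$ times the top part of $u$ and doesn't get cancelled by commutators; this is handled by working inside the free $U^+U^0{}'$-module structure where $K_c$ acts cleanly, plus the observation that $K_c$ commutes with everything in $\cM_X^0$ and $U^0_\Theta{}'$. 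Once that reduction is clean, the contradiction via $\hat\Theta(\delta) = -\delta$ closes the argument.
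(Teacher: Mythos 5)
Your reduction is sound, and it is a mild variant of the paper's: instead of the quantum Iwasawa decomposition of Proposition \ref{prop:qIwasawa}, which the paper combines with the centrality of $K_c-1$ to reduce the lemma to $(K_c-1)U'_\Theta\cap U^0_\Theta{}'=\{0\}$, you use the free-module bases of Propositions \ref{prop:leftBasisU} and \ref{prop:leftBasis} to force $(K_c-1)a_J\in \cM_X^+U^0_\Theta{}'$ for every $J$, reducing everything to the claim that $u\in U^+U^0{}'$ and $(K_c-1)u\in \cM_X^+U^0_\Theta{}'$ imply $(K_c-1)u=0$. That part is fine (note that $K_c=K_\delta$ is exactly central in $\uqgp$, since $(\delta,\alpha_i)=0$ for all $i$, so no "rescaling" caveat is needed), and stripping off the $U^+$-factor is routine once you pick a weight-homogeneous basis of $U^+$ extending one of $\cM_X^+$.

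The genuine gap is in your closing support argument. You take $\gamma$ maximal in the support of the torus part of $u$, observe that $K_{\gamma+\delta}$ survives in the product, and assert $\gamma+\delta\notin Q^{\Theta}$; but your justification ($\Theta(\gamma+\delta)=\gamma-\delta\neq\gamma+\delta$) presupposes $\gamma\in Q^{\Theta}$, which is not known -- $\gamma$ is an arbitrary element of the support of $u$, not of an element of $U^0_\Theta{}'$. Indeed $\gamma+\delta$ can perfectly well lie in $Q^{\Theta}$ while $\gamma$ does not (take $\gamma=\beta-\delta$ with $\beta\in Q^{\Theta}$), so inspecting only the top term yields no contradiction: it only tells you that the top term of $u$ sits in $\alpha_0$-degree $-1$. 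The missing ingredient is to work with the $\Z$-grading of $U^0{}'\cong\field(q)[Q]$ by the coefficient of $\alpha_0$ (here $Q^{\Theta}$ lies in degree $0$, because $\hat{\tau}(0)=0$ and $w_X$ only shifts by $\Z X$ with $0\notin X$, while $K_c$ has degree $+1$) and to look at \emph{both} ends of the support: if $h\neq 0$ has top degree $d$ and bottom degree $e$, then $(K_c-1)h$ has nonzero components in degrees $d+1$ (from $K_ch$) and $e$ (from $-h$), and since $e\le d$ these cannot both be $0$, so $(K_c-1)h\notin U^0_\Theta{}'$. (Using a $\Z$-grading also repairs the vagueness of "maximal in the relevant ordering": support elements need not be comparable in the partial order on $Q$.) This two-ended degree count is exactly what the paper's proof does after the Iwasawa reduction, writing elements of $U'_\Theta$ as Laurent polynomials in $K_0=K_{\alpha_0}$ and noting that multiplication by $K_c-1$ always creates genuine $K_0$-dependence.
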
 
\begin{proof}
  By the quantum Iwasawa decomposition (Proposition \ref{prop:qIwasawa}) and the fact that $K_c-1$ is central in $\uqgp$, it suffices to show that $(K_c-1)U_\Theta'\cap U^0_\Theta{}'=\emptyset$. Any element of $U_\Theta'$ can be written as a Laurent polynomial in $K_0$ with coefficients in $\field(q)\langle K_i^{\pm 1}\,|\, i\in I^\ast \setminus \{0\}\rangle$. The product of any such a Laurent polynomial with $(K_c-1)$ will always depend on $K_0$ and therefore does not belong to $U^0_\Theta{}'$.
\end{proof}
\begin{rema}
  A remark on terminology is in order. Besides \eqref{eq:thetah} there is another way to extend the involution $\theta$ of $\gfrak$ to $\hat{\gfrak}$. Indeed, consider the involutive automorphism $\thetah_1:\hat{\gfrak}\rightarrow \hat{\gfrak}$ given by 
  \begin{align}\label{eq:thetah1}
    \thetah_1(x\ot t^n)=\theta(x)\ot (-t^n),\qquad\thetah_1(c)=c, \qquad \thetah_1(d)=d.
  \end{align}
The automorphism $\thetah_1$ is of the first kind. Formula \eqref{eq:thetah1} also defines an involution of the loop algebra $L(\gfrak)$. The existing literature on twisted loop algebras is concerned with the Lie subalgebra of $L(\gfrak)$ fixed under $\thetah_1$, mostly in the case when $\theta$ is a diagram automorphism. It seems natural to say that the subalgebra of $L(\gfrak)$ fixed by $\thetah_1$ is a twisted loop algebra of the first kind. The terminology for twisted loop algebras then reflects the terminology for involutions of Kac-Moody algebras.    

In \cite{a-MolRagSor03} Molev, Ragoucy, and Sorba use the name \textit{twisted $q$-Yangians} for quantum analogs of $U(\kfrak'_L)$. As pointed out in the introduction of \cite{a-MolRagSor03}, however, Ol'shanski\u{i}'s original definition of twisted Yangians involves involutions of the first kind \cite{a-Olsh92}. For this reason we prefer to call the quantum analogs of $U(\kfrak'_L)$ twisted quantum loop algebras of the second kind. 
\end{rema}
The structure theory of $\uqLg$ is very similar to the structure theory of $\uqgp$. In particular, one can formulate specialization for $\uqLg$. The results of Section \ref{sec:specialize} literally translate to $\uqLg$ and $\pi(B_{\uc,\us})$ and one obtains the following result in the same way as Theorems \ref{thm:Bcspecial} and \ref{thm:maxcond}.
\begin{thm}\label{thm:maxcond2}
  Let $\uc\in \cC$ and $\us \in \cS$ be specializable.
    \begin{enumerate}
      \item The subalgebra $\pi(B_{\uc,\us})$ of $\uqLg$ specializes to $U(\kfrak'_L)$.
      \item Let $W$ be a vector subspace of $\uqLg$ which contains $\pi(B_{\uc,\us})$. If $W$ specializes to $U(\kfrak'_L)$ then $W=\pi(B_{\uc,\us})$.  
    \end{enumerate}
\end{thm}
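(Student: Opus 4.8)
The plan is to reduce Theorem \ref{thm:maxcond2} to the corresponding statements for $\uqgp$, namely Theorems \ref{thm:Bcspecial} and \ref{thm:maxcond}, by transporting the specialization framework of Section \ref{sec:specialize} through the canonical projection $\pi:\uqgp\rightarrow\uqLg$. First I would set up the $\bA$-form of $\uqLg$: define $(\uqLg)'_\bA=\pi(\cU'_\bA)$, which is the $\bA$-subalgebra of $\uqLg$ generated by the images of $E_i,F_i,K_i^{\pm 1},(K_i;0)_q$ for $i\in\hat I$. Since $K_c-1\in\cU'_\bA$ and specializes to $0$ under $\cU'_\bA\to U(\gfrak')$, one gets $\field\ot_\bA(\uqLg)'_\bA\cong U(\gfrak')/(c)=U(L(\gfrak))$, which is the loop-algebra analog of Theorem \ref{thm:specialization}. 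The key observation is that $K_c-1$ is central and that by the quantum Iwasawa decomposition (Proposition \ref{prop:qIwasawa}) together with the Lemma just proved, the ideal $(K_c-1)\uqgp$ meets $B_{\uc,\us}$ trivially; moreover one checks that $(K_c-1)\cU'_\bA\cap (B_{\uc,\us})_\bA=0$ by the same Iwasawa-based argument applied at the $\bA$-form level, using Theorem \ref{thm:A-triang}.(5) and the fact that multiplication by $K_c-1$ never lands in $(U^0_\Theta{}')_\bA$.

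Next I would verify the loop-algebra analog of Theorem \ref{thm:A-triang}. The point is that $\pi$ is an isomorphism on each of the factors $V_X^+$, $U'_\Theta$, $B_{\uc,\us}$ appearing in the quantum Iwasawa decomposition (this is where the Lemma $(K_c-1)\uqgp\cap B_{\uc,\us}=\emptyset$ and its analogs for the other factors matter: $K_c$ involves $K_0$, so it cannot be killed inside $\cM_X$ or inside $V_X^+$ either, since those are generated by root vectors of strictly positive weight together with the $\cM_X$-part), and $\pi$ is an isomorphism from $V_X^+\ot U'_\Theta\ot B_{\uc,\us}\!\big/\!(\text{image of }(K_c-1)\cdot-)$ onto $\uqLg$. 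Passing to $\bA$-forms and using the flatness results from Proposition \ref{prop:PID} and Lemma \ref{lem:obvious-isos}, one obtains $(V_X^+)_\bA\ot_\bA(U'_\Theta)_\bA\ot_\bA \pi(B_{\uc,\us})_\bA\cong (\uqLg)'_\bA$, together with the obvious loop analog of Lemma \ref{lem:0spec} (again a consequence of flatness of $(\uqLg)'_\bA$ over $\bA$).

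With this machinery in place, part (1) follows exactly as Theorem \ref{thm:Bcspecial}: the generators $\pi(B_i)$ lie in $(\uqLg)'_\bA$ and specialize to $f_i+\hat\theta_L(f_i)+\overline s_i$, and combined with $\pi(\cM_X^+)$ and $\pi(U^0_\Theta{}')$ specializing to $U(\gfrak_{X,L}^+)$ and $U(\hfrak'_{\hat\theta})$ respectively, the Iwasawa-type decomposition over $\bA$ forces $\overline{\pi(B_{\uc,\us})}=U(\kfrak'_L)$. Part (2) then follows by repeating verbatim the proofs of Lemma \ref{lem:free-quot} and Theorem \ref{thm:maxcond}, with $\uqgp$ replaced by $\uqLg$, $\cU'_\bA$ by $(\uqLg)'_\bA$, and the quantum Iwasawa decomposition by its loop-algebra counterpart: given $u\in W\setminus\pi(B_{\uc,\us})$, the quotient $(\qfield u+\pi(B_{\uc,\us}))_\bA/\pi(B_{\uc,\us})_\bA$ is a nonzero free $\bA$-module, but the hypothesis $\overline W=U(\kfrak'_L)$ together with the loop version of Lemma \ref{lem:0spec} produces a representative $v$ with $v-b\in(q-1)(\qfield u+\pi(B_{\uc,\us}))_\bA$ for some $b\in\pi(B_{\uc,\us})_\bA$, contradicting freeness.

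The main obstacle, and the only place requiring genuine care rather than translation, is establishing the loop-algebra triangular decomposition over $\bA$ — specifically checking that $\pi$ restricts to an isomorphism on each Iwasawa factor and that the induced decomposition remains valid at the integral level. Everything hinges on the structural fact that $K_c=\prod_j K_j^{b_j}$ genuinely involves $K_0$ with $b_0=1$, so that multiplication by $K_c-1$ cannot be absorbed into any of the subalgebras $V_X^+$, $\cM_X$, $U^0_\Theta{}'$, $U'_\Theta$; once this is pinned down uniformly, the flatness lemmas of Section \ref{sec:specialize} handle the rest, and the arguments of Theorems \ref{thm:Bcspecial} and \ref{thm:maxcond} carry over without modification.
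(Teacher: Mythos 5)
Your proposal follows the paper's own route: the paper proves the preceding Lemma that $(K_c-1)\uqgp\cap B_{\uc,\us}$ is trivial via the quantum Iwasawa decomposition and then simply asserts that the specialization machinery of Section \ref{sec:specialize} (Theorems \ref{thm:Bcspecial} and \ref{thm:maxcond}, together with the $\bA$-form decompositions of Theorem \ref{thm:A-triang}) translates literally to $\uqLg$ and $\pi(B_{\uc,\us})$, which is exactly the translation you carry out in more detail. The only small inaccuracy is your parenthetical claim that $\pi$ restricts to an isomorphism on $U'_\Theta$ --- this fails when $X=\emptyset$ and $\tau=\id$ (e.g.\ type $AI$), since then $K_c\in U'_\Theta$ --- but your argument really only uses the quotient form of the loop Iwasawa decomposition together with injectivity of $\pi$ on $B_{\uc,\us}$, so nothing breaks.
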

\subsection{FRT-realizations of twisted quantum loop algebras of the second kind}\label{sec:symLoopFRT}
In this subsection we restrict to the case $\gfrak=\slfrak_{N}(\field)$ for $n=N-1\in \N$. We assume that the Cartan matrix $A=(a_{ij})_{1\le i,j\le n}$ is given in the standard form 
\begin{align*}
  a_{ij}=\begin{cases}
           2 & \mbox{if $i=j$,}\\
           -1& \mbox{if $|i-j|=1$,}\\
           0& \mbox{else.}
         \end{cases}
\end{align*}
Let $\sigma:I\rightarrow I$ denote the only non-trivial element in $\Aut(A)$.
There exist three types of admissible pairs for $\gfrak$:
\begin{align*}
  \mbox{AI: }& (X,\tau)=(\emptyset, \id_I),\\
  \mbox{AII: }& (X,\tau)=((1,3,\dots,2m-1), \id_I) \quad\mbox{if $n=2m-1$,}\\
  \mbox{AIII/IV: }& (X,\tau)=((r+1,r+2,\dots,n-r), \sigma) \quad \mbox{for any $1\le r\le (n+1)/2$.} 
\end{align*}
The corresponding fixed Lie subalgebras of $\gfrak$ coincide with $\sofrak_N(\field)$, $\spfrak_{2m}(\field)$, and $\slfrak_N(\field)\cap(\glfrak_r(\field) \oplus \glfrak_{N-r}(\field))$, respectively. In \cite{a-MolRagSor03} Molev, Ragoucy, and Sorba construct $q$-analogs of $U(\kfrak'_L)$ in the cases AI and AII. Here we recall their construction and relate it to the coideal subalgebras $\pi(B_{\uc,\us})$ of $\uqLg$ constructed in the last subsection. The construction in \cite{a-MolRagSor03} uses the FRT realization of quantized enveloping algebras \cite{a-FadResTak2} as opposed to the Drinfeld-Jimbo realization used in the present paper. The translation between the different realizations of quantized enveloping algebras is given in detail by Frenkel and Mukhin in \cite{a-FrenkelMukhin02}.  Moreover, \cite{a-MolRagSor03} work with $\glfrak_N(\field)$ instead of $\slfrak_N(\field)$. Let $\mbox{Id}_N$ denote the identity matrix in $\glfrak_N(\field)$. The involution $\theta=\theta(X,\tau):\slfrak_N(\field)\rightarrow \slfrak_N(\field)$ corresponding to each of the three admissible pairs described above can be extended to an involutive automorphism of $\glfrak_N(\field)$ by setting $\theta(\lambda \mbox{Id}_N)=-\lambda \mbox{Id}_N$ for any $\lambda\in \field$. Just as in the previous subsection one obtains an involutive automorphism of the loop algebra $L(\glfrak_N(\field))=\glfrak_N(\field)\ot \field[t,t^{-1}]$ by
\begin{align*}
  \hat{\theta}: L(\glfrak_N(\field)) \rightarrow L(\glfrak_N(\field)), \qquad \hat{\theta}(x\ot t^k)=\theta(x)\ot t^{-k}.
\end{align*}
Let $L(\glfrak_N(\field))^{\hat{\theta}}$ denote the Lie subalgebra of $L(\glfrak_N(\field))$ fixed under $\hat{\theta}$. For $\lambda=1$ the Lie subalgebra of $\glfrak_N(\field)$ fixed under $\theta$ coincides with the fixed Lie subalgebra in $\slfrak_N(\field)$. However, $L(\glfrak_N(\field))^{\hat{\theta}}$ contains an infinite dimensional central subspace spanned by the elements $C_k=\mbox{Id}_N\ot t^k - \mbox{Id}_N\ot t^{-k}$ for all $k\in \N$ which are not contained in $L(\slfrak_N(\field))$.

Following \cite[2.3]{a-FrenkelMukhin02} and \cite[Section 3]{a-MolRagSor03} we now recall the definition of the Hopf algebra $\uqRglN$ which is a $q$-analog of the universal enveloping algebra of $L(\glfrak_N(\field))$. For $1\le i,j\le N$ let $E_{ij}$ denote the $N \times N$ matrix with only nonzero entry $1$ in the $(i,j)$-th position and set 
\begin{align*}
   R(z,w)=&(z-w) \sum_{i\neq j} E_{ii} \ot E_{jj} + (q^{-1}z - q w)\sum_{i} E_{ii} \ot E_{ii} \\
          & +(q^{-1}-q)z \sum_{i>j} E_{ij} \ot E_{ji} + (q^{-1} - q) w \sum_{i<j} E_{ij} \ot E_{ji}.
\end{align*}
By definition, the algebra $\uqRglN$ is generated by the coefficients $L^{\pm}_{ij}[\pm k]$ of the formal series 
\begin{align*}
  L_{ij}^{\pm}(z) &= \sum_{k=0}^\infty L_{ij}^{\pm}[\pm k] z^{\pm k}& 1\le i,j\le N
\end{align*}
subject to the relations
\begin{align}
  L_{ji}^+[0]&=L_{ij}^-[0]=0 & &\mbox{ for } 1\le i<j\le N,\label{eq:L0}\\
  L_{ii}^-[0] L_{ii}^+[0] &= L_{ii}^+[0] L_{ii}^-[0] =1 & &\mbox{ for } 1\le i\le N,\nonumber\\
  R(z,w) L_1^{\pm}(z) L_2^{\pm}(w) &= L_2^{\pm}(w) L_1^{\pm}(z) R(z,w), && \label{eq:RLL1}\\
  R(z,w) L_1^{+}(z) L_2^{-}(w) &= L_2^{-}(w) L_1^{+}(z) R(z,w).\label{eq:RLL2}
\end{align}
Here \eqref{eq:RLL1} and \eqref{eq:RLL2} are equations in $\uqRglN[[z,z^{-1},w,w^{-1}]]\ot \End(\field(q)^N)^{\ot 2}$ and
\begin{align*}
  L_1^{\pm}(z)&=\sum_{i,j=1}^N L_{ij}^{\pm}(z)\ot E_{ij}\ot \mbox{Id}_N, & L_2^{\pm}(z)&=\sum_{i,j=1}^N L_{ij}^{\pm}(z)\ot \mbox{Id}_N \ot E_{ij}.
\end{align*}
The relations \eqref{eq:RLL1} and \eqref{eq:RLL2} can be written more explicitly as
\begin{align}
  &(q^{-\delta_{ij}} z- q^{\delta_{ij}} w) L^\pm_{ia}(z) L^{\pm}_{jb}(w) + (q^{-1} - q)(z \delta_{i>j} + w \delta_{i<j})L^\pm_{ja}(z) L^{\pm}_{ib}(w) \label{eq:RLL1expl}\\
  =\, &(q^{-\delta_{ab}} z- q^{\delta_{ab}} w) L^\pm_{jb}(w) L^{\pm}_{ia}(z) + (q^{-1} - q)(z \delta_{a<b} + w \delta_{a>b})L^\pm_{ja}(w) L^{\pm}_{ib}(z)\nonumber
\end{align}
and
\begin{align*}  
   &(q^{-\delta_{ij}} z- q^{\delta_{ij}} w) L^+_{ia}(z) L^-_{jb}(w) + (q^{-1} - q)(z \delta_{i>j} + w \delta_{i<j})L^+_{ja}(z) L^-_{ib}(w)\\
  &\,=\,(q^{-\delta_{ab}} z- q^{\delta_{ab}} w) L^-_{jb}(w) L^+_{ia}(z) + (q^{-1} - q)(z \delta_{a<b} + w \delta_{a>b})L^-_{ja}(w) L^+_{ib}(z), 
\end{align*}
see also \cite[(2.42)-(2.44)]{a-MolevGow10}. In particular, if one collects coefficients of $z^0w^1$ then one obtains the relations
\begin{align}\label{eq:RTTfin1}  
   &q^{\delta_{ij}} L^{\pm}_{ia}[0] L^{\pm}_{jb}[0] - q^{\delta_{ab}} L^{\pm}_{jb}[0] L^{\pm}_{ia}[0] = (q-q^{-1})(\delta_{b<a} - \delta_{i<j}) L^{\pm}_{ja}[0] L^{\pm}_{ib}[0] 
\end{align}
and
\begin{align}\label{eq:RTTfin2}  
   q^{\delta_{ij}} L^+_{ia}[0]& L^-_{jb}[0] - q^{\delta_{ab}} L^-_{jb}[0] L^+_{ia}[0]\\ &= (q-q^{-1})   \big(\delta_{a>b} L^-_{ja}[0] L^+_{ib}[0] - \delta_{i<j} L^+_{ja}[0] L^-_{ib}[0]\big).\nonumber 
\end{align}
The algebra $\uqRglN$ is a Hopf algebra with coproduct given by
\begin{align}\label{eq:uqglhat-delta}
  \kow(L_{ij}^{\pm}(z))=\sum_{k=1}^N L_{ik}^\pm(z) \ot L_{kj}^\pm(z)
\end{align}
and antipode $S(L_{ij}^{\pm}(z))=L^{\pm}(z)^{-1}$. Consult \cite[Section 3]{a-FadResTak2}, \cite[2.3]{a-FrenkelMukhin02}, \cite[Section 3]{a-MolRagSor03}, or \cite[2.3]{a-MolevGow10} for more details.

In \cite{a-MolRagSor03} the authors define $\Yto$ to be the subalgebra of $\uqRglN$ generated by the coefficients $s_{ij}[-k]$ of the entries of the matrix 
\begin{align*}
  S(z)=(s_{ij}(z))=L^-(z) L^+(z^{-1})^t.
\end{align*}
More precisely, one considers the power series in $z^{-1}$ given by 
\begin{align*}
  s_{ij}(z)=\sum_{a=1}^N L_{ia}^-(z) L_{ja}^+(z^{-1})
\end{align*}
for $1\le i,j\le N$ and defines $s_{ij}[-k]$ by 
\begin{align*}
  s_{ij}(z)=\sum_{k=0}^\infty s_{ij}[-k]z^{-k}.
\end{align*}
Similarly, in the case $N=2m$, consider
\begin{align*}
  S(z)=L^-(z) G L^+(z^{-1})^t
\end{align*}
where $G$ denotes the $N\times N$-matrix given by
\begin{align*}
  G=q \sum_{k=1}^m E_{2k-1,2k} - \sum_{k=1}^m E_{2k,2k-1}.
\end{align*}
Molev, Ragoucy, and Sorba define $\Ytsp$ to be the subalgebra of $\uqRglN$ generated by the coefficients $s_{ij}[-k]$ of the entries of the matrix $S(z)$ and the elements $s_{i,i+1}[0]^{-1}$ for $i=1,3,\dots,2m-1$.
It follows from \eqref{eq:uqglhat-delta} that $\Yto$ and $\Ytsp$ are left coideal subalgebras of $\uqRglN$.

One can perform specialization for $\uqRglN$ in the same way as for $U_q(\hat{\gfrak})$ and $\uqLg$, see \cite[Section 3]{a-MolRagSor03}. The following statement is made in \cite[Corollaries 3.5, 3.12]{a-MolRagSor03}.
\begin{lem}\label{lem:Y-spec} The subalgebras $\Yto$ and $\Ytsp$ of $\uqRglN$ specialize to $U(L(\glfrak_N(\field))^{\hat{\theta}})$ for the involution $\theta$ corresponding to the admissible pairs of type $AI$ and $AII$, respectively. 
\end{lem}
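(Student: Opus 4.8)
Since this lemma is attributed to \cite[Corollaries 3.5 and 3.12]{a-MolRagSor03}, the plan is to explain how the statement fits the specialization framework of Section \ref{sec:specialize} rather than to reprove everything. First I would set up non-restricted specialization of $\uqRglN$ in complete analogy with Subsection \ref{sec:spec}: fix the $\bA$-form $\cU_\bA\subseteq\uqRglN$ generated over $\bA$ by the off-diagonal $L^\pm_{ij}[0]$ (for $i\neq j$), by the normalized Cartan-type elements $(L^\pm_{ii}[0]-1)/(q-1)$, and by the rescaled higher coefficients $(q-q^{-1})^{-1}L^\pm_{ij}[\pm k]$ for $k\ge 1$. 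Using the relations \eqref{eq:L0}--\eqref{eq:RLL2}, in the explicit forms \eqref{eq:RLL1expl} and in particular \eqref{eq:RTTfin1} and \eqref{eq:RTTfin2}, one checks that $\cU_\bA$ is an $\bA$-subalgebra and that $\field\ot_\bA\cU_\bA\cong U(L(\glfrak_N(\field)))$, with the images of the normalized generators matching the standard loop generators $E_{ij}\ot t^{\pm k}$. This is the FRT counterpart of Theorem \ref{thm:specialization} and is carried out in \cite[Section 3]{a-MolRagSor03} and \cite[2.3]{a-FrenkelMukhin02}.

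Second, I would compute the image of the generating matrix $S(z)$ under specialization. Writing $\bar L^\pm_{ij}(z)$ for the specializations of the normalized entries, the classical limit of $S(z)=L^-(z)L^+(z^{-1})^t$ in the orthogonal case, and of $S(z)=L^-(z)GL^+(z^{-1})^t$ in the symplectic case, is a generating series in $z^{-1}$ whose coefficients span $\kfrak'_L=\{x\in L(\glfrak_N(\field))\mid\thetah(x)=x\}$: the transpose (respectively the $G$-conjugated transpose) specializes $L\mapsto L^t$ to the Lie algebra involution $-\theta$ on $\glfrak_N(\field)$, while the substitution $z\mapsto z^{-1}$ accounts for $t\mapsto t^{-1}$, so that the $(i,j)$-coefficient of $z^{-k}$ in $\overline{S(z)}$ is, up to a nonzero scalar, of the form $E_{ij}\ot t^{-k}+\theta(E_{ij})\ot t^{k}+(\text{lower modes})$. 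Consequently each $s_{ij}[-k]$ lies in $\cU_\bA$, each $\overline{s_{ij}[-k]}$ lies in $U(\kfrak'_L)$, and together with $\overline{s_{i,i+1}[0]^{-1}}$ in the symplectic case they generate $U(\kfrak'_L)$; this gives $\overline{\Yto}\supseteq U(\kfrak'_L)$ and, analogously, $\overline{\Ytsp}\supseteq U(\kfrak'_L)$.

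The reverse inclusion, and the fact that nothing extra survives the specialization, follows exactly as in Section \ref{sec:specialize} once one has a PBW/triangular decomposition of $\uqRglN$ that refines $\Yto$ and descends to $\cU_\bA$ — the FRT analog of the quantum Iwasawa decomposition of Proposition \ref{prop:qIwasawa} — so that $(\Yto)_\bA=\Yto\cap\cU_\bA$ is an $\bA$-module direct summand of $\cU_\bA$; such a decomposition is provided in \cite{a-MolRagSor03}. Granting this, $\overline{\Yto}=\field\ot_\bA(\Yto)_\bA$ is computed termwise and the two inclusions combine to the claim, and likewise for $\Ytsp$. The hard part is the explicit classical limit of $S(z)$: one must verify carefully, using the triangularity $L^+_{ji}[0]=L^-_{ij}[0]=0$ for $i<j$ and the quadratic relations, that the rescaled products $L^-_{ia}(z)L^+_{ja}(z^{-1})$ converge in $\cU_\bA$ and that their leading terms assemble into exactly the $\thetah$-fixed subalgebra with the correct diagonal normalizations, no non-symmetric combinations surviving; the remaining steps are bookkeeping parallel to Section \ref{sec:specialize}.
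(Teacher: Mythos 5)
Your proposal is correct and follows essentially the same route as the paper: the paper offers no independent argument for this lemma, but merely observes that specialization for $\uqRglN$ is set up exactly as in Section \ref{sec:specialize} (via the $\bA$-form of \cite[(3.8), (3.9)]{a-MolRagSor03}) and quotes the statement from \cite[Corollaries 3.5, 3.12]{a-MolRagSor03}, which is precisely where the classical limit of $S(z)$ and the PBW-type decomposition you invoke are carried out. Just note that you recycle the symbol $\kfrak'_L$ for $\{x\in L(\glfrak_N(\field))\mid\hat{\theta}(x)=x\}$, whereas the paper reserves it for the fixed subalgebra of $L(\slfrak_N(\field))$; the distinction is relevant because the $\glfrak_N$ version contains the additional central elements $C_k$.
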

To relate the algebras $\Yto$ and $\Ytsp$ to twisted quantum loop algebras (of the second kind) as defined in the previous subsection we use the fact that the Hopf algebra $U_q(L(\slfrak_N(\field)))$ can be embedded into $\uqRglN$.
\begin{prop}[{\cite[Lemma 3.8]{a-FrenkelMukhin02}}]\label{prop:FMinclusion}
  The following formulas define an embedding of Hopf algebras $\hI:U_q(L(\slfrak_N(\field)))\rightarrow \uqRglN$: 
  \begin{align*}
    \hI(E_0)&=(-q)^N (q^{-1}-q)^{-1} L^-_{1N}[-1] L^+_{NN}[0],\\
    \hI(F_0)&=(-q)^{-N} (q - q^{-1})^{-1} L^-_{NN}[0] L^+_{N1}[1],\\
    \hI(E_i)&= (q^{-1}-q)^{-1} L^-_{i+1,i}[0] L^+_{ii}[0],\\
    \hI(F_i)&= (q-q^{-1})^{-1} L^-_{ii}[0] L^+_{i,i+1}[0],\\
    \hI(K_i)&= L^+_{ii}[0] L^-_{i+1,i+1}[0] \qquad \qquad (i=1,\dots,N-1).
  \end{align*}
\end{prop}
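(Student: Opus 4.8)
The statement is \cite[Lemma 3.8]{a-FrenkelMukhin02}, so the plan is to indicate how the verification proceeds rather than to reproduce it in full. First I would check that $\hI$ is a well-defined algebra homomorphism. For this one writes the defining relations of $U_q(L(\slfrak_N(\field)))$ in terms of the Chevalley generators $E_i,F_i,K_i^{\pm1}$, $i\in\{0,1,\dots,N-1\}$, attached to the affine Dynkin diagram of type $A^{(1)}_{N-1}$, and checks each of them on the images. The commutation of the $K_i$ among themselves and with the $E_j,F_j$, together with the identities $E_iF_j-F_jE_i=\delta_{ij}(K_i-K_i^{-1})/(q-q^{-1})$ for $i,j$ in the finite part, follow from the coefficients \eqref{eq:RTTfin1}, \eqref{eq:RTTfin2} of the RLL relations and the triangularity \eqref{eq:L0} of the matrices $L^\pm[0]$; here one also uses that the $L^\pm_{ii}[0]$ pairwise commute and that $L^-_{ii}[0]=(L^+_{ii}[0])^{-1}$. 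The latter fact makes $\prod_{i=1}^{N-1}\hI(K_i)$ telescope to $L^+_{11}[0](L^+_{NN}[0])^{-1}$, so that $\hI$ is forced on $K_0=\prod_{i=1}^{N-1}K_i^{-1}$ and respects the relation $K_c=1$. The relations involving the affine node $i=0$, which bring in the off-diagonal generators $L^-_{1N}[-1]$ and $L^+_{N1}[1]$, require the full spectral-parameter form \eqref{eq:RLL1expl} of the RLL relations together with its mixed $L^+L^-$ analogue.

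The quantum Serre relations are the main computational obstacle. For the finite part they reduce, since the finite Cartan matrix is of type $A_{N-1}$, to the $N=3$ case of \eqref{eq:RTTfin1}; for the relations coupling $E_0$ (resp.\ $F_0$) to $E_1,E_{N-1}$ (resp.\ $F_1,F_{N-1}$) one has to extract and combine enough coefficients of \eqref{eq:RLL1expl} and of the mixed relation to produce the required degree-three identities. These computations are exactly the content of \cite[Lemma 3.8]{a-FrenkelMukhin02}, to which I would defer.

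Next I would check compatibility with the Hopf structure. Applying the coproduct formula \eqref{eq:uqglhat-delta} to $\hI(E_i),\hI(F_i),\hI(K_i)$ and using once more that $L^-_{ab}[0]=0$ for $a<b$ and $L^+_{ab}[0]=0$ for $a>b$, each coproduct collapses to a two-term sum which one matches against $\kow(E_i)=E_i\ot1+K_i\ot E_i$, $\kow(F_i)=F_i\ot K_i^{-1}+1\ot F_i$ and $\kow(K_i)=K_i\ot K_i$; compatibility with $\vep$ and with $S$ then follows, or may be checked directly. Finally, for injectivity I would note that conjugation by the $\hI(K_i)$ gives a $Q$-weight decomposition of the subalgebra they generate and that $\hI$ is a morphism of $Q$-graded algebras, so a nonzero kernel element would lie in a single weight space; it therefore suffices to see that the images of the positive, resp.\ negative, Chevalley generators satisfy no relations beyond the quantum Serre relations. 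This is again established in \cite{a-FrenkelMukhin02}, for example by passing to the classical limit, where the corresponding faithfulness statement for $L(\glfrak_N(\field))$ and $L(\slfrak_N(\field))$ is standard.
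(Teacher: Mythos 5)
The paper offers no proof of this proposition at all: it is imported verbatim as \cite[Lemma 3.8]{a-FrenkelMukhin02}, so your proposal, which sketches the verification and defers the substantive computations (Serre relations, injectivity) to that same reference, is in effect the same approach and is acceptable. One minor caveat: your claim that each coproduct ``collapses to a two-term sum'' is literally true only for $\hI(E_i),\hI(F_i),\hI(K_i)$ with $i\ge 1$, where the triangularity of $L^\pm[0]$ applies; for $\hI(E_0)$ and $\hI(F_0)$ the mode expansion of $\kow(L^-_{1N}(z))$ and $\kow(L^+_{N1}(z))$ produces additional terms, and matching these with $\kow(E_0)$, $\kow(F_0)$ requires a slightly longer computation, again contained in the cited lemma.
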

In the following we will suppress the symbol $\hI$ and consider $U_q(L(\slfrak_N(\field)))$ as a Hopf subalgebra of $\uqRglN$.

For the admissible pairs of type $AI$ and $AII$ one has $\cS=\{\mathbf{0}\}$ and hence the corresponding twisted quantum loop algebras are parametrized by elements $\uc\in \cC$.
We are now in a position to establish the desired relation between the algebras $\Yto$ and $\Ytsp$ defined above and the twisted quantum loop algebras $\pi(B_{\uc})$ in the cases $AI$ and $AII$, respectively. As $\Yto$ and $\Ytsp$ are left coideal subalgebras we use the antipode $S$ to turn the right coideal subalgebra $\pi(B_{\uc})$ into a left coideal subalgebra. The following theorem is the main result of this section.
\begin{thm}\label{thm:MRS=QSP}
  (1) Let $(X,\tau)$ be of type $AI$, set 
  \begin{align*}
    \uc=(c_0,c_1,c_3,\dots,c_{N-1})=(q^{-2(N-1)},q^2,q^2,\dots,q^2),
  \end{align*}  
    and let $\pi(B_{\uc})\subset U_q(L(\slfrak_N(\field)))$ be the corresponding twisted quantum loop algebra. Then $S(\pi(B_{\uc})) = \Yto \cap U_q(L(\slfrak_N(\field)))$.
  
  (2) Let $(X,\tau)$ be of type $AII$ with $N=2m$, set 
    \begin{align*}
      \uc=(c_0,c_2,c_4,\dots,c_{N-2})=(q^{-2N+7},q^7,q^7,\dots,q^7)
    \end{align*}   
    and let $\pi(B_{\uc})\subset U_q(L(\slfrak_N(\field)))$ be the corresponding twisted quantum loop algebra.
    Then $S(\pi(B_{\uc})) = \Ytsp \cap U_q(L(\slfrak_N(\field)))$.
\end{thm}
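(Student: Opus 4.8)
The plan is to prove both identities by a uniform three-step argument: first exhibit the images under $S\circ\hI$ of the generators of $\pi(B_{\uc})$ explicitly inside $\Yto$ (resp.\ $\Ytsp$), giving one inclusion; then show that the a priori larger subspace $\Yto\cap U_q(L(\slfrak_N(\field)))$ (resp.\ $\Ytsp\cap U_q(L(\slfrak_N(\field)))$) also specializes to $U(\kfrak'_L)$; and finally invoke the maximality statement Theorem~\ref{thm:maxcond2}.(2) to upgrade the inclusion to an equality. Recall that for the admissible pair of type $AI$ one has $X=\emptyset$ and $Q^\Theta=\{0\}$, so $B_{\uc}$ is generated by the $N$ elements $B_i=F_i-c_iE_iK_i^{-1}$, $0\le i\le N-1$ (here $\hat\tau=\mathrm{id}$ forces $\theta_q(F_iK_i)K_i^{-1}=-E_iK_i^{-1}$), whereas for type $AII$ one has in addition the Hopf subalgebras $\cM_X$ and $U^0_\Theta{}'$ together with generators $B_i=F_i-c_iv_i\,\ad(Z^+_i(X))(E_i)K_i^{-1}$ for the even nodes and for $i=0$, where $Z^+_i(X)$ is a product of $E_j$, $j\in X$, as in Theorem~\ref{thm:theta-props}.(3).

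For the first step, since $S$ is an algebra anti-automorphism one has $S(B_i)=c_iE_i-F_iK_i$ in the $AI$-case and the analogous expression in the $AII$-case. Substituting the Frenkel--Mukhin formulas of Proposition~\ref{prop:FMinclusion} and simplifying with the triangular vanishing \eqref{eq:L0}, the relations $L^-_{ii}[0]L^+_{ii}[0]=1$, and the RTT relations \eqref{eq:RLL1expl}, \eqref{eq:RTTfin1}, \eqref{eq:RTTfin2}, the element $S\circ\hI(B_i)$ becomes, up to an invertible scalar in $\field(q)$, the coefficient $s_{i,i+1}[0]$ for $1\le i\le N-1$ (resp.\ the corresponding level-zero coefficient of the $G$-twisted matrix $L^-(z)GL^+(z^{-1})^t$ in type $AII$), while $S\circ\hI(B_0)$ becomes a scalar multiple of the level-one coefficient $s_{1N}[-1]$; the scalars are exactly those making up the $\uc$ of the statement, the uniform interior value $q^2$ (resp.\ $q^7$) being dictated by \eqref{eq:RLL1expl} and the exceptional $c_0$ by the extra factor $(-q)^{\pm N}$ attached to the affine generators together with the spectral shift $z\mapsto z^{-1}$ in $s_{ij}(z)=\sum_a L^-_{ia}(z)L^+_{ja}(z^{-1})$. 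In the symplectic case one checks separately that $S\circ\hI$ carries $\cM_X$ and $U^0_\Theta{}'$ into the subalgebra of $\Ytsp$ generated by the $s_{i,i+1}[0]^{\pm1}$ with $i$ odd, the odd-node Chevalley and Cartan generators of $U_q(L(\slfrak_N(\field)))$ going to off-diagonal and diagonal level-zero $L^{\pm}$ entries that combine in pairs, via $G$, to the corresponding $s$-generators. This gives $S(\pi(B_{\uc}))\subseteq\Yto$ (resp.\ $\subseteq\Ytsp$), hence $S(\pi(B_{\uc}))\subseteq\Yto\cap U_q(L(\slfrak_N(\field)))$ since $\pi(B_{\uc})\subseteq U_q(L(\slfrak_N(\field)))$ and $S$ preserves this Hopf subalgebra.

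For the second step, Lemma~\ref{lem:Y-spec} gives $\overline{\Yto}=U(L(\glfrak_N(\field))^{\thetah})$ (and similarly for $\Ytsp$ with the $AII$-involution), while the specialization theory of Section~\ref{sec:specialize}, transported to $\uqLg=U_q(L(\slfrak_N(\field)))$, gives $\overline{U_q(L(\slfrak_N(\field)))}=U(L(\slfrak_N(\field)))$. Choosing compatible $\bA$-forms and using that $\hI$, $S$ and $S^{-1}$ are compatible with these $\bA$-forms, one gets $\overline{\Yto\cap U_q(L(\slfrak_N(\field)))}\subseteq U(L(\glfrak_N(\field))^{\thetah})\cap U(L(\slfrak_N(\field)))$. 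This intersection is $U(\kfrak'_L)$: writing $L(\glfrak_N(\field))^{\thetah}=\kfrak'_L\oplus\bigoplus_{k\ge1}\field C_k$ with central $C_k=\mathrm{Id}_N\ot t^k-\mathrm{Id}_N\ot t^{-k}$ involving $\mathrm{Id}_N\notin\slfrak_N(\field)$, no monomial in the $C_k$ other than a scalar can lie in $U(L(\slfrak_N(\field)))$. Combining with the reverse inclusion $U(\kfrak'_L)=\overline{S(\pi(B_{\uc}))}\subseteq\overline{\Yto\cap U_q(L(\slfrak_N(\field)))}$, which follows from the first step together with $\overline{\pi(B_{\uc})}=U(\kfrak'_L)$ (Theorem~\ref{thm:maxcond2}.(1); the prescribed $\uc$ is specializable) and the $S$-invariance of $U(\kfrak'_L)$ on the classical side, one concludes $\overline{\Yto\cap U_q(L(\slfrak_N(\field)))}=U(\kfrak'_L)$.

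For the final step, set $W=S^{-1}\big(\Yto\cap U_q(L(\slfrak_N(\field)))\big)$, a vector subspace of $\uqLg$. The first step gives $W\supseteq\pi(B_{\uc})$, and the second gives $\overline{W}=U(\kfrak'_L)$; Theorem~\ref{thm:maxcond2}.(2) then forces $W=\pi(B_{\uc})$, i.e.\ $\Yto\cap U_q(L(\slfrak_N(\field)))=S(\pi(B_{\uc}))$, and the same argument with $\Ytsp$ settles type $AII$. The main obstacle is the first step: although conceptually routine, showing that $S\circ\hI$ sends each $B_i$ into $\Yto$ (resp.\ $\Ytsp$) with the precise constants $\uc$ requires careful bookkeeping of the RTT relations \eqref{eq:RLL1expl}, of the spectral shift in $s_{ij}(z)$, and---in the symplectic case---of the twisting matrix $G$ and of the images of the $\cM_X$- and $U^0_\Theta{}'$-generators; the affine generator $B_0$, whose Frenkel--Mukhin image already involves the non-constant coefficients $L^{\pm}[\pm1]$, is the single most delicate computation. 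A secondary point is ensuring that specialization is compatible with the intersection in the second step, which is handled by the flatness properties of $\bA$-modules (Proposition~\ref{prop:PID}) used throughout Section~\ref{sec:specialize}.
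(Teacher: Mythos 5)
Your proposal follows essentially the same route as the paper: one inclusion $S(\pi(B_{\uc}))\subseteq \Yt\cap U_q(L(\slfrak_N(\field)))$ by explicit computation of the images of the generators through the Frenkel--Mukhin embedding, the observation via Lemma \ref{lem:Y-spec} that the intersection specializes into $U(\kfrak'_L)$, and then Theorem \ref{thm:maxcond2}.(2) to force equality. The only parts you defer are exactly the computations the paper carries out (note the paper matches the generators to $s_{i+1,i}[0]$ and $s_{1N}[-1]$, and in type $AII$ it simplifies the bookkeeping by replacing $B_0,B_2$ with the dressed generators $B_0'=\ad(F_1)(B_0K_0)K_0^{-1}$, $B_2'=\ad(F_3)(B_2K_2)K_2^{-1}$ before applying $S$), and these do produce the stated constants $\uc$.
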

\begin{proof}
  (1) It follows from the definition of the $\bA$-form $\uqRglN_\bA$ of $\uqRglN$ given by \cite[(3.8), (3.9)]{a-MolRagSor03} that $U_q(L(\slfrak_N(\field)))_\bA \subset \uqRglN_\bA$. Hence Lemma \ref{lem:Y-spec} implies that $\Yto\cap U_q(L(\slfrak_N(\field)))$ specializes to a subalgebra of 
  \begin{align*}
    U(L(\slfrak_N(\field))) \cap U(L(\glfrak_N(\field))^{\hat{\theta}})= U(\kfrak'_L)
  \end{align*}
where as in Subsection \ref{sec:symLoop} we write $\kfrak'_L=\{x\in L(\slfrak_N(\field))\,|\,\hat\theta(x)=x\}$. In the following we will show that
\begin{align}\label{eq:Ycontained}
  S(\pi(B_{\uc}))\subseteq \Yto \cap U_q(L(\slfrak_N(\field)))
\end{align} 
Specialization is compatible with the antipode $S$. Hence $S(\pi(B_{\uc}))$ specializes to $U(\kfrak'_L)$. By \eqref{eq:Ycontained} this implies that $\Yto \cap U_q(L(\slfrak_N(\field)))$ also specializes to $U(\kfrak'_L)$. Hence $W=S^{-1}(\Yto \cap U_q(L(\slfrak_N(\field))))$ is a subspace of $U_q(L(\slfrak_N(\field)))$ which contains $\pi(B_{\uc})$ and specializes to $U(\kfrak'_L)$. Theorem \ref{thm:maxcond2} implies that $W=\pi(B_{\uc})$.

It remains to verify the inclusion \eqref{eq:Ycontained}. To this end one uses Proposition \ref{prop:FMinclusion} and calculates
\begin{align}
  s_{1N}[-1]&\stackrel{\phantom{\eqref{eq:L0}}}{=} \sum_{a=1}^N L^-_{1a}[0] L^+_{Na}[1] + L^-_{1a}[-1] L^+_{Na}[0] \nonumber\\
     &\stackrel{\eqref{eq:L0}}{=} L^-_{11}[0] L^+_{N1}[1] + L^-_{1N}[-1] L^+_{NN}[0] \nonumber\\
    &\stackrel{\phantom{\eqref{eq:L0}}}{=} (q-q^{-1}) (-q)^N (K_0 F_0 - q^{-2N} E_0)\nonumber\\
   &\stackrel{\phantom{\eqref{eq:L0}}}{=}  -(q-q^{-1}) (-q)^{N-2} S(F_0 - q^{-2(N-1)} E_0 K_0^{-1}) \label{eq:B0contained}
\end{align}
Similarly, for $i=1,\dots, N{-}1$, one calculates
\begin{align}
  s_{i+1,i}[0]&\stackrel{\phantom{\eqref{eq:L0}}}{=} \sum_{a=1}^N L^-_{i+1,a}[0] L^+_{ia}[0]  \nonumber\\
     &\stackrel{\eqref{eq:L0}}{=} L^-_{i+1,i}[0] L^+_{ii}[0] + L^-_{i+1,i+1}[0] L^+_{i,i+1}[0] \nonumber\\
    &\stackrel{\phantom{\eqref{eq:L0}}}{=} -(q-q^{-1}) (E_i- K_i F_i)\nonumber\\
   &\stackrel{\phantom{\eqref{eq:L0}}}{=}  -(q-q^{-1}) q^{-2} S(F_i - q^{2} E_i K_i^{-1}). \label{eq:Bicontained}
\end{align}
Relations \eqref{eq:B0contained} and \eqref{eq:Bicontained} show that $S(\pi(B_i))\in \Yto$ for all generators $B_i$ of $B_{\uc}$. Hence \eqref{eq:Ycontained} is verified which completes the proof of part (1) of the theorem.   

\medskip

(2) By the same argument as in the proof of part (1) it suffices to show that
$S(\pi(B_{\uc}))\subseteq \Ytsp \cap U_q(L(\slfrak_N(\field)))$. We first show that $\cM_X$ is contained in $\Ytsp$. Indeed, for $i=1,3,\dots,2m-1$ one has
\begin{align*}
  s_{i,i+1}[0]&=q L_{ii}^-[0] L_{i+1,i+1}^+[0] = q K_i^{-1}\\
  s_{ii}[0]&= q L_{ii}^-[0] L_{i,i+1}^+[0]= q(q-q^{-1}) F_i,\\
  s_{i+1,i+1}[0]&= q L_{i+1,i}^-[0] L_{i+1,i+1}^+[0]= -q(q-q^{-1}) E_i K_i^{-1}.
\end{align*}
As the inverse of $s_{i,i+1}[0]$ is by definition also contained in $\Ytsp$ one obtains $\cM_X\subset \Ytsp$.
Without loss of generality we now restrict to the case $m=2$. Recall that
\begin{align*}
  B_2= F_2 - c_2 \ad(E_3 E_1)(E_2)K_2^{-1}, \qquad B_0= F_0 - c_0 \ad(E_1 E_3)(E_0)K_0^{-1},
\end{align*}
and define
\begin{align*}
  B_2' &= \ad(F_3)(B_2 K_2) K_2^{-1}, &  B_0' &= \ad(F_1)(B_0 K_0) K_0^{-1}. 
\end{align*}
The algebra $\pi(B_{\uc})$ is generated by $\cM_X$ and $B_0', B_2'$. Hence, it suffices to show that $S(B'_i)\in \Ytspfour$ for $i=0,2$. One calculates
\begin{align}
  S(B_2') &= q^2 [F_2,F_3]_q K_2 - c_2 q^{-2} [E_2,E_1]_{q^{-1}} K_1^{-1}, \label{eq:SB2p}\\
  S(B_0') &= q^2 [F_0,F_1]_q K_0 - c_0 q^{-2} [E_0,E_3]_{q^{-1}} K_3^{-1}  \label{eq:SB0p}
\end{align}
where $[a,b]_v=ab - v\, ba$. Proposition \ref{prop:FMinclusion} and relations \eqref{eq:RTTfin1}, \eqref{eq:RTTfin2} imply that 
\begin{align*}
  [F_2,F_3]_q&=-q(q-q^{-1})^{-1}  L_{22}^-[0] L_{24}^+[0],\\ 
  [E_2,E_1]_{q^{-1}}&=q^{-1}(q-q^{-1})^{-1}  L_{31}^-[0] L_{11}^+[0].
\end{align*}
Entering these relations into \eqref{eq:SB2p} one obtains
\begin{align*}
  S(B_2') = - \frac{q^4}{q-q^{-1}} \big( L_{33}^-[0] L_{24}^+[0]  + c_2 q^{-7}L_{31}^-[0] L_{22}^+[0]  \big). 
\end{align*}
The assumption $c_2=q^7$ gives $S(B_2')= -\frac{q^3}{ q-q^{-1}} s_{32}[0]$ and hence $S(B_2')\in \Ytspfour$.

We next determine $S(B_0')$. Collecting coefficients of $z^{-1}w$ in \eqref{eq:RLL1expl} with two minus signs as superscripts one obtains the relations
\begin{align*}
  L_{14}^-[-1] L_{43}^-[0] - L_{43}^-[0] L_{14}^-[-1] &= (q-q^{-1}) L_{13}^-[-1] L_{44}^-[0]\\
  L_{14}^-[-1] L_{33}^-[0] - L_{33}^-[0] L_{14}^-[-1] &=0.
\end{align*}
Using the above relations, the relation $L_{44}^+[0] L_{43}^-[0] = q^{-1} L_{43}^-[0] L_{44}^+[0]$, Proposition \ref{prop:FMinclusion}, and the fact that $N=4$ is even, one calculates
\begin{align}\label{eq:E0E3}
  [E_0,E_3]_{q^{-1}}= q^{N-1}(q-q^{-1})^{-1} L_{13}^-[-1] L_{33}^+[0].
\end{align}
Similarly, collecting coefficients of $z^0w^2$ in \eqref{eq:RLL1expl} with two plus signs as superscripts one obtains the relations
\begin{align*}
  L_{12}^+[0] L_{41}^+[1] - L_{41}^+[1] L_{12}^+[0] &= (q-q^{-1}) L_{42}^+[1] L_{11}^+[0]\\
  L_{11}^+[0] L_{41}^+[1] - q L_{41}^+[1] L_{11}^+[0]  &=0.
\end{align*}
With these relations and Proposition \ref{prop:FMinclusion} one gets
\begin{align}\label{eq:F0F1}
  [F_0,F_1]_q = - q^{-N+1} (q-q^{-1})^{-1} L_{44}^-[0] L_{42}^+[1].
\end{align}
Entering \eqref{eq:E0E3} and \eqref{eq:F0F1} into \eqref{eq:SB0p} one obtains
\begin{align*}
  S(B_0') = - \frac{q^{-N+4}}{q-q^{-1}} \big( L_{11}^-[0] L_{42}^+[1] + c_0 q^{2N-7} L_{13}^-[-1] L_{44}^+[0] \big).
\end{align*}
The assumption $c_0=q^{-2N+7}$ gives $S(B_2')= -\frac{q^{-N+3}}{ q-q^{-1}} s_{14}[-1]$ and hence $S(B_0')\in \Ytspfour$. Hence all generators of $\pi(B_{\uc})$ are contained in $\Ytsp$ which completes the proof of the theorem.
\end{proof}
\begin{rema}
  An FRT-construction of a family of twisted quantum loop algebras of type $AIII/IV$ was given in \cite{a-CGM14} depending on one parameter apart from $q$. It is to be expected that the analog of the above theorem holds for this family and the corresponding twisted quantum loop algebras $\pi(B_{\uc,\us})$ considered in Subsection \ref{sec:symLoop}. Moreover, for type $AIII/IV$ Theorem \ref{thm:equiv} provides a family of nonequivalent quantum symmetric pair coideal subalgebras $\pi(B_{\uc,\us})$ depending on two parameters apart from $q$. It would be interesting to discover the second parameter in the constructions of \cite{a-CGM14}.
\end{rema}
\begin{rema}
  By \cite[Section 4.1]{a-MolRagSor03} both $\Yto$ and $\Ytsp$ contain central subalgebras which are polynomial rings in infinitely many variables. To treat both cases simultaneously we write $\Yt$ to denote either of $\Yto$ and $\Ytsp$, we denote the central polynomial subalgebra in each case by $\field(q)[C_N]$, and, as before, we let $\pi(B_{\uc})$ be the corresponding twisted quantum loop algebra. By Theorem \ref{thm:Z(B)infinite} the algebra $S(\pi(B_{\uc}))$ has trivial center. It would be interesting to know if the multiplication map
  \begin{align*}
    S(\pi(B_{\uc})) \ot \field(q)[C_N] \rightarrow \Yt
  \end{align*}  
is bijective. This seems natural as 
  \begin{align*}
    L(\glfrak_N(\field))^{\hat{\theta}} \cong L(\slfrak_N(\field))^{\hat{\theta}} \oplus C_N
  \end{align*}   
where $C_N$ denotes the infinite dimensional central subspace of $L(\glfrak_N(\field))^{\hat{\theta}}$ spanned by the elements $C_k=\mathrm{Id}_N \ot t^k - \mathrm{Id}_N \ot t^{-k}$ for all $k\in \N$. 
\end{rema}
\subsection{References to Letzter's constructions}
For finite dimensional $\gfrak$, the strategy of the proof of Theorem \ref{thm:MRS=QSP} was employed in \cite[Section 6]{a-Letzter99a} to show that the coideal subalgebras constructed by Noumi and Sugitani in \cite{a-Noumi96}, \cite{a-NS95} coincide with the quantum symmetric pair coideal subalgebras constructed in \cite{a-Letzter99a}. 
\section{Quantized GIM Lie algebras}\label{sec:QGIM}
As a second application of the theory developed in this paper we construct quantum group analogs of generalized intersection matrix (GIM) Lie algebras. These Lie algebras are generalizations of Kac-Moody Lie algebras for Cartan matrices which also allow positive off-diagonal entries. They were originally introduced by Slodowy in his study of the deformation theory of singularities \cite{habil-Slodowy}, \cite{a-Slodowy86}. In Section \ref{sec:GIMdef} we define GIM Lie algebras. In Section \ref{sec:qGIM} their quantum group analogs are constructed in full generality.
Special examples of quantized GIM Lie algebras were previously constructed by Y.~Tan in \cite{a-Tan05}. The motivation of that paper was the apparent similarity of their defining relations to those of the quantum toroidal Lie algebras considered by Ginzburg, Kapranov, and Vasserot in \cite[Section 3]{a-GKV95}. 
\subsection{Generalized intersection matrix algebras}\label{sec:GIMdef}
\begin{defi}
  A generalized intersection matrix {\upshape (GIM)} is an integral matrix $A=(a_{ij})_{i,j\in I}$ for some finite set $I$ such that
  \begin{align*}
    a_{ii}&=2,&
    a_{ij}>0 &\Leftrightarrow a_{ji}>0, &
    a_{ij}<0 &\Leftrightarrow a_{ji}<0
  \end{align*}
  for all $i,j\in I$.
  A generalized intersection matrix $A$ is called symmetrizable if there exists a diagonal matrix $D=\mbox{diag}(\epsilon_i\,|\,i\in I)$ with coprime entries $\epsilon_i\in \N$ such that $DA$ is symmetric.  
\end{defi}
Associated to any generalized intersection matrix $A$ is a GIM Lie algebra $\cL(A)$ which is given explicitly in terms of generators and relations in \cite{habil-Slodowy}, \cite{a-Slodowy86}. Here, however, $\cL(A)$ will be defined using an observation by Berman \cite[Proposition 2.1]{a-Berman89} which identifies $\cL(A)$ with the Lie subalgebra of a Kac-Moody algebra fixed under an involution. To this end one associates to $A$ a generalized Cartan matrix $C(A)$ as follows.
Let $\oI$ be an identical copy of $I$ with elements denoted by $\oi$ for all $i\in I$ and define $\hat{I}=I\cup \oI$. Then $C(A)=(c_{ij})_{i,j\in \hat{I}}$ where
$c_{ii}=c_{\oi\,\oi}=2$ for all $i\in I$ and
\begin{align*}
  c_{ij}&=c_{\oi\,\oj}=a_{ij} \quad \mbox{if $a_{ij} \le 0$,}&
  c_{ij}=c_{\oi\,\oj}&=0 \qquad\,\, \mbox{if $a_{ij} > 0$ and $i\neq j$,}\\
  c_{i\oj}&=0 \quad\, \mbox{if $a_{ij} \le 0$ or $i = j$,} &
  c_{i \oj}=c_{\oi\,j}&=-a_{ij} \,\,\,\,\, \mbox{if $a_{ij} > 0$ and $i\neq j$.}   
\end{align*} 
The definition of $C(A)$ is most easily understood in terms of Dynkin diagrams as explained in \cite[Section 2]{a-Berman89}. If $A$ is symmetrizable with diagonal matrix $D$, then so is $C(A)$ with diagonal matrix $D\oplus D$. The generalized intersection matrix $A$ is called unoriented if $C(A)$ is indecomposable. By construction, there exists $\sigma\in \Aut(\hat{I})$ given by
$\sigma(i)=\oi$ and $\sigma(\oi)=i$ for all $i\in I$. As $\sigma^2=\id_{\hat{I}}$, the map $\theta=\theta(\emptyset,\sigma)=\sigma\circ\omega$ defines an involutive Lie algebra automorphism of $\gfrak(C(A))$.
\begin{defi}
  Let $A$ be an unoriented GIM and $\gfrak=\gfrak(C(A))$. The Lie subalgebra $\cL(A)$ of $\gfrak(C(A))$ fixed under the involution $\theta$ is called the GIM Lie algebra corresponding to $A$.
\end{defi}
\subsection{Quantized GIM algebras}\label{sec:qGIM}
Quantum symmetric pairs provide an immediate quantum analog of the above notion of a GIM Lie algebra. Retain the notations of the previous subsection. As the entries of $C(A)$ satisfy $c_{i\oi}=c_{\oi i}=0$ for all $i\in I$, the parameter set $\cC$ defined by \eqref{eq:C-def} for $\hat{I}$ and $(X,\tau)=(\emptyset, \sigma)$ becomes
\begin{align*}
  \cC=\{\uc\in (\field(q)^\times)^{\hat{I}}\,|\,c_i= c_{\oi} \quad\mbox{for all $i\in I$}\}=(\field(q)^\times)^I.
\end{align*}
Similarly, the set $\cS$ defined by \eqref{eq:Sdef} is trivial in this case. Hence there exist only standard quantum symmetric pair coideal subalgebras corresponding to the involution $\theta=\theta(\emptyset,\sigma)$ of $\gfrak(C(A))$.
\begin{defi}
  Let $A$ be an unoriented, symmetrizable GIM, and $\theta=\theta(\emptyset,\sigma)$ the corresponding involution of $\gfrak(C(A))$ and $\uc \in \cC=\field(q)^I$. The corresponding quantum symmetric pair coideal subalgebra $B_{\uc}$ of $U_q(\gfrak(C(A))')$ is called a quantum GIM algebra of $A$ and is denoted by $U_q(\cL(A))_{\uc}$.  
\end{defi}
By definition, $U_q(\cL(A))_{\uc}$ is hence the subalgebra of $U_q(\gfrak(C(A))')$ generated by the elements
\begin{align*}
  B_i&=F_i - c_i E_{\oi} K_i^{-1}, &  B_{\oi}&=F_{\oi} - c_{i} E_{i} K_{\oi}^{-1}, & K_i K_{\oi}^{-1}, & &K_{\oi} K_i^{-1}
\end{align*}
for all $i\in I$. By Theorem \ref{thm:rels} and Theorem \ref{thm:relsBc} it is straightforward to obtain a presentation of $U_q(\cL(A))_{\uc}$ in terms of generators and relations. Theorem \ref{thm:Cij=0if} implies that if $c_{ij}\neq 0$ for $i,j\in \hat{I}$ then $\sigma(i)\notin\{i,j\}$ and therefore $C_{ij}(\uc)=0$. Hence the quantum Serre relations for $U_q(\cL(A))_{\uc}$ can have nonzero lower order terms only if $c_{ij}=0$. 

The noncommutative polynomial $F_{ij}(x,y)$ was defined in \eqref{eq:Fij-def} only for a specified generalized Cartan matrix. To express the quantum Serre relations for $U_q(\cL(A))_{\uc}$ in terms of $A$, introduce noncommutative polynomials ${}_{-k}F_{i}$ in two variables $x,y$ defined for any $i\in I$ and any $k\in \N_0$
by
\begin{align*}
  {}_{-k}F_{i}(x,y) = \sum_{n=0}^{1+k}(-1)^n
  \left[\begin{matrix}1+k\\n\end{matrix}\right]_{q_i}
  x^{1+k-n}yx^n.   
\end{align*}
In the case that $A$ itself is a generalized Cartan matrix one hence has ${}_{a_{ij}}F_i(x,y)=F_{ij}(x,y)$.
For $\uc\in \cC$ let $\cG_q(A)_{\uc}$ denote the unital, associative $\field(q)$-algebra with generators 
  \begin{align*}
    G_i,\, \ovG_{i},\, L_i,\, \oL_{i} \qquad \mbox{for all $i\in I$}
  \end{align*}
and the following defining relations:
\begin{enumerate}
  \item $L_i \oL_i = 1 = \oL_i L_i$ \quad for $i\in I$,
  \item $L_i G_j = q_i^{-a_{ij}} G_j L_i$, \quad $L_i \ovG_j = q_i^{a_{ij}} \ovG_j L_i$ \quad for all $i,j\in I$,
  \item If $a_{ij}=0$ then $[G_i,G_j]=[\ovG_i,\ovG_j]=0$ and $\displaystyle [G_i,\ovG_j]=\delta_{ij}c_i \frac{L_i- \oL_i}{q_i-q_i^{-1}}$
  \item If $a_{ij}<0$ then ${}_{a_{ij}}F_i(G_i,G_j)=0={}_{a_{ij}}F_i(\ovG_i,\ovG_j)$ and $[G_i,\ovG_j]=0$.
  \item If $a_{ij}>0$ then 
    ${}_{-a_{ij}}F_i(G_i,\ovG_j)=0={}_{-a_{ij}}F_i(\ovG_i,G_j)$ and \\ $[G_i,G_j]=0=[\ovG_i,\ovG_j]$.
\end{enumerate}
As indicated above, the following theorem is a direct consequence of Theorems \ref{thm:rels} and \ref{thm:relsBc}.
\begin{thm}
  Let $A$ be an unoriented, symmetrizable GIM and $\uc \in \cC$. There is a uniquely determined algebra isomorphism $\varphi:\cG_q(A)_{\uc}\rightarrow U_q(\cL(A))_{\uc}$ such that
  \begin{align*}
    \varphi(L_i)&=K_i K_{\oi}^{-1}, & \varphi(G_i)&=B_i, & \varphi(\ovG_i)&= B_{\oi} & \mbox{ for all $i\in I$.}
  \end{align*}
\end{thm}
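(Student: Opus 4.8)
The plan is to exhibit the same presentation on both sides, using Theorem~\ref{thm:rels}. Since $C(A)$ is indecomposable, $\gfrak:=\gfrak(C(A))$ is a symmetrizable Kac--Moody algebra, $(X,\tau)=(\emptyset,\sigma)$ is an admissible pair, and $\us=\mathbf 0$ lies in $\cS$ (which is trivial here), so Theorem~\ref{thm:rels} describes $B_{\uc}=U_q(\cL(A))_{\uc}$ by generators and relations. First I would record all the data entering that presentation for the pair $(\emptyset,\sigma)$: one has $w_X=1$ and $\rho_X^\vee=0$, so $\Theta=-\sigma$ on $\hfrak^\ast$, $Q^\Theta=\bigoplus_{i\in I}\Z(\alpha_i-\alpha_{\oi})$, and hence $\cM_X^+ U^0_\Theta{}'=U^0_\Theta{}'$ is the Laurent polynomial ring $\field(q)[L_i^{\pm 1}\mid i\in I]$ with $L_i:=K_{\alpha_i-\alpha_{\oi}}=K_iK_{\oi}^{-1}$. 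Moreover $T_X=\psi$, $s(\emptyset,\sigma)(\alpha_j)=1$ for all $j$ (as $\rho_X^\vee=0$), $Z^+_j(X)=1$, $a_j^\pm=v_j=1$, and the one-line computation $\theta_q(F_jK_j)=\Ad(s)\,\psi\,\sigma\,\omega(F_jK_j)=-E_{\sigma(j)}$ gives $B_j=F_j-c_jE_{\sigma(j)}K_j^{-1}$ for all $j\in\hat I$, where $c_{\oi}=c_i$ because $\uc\in\cC$ and $(\alpha_i,\Theta(\alpha_i))=-(\alpha_i,\alpha_{\oi})=0$. This identifies the generators $L_i^{\pm1},B_i,B_{\oi}$ of $B_{\uc}$ with the generators $L_i,\oL_i,G_i,\ovG_i$ of $\cG_q(A)_{\uc}$.

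Next I would translate the relations of that presentation. Relation \eqref{eq:ker2} is vacuous since $X=\emptyset$. For \eqref{eq:ker1} I would compute $(\alpha_i-\alpha_{\oi},\alpha_j)=\epsilon_i(c_{ij}-c_{\oi j})$ and, from the defining cases of $C(A)$ together with the symmetry $c_{\sigma a\,\sigma b}=c_{ab}$, verify that $c_{ij}-c_{\oi j}=a_{ij}$ and $c_{i\oj}-c_{\oi\oj}=-a_{ij}$ in all cases (whether $i=j$, or $i\ne j$ with $a_{ij}\le 0$, or $i\ne j$ with $a_{ij}>0$). Hence \eqref{eq:ker1} becomes $L_iB_j=q_i^{-a_{ij}}B_jL_i$ and $L_iB_{\oj}=q_i^{a_{ij}}B_{\oj}L_i$, which together with the relations $L_i\oL_i=1$ (and commutativity of the $L_i$) built into $U^0_\Theta{}'$ are precisely relations (1) and (2) of $\cG_q(A)_{\uc}$.

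For the quantum Serre relations \eqref{eq:ker3} I would first invoke Theorem~\ref{thm:Cij=0if}: for $i\ne j$ in $\hat I$ the lower-order term $C_{ij}(\uc)$ vanishes unless $j=\sigma(i)$, i.e.\ $\{i,j\}=\{k,\overline k\}$ for some $k\in I$. For such a pair $c_{k\overline k}=0$, so Case~1 of Theorem~\ref{thm:relsBc} applies; using $v_k=1$ and $\cZ_k=-K_{\overline k}K_k^{-1}=-\oL_k$, $\cZ_{\overline k}=-K_kK_{\overline k}^{-1}=-L_k$, formula \eqref{eq:Cij0} gives $C_{k\overline k}(\uc)=c_k(L_k-\oL_k)/(q_k-q_k^{-1})$, so \eqref{eq:ker3} for $(k,\overline k)$ reads $[B_k,B_{\overline k}]=c_k(L_k-\oL_k)/(q_k-q_k^{-1})$. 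For all remaining pairs $C_{ij}(\uc)=0$, and the polynomial $F_{ij}$ attached to $C(A)$ is one of the polynomials ${}_{\pm a_{ij}}F_i$ of the statement according to whether the relevant $C(A)$-entry is $a_{ij}\le 0$, is $0$, or is $-a_{ij}<0$; running through the pairs $(i,j),(i,\oj),(\oi,j),(\oi,\oj)$ in the cases $a_{ij}=0$, $a_{ij}<0$, $a_{ij}>0$ then reproduces relations (3), (4), (5) verbatim (for instance, when $a_{ij}>0$ and $i\ne j$ the entries are $c_{ij}=c_{\oi\oj}=0$ and $c_{i\oj}=c_{\oi j}=-a_{ij}$, so $F_{i\oj}(B_i,B_{\oj})=0$ becomes ${}_{-a_{ij}}F_i(G_i,\ovG_j)=0$ and $F_{ij}(B_i,B_j)=0$ becomes $[G_i,G_j]=0$).

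Having matched the generators and every defining relation, I conclude that $G_i\mapsto B_i$, $\ovG_i\mapsto B_{\oi}$, $L_i\mapsto L_i$, $\oL_i\mapsto L_i^{-1}$ defines an algebra homomorphism $\varphi\colon\cG_q(A)_{\uc}\to U_q(\cL(A))_{\uc}$; it is surjective by the description of the generating set in Theorem~\ref{thm:rels}, and the translation above shows that the relations \eqref{eq:ker1}--\eqref{eq:ker3} of that presentation follow from (1)--(5), so the assignment in the opposite direction is a two-sided inverse. Uniqueness is clear since the $G_i,\ovG_i,L_i,\oL_i$ generate $\cG_q(A)_{\uc}$. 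I expect the main obstacle to be purely organisational: keeping the case analysis of the entries of $C(A)$ attached to $(i,j),(i,\oj),(\oi,j),(\oi,\oj)$ consistent with the sign conventions, so that \eqref{eq:ker1} yields exactly $q_i^{\mp a_{ij}}$ and the $C(A)$-polynomials $F_{\bullet\bullet}$ coincide with the ${}_{\pm k}F_i$ of the statement; the only genuinely computational point is the evaluation of $\cZ_k$ and the check, via $v_k=1$ and $s(\emptyset,\sigma)\equiv 1$, that $C_{k\overline k}(\uc)$ is exactly the $\delta$-term of relation~(3).
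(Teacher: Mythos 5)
Your proposal is correct and follows essentially the same route as the paper, which obtains the theorem as a direct consequence of the presentation in Theorem \ref{thm:rels}, the vanishing criterion of Theorem \ref{thm:Cij=0if}, and Case 1 of Theorem \ref{thm:relsBc}; your explicit computations ($\Theta=-\sigma$, $U^0_\Theta{}'=\field(q)[L_i^{\pm1}]$, $\theta_q(F_jK_j)=-E_{\sigma(j)}$, $v_j=1$, $\cZ_k=-\oL_k$, $\cZ_{\overline k}=-L_k$, and the case check $c_{ij}-c_{\oi j}=a_{ij}$, $c_{i\oj}-c_{\oi\,\oj}=-a_{ij}$) are precisely the verification the paper leaves to the reader. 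In particular your identification of the only nontrivial lower-order term $C_{k\overline k}(\uc)=c_k(L_k-\oL_k)/(q_k-q_k^{-1})$ with the $\delta_{ij}$-term of relation (3) matches the intended reading of the definition of $\cG_q(A)_{\uc}$.
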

\begin{rema}
  For two-fold affinizations of Cartan matrices of type ADE a quantized GIM algebra was introduced in \cite{a-Tan05}. In this case the algebra defined in \cite[Definition 2.1]{a-Tan05} coincides with the algebra $\cG_q(A)_{\uc}$ for $\uc=(1,1,\dots,1)$ up to extension of $U^0_\Theta{}'$ via a compatible minimal realization of $C(A)$ as in Remark \ref{rem:DefComp}. In \cite[Theorem 3.1]{a-LvTan12} the above theorem is verified in this special case by explicit computation. Moreover, in \cite{a-Tan05} an action of a braid group on quantized GIM algebras associated to two-fold ADE affinizations is constructed. This braid group action can be interpreted within a general framework of braid group actions on quantum symmetric pair coideal subalgebras outlined in the finite case in \cite{a-KolPel11}. 
\end{rema}

\appendix

\section{Classification of involutions of the second kind}\label{app:classThm}
  In this appendix the statement of Theorem \ref{classThm} is reduced to results explicitly stated in \cite{a-KW92}.
\subsection{Cartan and Borel subalgebras}
Let $\gfrak$ denote an indecomposable, symmetrizable Kac-Moody algebra as in Section \ref{sec:KM} with standard Cartan subalgebra $\hfrak$ and standard Borel subalgebras $\bfrak^+=\hfrak\oplus \nfrak^+$ and $\bfrak^-=\hfrak\oplus \nfrak^-$. More generally, any maximal $\ad$-diagonalizable Lie subalgebra of $\gfrak$ is called a split Cartan subalgebra of $\gfrak$, and any maximal completely solvable Lie subalgebra of $\gfrak$ is called a Borel subalgebra of $\gfrak$ \cite[1.25, 1.26]{a-KW92}. As in the finite case, all Cartan subalgebras of $\gfrak$ are conjugate under the action of the Kac-Moody group $G$. Borel subalgebras, however, are generally either conjugate to $\bfrak^+$ or to $\bfrak^-$. More precisely, one has the following result. 
\begin{prop}[{\cite[Theorem 3]{a-PK83}, \cite[1.26]{a-KW92}}] \label{prop:split-conj}
  Let $\bfrak$ be a Borel subalgebra of $\gfrak$ and let $\tfrak$ be a Cartan subalgebra of $\gfrak$ such that $\tfrak\subset \bfrak$. There exists $g\in G$ such that $(\Ad(g)(\bfrak),\Ad(g)(\tfrak))=(\bfrak^+,\hfrak)$ or $(\Ad(g)(\bfrak),\Ad(g)(\tfrak))=(\bfrak^-,\hfrak)$.
\end{prop}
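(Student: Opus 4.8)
The statement to prove is Proposition~\ref{prop:split-conj}: every Borel subalgebra $\bfrak$ containing a Cartan subalgebra $\tfrak$ is conjugate under $G$ to either $(\bfrak^+,\hfrak)$ or $(\bfrak^-,\hfrak)$. The plan is to first reduce to the case where the Cartan subalgebra is already the standard one, then analyze how a Borel subalgebra containing $\hfrak$ can sit relative to the root space decomposition. For the first reduction I would invoke the standard fact (cited from \cite[1.25, 1.26]{a-KW92}, ultimately \cite{a-PK83}) that all split Cartan subalgebras of $\gfrak$ are conjugate under $G$: pick $g_0\in G$ with $\Ad(g_0)(\tfrak)=\hfrak$, and replace $(\bfrak,\tfrak)$ by $(\Ad(g_0)(\bfrak),\hfrak)$. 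So from now on $\hfrak\subset \bfrak$.

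The second step is to understand Borel subalgebras containing $\hfrak$. Since $\bfrak$ is $\hfrak$-stable (being a subalgebra containing $\hfrak$, it is a sum of $\hfrak$ and a collection of root spaces $\gfrak_\alpha$), write $\bfrak = \hfrak \oplus \bigoplus_{\alpha\in \Psi}\gfrak_\alpha$ for some subset $\Psi\subseteq \Phi$. The closure of $\bfrak$ under bracket forces $\Psi$ to be ``closed'' ($\alpha,\beta\in\Psi$, $\alpha+\beta\in\Phi$ implies $\alpha+\beta\in\Psi$), and the requirement that $\bfrak$ be completely solvable and maximal among such algebras should force $\Psi$ to contain exactly one of $\alpha,-\alpha$ for each real root $\alpha$ (and to contain all imaginary roots, or a suitable half of them). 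One then argues that such a $\Psi$ determines a choice of positive system for the real roots: a maximal closed set not containing any $\{\alpha,-\alpha\}$ pair of real roots, together with the completely-solvable condition, pins down a Weyl-chamber-like datum. The key point is that the Weyl group $W$ acts transitively on such choices for \emph{real} roots, but the two ``ends'' — corresponding to $\bfrak^+$ and $\bfrak^-$ — are swapped only by $-1$, which need not be in $W$. Here I would lean directly on the cited results \cite[Theorem 3]{a-PK83} and \cite[1.26]{a-KW92}, which presumably already establish that the $G$-orbits of pairs (Borel $\supseteq$ Cartan) are classified by this dichotomy; the proof is then essentially an assembly of these citations plus the conjugacy of Cartan subalgebras.

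The main obstacle I anticipate is the imaginary-root bookkeeping: in the Kac--Moody setting a Borel subalgebra is not simply ``$\hfrak$ plus the positive root spaces'', because imaginary root spaces are multidimensional and the notion of ``completely solvable'' must be handled with care (one uses the characterization via the lower central series / the existence of a flag, cf.\ \cite[1.25]{a-KW92}). Establishing that a maximal completely solvable $\hfrak$-stable subalgebra corresponds precisely to one of the two standard positive systems — rather than to some exotic closed subset of $\Phi$ — is the technical heart. However, since Proposition~\ref{prop:split-conj} is quoted verbatim from \cite[Theorem 3]{a-PK83} and \cite[1.26]{a-KW92}, the honest plan here is: cite conjugacy of Cartan subalgebras to arrange $\tfrak=\hfrak$, then cite the Peterson--Kac classification of Borel subalgebras relative to a fixed Cartan to conclude. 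No substantial new argument is needed; the content of this appendix subsection is the reduction, not a from-scratch proof.
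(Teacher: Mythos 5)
Your proposal is fine: the paper gives no proof of Proposition \ref{prop:split-conj} at all, quoting it directly from \cite[Theorem 3]{a-PK83} and \cite[1.26]{a-KW92}, which is exactly your final stance (conjugacy of Cartan subalgebras plus the Peterson--Kac classification of Borel subalgebras, with the genuine technical content living in the cited sources). Your heuristic sketch of how the underlying argument goes is a reasonable gloss, but it is not needed and the paper does not attempt it either.
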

\subsection{Split pairs}\label{sec:split-pairs}
As in Section \ref{sec:inv2nd}, let $\theta:\gfrak\rightarrow \gfrak$ be an involutive automorphism of the second kind. By \cite[Lemma 5.7, Corollary 5.8 (ii) $\Rightarrow$ (i)]{a-KW92} every Borel subalgebra of $\gfrak$ contains a $\theta$-stable Cartan subalgebra $\tfrak$. As in \eqref{eq:Theta} we denote the induced map on $\tfrak^\ast$ by $\Theta$. 
\begin{defi}[{\cite[5.15]{a-KW92}}] \label{splitpair} 
  Let $\tfrak$ be a $\theta$-stable Cartan subalgebra
  of $\gfrak$ and let $\bfrak$ be a Borel subalgebra of $\gfrak$
  containing $\tfrak$. Let $\Delta$ and $\Delta^+$ denote the set of roots of
  $\tfrak$ in $\gfrak$ and $\bfrak$, respectively, and let $\Pi$
  denote the set of simple roots of $\tfrak$ in $\bfrak$.

  The pair $(\tfrak,\bfrak)$ is called a split pair for $\theta$ if
  there exists a subset $X$ of $\Pi$ satisfying the following conditions:
  \begin{enumerate}
    \item $\Delta^+\cap \Theta(\Delta^+)=\Delta^+\cap \Z X$.
    \item $\theta|_{\gfrak_\alpha}=\mathrm{id}_{\gfrak_\alpha}$ for all
    $\alpha\in \Delta^+\cap \Z X$.
  \end{enumerate}
\end{defi}
Assume that $(\tfrak,\bfrak)$ is a split pair for the involutive automorphism $\theta$ of $\gfrak$, and that $X\subset \Pi$ is as in the above definition. Let, moreover, $\gfrak_X$ denote the Lie subalgebra of $\gfrak$ generated by $\gfrak_{\pm \alpha}$ for all $\alpha \in X$. By \cite[5.16]{a-KW92} the Lie algebra $\gfrak_X$ is finite-dimensional and semisimple. Hence, by Property (2) of the above definition one has $\theta|_{\gfrak_X}=\id_{\gfrak_X}$.

The involution $\theta$ of $\gfrak$ induces an involution of $G$. Let $G^\theta$ denote the fix point subgroup and observe that $\Ad(g)\circ \theta=\theta\circ \Ad(g)$ for all $g\in G^\theta$.
The following result is the main ingredient in the proof of Theorem \ref{classThm}.
\begin{thm}[{\cite[5.19, 5.32]{a-KW92}}] \label{thm:split-exist}
Let $\theta:\gfrak \rightarrow \gfrak$ be an involutive automorphism of the second kind.
\begin{enumerate}
  \item  There exists a split pair $(\tfrak,\bfrak)$ for $\theta$.
  \item  Assume that  $(\tfrak_1,\bfrak_1)$ and
    $(\tfrak_2,\bfrak_2)$ are split pairs for $\theta$ such that $\Ad(g)(\bfrak_1)=\bfrak_2$ for some $g\in G$. Then there exists $g'\in G^\theta$ such that $\Ad(g')(\tfrak_1)=\tfrak_2$ and $\Ad(g')(\bfrak_1)=\bfrak_2$.
\end{enumerate}
\end{thm}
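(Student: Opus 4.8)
The statement is, up to aligning the conventions of this appendix with those of \cite{a-KW92}, exactly \cite[5.19]{a-KW92} (part (1)) and \cite[5.32]{a-KW92} (part (2)). Since Definition~\ref{splitpair} reproduces \cite[5.15]{a-KW92}, and the inputs I need --- conjugacy of split Cartan subalgebras under $\Ad(G)$, the fact that every Borel subalgebra is $\Ad(G)$-conjugate to $\bfrak^+$ or $\bfrak^-$ (Proposition~\ref{prop:split-conj}), the existence of a $\theta$-stable Cartan subalgebra inside every Borel subalgebra (\cite[Lemma~5.7, Corollary~5.8]{a-KW92}), and the relation $\Ad(g)\circ\theta=\theta\circ\Ad(g)$ for $g\in G^\theta$ --- have all been recorded above, the plan is to put these together and quote \cite{a-KW92}. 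For orientation I sketch the ideas.

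For part (1): fix a Borel subalgebra $\bfrak$ with a $\theta$-stable Cartan subalgebra $\tfrak\subseteq\bfrak$ and write $\Theta$ for the induced map on $\tfrak^\ast$. Because $\theta$ is of the second kind it interchanges the two $\Ad(G)$-conjugacy classes of Borel subalgebras, so $\theta(\bfrak)$ is opposite to $\bfrak$ and $\bfrak\cap\theta(\bfrak)$ is finite-dimensional. I would then transpose Araki's argument \cite{a-Araki62} to this setting: choose the pair $(\tfrak,\bfrak)$ so that $\dim\bigl(\bfrak\cap\theta(\bfrak)\bigr)$ is minimal --- which amounts to taking the $(-1)$-eigenspace $\afrak$ of $\theta$ on $\tfrak$ as large as possible and ordering the roots by their restriction to $\afrak$ --- and let $X\subseteq\Pi$ be the set of simple roots vanishing on $\afrak$. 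One then verifies $\Delta^+\cap\Theta(\Delta^+)=\Delta^+\cap\Z X$ and, after a normalization of $\theta$ affecting only the finite-dimensional semisimple subalgebra $\gfrak_X$, the identity $\theta|_{\gfrak_X}=\id_{\gfrak_X}$; this is \cite[5.19]{a-KW92}.

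For part (2): given split pairs $(\tfrak_1,\bfrak_1)$ and $(\tfrak_2,\bfrak_2)$ with $\Ad(g)(\bfrak_1)=\bfrak_2$, the plan is to upgrade $g$ to an element of $G^\theta$ in stages --- first move $\tfrak_1$ and $\tfrak_2$ into a common Borel subalgebra by conjugacy of split Cartan subalgebras, then, working inside that Borel subalgebra and the attached finite-dimensional semisimple subalgebra $\gfrak_X$, reduce the claim to the conjugacy of maximal $\theta$-split tori, which --- because $\theta$ is of the second kind --- is a finite-dimensional question governed by $\gfrak_X$ where the classical real-group conjugacy theorems apply, together with a control of the remaining unipotent part that lets the conjugating element be chosen $\theta$-fixed. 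This is \cite[5.32]{a-KW92}.

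The step I expect to be the main obstacle --- and the reason this is genuinely a result of Kac and Wang rather than a transcription of the Araki/Borel--Tits theory --- is the control of infinite dimensions: one must know a priori that the intersections $\bfrak\cap\theta(\bfrak)$ are finite-dimensional, so that minimal choices exist, and that the conjugating subgroups needed in part (2) are available inside the Kac--Moody group $G$; both places are exactly where the hypothesis that $\theta$ is of the second kind is used. I do not plan to reprove these here but to cite \cite{a-KW92}.
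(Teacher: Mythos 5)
Your proposal is correct and coincides with what the paper does: the theorem is quoted directly from \cite[5.19, 5.32]{a-KW92}, with the supporting facts (Definition \ref{splitpair} matching \cite[5.15]{a-KW92}, Proposition \ref{prop:split-conj}, and the existence of $\theta$-stable Cartan subalgebras in Borel subalgebras) already recorded in the appendix, and no independent proof is given. Your added sketches of the Araki-style minimality argument and the reduction to $\gfrak_X$ are reasonable orientation but are not needed, since like the paper you ultimately rely on the citation.
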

\subsection{Square roots}
In the following we will encounter involutive automorphisms of $\gfrak$
which we would like to commute with elements of $\Ad(\Htil)$ or
$\Aut(\gfrak,\gfrak')$. The following lemma will provide a useful
tool for this purpose. We will say that a group $H$ allows square
roots if for any $h\in H$ there exists an element $g\in H$ such that $g^2=h$.
\begin{lem}\label{CommuteLem}
  Let $\theta$ be an involutive automorphism of $\gfrak$ and $H\subset
  \Aut(\gfrak)$ a commutative subgroup which allows square roots and
  satisfies $\theta\circ H\circ \theta =H$. Then for any $h\in H$ we
  can write
  \begin{align*}
    h\circ \theta = h_\theta \circ \theta^h
   \end{align*}
  where $\theta^h\in \Aut(\gfrak)$ is $H$-conjugate to $\theta$ and $h_\theta\in H$
  commutes with $\theta_h$.
\end{lem}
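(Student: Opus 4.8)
The plan is as follows. Since $\theta^2 = \id$ and $\theta\circ H\circ\theta = H$, conjugation by $\theta$ defines an automorphism $c_\theta\colon H\to H$, $c_\theta(g) = \theta\circ g\circ\theta$, which is itself an involution of $H$; because $H$ is abelian, $c_\theta$ fixes every element of the form $g\circ c_\theta(g)$. Now fix $h\in H$. Using that $H$ allows square roots, choose $g\in H$ with $g^2 = h$, and set
\begin{align*}
  \theta^h := g\circ\theta\circ g^{-1}, \qquad h_\theta := g\circ c_\theta(g).
\end{align*}
Both objects lie where they should: $\theta^h\in\Aut(\gfrak)$ is by construction $H$-conjugate to $\theta$, and $(\theta^h)^2 = g\circ\theta^2\circ g^{-1} = \id$ with $\theta^h\neq\id$, so it is again an involutive automorphism; and $h_\theta\in H$ since $c_\theta(H) = H$.

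Next I would verify the identity $h\circ\theta = h_\theta\circ\theta^h$. Expanding the right-hand side gives $g\circ\theta\circ g\circ\theta\circ g\circ\theta\circ g^{-1}$. Grouping the middle three factors $\theta\circ g\circ\theta = c_\theta(g)$ produces $g\circ c_\theta(g)\circ g\circ\theta\circ g^{-1}$, and since the three elements $g, c_\theta(g), g$ of the abelian group $H$ may be reordered so that the two copies of $g$ multiply to $g^2 = h$, this equals $h\circ c_\theta(g)\circ\theta\circ g^{-1}$. Finally $c_\theta(g)\circ\theta = \theta\circ g\circ\theta^2 = \theta\circ g$, so the expression collapses to $h\circ\theta\circ g\circ g^{-1} = h\circ\theta$, as desired.

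It remains to check that $h_\theta$ commutes with $\theta^h$. First I would observe that $c_\theta(h_\theta) = c_\theta(g)\circ c_\theta(c_\theta(g)) = c_\theta(g)\circ g = h_\theta$ by commutativity of $H$; equivalently, $\theta$ commutes with $h_\theta$. Then, writing $\theta^h\circ h_\theta = g\circ\theta\circ g^{-1}\circ h_\theta$, I would move $h_\theta$ past $g^{-1}$ (both in $H$), past $\theta$ (just shown), and past $g$ (both in $H$), which yields $h_\theta\circ g\circ\theta\circ g^{-1} = h_\theta\circ\theta^h$. This completes the proof.

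All the computations are elementary manipulations of compositions; the one point that genuinely requires care is the choice of square root. Taking a square root of $h$ itself — rather than of $(h\circ\theta)^2 = h\circ c_\theta(h)\in H$, which is the first candidate one might reach for — is precisely what makes the commutativity of $H$ and the relation $\theta^2=\id$ do all the work, so I do not anticipate any serious obstacle beyond bookkeeping the order of composition.
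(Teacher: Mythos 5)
Your proof is correct and follows essentially the same route as the paper: take a square root $g$ of $h$ itself, set $h_\theta=g\circ(\theta\circ g\circ\theta)$, define $\theta^h$ as an explicit $H$-conjugate of $\theta$, and verify everything by direct manipulation using commutativity of $H$ and $\theta^2=\id$. The only cosmetic difference is that you conjugate $\theta$ by $g$ rather than by $\theta\circ g\circ\theta$ and check commutation via $\theta$-invariance of $h_\theta$, but since both constructions force $\theta^h=h_\theta^{-1}\circ h\circ\theta$, they produce the same element.
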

\begin{proof}
  For any $k\in H$ define $\theta(k)=\theta\circ k\circ \theta\in
  H$. Given $g,h\in H$ with $g^2=h$ define $\theta^h:=\theta(g^{-1})\circ
  \theta \circ \theta(g)$ and $h_\theta:= g\circ\theta(g)=\theta(g)\circ g$. Then
  \begin{align*}
    h_\theta \circ \theta^h=g\circ \theta(g)\circ \theta(g^{-1})\circ
    \theta\circ \theta(g)= g \circ \theta\circ \theta(g)=h\circ
    \theta
  \end{align*}
  and on the other hand 
  \begin{align*}
    \theta^h\circ h_\theta= \theta(g^{-1})\circ \theta\circ
    \theta(g)\circ \theta(g)\circ g=\theta(g^{-1})\circ h\circ
    \theta(g)\circ \theta=h\circ \theta
  \end{align*}
  which proves the lemma.
\end{proof}
\subsection{Proof of Theorem \ref{classThm}}\label{sec:inv2}
Let $\theta:\gfrak\rightarrow \gfrak$ be an involutive automorphism of
the second kind. By Theorem \ref{thm:split-exist}.(1) there exists a split
pair $(\tfrak,\bfrak)$ for $\theta$. Conjugating $\theta$ by an
element of $\Ad(G)$ and using Proposition \ref{prop:split-conj} we may
assume that $(\tfrak,\bfrak)=(\hfrak,\bfrak^+)$ or
$(\tfrak,\bfrak)=(\hfrak,\bfrak^-)$. Note that $(\hfrak,\bfrak^+)$ is
a split pair for $\theta$ if and only if $(\hfrak,\bfrak^-)$ is a
split pair for $\theta$. Hence, we may assume that $(\tfrak,\bfrak)=(\hfrak,\bfrak^+)$.
\begin{rema}
  Note that the subset $X\subseteq \Pi$ obtained in this way is
  uniquely determined by the original involution $\theta$. Indeed, given
  $g\in G$ with $\Ad(g)(\hfrak)=\hfrak$ and $\Ad(g)(\bfrak)=\bfrak$, then
  $g\in N_G(\hfrak)$ represents an element $w$ in the Weyl group $W$
  which satisfies $w(\Delta^+)=\Delta^+$ and hence $w(\Pi)=\Pi$. But this
  implies that $w$ is the unit element in $W$ \cite[3.11 b)]{b-Kac1}.
\end{rema}
As $(\tfrak,\bfrak)=(\hfrak,\bfrak^+)$ we may identify $\Pi$ with the index set $I$ and consider $X$ as a subset of $I$. As observed before Theorem \ref{thm:split-exist} the subset $X$ thus obtained is of
finite type. Hence we may use notation and results from Subsection \ref{sec:Weyl}. 
The automorphism $\Ad(m_X)$ commutes with $\theta$ because $\Ad(e_i)$
and $\Ad(f_i)$ do so for all $i\in X$. Hence the map  $-w_X\circ
\Theta$ on $\hfrak^\ast$, which is induced by $\omega \circ \Ad(m_X)\circ\theta$, satisfies
\begin{align}\label{square=id}
  (- w_X \circ \Theta)^2=\id.
\end{align}
By Proposition \ref{Rousseau-Prop} the automorphism $\omega \circ \Ad(m_X) \circ \theta \in
\Aut(\gfrak)$ leaves $(\hfrak,\bfrak^+)$ and $\gfrak_X$ invariant. Hence
there exists $\tau\in \Aut(A,X)$ such that
$\tau\circ\omega\circ Ad(m_X)\circ \theta$ induces the trivial action
on $\Delta$. Hence $-\tau\circ w_X=\Theta$ and \eqref{square=id} implies
\begin{align}\label{eq:tau2=1}
  \tau^2=\id. 
\end{align}
As $\tau\circ\omega\circ \Ad(m_X)\circ \theta$ induces the trivial action
on $\Delta$ we can find $s\in \Htil$ such that 
\begin{align}\label{withs}
  \Ad(s)\circ \tau \circ \omega \circ \Ad(m_X)\circ \theta
  |_{\gfrak'}=\id _{\gfrak'}.
\end{align}
By Proposition \ref{Rousseau-Prop}.(1) we have
\begin{align}\label{eq:tauommX}
  \tau\circ\omega\circ \Ad(m_X)|_{\gfrak_X}=\id_{\gfrak_X}.
\end{align}
This relation, together with  $\theta|_{\gfrak_X}=\id_{\gfrak_X}$ from Subsection \ref{sec:split-pairs}, implies that the restriction $s|_{Q_X}\equiv 1$ is the constant function. Hence $\Ad(s)$
commutes with $\Ad(m_X)$. Moreover, as $\Ad(\Htil)$ is a commutative
subgroup of $\Aut(\gfrak)$ which is invariant under conjugation by
$\theta$, we may apply Lemma \ref{CommuteLem} and assume that $\Ad(s)$
and $\theta$ commute. By (\ref{withs}) this assumption implies that $\Ad(s)$ and
$\tau\circ\omega\circ \Ad(m_X)$ commute and therefore $\Ad(s)$ also
commutes with $\tau\circ \omega$. Hence we obtain
\begin{align}\label{scond}
  s(\alpha_i)=s(-\alpha_{\tau(\alpha_i)})=s(\alpha_{\tau(\alpha_i)})^{-1}
\end{align}
for all $i\in I$. This implies in particular that $s(\alpha_i)^2=1$ if
$\tau(i)=i$. Now we use $\theta^2=\id_\gfrak$ as well as the
commutativity from above and from Proposition \ref{Rousseau-Prop}.(3)
to obtain the relation 
\begin{align}\label{square-property}
  (\Ad(s))^2\circ (\Ad(m_X))^2|_{\gfrak'}= (\Ad(s)\circ \tau \circ \omega \circ
  \Ad(m_X))^2|_{\gfrak'}= \id_{\gfrak'}
\end{align}
Using Proposition \ref{Rousseau-Prop}.(2) one obtains from
\eqref{scond} and \eqref{square-property} the relation
\begin{align}\label{inZ}
  \alpha_i(\rho^\vee_X) \in \Z \qquad \mbox{if $\tau(i)=i$.}
\end{align}
Relations \eqref{eq:tau2=1}, \eqref{eq:tauommX}, and \eqref{inZ} imply that $(X,\tau)$ is an admissible pair. Moreover, $(\Ad(s)\circ \tau \circ \omega \circ
\Ad(m_X))^2|_\hfrak=\id_\hfrak$ and hence \eqref{square-property} implies 
\begin{align}\label{square=id2}
  \big( \Ad(s)\circ \tau \circ \omega \circ \Ad(m_X)\big)^2 =\id_\gfrak
\end{align}
Relation \eqref{withs} implies that there exists $\phi\in
\Aut(\gfrak;\gfrak')$ such that
\begin{align*}
  \theta=\phi\circ \Ad(s)\circ \tau \circ \omega \circ \Ad(m_X).
\end{align*}
As $\Aut(\gfrak;\gfrak')\subset \Aut(\gfrak)$ is a commutative subgroup
which allows square-roots (compare \cite[4.20]{a-KW92} for an explicit
description of $\Aut(\gfrak;\gfrak')$) we may apply Lemma
\ref{CommuteLem} and assume that $\phi$ commutes with $\theta$. But
then relation \eqref{square=id2} implies $\phi^2=\id$ which by
\cite[4.20]{a-KW92} is impossible if $\mbox{char}(\field)=0$ unless $\phi=\id_\gfrak$. The following proposition collects the result of the discussion up to this point.
\begin{prop}\label{half-way}
  Let $\theta:\gfrak\rightarrow \gfrak$ be an involutive automorphism
  of the second kind. Then there exists an admissible pair $(X,\tau)$ such that $\theta$ is $\Aut(\gfrak)$-conjugate to
  \begin{align*}
    \theta(X,\tau,s):=\Ad(s)\circ\tau\circ \omega\circ \Ad(m_X)
  \end{align*}
  for some $s\in \Htil$ such that $\Ad(s)$ commutes with $\theta(X,\tau,s)$.
\end{prop}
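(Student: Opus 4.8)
The plan is to assemble Proposition~\ref{half-way} from the structure theory developed above: Theorem~\ref{thm:split-exist} and Proposition~\ref{prop:split-conj} on split pairs, Proposition~\ref{Rousseau-Prop} on $\Ad(m_X)$, and two applications of the straightening device Lemma~\ref{CommuteLem}. First I would invoke Theorem~\ref{thm:split-exist}.(1) to produce a split pair $(\tfrak,\bfrak)$ for $\theta$, then conjugate $\theta$ by an element of $\Ad(G)$ so that, using Proposition~\ref{prop:split-conj}, one may take $(\tfrak,\bfrak)=(\hfrak,\bfrak^+)$ (the case $(\hfrak,\bfrak^-)$ being equivalent, since $(\hfrak,\bfrak^+)$ is a split pair for $\theta$ precisely when $(\hfrak,\bfrak^-)$ is). This identifies $\Pi$ with $I$ and singles out a subset $X\subseteq I$, which is of finite type as in Subsection~\ref{sec:split-pairs}, so that the notation and results of Subsection~\ref{sec:Weyl} become available.

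Next I would analyze the composite $\tau\circ\omega\circ\Ad(m_X)\circ\theta$. Since $\Ad(e_i)$ and $\Ad(f_i)$ commute with $\theta$ for $i\in X$, so does $\Ad(m_X)$; hence $\omega\circ\Ad(m_X)\circ\theta$ preserves $(\hfrak,\bfrak^+)$ and $\gfrak_X$ by Proposition~\ref{Rousseau-Prop}, and one can choose $\tau\in\Aut(A,X)$ so that $\tau\circ\omega\circ\Ad(m_X)\circ\theta$ acts trivially on $\Delta$. Then $\Theta=-\tau\circ w_X$, and squaring $-w_X\circ\Theta=\id$ yields $\tau^2=\id$. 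There is $s\in\Htil$ with $\Ad(s)\circ\tau\circ\omega\circ\Ad(m_X)\circ\theta|_{\gfrak'}=\id_{\gfrak'}$, and since $\tau\circ\omega\circ\Ad(m_X)|_{\gfrak_X}=\id_{\gfrak_X}$ one gets $s|_{Q_X}\equiv 1$, so $\Ad(s)$ commutes with $\Ad(m_X)$. Applying Lemma~\ref{CommuteLem} to the commutative, square-root-admitting group $\Ad(\Htil)$ (which is conjugation-stable under $\theta$), I may assume $\Ad(s)$ commutes with $\theta$; this forces $s(\alpha_i)=s(\alpha_{\tau(i)})^{-1}$, in particular $s(\alpha_i)^2=1$ when $\tau(i)=i$. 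Combining $\theta^2=\id$ with Proposition~\ref{Rousseau-Prop}.(2),(3) then gives $(\Ad(s))^2\circ(\Ad(m_X))^2|_{\gfrak'}=\id_{\gfrak'}$, and hence $\alpha_i(\rho_X^\vee)\in\Z$ whenever $\tau(i)=i$; together with $\tau^2=\id$ and $-\tau\circ w_X=\Theta$ this shows $(X,\tau)$ is admissible and moreover $(\Ad(s)\circ\tau\circ\omega\circ\Ad(m_X))^2=\id_\gfrak$ (checking separately on $\hfrak$).

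Finally, writing $\theta=\phi\circ\Ad(s)\circ\tau\circ\omega\circ\Ad(m_X)$ with $\phi\in\Aut(\gfrak;\gfrak')$, I would apply Lemma~\ref{CommuteLem} a second time, now to the commutative group $\Aut(\gfrak;\gfrak')$ which admits square roots by \cite[4.20]{a-KW92}, to assume $\phi$ commutes with $\theta$; then $\phi^2=\id$, which by \cite[4.20]{a-KW92} in characteristic $0$ forces $\phi=\id_\gfrak$. Thus (a conjugate of) $\theta$ equals $\theta(X,\tau,s)$, and the required commutation of $\Ad(s)$ with $\theta(X,\tau,s)$ is exactly the commutation with $\theta$ arranged en route. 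The delicate point is the bookkeeping in the two straightening steps: each application of Lemma~\ref{CommuteLem} must replace $s$, respectively $\phi$, by a conjugate without disturbing the commutation relations already secured, and one must confirm that the square-root hypothesis genuinely holds for $\Ad(\Htil)$ and for $\Aut(\gfrak;\gfrak')$ — the latter relying on the explicit description in \cite[4.20]{a-KW92}.
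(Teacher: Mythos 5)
Your proposal is correct and follows essentially the same route as the paper: split pairs via Theorem~\ref{thm:split-exist} and Proposition~\ref{prop:split-conj}, the choice of $\tau$ and $s$ from the trivial action on $\Delta$, two applications of Lemma~\ref{CommuteLem} to $\Ad(\Htil)$ and $\Aut(\gfrak,\gfrak')$, and the final elimination of $\phi$ via \cite[4.20]{a-KW92}. The bookkeeping points you flag are exactly the ones the paper handles, so no gap remains.
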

  By the discussion preceding Proposition \ref{half-way} we know that
  $s|_{Q_X} \equiv 1$ and that the relations
  $s(\alpha_j)=s(\alpha_{\tau(j)})^{-1}$ and
  $s(\alpha_j)^2=(-1)^{\alpha_j( 2\rho_X^\vee)}$ 
  hold for all $j\in I\setminus X$. Note that these relations imply
  $s(\alpha_j)=\pm s(X,\tau)(\alpha_j)$ for all $j\in I$.
  Define $u\in \Htil$ by
  \begin{align*}
    u(\alpha_j):=\begin{cases}
        1& \mbox{if $s(\alpha_j)=s(X,\tau)(\alpha_j)$,}\\
        i& \mbox{if $s(\alpha_j)=-s(X,\tau)(\alpha_j)$.} 
      \end{cases}
  \end{align*}
  Then $\theta(X,\tau)=\Ad(u)\circ \theta(X,\tau,s) \circ (\Ad(u))^{-1}$
  which proves that $\theta$ is conjugate to an automorphism of the
  form $\theta(X,\tau)$ as given by \eqref{tauDef}. 
  
  It is straightforward to check that if two admissible pairs belong
  to the same $\Aut(A)$-orbit, then the corresponding involutive
  automorphisms are $\Aut(\gfrak)$ conjugate. Hence it remains to
  show that 
  \begin{align}\label{conj}
    \theta(X,\tau)=\phi\circ  \theta(X',\tau')\circ \phi^{-1}
  \end{align}
  for some $\phi\in \Aut(\gfrak')$ implies that $(X,\tau)$ and
  $(X',\tau')$ belong to the same $\Aut(A)$-orbit. Indeed, note 
  that in this case both $(\hfrak, \bfrak)$ and $(\phi(\hfrak),
  \phi(\bfrak))$ are split pairs for $\theta:=\theta(X,\tau)$. Moreover, replacing $\phi$ by $\phi\circ \omega$ if necessary, on may assume that $\phi(\bfrak)$ and $\bfrak$ are $\Ad(G)$-conjugate. By Theorem \ref{thm:split-exist}.(2) there exists $g\in G^\theta$ such that
  $\phi(\hfrak)=\Ad(g)(\hfrak)$ and
  $\phi(\bfrak)=\Ad(g)(\bfrak)$. Hence, replacing $\phi$ by
  $\Ad(g^{-1})\circ \phi$, we may assume that 
  \begin{align}\label{phihb}
    \phi(\hfrak)=\hfrak \quad \mbox{ and } \quad\phi(\bfrak)=\bfrak. 
  \end{align}
  Let $\Theta, \Theta'$, and $\Phi$ denote the automorphisms of
  $\hfrak^\ast$ induced by $\theta, \theta':=\theta(X',\tau')$, and
  $\phi$, respectively. Relation \eqref{phihb} implies $\Phi\in
  \Aut(A)$ and by relation \eqref{conj} one has
  $\Theta=\Phi\circ \Theta'\circ \Phi^{-1}$. Thus $\Phi(X')=X$ and
  \begin{align*}
  \tau'=-w_{X'}\circ \Theta'=-\Phi^{-1}\circ w_X\circ \Phi
  \circ \Phi^{-1}\circ\Theta \circ \Phi=\Phi^{-1}\circ \tau\circ \Phi
  \end{align*}
  which shows that $(X,\tau)$ and $(X',\tau')$ belong to the same
  $\Aut(A)$-orbit. This completes the proof of Theorem \ref{classThm}.

\providecommand{\bysame}{\leavevmode\hbox to3em{\hrulefill}\thinspace}
\providecommand{\MR}{\relax\ifhmode\unskip\space\fi MR }
\providecommand{\MRhref}[2]{%
  \href{http://www.ams.org/mathscinet-getitem?mr=#1}{#2}
}
\providecommand{\href}[2]{#2}

\end{document}